\documentclass[a4paper,twoside,11pt]{article}
\usepackage{amssymb,amsfonts,amsmath,amsbsy,theorem,amscd}
\usepackage[
left = 2.2cm,
right = 2.2cm,
top = 2.8 cm,
bottom = 3 cm
]{geometry}
\usepackage{epsfig,psfrag,graphicx,xcolor}
\usepackage{stmaryrd}
\usepackage{extarrows}
\usepackage{hyperref}
\hypersetup{colorlinks=true,citecolor=blue,linkcolor=blue,urlcolor=black}
\usepackage{csquotes}
\usepackage{mathtools} 
\usepackage{mathrsfs} % for mathscr typeface
\usepackage{cite} % for citations
\usepackage{color}

\definecolor{CMUrot}{RGB}{128,18,18}
\definecolor{Gold}{RGB}{238,180,34}
\allowdisplaybreaks[4]

\newcommand{\ol}[1]{\overline{#1}}

\numberwithin{equation}{section}

\newcommand{\R}{\ensuremath{\mathbb{R}}}

\newcommand{\Om}{\ensuremath{\Omega}}

\newcommand{\N}{\ensuremath{\mathbb{N}}}

\newcommand{\dist}{\operatorname{dist}}

\newcommand{\sd}{{\,\rm d}}
\newcommand{\supp}{\operatorname{supp}}
\newcommand{\eps}{\ensuremath{\varepsilon}}
\newcommand{\weight}[1]{\langle #1\rangle}

\newcommand{\Div}{\operatorname{div}}

\newcommand{\T}{\ensuremath{\mathbb{T}}}

\newcommand{\no}{\mathbf{n}}

\newcommand{\tn}[1]{\mathbf{#1}}
\def\nn{\mathbf{n}}

\newcommand{\ve}{\mathbf{v}}
\newcommand{\we}{\mathbf{w}}
\newcommand{\ue}{\mathbf{u}}

\newcommand{\btau}{{\boldsymbol{\tau}}}
\newtheorem{thm}{Theorem}[section]
\newtheorem{cor}[thm]{Corollary}
\newtheorem{lem}[thm]{Lemma}
\newtheorem{defn}[thm]{Definition}
\newtheorem{theorem}[thm]{Theorem}
\newtheorem{prop}[thm]{Proposition}

\newtheorem{claim*}{Claim}
\newtheorem{assumption}[thm]{Assumption}
\theorembodyfont{\rmfamily}
\newtheorem{rem}[thm]{Remark}

\newenvironment{proof*}[1]{{\bf Proof
#1:}}{\hspace*{\fill}\rule{1.2ex}{1.2ex}\\ }
\newenvironment{proof}{{\bf
Proof:\,}}{\hspace*{\fill}\rule{1.2ex}{1.2ex}\\ }

\newcommand{\p}{\partial}
\newcommand{\G}{\Gamma}
\def\({\left(}
\def\){\right)}

\newcommand{\tc}{\hat{c}}
\newcommand{\tv}{\hat{\ve}}
\newcommand{\tp}{\hat{p}}
\newcommand{\tr}{\hat{r}}

\newcommand{\order}{N}
\newcommand{\zg}{\zeta_\Gamma}

\newcommand{\jump}[1]{\left\llbracket #1 \right\rrbracket}

\newcommand{\abs}[1]{\left\vert #1 \right \vert}
\newcommand{\absm}[1]{\vert #1 \vert}
\newcommand{\norm}[1]{\left\Vert #1 \right \Vert}
\newcommand{\normm}[1]{\Vert #1 \Vert}

\newcommand{\prho}{\partial_{\rho}}

\renewcommand{\d}{\mathrm{d}}
\newcommand{\dx}{\,\d x}
\newcommand{\dt}{\,\d t}
\newcommand{\dr}{\,\d r}
\newcommand{\ds}{\,\d s}
\newcommand{\dsigma}{\,\d \sigma}

\newcommand{\ddt}{\frac{\d}{\d t}}
\newcommand{\drho}{\,\d \rho}

\newcommand{\ptial}[1]{ \partial_{#1} }
\newcommand{\pt}{\ptial{t}}
\newcommand{\pr}{\partial_r}

\newcommand{\onehalf}{\frac{1}{2}}

\newcommand{\nablax}{\nabla_x}
\newcommand{\Divx}{\Div_x}
\newcommand{\Deltax}{\Delta_x}

\newcommand{\bb}[1]{\mathbb{#1}}
\newcommand{\bbr}{\bb{R}}

\newcommand{\bbn}{\bb{N}}

\newcommand{\bu}{\mathbf{u}}
\newcommand{\bv}{\mathbf{v}}
\newcommand{\bw}{\mathbf{w}}
\newcommand{\bn}{\mathbf{n}}

\newcommand{\br}{\mathbf{r}}

\newcommand{\bR}{\mathbf{R}}
\newcommand{\bphi}{\boldsymbol{\varphi}}
\newcommand{\hc}{\hat{c}}
\newcommand{\hv}{\hat{\bv}}
\newcommand{\hp}{\hat{p}}

\newcommand{\tw}{\widetilde{\bw}}

\newcommand{\cL}{\mathcal{L}}

\newcommand{\cN}{\mathcal{N}}

\newcommand{\oeps}{\omega_\eps}
\newcommand{\toeps}{\tilde{\omega}_\eps}
\newcommand{\wW}{H_\eps} %%% \oeps-weighted L^2-Sobolev space 
\newcommand{\Veps}{V_t^\eps} %%% Scaled H^1-space for \ol u in neighborhood of interface

\begin{document}
\begin{titlepage}
% Old Version  
    \title{\bfseries Higher Order Convergence for the Sharp Interface Limit of 3D Navier--Stokes/Allen--Cahn Systems}
\author{Helmut Abels, Mingwen Fei, Yadong Liu, and Maximilian Moser}
\end{titlepage}

\maketitle
\abstract{We show convergence of solutions to a Navier--Stokes/Allen--Cahn system  as the interfacial thickness $\varepsilon>0$ tends to zero for well-prepared initial data as long as the limit system possesses a sufficiently smooth solution. The limit system consists of a two-phase Navier--Stokes system separated by a sharp interface in the presence of surface tension coupled to a convective mean curvature flow equation. In comparison to previous results we obtain improved convergence estimates for higher-order norms. These enable us to prove convergence in the case of three space dimensions and non-constant viscosity, which was unknown before. The convergence result relies crucially on uniform higher-order estimates for the associated linearized Navier--Stokes/Allen--Cahn system in suitably weighted $L^2$-Sobolev spaces. Here a novel problem-adapted weight proportional to the sum of $\varepsilon$ and the distance to the sharp interface of the limit, which gives improved and sharp estimates, is an important new ingredient. This approach can potentially be adapted to other sharp interface limits as well. }

\medskip

{\small\noindent
{\bf Mathematics Subject Classification (2020):}
Primary: 76T06; %% Liquid-liquid two component flows
Secondary:
35Q30, %% Navier--Stokes equations 
35Q35, %% PDEs in connection with fluid mechanics
35R35, %% Free boundary problems for PDEs
35C20, %% Asymptotic expansions of solutions to PDEs
% 35B25, %% Singular perturbations in context of PDEs
76D05, %% Navier--Stokes equations for incompressible viscous fluids
76D45\\ %% Capillarity (surface tension) for incompressible viscous fluids 
{\bf Key words:} Two-phase flows, diffuse interface model, sharp interface limit, Allen--Cahn equation, Navier--Stokes equation, weighted function spaces

\section{Introduction and Main Result}\label{sec:Introduction}
The study of two-phase flows is a challenging and important problem in fluid dynamics and partial differential equations with many applications in the sciences and in engineering fields. Diffuse interface models, also called phase field models, are widely adopted mathematical models for two-phase flows in theoretical analysis and numerical computation,  see e.g.~\cite{AMW1998}. The approach recognizes micro-scale mixing of the macroscopically immiscible fluids and hence treats the interface as a transition layer with finite (small) width  $\eps> 0$, and a suitable (the so-called phase field, vector-valued in some cases) $c_\eps\colon \Omega \subseteq \mathbb{R}^d \to \mathbb{R} $ is introduced, which accounts for mixing in the interface region that real fluids always display. In the absence of a fluid flow the corresponding energy is the Cahn--Hilliard energy (of Ginzburg--Landau type)
\begin{align*}
	E(c_\eps) 
	= \int_\Omega \left(\frac{\eps}{2} \abs{\nabla c_\eps}^2 + \frac{1}{\eps} f(c_\eps)\right) \dx,
\end{align*}
where $f$ is typically a double well potential (e.g.\ $f(c_\eps) = \tfrac{1}{4} (c_\eps^2 - 1)^2$). Taking the $L^2$- or $H^{-1}$-gradient flow, one obtains the classical Allen--Cahn equation \cite{AC1979} or the Cahn--Hilliard equation \cite{CahnHilliard}, respectively.
As $\eps \to 0$, the domain $\Omega$ will be separated into two regions $\Omega^\pm(t)$, where $c_\eps \to \pm 1$. Moreover, the interface $\Gamma_t$ between these two regions evolves according to the mean curvature flow or the Mullins--Sekerka flow, respectively. 
In classical sharp interface models the interface is treated as a sufficiently smooth surface and the equations of motion hold in each phase and are supplemented by the boundary conditions at the sharp interface, which leads to a free boundary problem. In comparison the phase field models can describe singularities of interfaces due to topological changes, which are fundamental physical processes, such as pinch-off and reconnection. Hence they have many advantages in numerical simulations of the interfacial motion. A widely concerned important problem is the consistency of sharp and phase field models. Mathematically, it is the investigation of the convergence of solutions of the phase field model to solutions of its corresponding sharp interface model as the thickness  $\eps$ of the transition layer tends to zero. Such a problem is referred to as the sharp interface limit of the phase field model.

The goal of this contribution is two-fold. The first is to study the singular limit $\eps\to 0$ of the following Navier--Stokes/Allen--Cahn system:
\begin{subequations}\label{eq:NSAC}
	\begin{alignat}{2}\label{eq:NSAC1}
		\partial_t \bv_\eps +\bv_\eps\cdot \nabla \bv_\eps-\Div(2\nu(c_\eps)D\bv_\eps)  +\nabla p_\eps & = -\eps \Div (\nabla c_\eps \otimes \nabla c_\eps)&\quad & \text{in}\ \Omega\times(0,T_0),\\\label{eq:NSAC2}
		\Div \bv_\eps& = 0&\quad & \text{in}\ \Omega\times(0,T_0),\\\label{eq:NSAC3}
		\partial_t c_\eps +\bv_\eps\cdot \nabla c_\eps & =m_0\left(\Delta c_\eps - \tfrac1{\eps^2} f'(c_\eps)\right)&\quad & \text{in}\ \Omega\times(0,T_0),\\
		\label{eq:NSAC4}
		(\bv_\eps,c_\eps)|_{\partial\Omega}&= (0,-1)&\quad & \text{on }\partial\Omega\times (0,T_0),\\
		\label{eq:NSAC5}
		(\bv_\eps,c_\eps) |_{t=0}& = (\bv_{0,\eps},c_{0,\eps})&\quad& \text{in }\Omega,
	\end{alignat}
\end{subequations}
where $\Omega\subseteq \R^d$, $d=2,3$, is a bounded smooth domain. Here $\ve_\eps\colon \Omega\times (0,T_0)\to \R^d$, $p_\eps\colon \Omega\times (0,T_0)\to \R$, and $c_\eps\colon \Omega\times (0,T_0)\to \R$ are the (mean) velocity, the pressure, and the concentration difference, respectively, of the fluid mixture. Moreover, $\nu\colon \R\to (0,\infty)$ is the viscosity of the mixture, $f\colon \R\to \R$ is a (homogeneous) free energy density of double well shape, specified below, $\eps>0$ is a parameter proportional to the interfacial thickness of the diffuse interface between the two fluids, and $m_0>0$ is the mobility coefficient, chosen to be independent of $\eps$. In this model the density difference of the fluids is neglected and the densities are assumed to be the same (and put to one for simplicity). The Navier--Stokes/Allen--Cahn model \eqref{eq:NSAC} was proposed by Liu and Shen in \cite{LiuShen} as an alternative approximation of a classical sharp interface model for a two-phase flow of viscous, incompressible, Newtonian fluids. Later a more general model was derived by Jiang, Li, and Liu \cite{TwoPhaseVariableDensityJiangEtAl} for fluids with different densities and phase transitions. Moreover, existence of weak solutions, strong solutions and long-time behavior of the model were studied in \cite{TwoPhaseVariableDensityJiangEtAl} as well. The long-time behavior of solutions to the model \eqref{eq:NSAC} was studied by Gal and Grasselli \cite{GalGrasselliDCDS}. Recent analytic results on a mass-conserving Navier--Stokes/Allen--Cahn system and further references can be found in \cite{GGW2022,GHP2026}.

We will prove convergence of solutions to \eqref{eq:NSAC} to solutions of the following two-phase Navier--Stokes/mean curvature flow system:
\begin{subequations}
	\label{eq:TPNS}
	\begin{alignat}{2}
		\label{eq:TPNS1}
		\p_t \bv^\pm+\bv^\pm \cdot\nabla\bv^\pm-\nu^\pm\Delta \bv^\pm  +\nabla p^\pm &= 0 &\qquad &\text{in }\Omega^\pm (t), t\in (0,T_0),\\\label{eq:TPNS2}
		\Div \bv^\pm &= 0 &\qquad &\text{in }\Omega^\pm (t), t\in (0,T_0),\\\label{eq:TPNS3}
		\llbracket 2\nu^\pm D\bv^\pm -p^\pm \tn{I}\rrbracket\no_{\Gamma_t} &=- \sigma H_{\Gamma_t}\no_{\Gamma_t} && \text{on }\Gamma_t, t\in (0,T_0),\\ \label{eq:TPNS4}
		\llbracket\bv^\pm \rrbracket &=0 && \text{on }\Gamma_t, t\in (0,T_0),\\
		\label{eq:TPNS5}
		V_{\Gamma_t} -\no_{\Gamma_t}\cdot \bv^\pm &= m_0 H_{\Gamma_t} && \text{on }\Gamma_t, t\in (0,T_0),\\
		\bv^-|_{\partial\Omega}&= 0&&\text{on }\partial\Omega\times (0,T_0),\\
		\label{eq:TPNS6}
		(\bv^\pm, \Gamma_t)|_{t=0} &= (\bv_0^\pm, \Gamma_0).
	\end{alignat}
\end{subequations}
Here $\nu^\pm=\nu(\pm1)$, $\no_{\Gamma_t}$ is the interior normal of $\Omega^+(t)$, $ \sigma= \int_{-1}^1 \sqrt{2f(s)}\sd s > 0 $ denotes the surface tension coefficient, $V_{\Gamma_t}$ and $H_{\Gamma_t}$ are the normal velocity and mean curvature of $\Gamma_t$, respectively, and
\begin{equation*}
	\llbracket g\rrbracket:=\lim_{h\to 0+} \left[g(x+h\no_{\Gamma_t})- g(x-h\no_{\Gamma_t})\right], \quad x \in \Gamma_t,
\end{equation*}
denotes the jump of $g$ at $\Gamma_t$ in normal direction $\no_{\Gamma_t}$. It is noted that the existence of strong solutions to \eqref{eq:TPNS} for sufficiently smooth initial data was proved by Moser and the first author in \cite{AM2018}. A similar proof is given in \cite[Appendix A]{HL2023}. By standard parabolic theory one
can show that the solution is indeed smooth for smooth initial data satisfying the
necessary compatibility conditions. Existence of weak solutions for a non-Newtonian
variant of \eqref{eq:TPNS} was proved by Liu, Sato, and Tonegawa \cite{LiuSatoTonegawa2}. We note that if the mobility constant $m_0$ vanishes, then the limit system becomes the classical two-phase flow of incompressible viscous fluids with surface tension. This has been well-studied both for generalized/varifold solutions in \cite{Abels2007}, strong solutions in \cite{DenisovaTwoPhase,KoehnePruessWilkeTwoPhase,WTK2014,PruessSimonettTwoPhaseFlow,PruessSimonettMovingInterfaces}, weak-strong uniqueness in \cite{FischerHensel_NavStk}, and references therein. 

We will show convergence using the following:
\begin{assumption}\label{assump:main}
	Let $\Omega\subseteq \R^d$ be a bounded smooth domain, $d=2,3$, $\nu \colon \R\to (0,\infty)$ be smooth, bounded and with bounded derivatives such that $\inf_{s\in\R}\nu(s)=:\nu_0>0$, $T_0>0$, let  $(\Gamma_t)_{t\in [0,T_0]}\subseteq \Omega$ be smoothly evolving hypersurfaces such that $\Gamma_t= \partial\Omega^+(t)$ for some open set $\Omega^+(t)\subseteq \Omega$ and denote $\Omega^-(t)= \Omega \setminus \ol{\Omega^+(t)}$ for all $t\in [0,T_0]$ and $\Omega^\pm:=\bigcup_{t\in[0,T_0]} \ol{\Omega^\pm(t)} \times \{t\}$.   Moreover, let $\bv^\pm \colon \ol{\Omega^\pm} \to \R^d$, $p^\pm\colon \ol{\Omega^\pm}\to \R$ be smooth, and let  $(\ve^\pm,p^\pm,(\Gamma_t)_{t\in [0,T_0]})$  solve \eqref{eq:TPNS}. Finally, as in \cite{StokesAllenCahn,AbelsFei} we assume that $f\colon \R\to \R$  is smooth and satisfies the assumptions
\begin{equation}\label{eq:Assump-f}
  f(\pm 1)=0, \quad f'(\pm 1)=0, \quad \alpha:=f''(\pm 1)>0,\quad f(s)=f(-s)>0 \quad \text{for all }s\in (-1,1).
\end{equation}
\end{assumption}      
For the convergence, we want to obtain rates in suitable norms and an understanding of the regularities and shape of the diffuse interface as $\eps \to 0$. As mentioned above, so far such a result was only obtained in the case $d=2$ in \cite{AbelsFei} and before in \cite{StokesAllenCahn} for a Stokes/Allen--Cahn system and a lack of suitable higher-order estimates of the occurring errors prevented the extension to three space dimensions. In order to resolve the behavior of higher order derivatives close to the diffuse interface suitably it is essential to use a \emph{weight} $\omega_\eps \colon \ol\Omega\times [0,T_0]\to (0,\infty)$ such that
\begin{equation*}
	\omega_\eps (x,t)= \sqrt{\eps^2+\dist (x,\Gamma_t)^2}\qquad \text{if }\dist (x,\Gamma_t)<2\delta
\end{equation*}
for some suitable $\delta>0$, cf.\ \eqref{eq:weight} below.

Our main result on the sharp interface limit of the Navier--Stokes/Allen--Cahn system for well-prepared initial data is as follows:
\begin{thm}\label{thm:main}
	Let $d=2,3$, $N\in\N$, $N\geq 3$, $\Omega$, $f$, $\Omega^\pm$, $(\bv^\pm,p^\pm,\Gamma)$ be as in Assumption~\ref{assump:main}.
	Then there are smooth $c_{A,0}\colon \Omega\to \R$ and $\bv_{A,0}\colon \Omega\to \R^d$, depending on $\eps\in (0,1)$, such that the following is true: Let $\bv_\eps\colon \ol{\Omega}\times [0,T_\eps)\to \R^d$, $c_\eps\colon \ol{\Omega}\times [0,T_\eps)\to \R$ be smooth solutions of \eqref{eq:NSAC} with initial values $c_{0,\eps}\colon \Omega\to [-1,1]$, $\bv_{0,\eps}\colon \Omega\to \R^d$ and some $T_\eps \in (0,T_0]$, $0<\eps\leq 1$, satisfying
	\begin{align}\nonumber
		&\sum_{j+k\leq 2,j,k\in\N_0}\left\|\tfrac{\oeps^{j+k}}{\eps^j}\nabla^k (c_{0,\eps}-c_{A,0})\right\|_{L^2(\Omega)}+\left\|\nabla_\btau (c_{0,\eps}-c_{A,0})\right\|_{L^2(\Gamma_0(2\delta))}\\\label{initial  assumption}
		&+ \|(\bv_{0,\eps}-\bv_{A,0}, \nabla (\bv_{0,\eps}-\bv_{A,0}))\|_{L^2(\Omega)}
		\leq R_0\eps^{\order+\frac12}
	\end{align}
	for all $\eps\in (0,1]$ and some $R_0>0$.
	Then there are some $\eps_0 \in (0,1]$, $R_1>0$, and $c_A\colon \Om\times [0,T_0]\to \R$, $\bv_A\colon \Om\times [0,T_0]\to \R^d$ (depending on $\eps$) such that, for every $\eps\in (0,\eps_0]$, $\ve_\eps, c_\eps$ can be extended to smooth solutions $\bv_\eps\colon \ol{\Omega}\times [0,T_0]\to \R^d$, $c_\eps\colon \ol{\Omega}\times [0,T_0]\to \R$ of \eqref{eq:NSAC} satisfying  
	\begin{align}
		& \left\|\left(c_\eps-c_A,\omega_\eps \nabla (c_\eps-c_A)\right)\right\|_{L^\infty(0,T_0;L^2(\Omega))} + 
		\|\omega_\eps\partial_t (c_\eps-c_A)\|_{L^2(Q_{T_0})}\nonumber \\
		& \quad +\|\omega_\eps^2 \nabla^2 (c_\eps-c_A)\|_{L^2(Q_{T_0})}+\|(\nabla_\btau (c_\eps-c_A), \omega_\eps \nabla_{\btau}\nabla (c_\eps-c_A))\|_{L^2(\Gamma(2\delta))} \label{eq:convc}
		\leq R_1 \eps^{\order},\\
		&\|(\bv_\eps -\bv_A,\oeps \nabla (\bv_\eps-\bv_A)\|_{L^\infty(0,T_0;L^2(\Om))}\nonumber\\ \label{eq:convVelocityb}
		&\quad + \|(\oeps^{\frac32}\nabla^2 (\bv_\eps -\bv_A),\oeps \partial_t (\bv_\eps-\bv_A))\|_{L^2(Q_{T_0})} \leq R_1\eps^{N+\frac12},
	\end{align}
	and
    \begin{align}\label{eq:convc-Linfty}
		\norm{c_\eps - c_A}_{L^\infty(Q_{T_0})} \leq R_1 \eps^{N-1}
	\end{align}
	for all $\eps \in (0,\eps_0]$.
	Here $\Gamma(2\delta)$ is a tubular neighborhood of $\Gamma$ for some $\delta>0$ as in Section~\ref{subsec:coordinates} below, $\oeps$ is defined in \eqref{eq:weight} below, and $c_A,\bv_A$ are as in Section~\ref{subsec:ApproxSol} below. In particular,
	\begin{equation*}
		\lim_{\eps\to 0} c_A= \pm 1 \quad \text{uniformly on compact subsets of } \Omega^\pm.
	\end{equation*}
	and
	\begin{equation*}
		\bv_A=\bv^+\chi_{\Omega^+}+\bv^-\chi_{\Omega^-} + O(\eps) \qquad \text{in }L^\infty(\Om\times (0,T_0))\text{ as }\eps\to 0.
	\end{equation*}
      \end{thm}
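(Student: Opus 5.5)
The proof will follow the standard strategy for sharp interface limits: construct a high-order approximate solution, derive uniform estimates for the linearized system, and close with a continuation argument.

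\emph{Approximate solution.} First we construct $(c_A,\bv_A,p_A)$ by matched asymptotic expansions to order $N+1$ or higher. In the inner region $\Gamma(2\delta)$ we use the stretched variable $\rho=(d_\Gamma(x,t)-\eps h_\eps(s,t))/\eps$, with height functions $h_\eps=\sum_k \eps^k h_{k+1}$. The leading order is the optimal profile $\theta_0$. The higher-order terms $c_k,\bv_k$ are obtained by solving ODEs in $\rho$ whose solvability (Fredholm) conditions produce the linearized two-phase Navier--Stokes/mean curvature flow for $(\bv_k^\pm,p_k^\pm,h_k)$. Outer expansions are glued in with cutoffs. The outcome is a remainder $(r_\eps^c,\br_\eps^\bv)$ bounded in the weighted norms by $O(\eps^{N+1/2})$ or better. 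Using $\partial_\rho$-derivatives costs $\eps^{-1}$ but gains powers of $\oeps$ near $\Gamma$; this is exactly what the weight is designed to compensate. We define $c_{A,0}=c_A|_{t=0}$ and $\bv_{A,0}=\bv_A|_{t=0}$.

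\emph{Linearized estimates.} Next we consider the errors $u=c_\eps-c_A$ and $\bw=\bv_\eps-\bv_A$. They satisfy the linearized Allen--Cahn equation with operator $\partial_t+\bv_A\cdot\nabla-m_0(\Delta-\eps^{-2}f''(c_A))$, coupled to a Stokes system with viscosity $\nu(c_A)$ and forcing $-\eps\Div(\nabla c_A\otimes\nabla u+\nabla u\otimes\nabla c_A)$. The remaining terms are nonlinear ones and the remainders. The key tool is the spectral estimate for $-\Delta+\eps^{-2}f''(c_A)$ (in the form used in \cite{StokesAllenCahn,AbelsFei}): it is bounded below by $-C$, up to the $\theta_0'$ direction, which is controlled through the tangential quantity $\nabla_\btau u$ in $\Gamma(2\delta)$. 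We then prove energy estimates for $\oeps^{j+k}\eps^{-j}\nabla^k u$ by testing with $\oeps^{2}\Delta u$, $\oeps^{4}\Delta^2 u$-type multipliers, and for $\bw$ by testing the Stokes part with $\bw$, $\oeps^2\partial_t\bw$ and weighted $\nabla^2\bw$. Commutators of the weight satisfy $|\nabla\oeps|\le C$, so they are lower order relative to $\oeps^{-1}$. For the weighted $H^2$ Stokes regularity with variable viscosity $\nu(c_A)$, whose gradient is $O(\eps^{-1})$ in the layer, we localize on scales comparable to $\oeps$. The bound $|\nabla\nu(c_A)|\lesssim \oeps^{-1}$ (up to exponential tails) is precisely what makes $\oeps^{3/2}\nabla^2\bw$ controllable. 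The coupling term $\eps\nabla c_A\otimes\nabla u$ has size $\oeps^{-1}\cdot\eps|\nabla u|\lesssim|\oeps\nabla u|\,\eps\oeps^{-2}$, and we absorb it using the weighted bounds for $u$, which yields the half-power gain $\eps^{N+1/2}$ for $\bw$.

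\emph{Nonlinear closure.} Let $T_\eps'$ be the maximal time up to which the left-hand sides of \eqref{eq:convc}--\eqref{eq:convVelocityb} are at most $R_1\eps^N$ and $R_1\eps^{N+1/2}$. On $[0,T_\eps']$ we bound the nonlinearities $\eps^{-2}(f'(c_A+u)-f'(c_A)-f''(c_A)u)$, $\bw\cdot\nabla u$, $\eps\Div(\nabla u\otimes\nabla u)$ and $\bw\cdot\nabla\bw$. For this we use weighted Gagliardo--Nirenberg/anisotropic interpolation in $d=3$: $L^\infty$ of $u$ comes from the weighted $H^2$ and tangential bounds, giving \eqref{eq:convc-Linfty} with the loss $\eps^{-1}$, i.e.\ $\|u\|_\infty\lesssim\eps^{N-1}$. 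For $N\ge3$, each nonlinear term then carries an extra positive power of $\eps$. A Gronwall argument gives the bounds with $R_1/2$, so for $\eps\le\eps_0$ we get $T_\eps'=T_0$, and the classical solution extends by blow-up criteria since its norms stay bounded. The final convergence statements follow from the construction of $c_A,\bv_A$.

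\emph{Main obstacle.} I expect the hardest step to be the uniform weighted higher-order estimate for the coupled linear system: controlling $\oeps^2\nabla^2u$ and $\oeps^{3/2}\nabla^2\bw$ despite the $O(\eps^{-1})$ jump of $\nu(c_A)$ in three dimensions. I would first attempt it by localizing in normal–tangential coordinates, treating tangential derivatives via the spectral estimate, and recovering normal derivatives from the equation.
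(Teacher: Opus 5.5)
There is a genuine gap in your step on the linearized system. You treat the coupling between the Allen--Cahn part and the Stokes part as perturbative (``absorb it using the weighted bounds for $u$''), but it is of leading order.

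In $\Gamma(2\delta)$ one has $\bw\cdot\nabla c_A=\eps^{-1}\theta_0'(\rho)\,(\no\cdot\bw)|_{\Gamma_t}$, up to a remainder that is controlled with a gain of $\eps^{1/2}$ (cf.\ \eqref{est:diffw1couple}). This forcing points exactly in the direction $\theta_0'(\rho)$, where the spectral estimate gives no $\eps^{-2}$-coercivity. Tested against $u$ it is only bounded by $C\eps^{-1/2}\|\bw\|_{H^1}\|u\|_{L^2}$, and this is sharp for $u=\eps^{-1/2}\theta_0'(\rho)Z(s)$. Conversely, the capillary term only yields $\|\nabla\bw\|_{L^2}\lesssim\|u\|_{\Veps}+\|\oeps\nabla u\|_{L^2}$, cf.\ \eqref{w1est-1}. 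The two energy inequalities therefore close only with a Gronwall factor $e^{CT/\eps}$. Weighted higher-order multipliers ($\oeps^2\Delta u$, $\oeps^4\Delta^2 u$, $\oeps^2\partial_t\bw$) cannot repair this, since the obstruction already sits at the $L^2$ level.

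What actually happens is that this forcing generates a component $u\approx-\eps^{-1}\theta_0'(\rho)h$. This is a shift of the diffuse interface by a distance $h$ of the size of the velocity error, so $\|u\|_{L^2}\sim\eps^{-1/2}\|h\|$. The capillary force of this component is, after the $O(\eps^{-2})$ and $O(\eps^{-1})$ contributions cancel (partly against an inner velocity correction), an $O(1)$ force in $h$: the linearized surface tension in \eqref{eq:Limit3}. So the leading order of the linearized system is itself the coupled linearized two-phase problem \eqref{eq:Limit}, which energy estimates for $(u,\bw)$ alone do not resolve. By contrast, the variable viscosity you single out as the main obstacle is handled by the short weighted Stokes estimate Lemma~\ref{lem:weighted-stokes-estimate}, which only uses $\|\oeps\nabla c_A\|_{L^\infty}\le M$. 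The weight $\oeps^{3/2}$ in \eqref{eq:convVelocityb} is dictated by the inner part $\eps^{-1}h\,\partial_\rho\hv_A^{\mathrm{in}}$ of $\bw_A$, not by $\nabla\nu(c_A)$.

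The missing idea is the decomposition \eqref{eq:w_A}--\eqref{eq:u_A}: $u=u_A+\ol{u}$ and $\bw=\bw_A+\ol{\bw}$. Here $u_A=-\zg\eps^{-1}(\theta_0'+\eps^2\partial_\rho\hc_2)h$, $\bw_A$ is built from $\bw^\pm$ and $\eps^{-1}h\,\partial_\rho\hv_A^{\mathrm{in}}$, and $(\bw^\pm,q^\pm,h)$ solves \eqref{eq:Limit} driven by $(\no\cdot\ol{\bw})|_{\Gamma_t}$. Then $(\ol{\bw},\ol{u})$ solves the modified system \eqref{eq:linNSAC'}. Its coupling $(\ol{\bw}-\ol{\bw}|_{\Gamma_t})\cdot\nabla c_A$ gains $\eps^{1/2}$. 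The residuals of $(\bw_A,u_A)$ are $O(\sqrt{\eps}\,\|\ol{\bw}\|_{L^2(0,T;H^1)})$ by Theorems~\ref{th_leading_velocity} and \ref{thm:LeadingPart}, and can be absorbed. The earlier two-dimensional proofs achieved the same by adding error-dependent terms to the approximate solution.

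This is where the rates come from: $\ol{u},\ol{\bw}=O(\eps^{N+1/2})$ and $u_A=O(\eps^{N})$. The half power separating \eqref{eq:convc} from \eqref{eq:convVelocityb} is a loss for $u$ through the explicit interface-shift mode, not a gain for $\bw$ by absorption. Accordingly, the continuation argument runs on $\|(\ol{\bw},\ol{u})\|_{X_{T,\eps}}\le R\eps^{N+1/2}$, not on the full errors.

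The nonlinear closure also uses this structure. The term $\frac{1}{2\eps^2}f'''(\theta_0)u_A^2$ contains $\frac{h^2}{2\eps^4}f'''(\theta_0)(\theta_0')^2$, whose plain $L^2$ bound $\eps^{2N-5/2}$ is not $o(\eps^{N+1/2})$ when $N=3$. Only the oddness of $f'''(\theta_0)(\theta_0')^2$ together with Lemma~\ref{lem:EstimMeanValueFree} gives $\eps^{2N-3/2}$ in $L^2(0,T;(\Veps)')$. Likewise, \eqref{eq:convc-Linfty} comes from $\|\ol{u}\|_{L^\infty(Q_{T_0})}\lesssim\eps^{-3/2}\|\ol{u}\|_{X^2_{T_0,\eps}}\lesssim\eps^{N-1}$ plus the explicit bound on $u_A$. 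So your claim that ``each nonlinear term carries an extra positive power of $\eps$'' has to be verified through this splitting rather than asserted.
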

      \begin{rem}
        In principle one can get convergence in even higher-order norms for the velocity as well (hence in $L^\infty (\Omega \times (0,T_0))$ similar to \eqref{eq:convc-Linfty}) if one adds a power of $\oeps$ for each additional spatial derivative and extends the results for the linearized system accordingly.
        
        Moreover, the analogous result holds true for solutions $c_\eps$ of the Allen--Cahn equation with Dirichlet boundary conditions, i.e., \eqref{eq:NSAC3}-\eqref{eq:NSAC4} with $\ve_\eps\equiv 0$, as long as a solution of the mean curvature flow, i.e., \eqref{eq:TPNS5} with $\ve^\pm\equiv 0$ exists on a time interval $[0,T_0]$ analogously to Assumption~\ref{assump:main} and the initial data are well-prepared. One just has to delete all terms with $\ve_\eps, \ve_A, p_A,\ve_{0,\eps}, \ve_{A,0}$. This is a new result for the Allen--Cahn equation itself. The proof can be easily extracted from the proofs in the following, where many of the difficulties due to the coupling with the Navier--Stokes system disappear.
      \end{rem}
\begin{rem}
	Actually, in Section~\ref{sec:Convergence} more precise estimates for the error $u=c_\eps-c_A$ are shown. These are based on a decomposition $u=u_A +\ol u$ into a \emph{leading order} $u_A$ of the error, which is constructed quite explicitly and is order $O(\eps^N)$ in $L^\infty(0,T_0;L^2(\Omega))$, and a \emph{lower order} contribution $\ol u$, which is of order  $O(\eps^{N+\frac12})$ in $L^\infty(0,T_0;L^2(\Omega))$. We have chosen simplified estimates in Theorem~\ref{thm:main} for a less involved presentation.
\end{rem}
\begin{rem}
	We note that Theorem~\ref{thm:main} extends \cite[Theorem 1.1]{AbelsFei}, where only the two-dimensional case without the estimates involving $\oeps$ is shown. At first sight the order of the error estimate for ``$c_\eps-c_A$'' in   \cite[Theorem 1.1]{AbelsFei}, which is $O(\eps^{N+\frac12})$ in $L^\infty(0,T_0;L^2(\Omega))$, seems to be better than the one in \eqref{eq:convc}. But this is misleading since the leading part of the error, which is $u_A$ in our contribution, is part of the definition of the approximate solution ``$c_A$'' in \cite{AbelsFei}, i.e., $c_A+u_A$ in our contribution corresponds to the ``$c_A$'' in the estimate in \cite[Theorem 1.1]{AbelsFei}. Therefore ``$c_\eps-c_A$'' in \cite[Theorem 1.1]{AbelsFei} corresponds to $\ol u$ in our contribution, which is of order $O(\eps^{N+\frac12})$ in $L^\infty(0,T_0;L^2(\Omega))$, cf.\ \eqref{eq:MainUbarEstim} below.
\end{rem}

The proof of Theorem~\ref{thm:main} and the improved error estimates are directly linked to uniform higher-order estimates for the associated linearized Navier--Stokes/Allen--Cahn system
\begin{subequations}
	\label{eq:linNSAC}
	\begin{alignat}{2}\nonumber
		\partial_t \we +\bv_A\cdot \nabla \we+\we\cdot \nabla \bv_A&-\Div(2\nu(c_A)D\we) - \Div(2\nu'(c_A)uD\ve_A) \\\label{eq:linNSAC1}
		+\nabla q &= -\eps \Div (\nabla u \otimes \nabla c_A+\nabla c_A \otimes \nabla u)+\mathbf{r}_1&& \quad \text{in}\ Q_{T_0},\\\label{eq:linNSAC2}
		\Div \we& = r_2&& \quad \text{in}\ Q_{T_0},\\\label{eq:linNSAC3}
		\partial_t u +\bv_A\cdot \nabla u+ \we\cdot \nabla c_A & =m_0\left[\Delta u - \tfrac{1}{\eps^{2}} f''(c_A)u\right] +r_3 && \quad \text{in}\ Q_{T_0},\\
		\label{eq:linNSAC4}
		(\we,u)|_{\partial\Omega}&= (\boldsymbol{0},0)&& \quad \text{on }S_{T_0},\\
		\label{eq:linNSAC5}
		(\we,u) |_{t=0}& = (\we_0,u_0)&& \quad \text{in }\Omega,
	\end{alignat}
\end{subequations}
where $Q_{T_0} \coloneqq \Omega\times (0,T_0)$, $S_{T_0} \coloneqq \partial\Omega\times (0,T_0)$
and $\bv_A, c_A$ are suitable ``approximate solutions'', cf.\ Section~\ref{subsec:ApproxSol} and Remark~\ref{rem:PropertiesAproxSol} below. 
Therefore it is the second goal of this contribution to obtain optimal estimates for this linearized system in weighted $L^2$-Sobolev type spaces, which are second order in space of $\we$ and third order in space for $u$. We believe that this result is of independent interest in order to treat sharp interface limits of similar system. Moreover, it might be the basis for treating other models as e.g.\ Navier--Stokes/Cahn--Hilliard type systems as well. To this end it is essential to introduce a decomposition $u= u_A+\ol u$, where $u_A$ is a leading part of (the error) $u$ in terms of decay as $\eps\to 0$. The details are given in Theorem~\ref{thm:FullLinearizedSystem} below, which is our second main result. Here $u_A$ is constructed explicitly with the aid of a linearization of the limit system \eqref{eq:TPNS}, which is \eqref{eq:Limit} below and can be considered as the sharp interface limit of the linearized system \eqref{eq:linNSAC}.

In the following we will usually set $m_0=1$ for simplicity. But all results hold true for general $m_0>0$.

\paragraph{Literature overview.} Rigorous derivations of sharp interface limits were first developed for scalar phase-field equations. For the Allen--Cahn equation,
de Mottoni and Schatzman \cite{DeMottoniSchatzman} proved convergence to smooth solutions of the mean-curvature flow for well-prepared initial data by constructing high-order approximate solutions using matched asymptotic expansions and controlling the error by a spectral estimate for the linearized Allen--Cahn operator. Chen established a general spectral estimate for the Allen--Cahn, Cahn--Hilliard, and phase-field equations near generic interfaces \cite{ChenSpectrumAC}, which is the basis for the error estimates. 
This method has subsequently been extended to several phase-field models, including mass conserving Allen--Cahn equation \cite{ChenHilhorstLogak}, vector-valued nematic-isotropic phase transition \cite{FWZZ2018}, higher-order approximations of the Willmore flow \cite{FL2021}, and matrix-valued Allen--Cahn equation \cite{FLWZ2023} from the celebrated Keller--Rubinstein--Sternberg problem \cite{KRS1989a,KRS1989b}. Other approaches to the sharp interface limit include the De Giorgi’s $\Gamma$-convergence by Modica and Mortola \cite{Modica1,ModicaMortola1} for the Cahn--Hilliard energy, the comparison principle, as long as the mean curvature flow has a smooth solution, by Chen~\cite{ChenACLimit}, the viscosity-solution framework of Evans, Soner, and Souganidis \cite{EvansSonerSouganidis}, Ilmanen's convergence to Brakke motion via varifold solutions \cite{Ilmanen}, the multiphase convergence of Laux and Simon in a BV-setting \cite{LS2018}, and the convergence of minimizers of vectorial Allen--Cahn energy by Lin, Pan, and Wang \cite{LPW2012}. Interested readers are also referred to \cite{Bethuel2025,Chen2026,FonsecaTartar1989,Liu2024,AlikakosGeng2024,BOS2006,MizunoTonegawa} and references therein for other extensions. More recently, Fischer, Laux, and Simon \cite{FischerLauxSimon_AC_MCF} introduced a relative entropy (also called modulated energy) method of quantitative convergence to smooth mean-curvature flow which combines with interface calibration and avoids the spectral analysis of the linearized Allen--Cahn operator. This has been adapted by Laux and Liu \cite{LL2021} for a class of nematic-isotropic phase transition and by
Liu \cite{Liu2025} to the vectorial Allen--Cahn equation (also called Ginzburg--Landau equation) with potentials of high-dimensional wells.

Once hydrodynamics is involved, rigorous convergence of phase-field models is more delicate because the velocity, pressure, and capillary stress interact with the diffuse transition layer. Strong convergence with rates for fluid-coupled models was first obtained in two space dimensions for a Stokes/Allen--Cahn system by Liu and the first author \cite{StokesAllenCahn}. For the Navier--Stokes/Allen--Cahn system with phase-dependent viscosities, the first two authors proved a sharp interface limit in two dimensions by combining matched asymptotic expansions with refined spectral estimates \cite{AbelsFei}. In the case of vanishing mobility $m_\varepsilon=m_0 \sqrt{\varepsilon}$, the first two and fourth authors \cite{AbelsFeiMoser} proved convergence in two dimensions to the classical two-phase Navier--Stokes system with surface tension, using fractional-order expansions, $\varepsilon$-dependent coordinates,
and new higher-order ansatz terms. Later extensions can be referred to \cite{JSX2023,AbelsMumtaz}. All results mentioned here are restricted in two spatial dimensions and convergence is shown in lower order spaces in comparison to the present contribution, mainly related to the energy estimates for the system. 

A second line of recent work is based on a relative entropy method for the sharp interface limit, which was built upon the pioneering \cite{FischerLauxSimon_AC_MCF} for the sharp interface limit for the Allen--Cahn equation mentioned above, as well as on a weak-strong uniqueness result for a classical sharp interface model for a two-phase flow of viscous incompressible fluids by Fischer and Hensel \cite{FischerHensel_NavStk}. Based on these Hensel and Liu \cite{HL2023} proved convergence rates for solutions of \eqref{eq:NSAC} with constant viscosity $\nu$ and mobility $m_0$ in dimensions $d=2,3$ to \eqref{eq:TPNS}. Fischer, the first and last authors \cite{AFM2024ARMA} later extended the approach to approximate the classical two-phase Navier--Stokes system with surface tension by a Navier--Stokes/Allen--Cahn system in dimensions $d=2,3$ for same viscosities and vanishing mobility $m_\varepsilon=m_0\varepsilon^\beta$ with $\beta\in(0,2)$ in the subcritical range. A counter-example for convergence in the case $\beta>2$ is given in \cite{Abels2022} by the first author. 
These works show the advantages of relative entropy method that the dimension is not restricted, the scaling of the mobility can be more general, no construction of approximate solutions of a sufficiently high order is needed and the proofs are relatively short. However, the convergence is shown in norms related to the energy of the system and it does not seem to be possible to obtain $L^\infty$-convergence of the error. Moreover, this method strongly relies on the modulated surface energy construction $ \int_\Omega (1 - \xi \cdot \bn_{\Gamma_t}) \dx $ with a calibration $\xi$ from the Allen--Cahn equation \cite{FischerLauxSimon_AC_MCF}, which does not seem to apply directly for the Cahn--Hilliard coupled system (not even for the Cahn--Hilliard equation itself) to the best of our knowledge.

Now let us mention results concerning the Cahn--Hilliard equation. Alikakos, Bates, and Chen \cite{AlikakosLimitCH} established the convergence to the Mullins--Sekerka problem (also called the two-phase Hele--Shaw problem) with a modification of the matched asymptotic expansion method used by De Mottoni and
Schatzman \cite{DeMottoniSchatzman}, under the assumption that the limiting sharp interface problem has a smooth solution on a certain time interval. For other methods for convergence we refer to \cite{Chen1996,SandierSerfaty2004,LeCahnHilliardLimit,ApproxCHSolutions}. 
An important and widely used diffuse interface model for two-phase flows of viscous incompressible fluids of same densities is the so-called \emph{model H}, which leads to a Navier--Stokes/Cahn--Hilliard system. For such coupled systems, only few convergence results with rates are known. In \cite{AbelsMarquardt1,AbelsMarquardt2}, Marquardt and the first author considered a coupled Stokes/Cahn--Hilliard system in two dimensions. It is shown that smooth solutions of the diffuse interface systems converge for short times to solutions of the corresponding sharp interface model, the so-called two-phase Stokes/Mullins--Sekerka system, where the evolution of the interface is governed by a Mullins--Sekerka system with an additional convection term coupled to a two–phase stationary Stokes system with the Young--Laplace law for the jump of an extra contribution to the stress tensor, representing capillary stresses. Let us remark that proving a corresponding convergence result for the model H has remained open for more than two decades. For global in time convergence results to suitable varifold solutions, we refer to \cite{AL2014,AR2009,LY2025}.
We note that first three authors are working on incorporating the framework of this contribution to establish the rigorous sharp interface limit of the model H. This will also indicate the robustness of our new framework. 

\paragraph{Strategy and Novelties.}
In this contribution, we develop a perturbative framework for the error $U$ with respect to an approximate solution for the sharp interface limit problem in which the linearized dynamics and the nonlinear remainder are separated in a systematic way as
\begin{align*}
	\mathcal L_\varepsilon U = \mathcal N_\varepsilon(U) + \mathcal R_\varepsilon.
\end{align*}
More precisely, after constructing an approximate solution adapted to the solution of sharp interface limit, the equation for the error is decomposed into a principal linear system $ \mathcal{L}_\eps(U) = \mathcal{F} $, given by \eqref{eq:linNSAC} in the present situation, describing the leading order of the error as the diffuse interfacial thickness tends to zero and several terms become singular at the interface for some given data $ \mathcal{F} $. Moreover, $ \mathcal{N}_\eps(U) $ is a nonlinear remainder, which is quadratic in $U$ and which can be dominated by the linear operator $\mathcal L_\eps$ of the approximation with a large enough order. Here $ \mathcal{R}_\eps $ denotes the residual of the approximate solution. This makes the analysis of the system more efficient and transparent in comparison to the previous work \cite{StokesAllenCahn,AbelsFei,AbelsFeiMoser,AbelsMarquardt1,AbelsMarquardt2}, where certain leading order terms in the error were added ad-hoc to the approximate solution in a nonstandard way. Furthermore, since we obtain invertibility of $\mathcal{L}_\eps$ together with sharp uniform estimates in weighted higher-order $L^2$-Sobolev spaces, we are able to estimate $\mathcal{N}_\eps(U)$ efficiently and treat the case of three space dimensions as well. 

A central novelty of the contribution is the derivation of optimal weighted estimates in Sobolev $L^2$-spaces for the linearized system, uniformly in $\varepsilon$, cf.\ Theorem \ref{thm:FullLinearizedSystem} below, where the weight is as before and defined in \eqref{eq:weight} below. 
This weight captures the precise singular scaling at the diffuse interface of thickness $\eps$, which is close to the interface $\Gamma_t$ in the limit. It behaves like $\varepsilon$ inside the diffuse interface and like the distance to $\Gamma_t$ away from it. 
Therefore it allows one to measure quantities like derivatives in normal direction of the interface,  which may become singular of order $\eps^{-1}$ close to the interface as $\varepsilon\to 0$, but decay uniformly at a positive distance to it. This is combined with a decompostion of the error with respect to  $c_\eps$ in a part in the kernel of the one-dimensional Allen-Cahn operator and orthogonal to it, cf.\ Corollary~\ref{cor:Pu} below.

The main difficulty in the analysis of the linearized system is that it contains several terms which are singular as $\varepsilon\to0$. These terms arise from the rapidly varying profile across the diffuse layer, from normal derivatives in the stretched coordinate, from the curvature of the moving interface, and from the coupling between the phase-field variable and the fluid variables. If one estimates the full error directly, these singular contributions cannot be treated as perturbative remainders. To analyse the full linearized system, we essentially decompose it into a leading error $ (\bw_A,u_A) $, cf.\ \eqref{eq:w_A} and \eqref{eq:u_A} below, and a slightly more regular remainder $ (\ol{\bw},\ol{u}) $. This decomposition is one of the crucial points of the analysis. The component $(\bw_A,u_A)$ contains the most singular part of the error generated by the diffuse interface approximation and can be controlled with the aid of the solution to a linearization of the sharp interface limit \eqref{eq:TPNS}, which gives a kind of convergence result for the linearized systems. After this leading part is extracted, the remaining part of the solution $(\ol{\bw},\ol{u})$ satisfies a better behaved reduced linearized system, for which the uniform weighted estimates can be shown.

Finally, we strongly believe that the developed method can be adapted well to other open problems on sharp interface limits or other problems with singular limits since the approach is rather systematic and the problem is reduced to the analysis of the linearized system in the singular limit. Optimal estimates for the linearized system make the approach much more robust than previous works based on suboptimal estimates and ad-hoc introduction of leading order terms. %The first three authors of this contribution are currently working on an extension to the Navier--Stokes/Cahn--Hilliard system given by the model H.
We expect the approach can be used to treat the sharp interface limit for the Navier--Stokes/Cahn--Hilliard system given by the model H. It might also be possible to treat an important model for vesicle-fluid interaction, which couples the incompressible viscous Navier--Stokes equations and a phase-field approximation of the so-called Canham--Helfrich energy, thus providing a detailed description of the lipid bilayer geometry \cite{DLRW2009}. A simplified approximation of the Willmore flow was justified by the second author and Liu in \cite{FL2021}, while the whole hydrodynamics was left open still. 
Since \cite{KRS1989a,KRS1989b}, Rubinstein, Sternberg, and Keller introduced a vector-valued system for fast reaction and slow diffusion:
\begin{align*}
	\partial_t \bu = \Delta \bu - \tfrac{1}{\eps^2} \partial_\bu F(\bu),
\end{align*}
where $ \bu\colon \Omega \times (0,T) \to \bbr^d $ is a phase-indicator function, and the nonnegative, smooth potential function $ F\colon \bbr^d \to \bbr $ vanishes exactly on two disjoint connected sub-manifolds in $ \bbr^d $. In \cite{FLWZ2023}, the second author and his coauthors provided a solution to the Keller--Rubinstein--Sternberg problem in the $ O(d) $ setting. In more general setting this problem still remains largely open.
In the presence of a fluid flow, only a formal derivation of the Landau--de Gennes $ \mathbf{Q} $-tensor theory was given by the second author and his coauthors \cite{FWZZ2015}, while a rigorous justification has remained open. We believe that our framework will be helpful to solve the whole hydrodynamics of the Keller--Rubinstein--Sternberg problem in the future.

The paper is organized as follows. In Section~\ref{sec:Prelim} we summarize basic notations on function spaces, interpolation inequalities, coordinate transformations, and the weights $\oeps$ used in our analysis. Moreover, we recall and extend needed results on existence of approximate solutions, spectral estimate for the linearized Allen--Cahn operator and estimates of remainder terms. This is complemented by some auxiliary results for some elliptic equations and bilinear forms in $\oeps$-weighted $L^2$-Sobolev spaces. In Section~\ref{linearized system} first order ``energy-type'' estimates for the linearized system, including an estimate for $\|\oeps\nabla u\|_{L^2(Q_T)}$, are shown. This is the basis for the higher-order estimates. For a reduced linearized system this is done in Section~\ref{sec:higher-order}. Then in Section~\ref{sec:FullSystem} the full linearized system \eqref{eq:linNSAC} is reduced to the system studied in the previous section using a carefully constructed leading order part of the error $(\we_A,u_A)$, which involves the sharp interface limit of the linearized system \eqref{eq:linNSAC} and $\rho$-derivatives of the inner expansion of the first two terms of the approximate solution $(\ve_A,c_A)$. This shows the main result on our linearized system. With the aid of this result the main result on the sharp interface limit of the Navier--Stokes/Allen--Cahn system \eqref{eq:NSAC} is shown in Section~\ref{sec:Convergence} with the aid of a continuation argument and careful nonlinear estimates.

\section{Notations and Preliminaries}\label{sec:Prelim}

\subsection{Function Spaces}

Throughout the manuscript $\Omega\subseteq \R^d$, $d=2,3$, is a bounded and smooth domain with exterior normal $\no_{\partial\Omega}$.
The standard $L^q$-Sobolev spaces are denoted by $W^m_q(\Omega)$, where $1\leq q\leq \infty$, $m\in \N_0$, and $L^q(0,T;X)\equiv L^q((0,T);X)$, $W^m_q(0,T;X)\equiv W^m_q((0,T);X)$ denote the $X$-valued variants for strongly measurable $f\colon (0,T)\to X$ and an arbitrary Banach space $X$. If $q=2$, we use the convention $H^m=W^m_2$. For brevity we will often write $\|\cdot\|_{L^p(0,T;X)}$ instead of $\|\cdot\|_{L^p(0,T;X(\Omega))}$ for $X=L^q, H^k,...$. Furthermore we will use the subspaces
\begin{align*}
  L^p_{(0)}(\Omega) &\coloneqq \left\{f\in L^p(\Omega): \int_\Omega f(x)\sd x=0 \right\}, \quad\\
  H^1_{(0)}(\Omega)&\coloneqq  H^1(\Omega)\cap L^2_{(0)}(\Omega),\quad H^{-1}_{(0)}(\Omega)= (H^1_{(0)}(\Omega))'.
\end{align*}
If $U\subseteq \R^d\times (0,T)$ is an open set and $U_t\coloneqq \{x\in \R^d: (x,t)\in U\}$ for all $t\in (0,T)$, then we define for sufficiently smooth $u\colon U\to \R^N$ such that $(0,T)\ni t\mapsto \|u(t)\|_{X(U_t)}$ is measurable the norms
\begin{equation*}
  \|u\|_{L^p(0,T;X(U_t))} \coloneqq 
  \begin{cases}
    \left(\int_0^T \|u(t)\|_{X(U_t)}^p\,\dt\right)^{\frac1p}&\text{if }p<\infty,\\
    \operatorname{ess\,sup}_{t\in (0,T)}\|u(t)\|_{X(U_t)} &\text{if } p=\infty,
  \end{cases}
\end{equation*}
where $X(U_t)$ is Sobolev space with a suitable norm. In the following the measurability of $(0,T)\ni t\mapsto \|u(t)\|_{X(U_t)}$ will be clear from the context, where $u$ will be sufficiently regular and $U_t$ will depend on the smoothly evolving $(\Gamma_t)_{t\in [0,T_0]}$ in a suitable manner. 

Moreover, we denote by $L^2_\sigma(\Omega)$ the closure of divergence free smooth and compactly supported functions in $L^2(\Omega)^d$ and
\begin{equation*}
  V_\sigma \coloneqq  H^1_0(\Omega)^d\cap L^2_\sigma(\Omega).
\end{equation*}

\subsection{Evolving Interfaces and Coordinates}\label{subsec:coordinates}
We consider smooth evolving compact orientable hypersurfaces $\Gamma= \bigcup_{t\in [0,T_0]}\Gamma_t \times \{t\}$ for some  $T_0>0$, which separates $\Omega$ into two disjoint open sets $\Omega_t^\pm$ such that $\partial\Omega^+_t= \Gamma_t$ for all $t\in [0,T_0]$. Moreover, we choose a normal field $\no_{\Gamma_t}\colon \Gamma_t\to \R^d$, $t\in[0,T_0]$, such that $\no_{\Gamma_t}$ is the interior normal with respect to $\Omega^+_t$. $V_{\Gamma_t}$ and $H_{\Gamma_t}$ denote the normal velocity and mean curvature (sum of principal curvatures) of $\Gamma_t$ with respect to the chosen normal $\no_{\Gamma_t}$.
Furthermore, let $\Sigma\subseteq \R^d$ be a smooth and compact $(d-1)$-dimension ``reference'' hypersurface such that there is a sufficiently smooth mapping $X_0\colon \Sigma \times [0,T_0]\to \Gamma $ such that $X_0(\cdot, t)\colon \Sigma \to \Gamma_t$ is a diffeomorphism for every $t\in [0,T_0]$. For $\delta'>0$ we set $\Sigma_{\delta'}\coloneqq  (-\delta',\delta')\times \Sigma$. Furthermore, let $\delta>0$ be such that tubular neighborhood coordinates $(d_{\Gamma_t},P_{\Gamma})\colon \overline{\Gamma(3\delta)}\to [-3\delta,3\delta]\times \Gamma$ exist, where $\Gamma(\delta')\coloneqq  \{(x,t)\in \R^d\times [0,T_0]: |d_\Gamma(x,t)|<\delta'\}$, $d_\Gamma(.,t)=d_{\Gamma_t}$ is the signed distance function to $\Gamma_t$ and $P_{\Gamma}(.,t)\equiv P_{\Gamma_t}$ is orthogonal projection on $\Gamma_t$, i.e., $P_{\Gamma_t}x\in \Gamma_t$ is for each $x\in\Gamma_t(3\delta)$ the unique closest point on $\Gamma_t$ to $x$. Here $d_\Gamma\colon \Gamma(3\delta)\to \R$ is smooth.
We also use the notation
\begin{equation*}
  \no(s,t)= \no_{\Gamma_t}(X_0(s,t))\qquad \text{for all }s=(s_1,\ldots, s_{d})\in \Sigma,t\in [0,T_0].
\end{equation*}
For differentiable $u\colon \Gamma(3\delta) \to \R$ we define the tangential gradient
\begin{equation*}
  \nabla_{\btau} u = (I-\no_{\Gamma_t}\otimes \no_{\Gamma_t}) \nabla u.
\end{equation*}
For vector-valued differentiable functions this is applied component-wise.

In $\Gamma(3\delta)$ we introduce new coordinates with the aid of
\begin{equation*}
	X\colon (-3\delta,3\delta)\times \Sigma\times [0,T_0]\to \Gamma(3\delta)\quad\text{with }X(r,s,t)\coloneqq  X_0(s,t)+ r\no(s,t),
\end{equation*}
Then $X^{-1}(x,t)=(r,s,t)$ with
\begin{equation*}
  r= d_\Gamma(x),\qquad s= X_0^{-1}(P_{\Gamma_t}(x),t)\eqqcolon  S(x,t)=(S_1(x,t),\ldots,S_d(x,t)).
\end{equation*}
Furthermore we have for integrable $f\colon \Gamma_t(\Gamma(\delta'))\to \R$, $\delta \in (0,3\delta)$
\begin{equation}
  \label{eq:IntChangeOfVariables}
  \int_{\Gamma_t(\delta')} f(x) \sd x = \int_{-\delta'}^{\delta'}\int_{\Sigma} f(X(r,s,t))J(r,s,t)\sd s\sd r
\end{equation}
where ``$\sd s$'' denotes integration with respect to the canonical volume element on $\Sigma$ and $J\colon \Gamma(3\delta)\to (0,\infty)$ is uniformly bounded above and below by  positive constants.

Throughout this contribution we denote
\begin{equation}
  \label{eq:weight}
  \oeps= \omega_\eps(x,t)= \sqrt{\eps^2+ \tilde{d}(x,t)^2},
\end{equation}
where $\tilde{d}\colon \ol{\Omega} \times [0,T]$ is a smooth function such that 
  \begin{align*}
    \tilde{d}(x,t) &=
    \begin{cases}
      d_{\Gamma_t}(x) & \text{for all } (x,t)\in \Gamma(2\delta),\\
      \pm 1 & \text{for all } (x,t)\in \Omega^\pm \setminus \Gamma(3\delta),
    \end{cases}\\
  \tilde{d}(x,t)&\neq 0\qquad \text{for all }(x,t)\not \in \Gamma,  
  \end{align*}
  and denote $\wW^m(\Omega')= H^m(\Omega')= W^m_2(\Omega')$ normed by $\|\cdot\|_{\wW^m(\Omega')}$ with
  \begin{align*}
    \|u\|_{\wW^m(\Omega')}^2 = \sum_{k+l\leq m}\left\|\tfrac{\oeps^{k+l}}{\eps^l}\nabla^k u\right\|_{L^2(\Omega')}^2, 
  \end{align*}
  where $k,l\in\N_0$ in the sum, $\Omega'\subseteq \Omega$, $\eps\in (0,1]$, and $m\in\N_0$. If $\Omega'=\Omega$, we simply write $\wW^m$. This $\eps$-dependent norm will play an important role for higher order estimates of the linearized system.
  
  For the following we fix some $\zeta\in C^\infty_0(\R)$ with $\supp \zeta \subset [-2\delta,2\delta]$ and $\zeta(r)=1$ for all $r\in [-\delta,\delta]$. Moreover, we set
  \begin{equation}\label{eq:zg}
    \zg(x,t)\coloneqq 
    \begin{cases}
      \zeta(d_{\Gamma_t}(x)) &\text{if } (x,t)\in \Gamma(2\delta),\\
      0 &\text{else}.
    \end{cases}
  \end{equation}
  In $(r,s)$-coordinates we will use
  \begin{equation*}
    \toeps= \toeps(r)= \sqrt{\eps^2+r^2}\qquad \text{for all }r\in(-2\delta,2\delta), \eps>0.
  \end{equation*}

We associate to a function $\phi\colon \Gamma(3\delta)\to \R$ the function $\tilde{\phi}\colon \Sigma_{3\delta}\times [0,T_0]\to\R$ defined by
 \begin{equation}\label{eq:1.4}
   \phi(x,t)=\tilde{\phi}(d_{\G}(x,t),S(x,t),t) \quad \text{for all }(x,t)\in \Gamma(3\delta). %\quad\text{for}\quad\phi(X_0(s,t)+r\no(s,t),t)=\tilde{\phi}(r,s,t)
 \end{equation}
  Then
\begin{equation}\label{Prelim:1.13}
  \begin{split}
    \partial_t \phi(x,t) &= -V_{\Gamma_t} (P_{\Gamma_t}(x)) \partial_r\tilde{\phi}(r,s,t) + \partial_{t}^\Gamma \tilde{\phi}(r,s,t), \\
  \nabla \phi(x,t) &= \no_{\Gamma_t} (P_{\Gamma_t}(x)) \partial_r\tilde{\phi}(r,s,t) + \nabla^ \Gamma  \tilde{\phi}(r,s,t), \\
 \Delta \phi(x,t) &= \partial_r^2\tilde{\phi}(r,s,t) + \Delta d_{\G_t}(x) \partial_r\tilde{\phi}(r,s,t) +  \Delta^{\Gamma} \tilde{\phi}(r,s,t),
  \end{split}
\end{equation}
where $x=X(r,s,t)$ for all $(r,s,t)\in\Sigma_{3\delta}\times [0,T_0]$  and we denote
\begin{equation}\label{Prelim:1.12}
  \begin{split}
    \partial_{t}^\Gamma \tilde{\phi}(r,s,t) &= \partial_t \tilde{\phi}(r,s,t) + \p_t S(x,t)\cdot\nabla_{\Sigma} \tilde{\phi}(r,s,t) ,\\
\nabla^{\Gamma} \tilde{\phi}(r,s,t) &=   D S(x,t)^T \nabla_\Sigma \tilde{\phi}(r,s,t)=\sum_{i=1}^{d}\nabla S_i\cdot \p_{s_i}  \tilde{\phi}(r,s,t) ,\\
\Delta^{\Gamma} \tilde{\phi}(r,s,t) &=   (\Delta S)(x,t)\cdot\nabla_\Sigma \tilde{\phi}(r,s,t)+\sum_{i,j=1}^{d}\nabla S_i\cdot \nabla S_j \p_{s_is_j}^2  \tilde{\phi}(r,s,t) 
  \end{split}
\end{equation}
for all $(r,s,t)\in\Sigma_{3\delta}\times [0,T_0]$ and suitable $\phi$.  Here and in what follows $ (\nabla_{\Sigma}(\cdot))_i=\partial_{s_i}(\cdot),i=1,\cdots,d.$
 We refer to \cite[Section~4.1]{ChenHilhorstLogak} for more details.
Finally, we  define for every suitable $h\colon \Sigma\times [0,T_0]\to \R$
\begin{equation*}%\label{eq:1.27}
%\begin{split}
  (\nabla_\G h)(s,t)\coloneqq (\nabla^\Gamma h)(0,s,t),\quad
  (\Delta_\G h)(s,t)\coloneqq (\Delta^\G h)(0,s,t),\quad
  (D_t h)(s,t)\coloneqq (\p_t^\G h)(0,s,t),
%\end{split}
\end{equation*}
where $h$ is considered as a function on $\Sigma_{3\delta}\times [0,T_0]$, which is independent of $r\in(-3\delta,3\delta)$.

\subsection{An Anisotropic Interpolation Inequality}
In order to obtain good error estimates in a neighborhood of the interface $\Gamma$, we will use several anisotropic variants of Gagliardo-Nirenberg-type interpolation inequalities.
\begin{lem}[Anisotropic $ L^p $-estimate]
	\label{lem:Lp}
	Let $ 2 < p < 6 $, $ d \leq 3 $, $\delta' \in (0,2\delta]$. There is some $C > 0$ such that for every $u \in H^1(\Omega)$ and $t \in [0,T_0]$ we have
	\begin{align*}
		\norm{u}_{L^p(\Gamma_t(\delta'))}^p
		& \leq C \left(\norm{u}_{L^2(\Gamma_t(\delta'))} + \norm{\pr u}_{L^2(\Gamma_t(\delta'))}\right)^{\frac{p-2}{2}} \\
		& \qquad \times \left(\norm{u}_{L^2(\Gamma_t(\delta'))} + \norm{\nabla_\btau u}_{L^2(\Gamma_t(\delta'))}\right)^{p-2} \norm{u}_{L^2(\Gamma_t(\delta'))}^{\frac{6 - p}{2}}.
	\end{align*}
\end{lem}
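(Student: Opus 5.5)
We pass to the tubular coordinates $(r,s)\in(-\delta',\delta')\times\Sigma$ via \eqref{eq:IntChangeOfVariables}. Since $J$ is bounded above and below, all $L^q(\Gamma_t(\delta'))$-norms are equivalent to the corresponding norms of $\tilde u(r,s)$ on $\Sigma_{\delta'}$. Moreover, $\pr u$ corresponds to $\partial_r\tilde u$, and $\nabla_\btau u$ is comparable to $\nabla_\Sigma\tilde u$ by \eqref{Prelim:1.13}--\eqref{Prelim:1.12}, because $DS$ restricted to tangential directions is uniformly invertible in $\Gamma(3\delta)$. So it suffices to prove the inequality on the product $I\times\Sigma$, $I=(-\delta',\delta')$, with $\Sigma$ a compact manifold of dimension $d-1\le 2$. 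By density we may take $\tilde u$ smooth.

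The core idea is the splitting $|u|^p=|u|^{p-2}\cdot|u|^2$, with a one-dimensional estimate in $r$ and a $(d-1)$-dimensional estimate in $s$. For fixed $s$, the one-dimensional Agmon/Sobolev inequality on the bounded interval $I$ gives
\[
\sup_{r\in I}|\tilde u(r,s)|^2\le C\,\|\tilde u(\cdot,s)\|_{L^2(I)}\bigl(\|\tilde u(\cdot,s)\|_{L^2(I)}+\|\partial_r\tilde u(\cdot,s)\|_{L^2(I)}\bigr).
\]
Set $q=p-2\in(0,4)$. Then
\[
\int_\Sigma\int_I|\tilde u|^p\,dr\,ds\le \int_\Sigma \Bigl(\sup_r|\tilde u(\cdot,s)|\Bigr)^{q}\,\|\tilde u(\cdot,s)\|^2_{L^2(I)}\,ds .
\]
By the Agmon bound, the integrand is at most $C\,a(s)^{2+q/2}b(s)^{q/2}$, where $a(s)=\|\tilde u(\cdot,s)\|_{L^2(I)}$ and $b(s)=a(s)+\|\partial_r\tilde u(\cdot,s)\|_{L^2(I)}$. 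Hölder in $s$, with $b^{q/2}$ in $L^{4/q}(\Sigma)$ (note $4/q>1$ since $p<6$), yields
\[
\|\tilde u\|_{L^p}^p\le C\,\|b\|_{L^2(\Sigma)}^{q/2}\,\|a\|_{L^{m}(\Sigma)}^{2+q/2},\qquad m=\frac{(4+q)\cdot 4}{2(4-q)}\cdot\frac12\cdot 2 = \frac{2(4+q)}{4-q}.
\]
The factor $\|b\|_{L^2(\Sigma)}$ is $\le C(\|u\|_{L^2}+\|\pr u\|_{L^2})$, which produces the first factor with exponent $(p-2)/2$.

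It remains to bound $\|a\|_{L^m(\Sigma)}$ with $m=2(p+2)/(6-p)$ by a Gagliardo--Nirenberg inequality on the $(d-1)$-manifold $\Sigma$. Since $|\nabla_\Sigma a(s)|\le\|\nabla_\Sigma\tilde u(\cdot,s)\|_{L^2(I)}$, we have $a\in H^1(\Sigma)$, with $\|a\|_{L^2(\Sigma)}=\|\tilde u\|_{L^2}$ and $\|\nabla_\Sigma a\|_{L^2(\Sigma)}\le C\|\nabla_\btau u\|_{L^2}$. In dimension $2$ (the worst case $d=3$), Ladyzhenskaya/Gagliardo--Nirenberg on compact $\Sigma$ gives
\[
\|a\|_{L^m}\le C\|a\|_{L^2}^{2/m}\|a\|_{H^1}^{1-2/m}.
\]
Raising this to the power $2+q/2=(p+2)/2$, the $H^1$-exponent becomes $\frac{p+2}{2}\bigl(1-\frac{6-p}{p+2}\bigr)=p-2$ and the $L^2$-exponent becomes $\frac{6-p}{2}$, exactly as claimed. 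For $d=2$, where $\Sigma$ is one-dimensional, the same inequality holds because one-dimensional GN is even stronger; one can also reduce to the 2D case by a trivial product extension. Combining the two steps gives the asserted estimate.

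The main obstacle is the bookkeeping of exponents and ensuring that the GN inequality on $\Sigma$ applies with the inhomogeneous $H^1$ norm, since $\Sigma$ is compact and $a$ has no zero mean. This is handled by using $\|a\|_{L^2}+\|\nabla_\Sigma a\|_{L^2}$, which is why the factor $\|u\|+\|\nabla_\btau u\|$ appears. A further subtle point is the differentiability of $a(s)$: we would regularize it as $\sqrt{a^2+\eta^2}$ and let $\eta\to0$.
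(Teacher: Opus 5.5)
Your proof is correct and is essentially the paper's argument. The structure is the same: one-dimensional interpolation in $r$ (your Agmon bound is equivalent to the paper's 1D Gagliardo--Nirenberg inequality $\|f\|_{L^p}^p\le C\|f\|_{H^1}^{(p-2)/2}\|f\|_{L^2}^{(p+2)/2}$), then H\"older in $s$ with the same exponents $4/(p-2)$ and $4/(6-p)$, then a 2D Gagliardo--Nirenberg inequality in the tangential variables, with identical final exponents.

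The only deviation is in the last step. You apply Gagliardo--Nirenberg directly to $a(s)=\|\tilde u(\cdot,s)\|_{L^2(I)}$, using the bound $|\nabla_\Sigma a|\le\|\nabla_\Sigma\tilde u(\cdot,s)\|_{L^2(I)}$. The paper instead uses Minkowski's inequality to move the $L^{2(p+2)/(6-p)}(\Gamma_t)$-norm inside the $L^2(-\delta',\delta')$-norm, applies Gagliardo--Nirenberg on each level set $\{r=\mathrm{const}\}$, and finishes with H\"older in $r$.
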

\begin{proof}
	By means of the H\"older inequality 
	and the Minkowski inequalities, it holds
	\begin{align*}
		\norm{u}_{L^p(\Gamma_t(\delta'))}^p
		& \leq C \int_{\Gamma_t} \norm{u}_{H^1(-\delta',\delta')}^{\frac{p-2}{2}}
		\normm{u}_{L^2(-\delta',\delta')}^{\frac{p+2}{2}} \dsigma \\
		& \leq C \norm{u}_{H^1(-\delta',\delta';L^2(\Gamma_t))}^{\frac{p-2}{2}}
		\norm{\norm{u}_{L^{\frac{2(p+2)}{6-p}}(\Gamma_t)}}_{L^2(-\delta',\delta')}^{\frac{p+2}{2}} \\
		& \leq C \norm{u}_{H^1(-\delta',\delta';L^2(\Gamma_t))}^{\frac{p-2}{2}}
		\norm{
			\norm{u}_{H^1(\Gamma_t)}^{\frac{2(p-2)}{p+2}}
			\norm{u}_{L^2(\Gamma_t)}^{\frac{6-p}{p+2}}
		}_{L^2(-\delta',\delta')}^{\frac{p+2}{2}} \\
		& \leq C\norm{u}_{H^1(-\delta',\delta';L^2(\Gamma_t))}^{\frac{p-2}{2}}
		\norm{
			\norm{u}_{H^1(\Gamma_t)}^{\frac{2(p-2)}{p+2}}
		}_{L^{\frac{p+2}{p-2}}(-\delta',\delta')}^{\frac{p+2}{2}}
		\norm{
			\norm{u}_{L^2(\Gamma_t)}^{\frac{6-p}{p+2}}
		}_{L^{\frac{2(p+2)}{6-p}}(-\delta',\delta')}^{\frac{p+2}{2}} \\
		& \leq C\Big(\norm{u}_{L^2(\Gamma_t(\delta'))}
		+ \norm{\pr u}_{L^2(\Gamma_t(\delta'))}\Big)^{\frac{p-2}{2}} \\
		& \quad \times
		\Big(\norm{u}_{L^2(\Gamma_t(\delta'))}
		+ \norm{ \nabla_\btau u}_{L^2(\Gamma_t(\delta'))}\Big)^{p-2} \norm{u}_{L^2(\Gamma_t(\delta'))}^{\frac{6-p}{2}},
	\end{align*}
	where we used the Gagliardo--Nirenberg inequalities $ \normm{f}_{L^p(-\delta',\delta')} \leq C \normm{f}_{H^1(-\delta',\delta')}^{\frac{p-2}{2p}} \normm{f}_{L^2(-\delta',\delta')}^{\frac{p+2}{2p}} $ for $ f \in H^1(-\delta',\delta') $, and
        \begin{equation*}
        \norm{u}_{L^{\frac{2(p+2)}{6-p}}(\Gamma_t)} \leq C \norm{u}_{H^1(\Gamma_t)}^{\frac{2(p-2)}{p+2}} \norm{u}_{L^2(\Gamma_t)}^{\frac{6-p}{p+2}}\quad \text{for all } u \in H^1(\Gamma_t)  
      \end{equation*}
      since $d\leq 3$.
\end{proof}
      
\subsection{Optimal Profile, Stretched Variable, and Spectral Estimate}

Because of \eqref{eq:Assump-f}, there is a unique  solution $\theta_0 \colon\mathbb{R}\to \R$ of 
\begin{align}\label{eq:OptProfile}
	-\theta_0''+f'(\theta_0)=0\quad\text{ in }\R, \qquad
	\lim_{\rho\to\pm \infty}\theta_0(\rho)=\pm 1, \qquad \theta_0(0)=0,
\end{align}
which is monotone and called optimal profile. Moreover, for every $m\in\N_0$, there is some $C_m>0$ such that
\begin{equation*}
  |\p_\rho^m(\theta_0(\rho)\mp 1)|\leq C_m e^{-\sqrt{\alpha}|\rho|}\quad\text{for all } \rho\in\R  \text{ with }\rho \gtrless 0.
\end{equation*}
Since $f$ is even,  $\theta_0$ is odd and $\theta'_0$ is even.
In what follows, we denote by $\sigma= \int_{\R}\theta_0'(\rho)^2\sd \rho= \int_{-1}^1 \sqrt{2f(s)}\sd s$ the associated surface tension coefficient.

In this subsection we assume, see Section \ref{subsec:ApproxSol} below, that 
\begin{alignat*}{2}
	c_A(x)&= c_{A,0}(x)+ \eps^2c_{A,2+}(x), &\quad& \text{for all } x\in \Om,\\
	c_{A,0}(x)&=\zg \theta_0(\rho)+(1-\zg)\left(
	\chi_{\Omega^+(t)}-\chi_{\Omega^-(t)} \right),&\quad& \text{for all } x\in \Om,
\end{alignat*}
where $\zg$ is as in \eqref{eq:zg}.
Moreover, we assume that $\dist(\Gamma_t,\partial\Omega)> 3\delta$ for all $t\in [0,T_0]$.
For given  continuous functions $(\tilde{h}_\eps)_{0<\eps< 1} \colon \G\to \R$ with $\Gamma\coloneqq \bigcup_{t\in[0,T_0]} \Gamma_t \times {\{t\}}$, we define the \textit{stretched variable} $\rho$ as
\begin{equation}
\label{eq:StretchedVariable}
\rho \coloneqq \frac{d_\Gamma(x,t)}\eps- h_\eps (s,t)\quad \text{for all } (x,t)\in \Gamma(2\delta), \eps \in (0,1], \text{ where } s=S(x,t),
  \end{equation}
with $h_\eps(s,t)= \tilde{h}_\eps (X_0(s,t),t)$.
Furthermore, we assume that
\begin{equation}\label{eq:BoundhcA}
	\sup_{\eps \in (0,1]}\left(\sup_{(p,t)\in \Gamma} |\tilde{h}_\eps (p,t)|+ \sup_{x\in\Om, t\in[0,T_0]} |c_{A,2+} (x,t)|\right)\leq M
\end{equation}
for some $M>0$.  We will apply the results of this subsection to $\tilde{h}_\eps (p,t)= h_\eps(X_0^{-1}(p,t),t)$ for some $h_\eps\colon \Sigma\times [0,T_0]\to \R$.

The following spectral estimate due to  \cite[Theorem~2.13]{StokesAllenCahn} is a key ingredient for the proof of convergence.
\begin{thm}\label{thm:Spectral}
	Let $c_A$ be as above and \eqref{eq:BoundhcA} be satisfied for some $M>0$. Then there are some $C_L,\eps_1>0$, independent of $\tilde{h}_\eps, c_A$, such that for every $\psi\in H^1(\Om)$, $t\in[0,T_0]$, and $\eps\in (0,\eps_1]$ we have
	\begin{align*}
		& \int_{\Om}\left(|\nabla\psi|^2+ \tfrac{f''(c_A(\cdot,t))}{\eps^2}\psi^2\right)\sd x  \geq -C_L\int_\Om \psi^2 \sd x + \int_{\Om\setminus \Gamma_t(\delta)} |\nabla \psi|^2\sd x +  \int_{\Gamma_t({\delta})} |\nabla_\btau \psi|^2\sd x.
	\end{align*}
\end{thm}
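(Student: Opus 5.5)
The estimate is local to $\Gamma_t(2\delta)$, since away from it $c_A$ is within $O(\eps^2)$ of $\pm1$ and $f''(\pm1)=\alpha>0$. I will therefore split $\psi$ with a partition of unity subordinate to $\Gamma_t(\delta)$ and its complement, and reduce to a one-dimensional spectral estimate in the normal variable. I will use the ambient result of Chen~\cite{ChenSpectrumAC} in the form proved in \cite[Theorem~2.13]{StokesAllenCahn}, and give the argument as follows.

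\textbf{Step 1: outside the tube.} On $\Om\setminus\Gamma_t(\delta)$ we have $|c_A\mp1|\le Ce^{-c\delta/\eps}+M\eps^2$. Hence $f''(c_A)\ge \alpha/2>0$ for $\eps\le\eps_1$, and there the integrand is bounded below by $|\nabla\psi|^2$. On the transition shell $\Gamma_t(2\delta)\setminus\Gamma_t(\delta)$ the same positivity holds, which lets us absorb commutator terms from cut-offs.

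\textbf{Step 2: inside the tube.} Write $\int_{\Gamma_t(\delta)}$ in $(r,s)$-coordinates via \eqref{eq:IntChangeOfVariables}. Decompose $|\nabla\psi|^2=|\p_r\psi|^2+|\nabla_\btau\psi|^2$, which is exact because $\no_{\Gamma_t}\circ P_{\Gamma_t}=\nabla d_\Gamma$. It then suffices to show, for each fixed $s$,
\begin{equation*}
\int_{-\delta}^{\delta}\Big(|\p_r\psi|^2+\tfrac{f''(c_A)}{\eps^2}\psi^2\Big)J\,\d r\ \ge\ -C\int_{-\delta}^{\delta}\psi^2 J\,\d r - (\text{boundary terms at } r=\pm\delta),
\end{equation*}
where the boundary terms are controlled through Step 1. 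Rescale by $r=\eps(\rho+h_\eps)$. Since $c_A=\theta_0(\rho)+O(\eps^2)$ in the tube, the operator becomes $\eps^{-2}\big(-\p_\rho^2+f''(\theta_0(\rho))\big)$ plus an $O(1)$ perturbation, posed on an interval of length $\sim\delta/\eps$. The unperturbed operator is nonnegative, with kernel spanned by $\theta_0'$ and a spectral gap above it. The Jacobian contributes $\p_r J=O(1)$, which produces only $O(1)\,\psi^2$ and $O(1)|\psi\p_r\psi|$ terms; Young's inequality absorbs these.

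\textbf{Step 3: the kernel direction.} This is the main obstacle. Decompose $\psi(\cdot,s)=a(s)\,\eps^{-1/2}\theta_0'(\rho)\,\zeta+\psi^\perp$, i.e.\ orthogonally to the approximate kernel. The kernel part has energy
\begin{equation*}
\eps^{-2}\langle(-\p_\rho^2+f''(\theta_0)+O(\eps^2))\theta_0',\theta_0'\rangle=O(1)\,a^2,
\end{equation*}
because $\theta_0'$ is an exact eigenfunction with eigenvalue $0$ and the cut-off errors are exponentially small. The orthogonal part has energy at least $\tfrac{\lambda_1}{\eps^2}\|\psi^\perp\|^2-C\|\psi^\perp\|^2\ge 0$. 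The cross terms are of size $O(1)|a|\,\|\psi^\perp\|$, so they are absorbed into $C_L\|\psi\|^2$. Finally, the $\eps$-dependent shift $h_\eps$ is harmless by \eqref{eq:BoundhcA}, and integrating in $s$ (with $J$ bounded above and below) gives the claim with $C_L$ depending only on $M$, $f$, and $\Gamma$.
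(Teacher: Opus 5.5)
The paper does not prove this statement; it quotes \cite[Theorem~2.13]{StokesAllenCahn}, which adapts Chen's argument \cite{ChenSpectrumAC}. Your outline follows that route. The reduction to $\int_{\Gamma_t(\delta)}|\p_{\no}\psi|^2\dx+\int_\Om\eps^{-2}f''(c_A)\psi^2\dx\ge -C_L\|\psi\|_{L^2(\Om)}^2$ is correct, and so are the one-dimensional estimate in $r$ for each $s$ and the splitting along $\eps^{-1/2}\theta_0'(\rho)$.

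\emph{The gap.} The step that fails as written is the Jacobian in Step~2: terms of size $O(1)\,|\psi\,\p_r\psi|$ cannot be absorbed by Young's inequality. The estimate is critical, because all of the normal kinetic energy in $\Gamma_t(\delta)$ is needed to balance the potential. For $\psi=\eps^{-1/2}\theta_0'(\rho)\zeta$ one has $\int|\p_r\psi|^2\dr+\eps^{-2}\int f''(\theta_0)\psi^2\dr\approx0$. So giving up $\eta\|\p_r\psi\|^2$ leaves about $-\eta\eps^{-2}\|\theta_0''\|_{L^2(\R)}^2$, against $\|\psi\|^2\approx\sigma$. Young therefore only works with $\eta\lesssim\eps^2$, and then its companion term is of order $\eps^{-2}\|\psi\|^2$. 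Put differently, for this $\psi$ one has $\int|\psi\,\p_r\psi|\dr\sim\eps^{-1}$. The term $\int b\,\psi\,\p_r\psi\dr$ is $O(1)$ only because $\theta_0'\theta_0''$ is odd. For the same reason, in Step~3 the kernel energy of the \emph{weighted} form is $O(a^2)$ only through this cancellation, not merely because $\theta_0'$ is an exact eigenfunction.

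\emph{The repair.} Integrate by parts instead: $\int b\,\psi\,\p_r\psi\dr=-\tfrac12\int \p_rb\,\psi^2\dr+\tfrac12[b\,\psi^2]_{-\delta}^{\delta}$. Equivalently, substitute $\psi=J^{-1/2}\chi$, which gives
\[
\int_{-\delta}^{\delta}|\p_r\psi|^2J\dr=\int_{-\delta}^{\delta}\bigl(|\p_r\chi|^2+V_J\chi^2\bigr)\dr-\tfrac12\Bigl[\tfrac{\p_rJ}{J}\chi^2\Bigr]_{-\delta}^{\delta},\qquad V_J=\tfrac14\Bigl(\tfrac{\p_rJ}{J}\Bigr)^2+\tfrac12\,\p_r\Bigl(\tfrac{\p_rJ}{J}\Bigr).
\]
Since also $\int\psi^2J\dr=\int\chi^2\dr$, your Step~3 then applies verbatim to the unweighted form in $\chi$.

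\emph{Boundary terms.} These are not handled by Step~1. Outside $\Gamma_t(\delta)$ the full gradient appears on both sides, so there you only have $L^2$ control. Use instead the part of the tube where $f''(c_A)\ge\alpha/2$. On $I_\pm=\{\delta/2<\pm r<\delta\}$ the trace bound $|\chi(\pm\delta)|^2\le C\bigl(\eps\|\p_r\chi\|_{L^2(I_\pm)}^2+\eps^{-1}\|\chi\|_{L^2(I_\pm)}^2\bigr)$ holds. So a fraction of the non-critical form on $I_\pm$ absorbs these terms, and Step~3 is applied on the remaining interval.

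\emph{The spectral gap.} The gap used in Step~3 must hold for $-\p_\rho^2+f''(\theta_0)$ on a finite interval of length $\sim\delta/\eps$ around the layer. It must hold with free boundary conditions, uniformly in $\eps$ and in $|h_\eps|\le M$. This finite-interval statement is the actual one-dimensional lemma in Chen's argument; the whole-line spectral gap alone does not give it.
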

For the following we denote $\Veps\coloneqq  H^1(\Omega)$ with $\eps$- and $t$-dependent norm $\|.\|_{\Veps}$ defined by
\begin{equation}\label{eq:VepsNorm}
  \|u\|_{\Veps}^2= \int_{\Om}\left(|\nabla u(x)|^2+ \tfrac{f''(c_A(x,t))}{\eps^2}u^2\right)\sd x +(C_L+1)\int_\Om u(x)^2 \sd x
\end{equation}
for every $u\in \Veps$, $\eps\in (0,\eps_1]$, $t\in [0,T_0]$, where $\eps_1>0$ is as in the previous theorem. Since $f''(c_A(x,t))\geq \alpha/2$ for all $(x,t)\in Q_{T_0}\setminus \Gamma(2\delta)$ and $\eps\in (0,\eps_1]$ (possibly choosing $\eps_1>0$ slightly smaller), we have 
\begin{equation}\label{eq:VepsExterior}
  \|u\|_{\Veps}^2\geq \min (1,\alpha/2) \left\|(\nabla u,\tfrac1\eps u)\right\|_{L^2(\Omega\setminus \Gamma_t(2\delta))}^2
\end{equation}
uniformly in $\eps\in (0,\eps_1]$, $t\in [0,T_0]$, $u\in \Veps$. In $\Gamma_t(2\delta)$ the $V_\eps$-norm is more involved.
As in \cite[Remark~2.10]{AbelsFei} one derives the existence of $C_1,C_2,C,\eps_1\in (0,1]$ such that
  \begin{align}\nonumber
    &\|u\|_{\Veps(2\delta)}^2+\|\nabla_\btau u \|_{L^2(\Gamma_t(2\delta))}^2\\
    &\quad \leq C_1\int_{\Gamma_t(2\delta)} \left(|\nabla u|^2 +\tfrac1{\eps^2} f''(c_A(.,t))u^2\right)\sd x + C_2\|u\|_{L^2(\Gamma_t(2\delta))}^2\leq C\|u\|_{\Veps}^2\label{eq:VepsChar}
  \end{align}
  for all $u\in \Veps(2\delta)$, $\eps\in (0,\eps_1]$, 
where
  \begin{align*}
    \|u\|_{\Veps(2\delta)} \coloneqq  &\inf\left\{ \|Z\|_{H^1(\Sigma)}+ \|v\|_{H^1(\Gamma_t(2\delta))}+\tfrac{1}{\eps}\|v\|_{L^2(\Gamma_t(2\delta))}: Z\in H^1(\Sigma), v\in H^1(\Gamma_t(2\delta)),\right.\\
    &\quad \qquad \left. u(x)=\tfrac{1}{\sqrt{\eps}} Z(s)\theta_0'(\rho) + v(x)\text{ for all }x\in\Gamma_t(2\delta), s=S(x,t),\rho \text{ as in \eqref{eq:StretchedVariable}}\right\}
  \end{align*}
  for $u\in H^1(\Gamma_t(2\delta))$.

        \begin{cor}\label{cor:Pu}
          For $u\in H^1(\Gamma_t(2\delta))$, $t\in [0,T]$, and $\tilde{u}(r,s) = u (X(r,s,t))$ for all $(r,s) \in \Sigma_{2\delta}$  let $P\tilde{u}\colon \Sigma_{2\delta}\to \R$ be defined as
      \begin{align}\label{eq:DefnPu}
        P\tilde{u}(r,s)&\coloneqq  \tilde{u}(r,s) - Q\tilde{u}(r,s),\quad \text{where}\\\nonumber
        Q\tilde{u}(r,s) &\coloneqq  \frac1{\sqrt{\eps\sigma}} \theta_0'(\tfrac{r}\eps -h_\eps(s,t)) 
        \tilde{w}(s), \quad \tilde{w}(s) \coloneqq \frac1{\sqrt{\eps\sigma}}\int_{-2\delta}^{2\delta} \theta_0'(\tfrac{r}\eps -h_\eps(s,t)) \tilde u (r,s)\,\dr
      \end{align}
      for all $r\in (-2\delta,2\delta),s\in \Sigma$. 
      Then there are $C_1,C_2>0,\eps_0\in (0,1)$ (independent of $t\in [0,T_0]$) such that
      \begin{align}
        \label{eq:Pu}
        \frac1{\eps} \|P\tilde{u}\|_{L^2(\Sigma_{2\delta})}+ \|P\tilde{u}\|_{H^1(\Sigma_{2\delta})} 
        +\|\tilde{w}\|_{H^1(\Sigma)}
        \leq C\|u\|_{\Veps(2\delta)}. 
  \end{align}
  for all $u\in H^1(\Gamma_t(2\delta))$ and $\eps\in (0,\eps_0]$.
    \end{cor}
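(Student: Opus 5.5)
Fix a near-optimal decomposition $u=\frac1{\sqrt\eps}Z(s)\theta_0'(\rho)+v$ from the definition of $\|u\|_{\Veps(2\delta)}$, so that $\|Z\|_{H^1(\Sigma)}+\|v\|_{H^1}+\frac1\eps\|v\|_{L^2}\le 2\|u\|_{\Veps(2\delta)}$. We pass to $(r,s)$-coordinates via \eqref{eq:IntChangeOfVariables}, using that $J$ is bounded above and below. Because $Q$ is linear and $\tilde u=Q\tilde u+P\tilde u$, everything splits into a $Z$-part and a $v$-part, and we estimate each separately.

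For the $Z$-part, put $\tilde u_Z=\frac1{\sqrt\eps}Z(s)\theta_0'(\frac r\eps-h_\eps)$ and compute
\begin{equation*}
  \tilde w_Z(s)=\frac{Z(s)}{\eps\sqrt\sigma}\int_{-2\delta}^{2\delta}\theta_0'(\tfrac r\eps-h_\eps)^2\dr=\sqrt\sigma Z(s)+O(e^{-c/\eps})Z(s).
\end{equation*}
The error comes from the exponential tails of $\theta_0'$, since $|h_\eps|\le M$. Hence $P\tilde u_Z=(1-\sigma^{-1}\eps^{-1}\int\theta_0'^2)\tilde u_Z$. The prefactor is $O(e^{-c/\eps})$ and is $s$-dependent only through $h_\eps$. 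Its $s$-derivatives involve $\nabla_\Sigma h_\eps$, and we will need a uniform bound on these. The $H^1$ and $\frac1\eps L^2$ norms of $\tilde u_Z$ are at most $C\eps^{-1}\|Z\|_{H^1}$, because $\partial_r$ produces a factor $\eps^{-1}$ and $\partial_s$ produces $\nabla h_\eps$. The exponential smallness absorbs this factor. Likewise $\|\tilde w_Z\|_{H^1(\Sigma)}\le C\|Z\|_{H^1(\Sigma)}$, since differentiating the integral in $s$ only brings down $\partial_s h_\eps$ times exponentially small tails.

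For the $v$-part, Cauchy--Schwarz gives $|\tilde w_v(s)|\le C\eps^{-1/2}\|\theta_0'(\tfrac\cdot\eps-h_\eps)\|_{L^2_r}\|\tilde v(\cdot,s)\|_{L^2_r}\le C\|\tilde v(\cdot,s)\|_{L^2_r}$, so $\|\tilde w_v\|_{L^2(\Sigma)}\le C\|v\|_{L^2}\le C\eps\|u\|_{\Veps(2\delta)}$. For $\partial_s\tilde w_v$, the derivative either falls on $\tilde v$, which is controlled by $\|\nabla v\|$, or on $h_\eps$, which gives $\frac1{\sqrt\eps}\int\theta_0''\,\partial_sh_\eps\,\tilde v\dr$. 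The latter is bounded by $C\|\tilde v(\cdot,s)\|_{L^2_r}$ by the same Cauchy--Schwarz argument. The $L^2$ norm of $Q\tilde u_v$ is $\|\tilde w_v\|_{L^2(\Sigma)}$ up to a constant, hence $O(\eps)\|u\|$. Its $\partial_r$-derivative costs a factor $\eps^{-1}$, giving $O(1)$, and its $\partial_s$-derivative is bounded similarly. Therefore $\frac1\eps\|P\tilde u_v\|+\|P\tilde u_v\|_{H^1}\le \frac1\eps\|v\|+\|v\|_{H^1}+(\text{same for }Q\tilde u_v)\le C\|u\|_{\Veps(2\delta)}$.

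Summing the two parts yields \eqref{eq:Pu}. The main obstacle is the uniform control of $\nabla_\Sigma h_\eps$ (and of its second derivatives if needed), which does not follow from \eqref{eq:BoundhcA} alone. I would first try to assume, as in the setting of Section~\ref{subsec:ApproxSol}, that $\|h_\eps\|_{C^1}$ is bounded uniformly in $\eps$. A secondary point is to organize the exponentially small terms carefully, so that the powers $\eps^{-1}$ are genuinely beaten, using $\int_{|\rho|>c/\eps}\theta_0'^2\le Ce^{-c'/\eps}$.
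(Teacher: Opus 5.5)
Your proposal is correct and follows the paper's proof essentially step for step. You choose a near-optimal decomposition $u=\frac{1}{\sqrt{\eps}}Z\theta_0'(\rho)+v$, observe that $P$ annihilates the $Z$-part up to the exponentially small factor $1-\frac{1}{\eps\sigma}\int_{-2\delta}^{2\delta}\theta_0'(\frac{r}{\eps}-h_\eps)^2\dr$, and bound the $v$-part and $\tilde w$ directly via Cauchy--Schwarz. Your remark that a uniform bound on $\nabla_\Sigma h_\eps$ is needed is apt: the paper uses it tacitly too (in its ``similar relation for derivatives with respect to $s$''), and it holds in the application because $h_\eps$ comes from the smooth approximate solution.
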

    \begin{proof}
      Let $Z\in H^1(\Gamma_t)$ and $v\in H^1(\Gamma_t(2\delta))$ be such that $u(x)= \tfrac{1}{\sqrt{\eps}} Z(s)\theta_0'(\rho) + v(x)$ and
      \begin{equation*}
        \|Z\|_{H^1(\Gamma_t)}+ \|v\|_{H^1(\Gamma_t(2\delta))}+\frac{1}{\eps}\|v\|_{L^2(\Gamma_t(2\delta))}\leq 2\|u\|_{V_t^\eps}.
      \end{equation*}
      Then
      \begin{equation*}
        P\tilde{u}(r,s) = P\tilde{v}(r,s) + Z(s)\frac1{\sqrt\eps}\theta_0'(\rho)\left(1-  \frac1{\eps\sigma}\int_{-2\delta}^{2\delta} \theta_0'(\tfrac{r}\eps -h_\eps(s,t))^2 \,\dr\right),
      \end{equation*}
      where
      \begin{equation*}
        \left|1- \frac1{\eps\sigma}\int_{-2\delta}^{2\delta} \theta_0'(\tfrac{r}\eps -h_\eps(s,t))^2 \,\dr\right|\leq \int_{\delta}^\infty |\theta_0'(\tfrac{r}\eps)|^2\,\dr \leq Ce^{-\frac{\alpha \delta}\eps}
      \end{equation*}
      for all $\eps>0$ sufficiently small      since $\sigma= \int_{\R}|\theta'_0(z)|^2\, dz$ and
      \begin{align*}
        \|P\tilde{v}\|_{L^2(\Sigma_{2\delta})}\leq C\left(1+\tfrac1{\sigma\eps}\|\theta_0'(\cdot/\eps)\|_{L^2(\R)}^2 \right) \|v\|_{L^2(\Gamma_t(2\delta))}\leq 4C\eps\|u\|_{V_t^\eps}.
      \end{align*}
      This yields the claimed estimate for $\frac1{\eps}\|P\tilde{u}\|_{L^2(\Sigma_{2\delta})}$. Moreover, using
       \begin{align*}
          \partial_r(P\tilde u)(r,s) &= \partial_r(P \tilde v)(r,s) +\partial_r\left( Z(s)\frac1{\sqrt\eps}\theta_0'(\rho)\left(1-  \frac1{\eps\sigma}\int_{-2\delta}^{2\delta} \theta_0'(\tfrac{r}\eps -h_\eps(s,t))^2 \,\dr\right)\right),\\
         \partial_r(P \tilde v)(r,s) &= \partial_r \tilde{v}(r,s) - \frac1{\eps\sigma}\int_{-2\delta}^{2\delta} \theta_0'(\tfrac{r}\eps -h_\eps(s,t)) \tilde v (r,s)\,\dr\, \tfrac1\eps \theta_0''(\tfrac{r}\eps -h_\eps(s,t)), 
       \end{align*}
       and a similar relation for derivatives with respect to $s$ one obtains
      \begin{align*}
        \|\nabla P\tilde u\|_{L^2(\Sigma_{2\delta})}\leq C\left(\|v\|_{H^1(\Gamma_t(2\delta))} +\tfrac1\eps \|v\|_{L^2(\Gamma_t(2\delta))}+\left(1+\frac{e^{-\frac{\alpha\delta}\eps}}\eps\right) \|Z\|_{H^1(\Gamma_t)}\right)
        \leq C\|u\|_{V_t^\eps}.
      \end{align*}
      This shows the estimate for $\|Pu\|_{H^1(\Sigma_{2\delta})}$. Finally, the estimate of $ \|\tilde{w}\|_{H^1(\Sigma)}$ follows in a straightforward manner using the decomposition for $u$.
    \end{proof}

\subsection{Remainder Estimates}
      We use the following classes to systematically handle remainder terms as in \cite[Definition~2.3]{AbelsFei}:
\begin{defn}\label{eq:1.15}
  For any $k\in \R$ and $\tilde{\alpha}>0$,  $\mathcal{R}_{k,\tilde{\alpha}}$ denotes the vector space of families of continuous functions $\tr_\eps\colon \R\times \Gamma(2\delta) \to \R$, indexed by $\eps\in (0,1)$, which are continuously differentiable with respect to $\no_{\Gamma_t}$ for all $t\in [0,T_0]$ such that
  \begin{equation}\label{eq:EstimRkalpha}
    |\partial_{\no_{\Gamma_t}}^j \tr_\eps(\rho,x,t)|\leq Ce^{-\tilde{\alpha} |\rho|}\eps^k\qquad \text{for all }\rho\in \R,(x,t)\in\Gamma(2\delta),\, j\in\{0,1\},\, \eps \in (0,1)
  \end{equation}
  for some $C>0$ independent of $\rho\in \R,(x,t)\in\Gamma(2\delta)$, $\eps\in (0,1)$.  $\mathcal{R}_{k,\tilde{\alpha}}^0$ is the subclass of all $(\tr_\eps)_{\eps\in (0,1)}\in \mathcal{R}_{k,\tilde{\alpha}}$ such that
  $\tr_\eps(\rho,x,t)= 0$ for all  $\rho \in\R, x\in\Gamma_t, t\in [0,T_0]$.
\end{defn}

      Finally, we will frequently use the following result for remainder estimates:
\begin{lem}\label{lem:rescale}
  Let $0<\eps\leq \eps_1$, $h_\eps$ be as in the beginning of this subsection and satisfy
  \begin{equation*}
    M\coloneqq \sup_{0<\eps <\eps_1, (s,t)\in \Sigma\times [0,T_\eps]} |h_\eps (s,t)| <\infty
  \end{equation*}
  for some  $T_\eps \in (0,T_0]$, $\eps_1\in (0,1)$,  and   $(\tr_\eps)_{0<\eps<1}\in \mathcal{R}_{k,\tilde{\alpha}}$ for some $\tilde{\alpha}>0$, $k\in\R$ and let $j=1$ if $(\tr_\eps)_{0<\eps<1}\in \mathcal{R}_{k,\tilde{\alpha}}^0$ and $j=0$ else.
  Then there is some $C>0$, independent of $T_\eps,0<\eps\leq \eps_1$, $\eps_1\in (0,1)$  such that
  \begin{equation*}
    r_\eps (x,t)\coloneqq\tr_\eps\left( \rho, x,t\right)\qquad \text{for all }(x,t)\in\Gamma(2\delta)
  \end{equation*}
  with $\rho$ as in \eqref{eq:StretchedVariable} satisfies
  \begin{align}\label{eq:RemEstim1}
    \left\|{\mathfrak{a}(P_{\Gamma_t}(\cdot))r_\eps \varphi} \right\|_{L^1(\G_t(2\delta))} & \leq C(1+M)^j\eps^{1+k+j}\|\varphi\|_{H^1(\Omega)}\|\mathfrak{a}\|_{L^2(\Gamma_t)}, \\
    \label{eq:RemEstim2}
    \left\| {\mathfrak{a}(P_{\Gamma_t}(\cdot))r_\eps} \right\|_{L^2(\G_t( 2\delta))}       & \leq C (1+M)^j\eps^{\frac 12+k+j} \|\mathfrak{a}\|_{L^2(\Gamma_t)}
  \end{align}
  uniformly for all $\varphi\in H^1(\Omega)$, $\mathfrak{a}\in L^2(\Gamma_t)$, $t\in [0,T_\eps]$, and $\eps\in (0,\eps_1]$.
\end{lem}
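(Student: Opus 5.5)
The plan is to pass to the tubular coordinates $(r,s)$ via \eqref{eq:IntChangeOfVariables}, where the Jacobian $J$ is bounded above and below. This reduces both estimates to one-dimensional integrals in $r$ for fixed $s\in\Sigma$. We then integrate over $\Sigma$ using $\mathfrak{a}(P_{\Gamma_t}x)=\mathfrak{a}(X_0(s,t))$, which is independent of $r$.

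\textbf{Step 1: the $L^2$ estimate \eqref{eq:RemEstim2}.}
\begin{itemize}
\item In the case $j=0$, we use $|r_\eps|\le C\eps^k e^{-\tilde\alpha|\rho|}$ with $\rho=r/\eps-h_\eps(s,t)$. Substituting $z=r/\eps$ gives
\begin{equation*}
\int_{-2\delta}^{2\delta} e^{-2\tilde\alpha|r/\eps-h_\eps|}\,\dr\le \eps\int_\R e^{-2\tilde\alpha|z-h_\eps|}\,\d z\le C\eps.
\end{equation*}
Integrating $|\mathfrak a|^2$ over $\Sigma$ then yields the factor $\eps^{\frac12+k}\|\mathfrak a\|_{L^2(\Gamma_t)}$.
\item In the case $j=1$, we have $\tr_\eps(\rho,x,t)=0$ for $x\in\Gamma_t$. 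The mean value theorem in the normal direction, together with the bound on $\partial_{\no}\tr_\eps$, gives $|\tr_\eps(\rho,X(r,s,t),t)|\le C\eps^k|r|e^{-\tilde\alpha|\rho|}$.
\item Write $|r|=\eps|\rho+h_\eps|\le\eps(|\rho|+M)$. Absorbing $|\rho|$ into a slightly weaker exponential gives
\begin{equation*}
|r_\eps|\le C(1+M)\eps^{k+1}e^{-\tilde\alpha|\rho|/2}.
\end{equation*}
This is the one-point subtlety: $\partial_{\no}$ is taken at fixed $\rho$, as Definition~\ref{eq:1.15} prescribes. The $j=0$ computation then gives the additional factor $\eps$.
\end{itemize}

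\textbf{Step 2: the $L^1$ estimate \eqref{eq:RemEstim1}.}
\begin{itemize}
\item For fixed $s$ we bound $\int|r_\eps\varphi|\,\dr\le \sup_r|\tilde\varphi(r,s)|\int|r_\eps|\,\dr$.
\item The integral satisfies $\int|r_\eps|\,\dr\le C(1+M)^j\eps^{1+k+j}$, by the same pointwise bounds as in Step 1 and the same substitution, now with exponent $\tilde\alpha$ instead of $2\tilde\alpha$.
\item By the one-dimensional Sobolev embedding $H^1(-2\delta,2\delta)\hookrightarrow L^\infty$, we get $\sup_r|\tilde\varphi(r,s)|\le C\|\tilde\varphi(\cdot,s)\|_{H^1(-2\delta,2\delta)}$.
\item Cauchy--Schwarz in $s$ then gives the bound $\|\mathfrak a\|_{L^2(\Sigma)}\,\|\tilde\varphi\|_{L^2(\Sigma;H^1(-2\delta,2\delta))}$.
\item Finally, $\|\tilde\varphi\|_{L^2(\Sigma;H^1(-2\delta,2\delta))}\le C\|\varphi\|_{H^1(\Omega)}$, since $\partial_r\tilde\varphi=\no\cdot\nabla\varphi$ and the Jacobian is bounded.
\end{itemize}

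\textbf{Uniformity.} The surface measures on $\Sigma$ and $\Gamma_t$ are equivalent uniformly in $t$. All constants depend only on $\delta$, $\tilde\alpha$, the bounds on $J$, and the constant $C$ from \eqref{eq:EstimRkalpha}, so they are independent of $t$, $\eps$ and $T_\eps$. The only step needing care is the $j=1$ gain in Step 1: the trade of $|r|$ for $\eps(|\rho|+M)$, which is what produces the explicit factor $(1+M)^j$.
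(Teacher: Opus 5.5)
Your proof is correct and is essentially the argument the paper relies on. The paper gives no proof of its own and defers to \cite[Corollary~2.7]{StokesAllenCahn}, which likewise passes to $(r,s)$-coordinates, integrates the exponential decay in $r$ after rescaling, and uses the one-dimensional embedding $H^1(-2\delta,2\delta)\hookrightarrow L^\infty$ for the $L^1$ bound. In the $\mathcal{R}^0_{k,\tilde\alpha}$ case it gains the extra factor through $|r|=\eps|\rho+h_\eps|\le\eps(|\rho|+M)$, exactly as you do, with the normal derivative taken at fixed $\rho$ as you point out.
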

We refer to \cite[Corollary 2.7]{StokesAllenCahn} for the proof. We note that in this paper the case $\Sigma=\T^1$ is considered. But the proof carries over directly to general $(d-1)$-dimensional reference manifolds $\Sigma$ and $d$ dimensions.

Moreover, we will use the following result.
\begin{lem}\label{lem:EstimMeanValueFree}
  Let the assumptions above hold true and $f\colon \Gamma_t(2\delta)\to \R$ be such that
    \begin{equation*}
    f(x,t)=
      a(\rho,s,t) w(s) \qquad \text{in } \Gamma_t(3\delta),\quad \text{where } x=X(r,s,t), \rho = \tfrac{r}\eps - h_\eps (s,t),
    \end{equation*}
    with $w\in L^2(\Sigma)$ and $a\in \mathcal{R}_{0,\tilde{\alpha}}$
    \begin{equation}\label{eq:MeanValueFree}
    \int_\R a(\rho,s,t) \theta'_0(\rho) \, \drho=0\qquad \text{for all }s\in \Sigma, t\in [0,T_0].
  \end{equation}
  Then there are constants $C(T_0)$, $\eps_1>0$ independent of $t\in [0,T_0]$ and $w$ such that
  \begin{equation*}
    \|f\|_{(\Veps(2\delta))'} \leq C \eps^{\frac32}\|w\|_{L^2(\Sigma)}
  \end{equation*}
  for every $\eps\in (0,\eps_1]$.
\end{lem}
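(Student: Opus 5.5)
By duality, it suffices to show that for every $\psi\in \Veps(2\delta)$ with $\|\psi\|_{\Veps(2\delta)}\le 1$ we have
\begin{equation*}
\Big|\int_{\Gamma_t(2\delta)} f\psi\,\dx\Big|\le C\eps^{\frac32}\|w\|_{L^2(\Sigma)}.
\end{equation*}
The plan is to split $\psi$ into the part spanned by $\theta_0'$ and a remainder, and to use the orthogonality \eqref{eq:MeanValueFree} to kill the leading contribution of the first part.

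\textbf{Step 1: decomposition of $\psi$.} Using the definition of $\|\cdot\|_{\Veps(2\delta)}$, choose $Z\in H^1(\Sigma)$ and $v\in H^1(\Gamma_t(2\delta))$ with
\begin{equation*}
\psi=\tfrac{1}{\sqrt\eps}Z(s)\theta_0'(\rho)+v, \qquad \|Z\|_{H^1(\Sigma)}+\|v\|_{H^1}+\tfrac1\eps\|v\|_{L^2}\le 2.
\end{equation*}
Alternatively one could use $P\tilde\psi$ and $\tilde w$ from Corollary~\ref{cor:Pu}. Then change variables via \eqref{eq:IntChangeOfVariables} to $(r,s)$ coordinates with Jacobian $J(r,s,t)$.

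\textbf{Step 2: the $v$-part.} Since $f=a(\rho,s,t)w(s)$ and $a\in\mathcal R_{0,\tilde\alpha}$, the function $f$ has the form $\mathfrak a(P_{\Gamma_t}x)\,r_\eps$ with $\mathfrak a=w$. Lemma~\ref{lem:rescale}, \eqref{eq:RemEstim2} with $k=0$, gives $\|f\|_{L^2}\le C\eps^{1/2}\|w\|_{L^2(\Sigma)}$. Hence
\begin{equation*}
\Big|\int f v\Big|\le C\eps^{\frac12}\|w\|_{L^2(\Sigma)}\,\|v\|_{L^2}\le C\eps^{\frac32}\|w\|_{L^2(\Sigma)}.
\end{equation*}

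\textbf{Step 3: the $Z$-part.} This is the key step. Substitute $r=\eps(\rho+h_\eps)$. The integral becomes
\begin{equation*}
\eps^{\frac12}\int_\Sigma w(s)Z(s)\int_{I_\eps(s)} a(\rho,s,t)\,\theta_0'(\rho)\,J\big(\eps(\rho+h_\eps),s,t\big)\,\drho\,\ds,
\end{equation*}
where $I_\eps(s)=\big(-\tfrac{2\delta}{\eps}-h_\eps,\tfrac{2\delta}{\eps}-h_\eps\big)$. Next, expand
\begin{equation*}
J\big(\eps(\rho+h_\eps),s,t\big)=J(0,s,t)+O\big(\eps(|\rho|+M)\big),
\end{equation*}
using that $J$ is smooth in $r$. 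The $J(0,s,t)$ term, after extending the $\rho$-integral to $\R$ at a cost $O(e^{-c/\eps})$, vanishes by \eqref{eq:MeanValueFree}. The error term is bounded by
\begin{equation*}
C\eps\int_\R e^{-\tilde\alpha|\rho|}(1+|\rho|)|\theta_0'(\rho)|\,\drho\le C\eps,
\end{equation*}
because of the exponential decay of $a$ and $\theta_0'$. The $Z$-part is therefore at most
\begin{equation*}
C\eps^{\frac32}\int_\Sigma|w||Z|\,\ds\le C\eps^{\frac32}\|w\|_{L^2(\Sigma)}\|Z\|_{L^2(\Sigma)}.
\end{equation*}

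The main obstacle is precisely this Jacobian expansion in Step 3. One must check that every $\rho$-dependence outside $a\theta_0'$ enters only through $J$ and is Lipschitz in $r$, uniformly in $t$ and $s$; the bound \eqref{eq:BoundhcA} on $h_\eps$ provides the needed uniformity. Combining Steps 2 and 3 gives $\|f\|_{(\Veps(2\delta))'}\le C\eps^{3/2}\|w\|_{L^2(\Sigma)}$.
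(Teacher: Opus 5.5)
Your argument is correct and is essentially the standard proof of the two-dimensional version (Lemma~2.11 in Abels--Fei), to which the paper defers without details. You decompose the test function as $\tfrac{1}{\sqrt\eps}Z(s)\theta_0'(\rho)+v$ via the definition of $\|\cdot\|_{\Veps(2\delta)}$ and bound the $v$-part by $\|f\|_{L^2}\|v\|_{L^2}\le C\eps^{1/2}\|w\|_{L^2(\Sigma)}\cdot 2\eps$. The leading $Z$-part vanishes by \eqref{eq:MeanValueFree}, up to an exponentially small tail and the Jacobian error $J(\eps(\rho+h_\eps),s,t)-J(0,s,t)=O(\eps(|\rho|+M))$. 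None of these steps depends on $d$, which is why the paper can simply state that the proof carries over.
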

\begin{proof}
  The result is a generalization of \cite[Lemma~2.11]{AbelsFei} in the case $d=2$ to higher space dimensions. The proof carries over in a straight-forward manner.
\end{proof}

\begin{lem}
	\label{lem:f'-phi-x-dependent}
	Let  $t\in[0,T_0]$ and $\rho=\rho(x,t)$ be as in \eqref{eq:StretchedVariable}. Moreover,  let $f\colon \R\times \Gamma_t(2\delta)\to \R$ be smooth and $g=\partial_\rho f$ and assume that there exist $\tilde\alpha>0$, $M > 0$ and $C>0$
	such that
	$\rho\in\mathbb R$,
	\begin{align}
		\label{ass:f-g-x-decay}
		\sum_{|\beta|\leq 2}|\partial_x^\beta f(\rho,x)|
		+
		\sum_{|\beta|\leq 2}|\partial_x^\beta g(\rho,x)|
		\leq C e^{-\tilde{\alpha} |\rho|} \quad\text{for all }x\in\Gamma_t(2\delta),\rho\in\R. 
	\end{align}
	Then there is some $C>0$ such that
	\begin{align}
		& \left|
                  \int_{\Gamma_t(2\delta)}g(\rho(x,t),x)
		\Psi(x)\sd x
		\right| \leq
		C\varepsilon^{\frac32}
		\|(\partial_{\no_{\Gamma_t}}\Psi,\Psi)\|_{L^2(\Gamma_t(2\delta))}
		\label{est:x-dependent-g-Psi}
		+
		Ce^{-\frac{2\tilde{\alpha}\delta}{\varepsilon}}
		\|\Psi\|_{H^1(\Gamma_t(2\delta))}.
	\end{align}
        for all $0<\varepsilon\leq 1$ and $\Psi\in H^1(\Gamma_t(2\delta))$. 
	Moreover, for every $\varphi\in H^1(\Gamma_t(2\delta))$ and
	$w\in L^2(\Gamma_t)$
	\begin{align}
		\notag
		& \left|
                  \int_{\Gamma_t(2\delta)}g(\rho(x,t),x)
		\varphi(x)w(P_{\Gamma_t}x)\sd x
		\right|\\
		& \quad
		\leq
		C\varepsilon^{\frac32}\|(\partial_{\no_{\Gamma_t}}\varphi,\varphi)\|_{L^2(\Gamma_t(2\delta))}
		\|w\|_{L^2(\Gamma_t)}
		+
		Ce^{-\frac{2\tilde{\alpha}\delta}{\varepsilon}}
		\|\varphi\|_{H^1(\Gamma_t(2\delta))}
		\|w\|_{L^2(\Gamma_t)}.
		\label{est:x-dependent-g-phi-w}
	\end{align}
\end{lem}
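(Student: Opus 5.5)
We pass to $(r,s)$-coordinates in $\Gamma_t(2\delta)$ via \eqref{eq:IntChangeOfVariables} and write the integral as
\begin{equation*}
\int_\Sigma\int_{-2\delta}^{2\delta} \tilde g(\rho,r,s)\tilde\Psi(r,s)J(r,s,t)\,\dr\,\ds ,
\end{equation*}
where $\rho=\frac r\eps-h_\eps(s,t)$ and $\tilde g(\rho,r,s)=g(\rho,X(r,s,t))$. The key observation is that, for fixed $(r,s)$, $g=\partial_\rho f$ is a $\rho$-derivative. Hence
\begin{equation*}
\partial_r\bigl[\tilde f(\rho(r,s),r,s)\bigr]=\tfrac1\eps\tilde g(\rho,r,s)+(\partial_r\tilde f)(\rho,r,s),
\end{equation*}
where $\partial_r\tilde f$ denotes the derivative in the explicit $x$-dependence, i.e.\ $\partial_{\no_{\Gamma_t}}f$. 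Therefore
\begin{equation*}
\tilde g=\eps\,\partial_r\bigl[\tilde f(\rho,\cdot)\bigr]-\eps(\partial_{\no}f)(\rho,\cdot).
\end{equation*}
Substituting this, we integrate the first term by parts in $r$ over $(-2\delta,2\delta)$, which moves the derivative onto $\tilde\Psi J$.

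Three contributions result.

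\emph{Boundary terms} at $r=\pm2\delta$. Here $|\rho|\ge 2\delta/\eps-M$, so $|\tilde f|\le Ce^{-2\tilde\alpha\delta/\eps}$. The traces $\tilde\Psi(\pm2\delta,\cdot)$ are controlled in $L^2(\Sigma)$ by $\|\Psi\|_{H^1(\Gamma_t(2\delta))}$ via the trace theorem. This gives the term $Ce^{-2\tilde\alpha\delta/\eps}\|\Psi\|_{H^1}$.

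\emph{Interior terms.} After integration by parts these are
\begin{equation*}
-\eps\int\!\!\int \tilde f(\rho,\cdot)\,\partial_r(\tilde\Psi J)\,\dr\,\ds-\eps\int\!\!\int(\partial_{\no}f)(\rho,\cdot)\,\tilde\Psi J\,\dr\,\ds .
\end{equation*}
Both integrands carry a factor bounded by $Ce^{-\tilde\alpha|\rho|}$, by \eqref{ass:f-g-x-decay} together with boundedness of $J$ and $\partial_rJ$. By Cauchy--Schwarz in $(r,s)$,
\begin{equation*}
\bigl\|e^{-\tilde\alpha|\rho|}\bigr\|_{L^2(\Sigma_{2\delta})}\le C\Bigl(\int_\R e^{-2\tilde\alpha|\eps(\rho+h_\eps)|/\eps}\,\eps\,\d\rho\Bigr)^{1/2}\le C\eps^{1/2},
\end{equation*}
using the substitution $r=\eps(\rho+h_\eps)$ and the boundedness of $h_\eps$. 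This is exactly \eqref{eq:RemEstim2} of Lemma~\ref{lem:rescale} with $k=0$ and $\mathfrak a\equiv1$. Hence both terms are bounded by
\begin{equation*}
C\eps\cdot\eps^{1/2}\,\|(\partial_r\tilde\Psi,\tilde\Psi)\|_{L^2(\Sigma_{2\delta})} .
\end{equation*}
Since $\partial_r\tilde\Psi=\partial_{\no_{\Gamma_t}}\Psi$ and $J$ is bounded above and below, this is $C\eps^{3/2}\|(\partial_{\no}\Psi,\Psi)\|_{L^2(\Gamma_t(2\delta))}$, and \eqref{est:x-dependent-g-Psi} follows.

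For \eqref{est:x-dependent-g-phi-w} we apply the same computation with $\Psi=\varphi\, w(P_{\Gamma_t}\cdot)$. This is formally allowed because $w\circ P_{\Gamma_t}$ is constant in $r$, so $\partial_r$ never falls on $w$. To handle merely $w\in L^2(\Gamma_t)$ without approximation, we do not apply the first estimate as a black box. Instead we repeat the argument directly: $\partial_r(\tilde\varphi J w(s))=w(s)\partial_r(\tilde\varphi J)$. In the interior terms we then use Cauchy--Schwarz first in $r$ and then in $s$, getting
\begin{equation*}
\int_\Sigma|w(s)|\Bigl(\int e^{-2\tilde\alpha|\rho|}\dr\Bigr)^{1/2}\|(\partial_r\tilde\varphi,\tilde\varphi)(\cdot,s)\|_{L^2_r}\,\ds\le C\eps^{1/2}\|w\|_{L^2(\Sigma)}\|(\partial_r\tilde\varphi,\tilde\varphi)\|_{L^2(\Sigma_{2\delta})}.
\end{equation*}
For the boundary terms we use $\int_\Sigma|w||\tilde\varphi(\pm2\delta,s)|\ds\le\|w\|_{L^2}\|\varphi\|_{H^1}$ by the trace theorem. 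Finally, $\|w\|_{L^2(\Sigma)}\sim\|w\|_{L^2(\Gamma_t)}$ since $X_0(\cdot,t)$ is a diffeomorphism with bounds uniform in $t$.

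The main obstacle is the bookkeeping of the chain rule: the total $r$-derivative of $\tilde f(\rho(r,s),X(r,s,t))$ must produce exactly $\eps^{-1}g$ plus the normal derivative of the $x$-dependence. Only the normal part of the $x$-regularity in \eqref{ass:f-g-x-decay} is needed, and no $s$-derivatives of $h_\eps$ enter because we integrate by parts only in $r$. A smaller point is ensuring the trace estimates are uniform in $t$, which follows from the smoothness of $\Gamma$.
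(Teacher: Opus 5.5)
Your proposal is correct and follows the paper's proof: you rewrite $g=\eps\,\partial_r[\tilde f(\rho,\cdot)]-\eps\,\partial_{\no_{\Gamma_t}}f$, integrate by parts in $r$, bound the boundary terms at $r=\pm2\delta$ via the exponential decay, bound the interior terms via \eqref{eq:RemEstim2}, and repeat the argument with the $r$-independent factor $w(P_{\Gamma_t}x)$. Your explicit Cauchy--Schwarz treatment for $w\in L^2(\Gamma_t)$ just spells out what the paper calls ``the same way''; in your substitution display the integrand should simply read $e^{-2\tilde\alpha|\rho|}$, which is a harmless slip.
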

\begin{proof}
We prove the first estimate \eqref{est:x-dependent-g-Psi}, while the second estimate \eqref{est:x-dependent-g-phi-w} follows the same way just with an additional factor $w(P_{\Gamma_t}x)$ or $\tilde{w}(s)$, which is independent of $r$.
We first write
\begin{align*}
\int_{\Gamma_t(2\delta)}g(\rho,x)
		\Psi(x)\sd x &=    \int_\Sigma \int_{-2\delta}^{2\delta} (\partial_\rho f)\left(X(r,s),\tfrac{r}{\eps}-h_\eps(s,t)\right)\tilde{\Psi}(r,s)J_t(r,s)\dr\ds.
\end{align*}
Using
\begin{equation*}
  (\partial_\rho f)\left(X(r,s),\tfrac{r}{\eps}-h_\eps(s,t)\right)= \eps \partial_r f\left(X(r,s),\tfrac{r}{\eps}-h_\eps(s,t)\right)- \eps\no_{\Gamma_t}\cdot (\nabla_x f)\left(X(r,s),\tfrac{r}{\eps}-h_\eps(s,t)\right) 
\end{equation*}
and integration by parts yields
\begin{align*}
  &\int_\Sigma \int_{-2\delta}^{2\delta} (\partial_\rho f)\left(X(r,s),\tfrac{r}{\eps}-h_\eps(s,t)\right)\tilde{\Psi}(r,s)J_t(r,s)\dr\ds\\
  &= \left.\eps\left[\int_\Sigma f\left(X(r,s),\tfrac{r}{\eps}-h_\eps(s,t)\right)\tilde{\Psi}(r,s)J_t(r,s)\ds\right]\right|_{r=-2\delta}^{2\delta}\\
  &\quad -\eps \int_\Sigma \int_{-2\delta}^{2\delta} f\left(X(r,s),\tfrac{r}{\eps}-h_\eps(s,t)\right)\partial_r(\tilde{\Psi}(r,s)J_t(r,s))\dr\ds\\
  &\quad +\eps \int_\Sigma \int_{-2\delta}^{2\delta} \no_{\Gamma_t}\cdot (\nabla_x f)\left(X(r,s),\tfrac{r}{\eps}-h_\eps(s,t)\right)\tilde{\Psi}(r,s)J_t(r,s)\dr\ds.
\end{align*}
Now we can use the exponential decay to estimate the first term on the right-hand side and \eqref{eq:RemEstim2} to estimate the last two terms. 
\end{proof}
\begin{lem}
	\label{lem:2delta-Gamma_t}
Let $g\colon \R\to \R$ be continuous such that $|g(\rho)|\leq Ce^{-\tilde\alpha |\rho|}$ for all $\rho\in\R$ and some $C,\tilde\alpha>0$ and let $m:= \int_\R g(\rho)\sd \rho$. There are  $ C > 0$, $\eps_1\in (0,1]$ such that
	\begin{align}	\label{eq:delta-gamma}
		&\left|\int_{\Gamma_t(2\delta)}
		\frac{1}{\varepsilon} g(\rho)a(x,t) \varphi(x,t) \dx - m\int_{\Gamma_t}(a\varphi)|_{\Gamma_t} \dsigma \right|\leq C\sqrt{\eps} \|a \|_{H^1(\Gamma_t(2\delta))}\|\varphi \|_{H^1(\Gamma_t(2\delta))}, %\\
            \end{align}
            for all $\eps\in (0,\eps_1]$, $ t \in [0,T_0] $,  $ a \in H^1(\Gamma_t(2\delta)) $, and $ \varphi \in H^1(\Gamma_t(2\delta)) $.
\end{lem}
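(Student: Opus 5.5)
Set $F:=a\varphi$ and pass to the tubular coordinates $(r,s)$ via \eqref{eq:IntChangeOfVariables}. This gives
\begin{equation*}
\int_{\Gamma_t(2\delta)}\tfrac1\eps g(\rho)F\dx=\int_\Sigma\int_{-2\delta}^{2\delta}\tfrac1\eps g\bigl(\tfrac r\eps-h_\eps(s,t)\bigr)\tilde F(r,s)J(r,s,t)\dr\ds .
\end{equation*}
Similarly, $\int_{\Gamma_t}F\dsigma=\int_\Sigma \tilde F(0,s)J(0,s,t)\ds$, since $J(0,\cdot,t)$ is the surface density.

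Inside the inner integral, write $\tilde F(r,s)J(r,s)=\tilde F(0,s)J(0,s)+\int_0^r\partial_r(\tilde FJ)(r',s)\dr'$ and split into two pieces.

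For the first piece, substitute $\rho=r/\eps-h_\eps$. It equals $\tilde F(0,s)J(0,s)\int_{-2\delta/\eps-h_\eps}^{2\delta/\eps-h_\eps}g(\rho)\drho$. Since $|h_\eps|\le M$, this differs from $m\tilde F(0,s)J(0,s)$ by at most $Ce^{-\tilde\alpha\delta/\eps}|\tilde F(0,s)|$. Integrating over $\Sigma$ and using the trace estimate $\|F|_{\Gamma_t}\|_{L^1(\Gamma_t)}\le C\|F\|_{W^1_1(\Gamma_t(2\delta))}\le C\|a\|_{H^1}\|\varphi\|_{H^1}$ shows this error is exponentially small, hence $\le C\sqrt\eps\|a\|_{H^1}\|\varphi\|_{H^1}$.

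For the second piece, note that $\bigl|\int_0^r\partial_r(\tilde FJ)\dr'\bigr|\le |r|^{1/2}\|\partial_r(\tilde FJ)(\cdot,s)\|_{L^2(-2\delta,2\delta)}$ by Cauchy–Schwarz. Hence
\begin{equation*}
\int_{-2\delta}^{2\delta}\tfrac1\eps|g(\rho)||r|^{1/2}\dr\le C\int_\R e^{-\tilde\alpha|\rho|}\,\eps^{1/2}(|\rho|+M)^{1/2}\drho\le C\sqrt\eps .
\end{equation*}
This rescaling computation is exactly where the factor $\sqrt\eps$ arises. The remaining factor is controlled by $\int_\Sigma\|\partial_r(\tilde FJ)(\cdot,s)\|_{L^2_r}\ds\le C\|F\|_{W^1_1(\Gamma_t(2\delta))}$, after another Cauchy–Schwarz in $s$. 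Here we use that $J,\partial_rJ$ are bounded and that the $L^1$ norm in $s$ of the $L^2_r$ norm is bounded by the $L^2(\Sigma_{2\delta})$ norm since $\Sigma$ is compact.

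The key point is this last bound. The function $F=a\varphi$ lies only in $W^1_1$, not in $H^1$, because $\nabla F=\varphi\nabla a+a\nabla\varphi\in L^1$. So one should not try to estimate $\partial_r F$ in $L^2$. Instead, estimate $\int_\Sigma\bigl(\int|\partial_r\tilde F|^2\dr\bigr)^{1/2}\ds$ via $|\partial_r\tilde F|\le |a||\partial_r\varphi|+|\varphi||\partial_ra|$ and Minkowski/Hölder. Concretely, $\bigl(\int |a|^2|\partial_r\varphi|^2\dr\bigr)^{1/2}\le \|a(\cdot,s)\|_{L^\infty_r}\|\partial_r\varphi(\cdot,s)\|_{L^2_r}$. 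One-dimensional Sobolev in $r$ gives $\|a(\cdot,s)\|_{L^\infty_r}\le C\|a(\cdot,s)\|_{H^1_r}$, and Cauchy–Schwarz in $s$ then yields $\le C\|a\|_{H^1}\|\varphi\|_{H^1}$. The symmetric term is handled the same way, and the $\partial_rJ$ term is easier. Combining the two pieces gives \eqref{eq:delta-gamma} uniformly in $t$, since all constants depend only on the tubular coordinates and on $M$.
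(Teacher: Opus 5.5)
Your proposal is correct and follows essentially the same route as the paper. Both use the fundamental theorem of calculus in $r$ with $|\psi(r)-\psi(0)|\le\sqrt{|r|}\,\|\partial_r\psi\|_{L^2(-2\delta,2\delta)}$, the rescaling $r=\eps(\rho+h_\eps)$ that produces the factor $\sqrt{\eps}$, and an exponentially small tail coming from $|h_\eps|\le M$.

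One correction: your intermediate claim $\int_\Sigma\|\partial_r(\tilde FJ)(\cdot,s)\|_{L^2_r}\,\mathrm{d}s\le C\|F\|_{W^1_1(\Gamma_t(2\delta))}$ is false as written, since an $L^1_sL^2_r$ norm is not controlled by a $W^1_1$ norm. The anisotropic argument you give right afterwards (one-dimensional Sobolev in $r$ for $a$ and $\varphi$, then Cauchy--Schwarz in $s$) bounds this quantity directly by $C\|a\|_{H^1}\|\varphi\|_{H^1}$, so the proof closes. That argument also supplies exactly the step the paper leaves implicit.
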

\begin{proof} 
  Using $m= \int_\R f(\rho)\sd \rho$, it follows
	\begin{align*}
		&m \int_{\Gamma_t} a \varphi|_{\Gamma_t} \dsigma 
		 = \frac{1}{\varepsilon} \int_\Sigma \int_\bbr g\big(\tfrac{r - \varepsilon h_\varepsilon}{\varepsilon}\big) \,a(0,s,t) \,\varphi(0,s,t) J(0,s,t) \dr \ds \\
		&\quad  = \frac{1}{\varepsilon} \int_\Sigma \int_{-2\delta}^{2\delta} g\big(\tfrac{r - \varepsilon h_\varepsilon}{\varepsilon}\big) \Big((a\varphi J)(0,s,t)
		- (a\varphi J)(r,s,t)\Big) \dr \ds \\
		& \qquad + \frac{1}{\varepsilon} \int_\Sigma \int_{-2\delta}^{2\delta} g\big(\tfrac{r - \varepsilon h_\varepsilon}{\varepsilon}\big) (a\varphi J)(r,s,t) \dr \ds + \frac{1}{\varepsilon} \int_\Sigma \int_{\bbr \setminus (-2\delta,2\delta)} g\big(\tfrac{r - \varepsilon h_\varepsilon}{\varepsilon}\big) \,(a\varphi J)(0,s,t) \dr \ds.
	\end{align*}
	Using $ |\psi(r) - \psi(0)| = \left|\int_0^r \partial_r \psi(\tilde{r}) \d\tilde{r}\right|\leq \sqrt{|r|} \|\partial_r\psi\|_{L^2(-2\delta,2\delta)}  $ we obtain
	\begin{align*}
		& \abs{m \int_{\Gamma_t} (a\varphi)|_{\Gamma_t} \dsigma
			- \frac{1}{\varepsilon} \int_{\Gamma_t(2\delta)} g(\rho) a(x,t)\,\varphi(x,t) \dx} \\
		& \quad \leq
		\abs{\frac{1}{\varepsilon} \int_\Sigma \int_{-2\delta}^{2\delta} g\big(\tfrac{r - \varepsilon h_\varepsilon}{\varepsilon}\big) \sqrt{\absm{r}} \dr \norm{\ptial{r} (a\varphi J)(\cdot,s,t)}_{L^2(-2\delta,2\delta)} \ds} \\
		& \qquad + \abs{\frac{1}{\varepsilon} \int_{\Gamma_t} \int_{\bbr \setminus (-2\delta,2\delta)} g\big(\tfrac{r - \varepsilon h_\varepsilon}{\varepsilon}\big) (a\varphi)|_{\Gamma_t} \dr \dsigma}
		\eqqcolon I_1 + I_2.
	\end{align*}
	By $r = \varepsilon(\rho + h_\varepsilon)$, one has
	\begin{align*}
		I_1 & \leq
		\frac{C}{\varepsilon} \int_\Sigma \int_{-2\delta}^{2\delta} \abs{g\big(\tfrac{r - \varepsilon h_\varepsilon}{\varepsilon}\big)} \sqrt{\absm{r}} \dr \norm{\ptial{r} (a\varphi J)(\cdot,s,t)}_{L^2(-2\delta,2\delta)} \ds \\
		& \leq C
		\sqrt{\varepsilon} \int_\Sigma \int_{- \frac{2\delta}{\varepsilon} - h_\varepsilon}^{\frac{2\delta}{\varepsilon} - h_\varepsilon} \abs{g(\rho)} \sqrt{|\rho + h_\varepsilon|} \drho \norm{\ptial{r} (a\varphi J)(\cdot,s,t)}_{L^2(-2\delta,2\delta)} \ds \\
        & \leq C' \sqrt{\varepsilon} \norm{a}_{H^1(\Gamma_t(2\delta))} \norm{\varphi}_{H^1(\Gamma_t(2\delta))},
	\end{align*}
	where we employed that $ \theta_0'(\rho) $ exhibits exponential decay. Moreover, we find 
	\begin{align}
		I_2 & \leq \frac{1}{\varepsilon} \int_{\Gamma_t} \abs{\int_{\bbr \setminus (-2\delta,2\delta)} g\big(\tfrac{r - \varepsilon h_\varepsilon}{\varepsilon}\big) \dr} \abs{(a\varphi)|_{\Gamma_t}} \dsigma \nonumber\\
		& 
		\leq \frac{C}{\varepsilon} \int_{\Gamma_t} \abs{\int_{\bbr \setminus (-2\delta,2\delta)} e^{- \frac{\tilde{\alpha} \absm{r}}{2 \varepsilon}} \dr} \abs{(a\varphi)|_{\Gamma_t}} \dsigma e^{- \frac{\tilde{\alpha} \delta}{\varepsilon}} 
		\leq C e^{- \frac{\tilde{\alpha} \delta}{\varepsilon}} \norm{a}_{H^1(\Gamma_t(2\delta))} \norm{\varphi}_{H^1(\Gamma_t(2\delta))},
	\nonumber
	\end{align} 
        for sufficiently small $\eps\in (0,1]$, which shows \eqref{eq:delta-gamma}.
      \end{proof}

      \subsection{Approximate Solutions}\label{subsec:ApproxSol}

      For the proof of our main result on the sharp interface limit, Theorem~\ref{thm:main}, we use an approximate solution $(\ve_A, p_A, c_A)$ of the Navier--Stokes/Allen--Cahn system~\eqref{eq:NSAC} and corresponding remainder estimates essentially. These can be constructed and estimated as in \cite[Theorem 3.2]{AbelsFei}. We note that the convergence result in \cite{AbelsFei} is only valid in two space dimensions. But the construction of the approximate solution and the remainder estimates work for three space dimensions as well. To fit our higher-order estimates close to the interface, we include the corresponding weighted higher order remainders estimates.
\begin{thm}\label{thm:remainders}
	Let $N\in \N$. Then there are smooth $(\tilde{\ve}_A^{\mathrm{in}},\tilde{p}_A^{\mathrm{in}}, \tilde{c}_A^{\mathrm{in}})$ defined in $\Gamma(3\delta)$ and $(\ve_A^\pm, p_A^\pm, c_A^\pm)$ defined on $\Omega\times [0,T_0]$, which are smooth,  such that:
	\begin{enumerate}
		\item \emph{Inner expansion:} In $\Gamma(3\delta)$ we have
		\begin{alignat}{2}\nonumber
			\partial_t \tilde{\ve}_A^{\mathrm{in}} + \tilde{\ve}_A^{\mathrm{in}}\cdot \nabla \tilde{\ve}_A^{\mathrm{in}} -\Div (2\nu(\tilde{c}_A^{\mathrm{in}})D \tilde{\ve}_A^{\mathrm{in}}) +\nabla \tilde{p}_A^{\mathrm{in}}&= -\eps \Div (\nabla \tilde{c}_A^{\mathrm{in}}\otimes \nabla \tilde{c}_A^{\mathrm{in}}) + \bR_\eps^{\mathrm{in}},
			\\\nonumber
			\Div \tilde{\ve}_A^{\mathrm{in}} &=\,  
			G_\eps^{\mathrm{in}},\\\nonumber
			\partial_t \tilde{c}_A^{\mathrm{in}} + \tilde{\ve}_A^{\mathrm{in}}\cdot \nabla \tilde{c}_A^{\mathrm{in}} &= \Delta \tilde{c}_A^{\mathrm{in}} -\frac1{\eps^2} f'(\tilde{c}_A^{\mathrm{in}})+ S_\eps^{\mathrm{in}},
		\end{alignat}
		where
		\begin{alignat}{2}\label{eq:RemainderApprox1}
                  \|(\bR_\eps^{\mathrm{in}}, \partial_t G_\eps^{\mathrm{in}} , \nabla G_\eps^{\mathrm{in}},  S_\eps^{\mathrm{in}})\|_{L^\infty(\Gamma(3\delta))}&\leq C\eps^{N+1},\\\label{eq:RemainderApprox2}
                  \|G_\eps^{\mathrm{in}} \|_{L^\infty(\Gamma(3\delta))}&\leq C\eps^{N+2},\\\label{eq:RemainderApprox3}				\|(\oeps^2 \nabla S_\eps^{\mathrm{in}},\oeps \nabla G_\eps^{\mathrm{in}})\|_{L^\infty(\Gamma(3\delta))}
				&\leq C\eps^{N+2}.
		\end{alignat}
		\item \emph{Outer expansion:}  In $\Omega^\pm$ we have $c_A^\pm \equiv \pm 1$ and
		\begin{alignat}{2}
			\partial_t \ve_A^\pm + \ve_A^\pm\cdot \nabla \ve_A^\pm -\nu^\pm \Delta \ve_A^\pm +\nabla p_A^\pm&= \bR^\pm_\eps,
			\\\nonumber
			\Div \ve_A^\pm &=\, 0,\\
			\ve_A^\pm{|_{\partial\Omega}} &=\, \ol{a}_\eps \no_{\partial\Omega},
		\end{alignat}
		where $\ol{a}_\eps\colon (0,T)\to \R$ is continuous and
		\begin{equation*}
			\|\bR_\eps^\pm \|_{L^\infty(\Omega\times [0,T_0])}\leq C\eps^{N+2}\qquad \text{for all }\eps\in (0,1).
		\end{equation*}
		\item \emph{Matching condition:} There is some $\tilde{\alpha}>0$ such that for every $\beta\in\N_0^n$ there is some $C_\beta>0$ such that 
		\begin{equation*}
			\begin{split}
				&\| \p_x^{\beta}( \tilde v_A^{\mathrm{in}}-v_A^{+}
				\chi_+-v_A^{-}\chi_-)\|_{L^\infty(\Gamma(3\delta)\setminus \Gamma(\delta) )}\leq C_\beta e^{-\frac{\tilde{\alpha}\delta}{2\eps}}%,\\
			\end{split}
		\end{equation*}
		for $v=\ve,p,c$ and for all  $\eps \in (0,1]$.
	\end{enumerate}
\end{thm}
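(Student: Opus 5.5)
The plan is to take the approximate solution essentially verbatim from the construction in \cite[Theorem~3.2]{AbelsFei}, which is a dimension-independent matched asymptotic expansion, and then check the new weighted estimate \eqref{eq:RemainderApprox3}. Concretely, I would make the inner ansatz
\begin{equation*}
\tilde c_A^{\mathrm{in}}=\sum_{k=0}^{N+1}\eps^k c_k(\rho,x,t),\qquad \tilde\ve_A^{\mathrm{in}}=\sum_{k=0}^{N+1}\eps^k\ve_k(\rho,x,t),\qquad \tilde p_A^{\mathrm{in}}=\sum_{k=-1}^{N}\eps^k p_k(\rho,x,t),
\end{equation*}
with $\rho$ as in \eqref{eq:StretchedVariable} and $h_\eps=\sum_k\eps^k h_{k+1}$. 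Here $c_0=\theta_0$, and the outer terms $\ve_A^\pm=\sum\eps^k\ve_k^\pm$ come from the linearized two-phase Navier--Stokes problem about $(\ve^\pm,p^\pm,\Gamma)$. Inserting the ansatz into \eqref{eq:NSAC} and using \eqref{Prelim:1.13} gives, at each order in $\eps$, ODE problems in $\rho$ driven by $-\p_\rho^2+f''(\theta_0)$. These are solvable exactly when the right-hand side is orthogonal to $\theta_0'$, and this solvability condition, together with the matching conditions, determines $h_{k}$ and the outer data through linear parabolic/Stokes problems on $\Gamma$ and $\Omega^\pm$. Nothing in this procedure uses $d=2$: the tangential operators $\nabla^\Gamma,\Delta^\Gamma$ and $D_t$ on the $(d-1)$-dimensional $\Sigma$ enter only through smooth coefficients. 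The remainder bounds \eqref{eq:RemainderApprox1}--\eqref{eq:RemainderApprox2}, the outer bound $\bR^\pm_\eps=O(\eps^{N+2})$ and the matching condition therefore carry over as in \cite{AbelsFei}. As there, one may expand to one or two more orders than needed and truncate, since the truncation only adds terms of higher order.

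The one genuinely new point is \eqref{eq:RemainderApprox3}. For this I would use the structure of the residual. After cancelling all orders up to $N+1$, the remainders have the form
\begin{equation*}
S_\eps^{\mathrm{in}}=\eps^{N+1}\tilde r_\eps(\rho,x,t),\qquad G_\eps^{\mathrm{in}}=\eps^{N+2}\tilde g_\eps(\rho,x,t),
\end{equation*}
where $\tilde r_\eps,\tilde g_\eps$ are smooth with all $x$- and $\rho$-derivatives bounded uniformly in $\eps$. Differentiating with the chain rule gives
\begin{equation*}
\nabla S_\eps^{\mathrm{in}}=\eps^{N}\,\p_\rho\tilde r_\eps\,(\no-\eps\nabla^\Gamma h_\eps)+\eps^{N+1}\nabla_x\tilde r_\eps .
\end{equation*}
Multiplying by $\oeps^2$ would then suffice if $|\oeps^2\p_\rho\tilde r_\eps|\le C\eps^2$. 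This holds as soon as $\p_\rho\tilde r_\eps$ decays exponentially in $\rho$, because $\oeps^2\le C\eps^2(1+\rho^2)$ on $\Gamma(2\delta)$ (using the boundedness of $h_\eps$).

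The main obstacle is this decay. The $\rho$-dependent remainders themselves need not decay: they tend to the outer expressions as $\rho\to\pm\infty$. Their $\rho$-derivatives do decay, however. The reason is that each $c_k,\ve_k,p_k$ converges exponentially in $\rho$, together with its $\rho$-derivatives, to polynomial-in-$\rho$ Taylor data of the outer solution, and this is exactly what the matching condition encodes. So I would verify that $\p_\rho\tilde r_\eps$ equals an exponentially decaying part plus $\p_\rho$ of the Taylor expansion of the outer residual about $r=0$. The outer residual is $O(\eps^{N+2})$, and its Taylor part has coefficients that carry matching powers of $\eps$. I expect this bookkeeping to be the delicate step.

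The estimate for $\oeps\nabla G_\eps^{\mathrm{in}}$ is analogous and uses only $\oeps\le C\eps(1+|\rho|)$. Away from $\Gamma(\delta)$ the weight is bounded and the matching condition gives an exponentially small error, so these regions pose no problem.
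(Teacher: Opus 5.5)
Your route is the paper's: \eqref{eq:RemainderApprox1}--\eqref{eq:RemainderApprox2}, the outer bounds and the matching condition are taken from the dimension-independent construction in \cite{AbelsFei}. The weighted bound \eqref{eq:RemainderApprox3} then follows from the chain rule, exponential decay in $\rho$, and $\oeps\le C\eps(1+|\rho|)$ near $\Gamma$. Your treatment of $\oeps\nabla G_\eps^{\mathrm{in}}$ is correct; it needs only decay of the $\rho$-derivative of the divergence residual, which is what Remark~\ref{rem:PropertiesAproxSol} records. Your reduction for $S_\eps^{\mathrm{in}}$, however, drops a term. By your own formula,
\[
\oeps^2\nabla S_\eps^{\mathrm{in}}=\eps^{N}\oeps^2\,\partial_\rho\tilde r_\eps\,(\no-\eps\nabla^\Gamma h_\eps)+\eps^{N+1}\oeps^2\,\nabla_x\tilde r_\eps ,
\]
and the condition $|\oeps^2\partial_\rho\tilde r_\eps|\le C\eps^2$ does not control the second term. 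Once $|d_\Gamma|$ is of order one, $\oeps$ is of order one; this already happens at $|d_\Gamma|=\delta/2$, inside $\Gamma(\delta)$, where the matching condition gives nothing. There the term is only $O(\eps^{N+1})$ unless $\nabla_x\tilde r_\eps$ is itself small. So under your premise that $\tilde r_\eps$ need not decay in $\rho$, the argument fails in the region $\eps\ll|d_\Gamma|<\delta$. A $\rho$-independent $\tilde r_\eps=a(x,t)$ is a counterexample to the claimed sufficiency.

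The fix is that for the Allen--Cahn residual the ``outer expression'' is identically zero, because $c_A^\pm\equiv\pm1$ and $f'(\pm1)=0$. The profiles $\hat c_k$ solve $-\partial_\rho^2\hat c_k+f''(\theta_0)\hat c_k=$ (exponentially decaying data) with $f''(\pm1)=\alpha>0$, so they decay exponentially. Every term of $\partial_t\tilde c_A^{\mathrm{in}}+\tilde\ve_A^{\mathrm{in}}\cdot\nabla\tilde c_A^{\mathrm{in}}-\Delta\tilde c_A^{\mathrm{in}}+\eps^{-2}f'(\tilde c_A^{\mathrm{in}})$ contains either a derivative of $\tilde c_A^{\mathrm{in}}$ or the factor $f'(\tilde c_A^{\mathrm{in}})$. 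Hence $\tilde r_\eps$ and all its $x$- and $\rho$-derivatives are $O(e^{-\tilde\alpha|\rho|})$, and both terms above are bounded by $C\eps^{N}\oeps^2e^{-\tilde\alpha|\rho|}\le C\eps^{N+2}$ in $\Gamma(2\delta)$. In particular the Taylor bookkeeping you flag as the main obstacle is not needed for $S_\eps^{\mathrm{in}}$. Alternatively, within your own framework, apply the split ``decaying part plus Taylor part of the outer residual'' to $\tilde r_\eps$ itself rather than only to $\partial_\rho\tilde r_\eps$. The non-decaying part is then $O(\eps)$ in $C^1$, which handles the $\nabla_x$ term.
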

\begin{proof}
	The estimates \eqref{eq:RemainderApprox1} and \eqref{eq:RemainderApprox2} were shown in \cite{AbelsFei}. The proof of \eqref{eq:RemainderApprox3} can be justified naturally, since close to the interface, the gradient (more precisely the normal derivative), produces one order loss of $ \eps $, while the factor $ \oeps $ can compensate for it with the help of the (exponential) matching condition. 
\end{proof}
\begin{rem}
  We note that the previous approximate solution was denoted by $(\tilde{\ve}_A, \tilde{p}_A, \tilde{c}_A)$ in \cite{AbelsFei}. There it was essential that some extra (leading order) terms were added to it to obtain the approximate solution (denoted by $({\ve}_A, {p}_A, {c}_A)$ in \cite{AbelsFei}), which then was used to estimate the error $u=c_\eps-c_A$, $\we= \ve_\eps-\ve_A$. (See \cite[Section~3]{AbelsFei} for the details.) Actually, these extra terms depend on the error $u$. In the present contribution the corresponding terms are not treated as part of the approximate solution. Instead they are present as a leading order part $u_A$ of the solution of the linearized system \eqref{eq:linNSAC}, see Theorem~\ref{thm:FullLinearizedSystem} below. 
\end{rem}
\begin{thm}\label{thm:ApproximateSolution}
There are smooth $c_A,p_A\colon \Omega\times [0,T_0]\to \R$, $\ve_A\colon \Omega\times [0,T_0]\to \R^d$, defined in \eqref{eq:DefcA}-\eqref{eq:DefpA} below,  such that
  \begin{alignat*}{2}
    \partial_t \ve_A+ \ve_A\cdot \nabla \ve_A -\Div (2\nu(c_A)D \ve_A) +\nabla p_A&= -\eps \Div (\nabla c_A\otimes \nabla c_A) + \bR_\eps &\quad &\text{in }Q_{T_0},
\\
      \Div \ve_A &=\,  
                        G_\eps&\quad &\text{in }Q_{T_0},\\
      \partial_t c_A + \ve_A\cdot \nabla c_A &= \Delta c_A -\frac1{\eps^2} f'(c_A)
      + S_\eps&\quad &\text{in }Q_{T_0},\\
      (\ve_A,c_A)|_{\partial\Omega}&= (0,-1)&\quad &\text{on }S_{T_0},
\end{alignat*}
where $S_\eps|_{\partial\Omega}=0$ and
\begin{alignat}{2}\label{eq:RemainderApprox1'}
			\|(\bR_\eps, \partial_t G_\eps , \nabla G_\eps,  S_\eps)\|_{L^\infty((0,T_0)\times\Omega)}&\leq C\eps^{N+1},\\\label{eq:RemainderApprox2'}
			\|(G_\eps,\oeps^2 \nabla S_\eps,\oeps \nabla G_\eps) \|_{L^\infty((0,T_0)\times\Omega)}&\leq C\eps^{N+2}.
\end{alignat}
\end{thm}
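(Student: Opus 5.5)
The approximate solution will be obtained by gluing the inner expansion to the outer expansions from Theorem~\ref{thm:remainders}, with the cut-off $\zg$ from \eqref{eq:zg}. I set
\begin{align*}
c_A &= \zg\, \tilde c_A^{\mathrm{in}} + (1-\zg)\bigl(c_A^+\chi_{\Omega^+}+c_A^-\chi_{\Omega^-}\bigr),\\
\ve_A &= \zg\, \tilde{\ve}_A^{\mathrm{in}} + (1-\zg)\bigl(\ve_A^+\chi_{\Omega^+}+\ve_A^-\chi_{\Omega^-}\bigr) + \text{correction},\\
p_A &= \zg\, \tilde p_A^{\mathrm{in}} + (1-\zg)\bigl(p_A^+\chi_{\Omega^+}+p_A^-\chi_{\Omega^-}\bigr).
\end{align*}
The correction has two purposes: it removes the boundary value $\ol{a}_\eps\no_{\partial\Omega}$ and it fixes the divergence. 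Since $\dist(\Gamma_t,\partial\Omega)>3\delta$, near $\partial\Omega$ we have $c_A\equiv-1$, so the boundary condition for $c_A$ holds, and $S_\eps=0$ there. For the velocity, $\int_{\partial\Omega}\ol a_\eps\,\d\sigma=\int_\Omega \Div \ve_A^-\,\d x$ is (up to the interior contributions) of order $O(\eps^{N+2})$. I will therefore subtract $\ol a_\eps(t)\,\mathbf{b}$, where $\mathbf{b}$ is a fixed smooth field supported near $\partial\Omega$ with $\mathbf{b}|_{\partial\Omega}=\no_{\partial\Omega}$. Its divergence and its momentum contributions go into $G_\eps$ and $\bR_\eps$; this is harmless provided $|\ol a_\eps|+|\ol a_\eps'|\le C\eps^{N+2}$, which I would check from the construction in \cite{AbelsFei}.

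Next I compute the residuals. Inside $\Gamma(\delta)$ we have $\zg\equiv1$, so the residuals are exactly $\bR^{\mathrm{in}}_\eps$, $G^{\mathrm{in}}_\eps$, $S^{\mathrm{in}}_\eps$, and \eqref{eq:RemainderApprox1}--\eqref{eq:RemainderApprox3} apply directly. Outside $\Gamma(2\delta)$ only the outer expansions remain, with $c_A=\pm1$. There $\nu(c_A)=\nu^\pm$, $f'(\pm1)=0$ and $\nabla c_A=0$, so the residuals are $\bR^\pm_\eps$, $0$, $0$, and $\oeps$ is bounded above there. In the transition region $\Gamma(2\delta)\setminus\Gamma(\delta)$, I write each quantity as an outer value plus $\zg$ times (inner $-$ outer). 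Every extra term then contains a derivative of $\zg$, a difference $\tilde v^{\mathrm{in}}_A - v^\pm_A$, or a difference of the nonlinearities (for instance $f'(c_A)-\zg f'(\tilde c_A^{\mathrm{in}})$, or $\nu(c_A)$ versus $\nu^\pm$), which is Lipschitz in that difference. By the matching condition all of these and their derivatives are $O(e^{-\tilde\alpha\delta/(2\eps)})$. This beats every power of $\eps$, including the factor $\eps^{-2}$ in front of $f'$ and the factor $\eps$ in $-\eps\Div(\nabla c_A\otimes\nabla c_A)$, and $\oeps\sim1$ there. Derivatives in $t$ and $x$ of these terms are also exponentially small, which gives the bounds for $\partial_t G_\eps$ and $\nabla G_\eps$.

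The main obstacle is the weighted bound $\|\oeps^2\nabla S_\eps\|_{L^\infty}+\|\oeps\nabla G_\eps\|_{L^\infty}\le C\eps^{N+2}$. In the region where all terms are exponentially small or outer it is immediate, so it reduces to \eqref{eq:RemainderApprox3} for the inner remainders. That estimate was only sketched in Theorem~\ref{thm:remainders}, so I would verify it in this way. In the inner expansion, $S^{\mathrm{in}}_\eps=\sum_k \eps^k s_k(\rho,x,t)$ consists of terms whose $\rho$-profiles decay exponentially (in $\mathcal R_{N+1,\tilde\alpha}$-type classes) to their outer limits. Then $\nabla$ produces a factor $\eps^{-1}\partial_\rho$, and $\oeps^2\eps^{-1}|\partial_\rho s_k|\le C\eps(1+\rho^2)e^{-\tilde\alpha|\rho|}$ is bounded by $C\eps$, using $\oeps=\eps\sqrt{1+(\rho+h_\eps)^2}$. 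Together with the matching, this gives the extra power of $\eps$. The same argument with one power of $\oeps$ handles $G^{\mathrm{in}}_\eps$, since $G^{\mathrm{in}}_\eps=O(\eps^{N+2})$ already.
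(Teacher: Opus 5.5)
Your construction is the paper's: the same $\zg$-gluing of inner and outer expansions, and the same boundary correction $-\bar a_\eps(t)\mathbf{b}$ (the paper's $\mathbf{N}$). Taking $\mathbf b$ supported near $\partial\Omega$ is even slightly cleaner, since the correction then never meets $\nabla c_A$. You also make the same reduction to \eqref{eq:RemainderApprox1}--\eqref{eq:RemainderApprox3} in $\Gamma(\delta)$ and get exponential smallness in the transition layer from matching. Your check of the weighted bound is at the paper's level of detail. It does rely on the profiles of $S^{\mathrm{in}}_\eps$ decaying to $0$, not merely to some outer limit, which holds because $c_A^\pm\equiv\pm1$ kills the outer Allen--Cahn residual.

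The gap is the one non-routine step: the bound on $\bar a_\eps$, which you defer to \cite{AbelsFei}. Theorem~\ref{thm:remainders} only says $\bar a_\eps$ is continuous, and the identity you indicate does not give smallness. Since $\ve_A^-$ is divergence free only in $\Omega^-$, one has $\int_\Omega\Div\ve_A^-\dx=\int_{\Omega^+(t)}\Div\ve_A^-\dx$. Using also $\Div\ve_A^+=0$ in $\Omega^+$, this gives $\mathcal{H}^{d-1}(\partial\Omega)\,\bar a_\eps(t)=\int_{\Gamma_t}(\ve_A^+-\ve_A^-)\cdot\no_{\Gamma_t}\,\d\sigma$. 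So $\bar a_\eps$ is the mean normal jump of the outer velocities across $\Gamma_t$, and nothing in the outer expansion alone makes it small. Without this bound, the terms $\bar a_\eps\Div\mathbf b$ and $\bar a_\eps\nabla\Div\mathbf b$ in $G_\eps$ and $\oeps\nabla G_\eps$ are not controlled in \eqref{eq:RemainderApprox2'}.

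The missing idea, which is exactly the paper's argument, is to apply the divergence theorem to the uncorrected glued field
$\ve_A+\bar a_\eps\mathbf b=\zg\tilde\ve_A^{\mathrm{in}}+(1-\zg)(\ve_A^+\chi_{\Omega^+}+\ve_A^-\chi_{\Omega^-})$.
This field has three useful properties.
\begin{itemize}
\item It is smooth across $\Gamma_t$, since $\zg\equiv1$ there.
\item Its trace on $\partial\Omega$ is $\bar a_\eps\no_{\partial\Omega}$.
\item Because $\Div\ve_A^\pm=0$ in $\Omega^\pm$, its divergence is $\zg G_\eps^{\mathrm{in}}+\nabla\zg\cdot(\tilde\ve_A^{\mathrm{in}}-\ve_A^+\chi_{\Omega^+}-\ve_A^-\chi_{\Omega^-})$, which is $O(\eps^{N+2})$ by \eqref{eq:RemainderApprox2} and the matching condition.
\end{itemize}
Hence $\mathcal{H}^{d-1}(\partial\Omega)|\bar a_\eps(t)|=\bigl|\int_\Omega\Div(\ve_A+\bar a_\eps\mathbf b)\dx\bigr|\le C\eps^{N+2}$. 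This is where the small divergence of the inner expansion enters. Differentiating in $t$ and using $\partial_tG_\eps^{\mathrm{in}}=O(\eps^{N+1})$ from \eqref{eq:RemainderApprox1} gives $|\bar a_\eps'|\le C\eps^{N+1}$.

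That weaker bound on the derivative is all you need, since $\bar a_\eps'$ enters only $\bR_\eps$ and $\partial_tG_\eps$, both of which need only be $O(\eps^{N+1})$. Your requirement $|\bar a_\eps'|\le C\eps^{N+2}$ asks for more than necessary. With these two bounds the rest of your argument goes through as written.
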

\begin{proof}
  The proof follows the arguments of the proof of \cite[Theorem~3.1]{AbelsFei} in the simple case that $\ue\equiv 0$.   As in \cite[Section~3]{AbelsFei} we define 
  \begin{alignat}{1}\label{eq:DefcA}
  c_{A}(x,t)&=\zg c_A^{\mathrm{in}}(x,t)+(1-\zg )(c_A^+
  \chi_{\Omega^+}(x,t) +c_A^-\chi_{\Omega^.}(x,t) ),\\\label{eq:DefvA}
     \ve_A(x,t)&=\zg \ve_A^{\mathrm{in}}(x,t)+(1-\zg )\left(\ve_A^{+}(x,t)
  \chi_{\Omega^+}(x,t) +\ve_A^-(x,t)\chi_{\Omega^-}(x,t)\right){- {\mathbf{N}}\bar{a}_\eps(t)},\\\label{eq:DefpA}
  p_A(x,t)&=\zg p_A^{{in}}(x,t)+(1-\zg )
  \left(p_A^{+}(x,t)
  \chi_{\Omega^+}(x,t) +p_A^{-}(x,t)\chi_{\Omega^-}(x,t) \right),
\end{alignat}
where $\rho$ is as in \eqref{eq:StretchedVariable}, and
\begin{alignat*}{1}
  c_A^{\mathrm{in}}(x,t)= \tc_A^{\mathrm{in}}(\rho,x,t)&= \theta_0(\rho) + \sum_{k=2}^{N+2}\eps^k \tc_k(\rho,x,t),\\
 \ve_A^{\mathrm{in}}(x,t)= \tv_A^{\mathrm{in}}(\rho,x,t)&= \sum_{k=0}^{N+2}\eps^k \tv_k(\rho,x,t),\\
  p_A^{\mathrm{in}}(x,t)= \tp_A^{\mathrm{in}}(\rho,x,t)&=\sum_{k=-1}^{N+1}\eps^k \tp_k(\rho,x,t),
\end{alignat*}
and $\mathbf{N}\colon \Omega \to \R^d$ is a smooth vector field such that $\mathbf{N}|_{\partial \Omega}= \no_{\partial\Omega}$. We note that $c_A^\pm \equiv \pm 1$ and $\ve_A^\pm = \sum\limits_{k=0}^{N+2}\eps^k \ve_k^\pm$, $p_A^\pm = \sum\limits_{k=0}^{N+2}\eps^k p_k^\pm$, where $\ve_0^\pm = \ve^\pm$, $p_0^\pm =p^\pm$.
Then we have
    \begin{align}\nonumber
      &\Div (\ve_A(x,t) + \mathbf{N}\bar{a}_\eps(t))\\\nonumber
      &\quad = \zeta(d_\Gamma(x,t)) G_\eps^{\mathrm{in}} + \zeta'(d_\Gamma(x,t))\nabla d_\Gamma(x,t)\cdot \left(\ve_A^{\mathrm{in}} - \ve^+_A(x,t)\chi_{\Omega^+(t)}(x) - \ve^-_A(x,t)\chi_{\Omega^-(t)}(x)\right)\\\nonumber
      &\quad =O(\eps^{N+2})
    \end{align}
    in $L^\infty(Q_{T_0})$. Therefore
    \begin{align*}
      \sup_{0\leq t\leq T_0}|\bar{a}_\eps(t)|  &= \sup_{0\leq t\leq T_0} \frac1{\mathcal{H}^{d-1}(\partial\Omega)} \left|\int_{\partial \Omega}(\ve_A(x,t) + \mathbf{N}\bar{a}_\eps(t))\cdot \no_{\partial\Omega}\, d\sigma(x)\right|\\  
      &=\sup_{0\leq t\leq T_0} \frac1{\mathcal{H}^{d-1}(\partial\Omega)}\left|\int_{\Omega}\Div (\ve_A + \mathbf{N}\bar{a}_\eps) \,\dx\right| \leq C\eps^{N+2}.
    \end{align*}
    and similarly, using $\|\partial_t G_\eps^{\mathrm{in}}\|_{L^\infty(\Gamma(3\delta))}= O(\eps^{N+1})$,
    \begin{align*}
      \sup_{0\leq t\leq T_0}|\partial_t\bar{a}_\eps(t)|  &\leq C\eps^{N+1}.
    \end{align*}
    Hence we have $\ve_A|_{\partial\Omega}=0$ by construction and
    \begin{equation*}
      \Div \ve_A= \Div(\ve_A+ \mathbf{N}\bar{a}_\eps(t))- (\Div \mathbf{N})\bar{a}_\eps(t)= O(\eps^{N+2})\quad \text{in }L^\infty(Q_{T_0}).
    \end{equation*}
The rest of the statements can be verified in a straightforward manner using the estimates from Theorem~\ref{thm:remainders} and the estimate for $\bar{a}_\eps$. We note that $S_\eps|_{\partial\Omega}=0$ since $c_A\equiv -1$ in neighborhood of $\partial\Omega$ and $f'(-1)=0$.
\end{proof}
\begin{rem}\label{rem:PropertiesAproxSol}
	The following properties of $\theta_0$, $d_\Gamma$, $\hc_2$, $\hv_0, \hv_1$, $\hp_0$ will be essential for the results for the full linearized system \eqref{eq:linNSAC} in Section~\ref{sec:FullSystem}. 
	\begin{enumerate}
		\item $\theta_0$ solves \eqref{eq:OptProfile}.
		\item $\partial_t d_\Gamma + \ve^\pm \cdot \nabla d_\Gamma -\Delta d_\Gamma = -V_{\Gamma_t}+\no_{\Gamma_t}\cdot \ve^\pm+H_{\Gamma_t}=0$ on $\Gamma$.
		\item $\hc_2\colon \R\times \Gamma(2\delta)\to  \R$ is bounded and solves
		\begin{align}\nonumber
			&-\partial_\rho^2 \hc_2(\rho,x,t) + f''(\theta_0(\rho))\hc_2(\rho,x,t)\\\label{eq:c2}
			&\qquad =|\nabla^\Gamma h_1(r,s,t)|^2 \theta_0''(\rho)-\theta_0'(\rho) \rho g_0(x,t)=: \hat{a}(\rho,x,t)
		\end{align}
		for all $\rho\in\R, (x,t)\in \Gamma(2\delta)$, where $s=S(x,t)$, $r=d_\Gamma$, and
		\begin{equation*}
			g_0(x,t)=
			\begin{cases}
				-   \frac1{d_\Gamma}\left(\partial_t d_\Gamma + \ve\cdot \nabla d_\Gamma -\Delta d_\Gamma\right)&\text{if }(x,t)\not \in \Gamma,\\
				-\no_{\Gamma_t}\cdot \nabla\left(\partial_t d_\Gamma + \ve\cdot \nabla d_\Gamma -\Delta d_\Gamma\right) &\text{else}.
			\end{cases}
		\end{equation*}
		Moreover, there is some $\tilde\alpha>0$ such that for every $k,m\in\N_0,l\in\N_0^n,$ there is some $C_{k,l,m}>0$ satisfying
		\begin{equation*}
                  |\partial_t^k \partial_x^l \partial_\rho^m \hc_2(\rho,x,t)| \leq C_{k,l,m} e^{-\tilde\alpha|\rho|}\quad \text{for all } \rho \in\R, (x,t)\in \Gamma(2\delta).
                \end{equation*}
              \item $\hp_0\colon \R\times \Gamma(2\delta)\to \R$, $\hv_j\colon \R\times \Gamma(2\delta)\to \R^d$, $j=0,1$ are smooth and $\hv_0$ is given by
                \begin{align*}
                  \hv_0(\rho,x,t)&= \ve^+(x,t)\eta(\rho)+ \ve^-(x,t)(1-\eta(\rho))= \tfrac{\ve^+(x,t)+\ve^-(x,t)}2+d_\Gamma (\eta(\rho)-\tfrac12)\ue_0(x,t)\
                \end{align*}
                for all $\rho\in \R$, $(x,t)\in\Gamma (2\delta)$ with $\eta\colon \R\to[0,1]$ satisfying $\eta=0$ in $(-\infty,-1]$ and $\eta=1$ in $[1,\infty)$, and  
                \begin{align*}   \ue_0(x,t) &=
            	\begin{cases}
            		\frac{\ve^+(x,t)-\ve^-(x,t)}{d_\Gamma(x,t)}&\text{if }(x,t) \notin \Gamma,\\
            		\no_{\Gamma_t} \cdot \nabla (\ve^+(x,t)-\ve^-(x,t)) &\text{if }(x,t)\in \Gamma,
            	\end{cases}
            \end{align*}
                cf.\ \cite[(A.54)]{AbelsFei}.    Here $\ve^\pm$ is extended smoothly to $\Omega$ such that $\Div \ve^\pm =0$ in $\Omega$. In particular, we have
                \begin{align} 
                  \partial_\rho \left(\nu(\theta_0(\rho))\big(\partial_\rho \hv_0(\rho,x,t)-\ue_0 d_\Gamma(x,t)\eta'(\rho)\big)\right)&= 0\label{eq:A.42}\\\label{eq:A.44}
                  \left(\partial_\rho \hv_0(\rho,x,t)-\ue_0(x,t)d_\Gamma(x,t)\eta'(\rho)\right)\cdot\nabla d_\Gamma &=0
                \end{align}
                for all $\rho \in\R$, $(x,t)\in\Gamma(2\delta)$, cf.\ \cite[(A.42), (A.44)]{AbelsFei}, where we note that $\hp_{-1}(\rho,x,t)= -\tfrac12 (\theta_0'(\rho))^2$ and therefore the right-hand side of the first equation vanishes. Equations for $\hv_1$, which corresponds to \eqref{eq:A.42} and \eqref{eq:A.44}, are given by \cite[(A.43) together with (A.62)]{AbelsFei} and \cite[(A.47)]{AbelsFei} with $k=1$. These will only be used once in the following. 
                        \item  For $\hv_A^{\mathrm{in}}(\rho,x,t)= \sum_{k=0}^{N+2}\eps^k\hv_k(\rho,x,t)$ for the definition of $\ve_A^{\mathrm{in}}(x,t)$ we have
                \begin{equation*}
                  \tfrac{1}\eps \partial_\rho \hv_A^{\mathrm{in}}(\rho,x,t)\cdot \nabla (d_\Gamma(x,t)- h_\eps(S(X,t)))+(\Div_x \hv_A^{\mathrm{in}})(\rho,x,t) =\hat{g}_\eps(\rho,x,t)
                \end{equation*}
                for all $\rho\in\R, (x,t)\in\Gamma(2\delta)$, where $\hat{g}_{\eps}=O(\eps^{N+2})$ as $\eps\to$ uniformly in $\rho\in\R,(x,t)\in \Gamma(2\delta)$. This follows from the construction in \cite[Appendix]{AbelsFei} and leads to $G_\eps = O(\eps^{N+2})$ in $L^\infty(\Gamma(2\delta))$. Differentiation with respect to $\rho$ yields
                \begin{equation}\label{eq:InnerDivergence}
                  \tfrac{1}\eps \partial_\rho^2 \hv_A^{\mathrm{in}}(\rho,x,t)\cdot \nabla (d_\Gamma(x,t)- h_\eps(S(x,t)))+(\partial_\rho \Div_x \hv_A^{\mathrm{in}})(\rho,x,t) =\partial_\rho \hat{g}_\eps(\rho,x,t),
                \end{equation}
                where $|\partial_\rho^k \partial_t^k\partial_x^\alpha \hat{g}_\eps(\rho,x,t)|\leq C_{k,l,\alpha}\eps^{N+2}e^{-\frac{\tilde{\alpha}|\rho|}\eps}$ for any $k\in\N$, $l\in\N_0$, $\alpha\in\N_0^d$ and some $\tilde{\alpha}>0$. Actually, $\partial_\rho \hat{g}_\eps(\rho,x,t)$ is a polynomial in $\eps$ with powers larger or equal to $N+2$ and coefficients, depending on $\rho\in\R$ and $(x,t)\in\Gamma(2\delta)$, decaying uniformly exponentially to zero as $|\rho|\to \infty$ since all functions involved have the latter properties. The same is true for all $\rho$ and $(x,t)$ derivatives.
              \end{enumerate}    
            \end{rem}

The following decomposition of $f''(c_A)$ will be essential for $\oeps$-weighted estimates.
\begin{lem}
	\label{lem:f''-decomposition}
	Let $c_A$ be as before. Then
	\begin{align}\label{eq:f''-decomposition}
		f''(c_A)= \alpha + V(\rho) + a_\eps\qquad \text{in }\Gamma(2\delta),
	\end{align}
	where $V(\rho)=f''(\theta_0(\rho)) - \alpha$ for all $\rho\in\R$,
        $\alpha=f''(\pm 1)$, $a_\eps = O(\eps^2)$ in $L^\infty(\Gamma(2\delta))$, and $ \nabla a_\eps = O(\eps^{1+1/p}) $ in $ L^p(\Gamma(2\delta)) $ for $ 1 \leq p \leq \infty $. Moreover,  $|V(\rho)|\leq Ce^{-\tilde{\alpha}|\rho|}$ for all $\rho\in\R$ for some $\tilde\alpha>0$.
\end{lem}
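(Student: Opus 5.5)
The plan is a Taylor expansion of $f''$ around $\theta_0(\rho)$. In $\Gamma(2\delta)$ we have, by \eqref{eq:DefcA},
\begin{equation*}
c_A = \zg c_A^{\mathrm{in}} + (1-\zg)(c_A^+\chi_{\Omega^+}+c_A^-\chi_{\Omega^-}),
\end{equation*}
where $c_A^{\mathrm{in}}=\theta_0(\rho)+\sum_{k=2}^{N+2}\eps^k\hat c_k(\rho,x,t)$ and $c_A^\pm\equiv\pm1$. We therefore write
\begin{equation*}
c_A=\theta_0(\rho)+\eps^2 b_\eps ,\qquad
b_\eps:=\zg\sum_{k\ge2}\eps^{k-2}\hat c_k+\eps^{-2}(1-\zg)\bigl(\pm1-\theta_0(\rho)\bigr).
\end{equation*}
The last term is supported where $\delta\le|d_\Gamma|\le 2\delta$. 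There $|\rho|\ge \delta/(2\eps)$ for $\eps$ small, since $h_\eps$ is bounded. Hence this term is $O(e^{-c/\eps})$, together with all its derivatives, which only cost powers of $\eps^{-1}$. So $b_\eps$ is bounded uniformly in $\eps$.

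We set $V(\rho)=f''(\theta_0(\rho))-\alpha$ and $a_\eps:=f''(c_A)-f''(\theta_0(\rho))$. By the mean value theorem and the smoothness of $f$ on bounded sets,
\begin{equation*}
|a_\eps|\le \sup|f'''|\,\eps^2|b_\eps|=O(\eps^2).
\end{equation*}
The decay of $V$ follows from $f''(\theta_0(\rho))-f''(\pm1)=O(|\theta_0(\rho)\mp1|)$ together with the exponential convergence $|\theta_0(\rho)\mp1|\le Ce^{-\sqrt\alpha|\rho|}$.

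For the gradient we compute
\begin{equation*}
\nabla a_\eps=\bigl(f'''(c_A)-f'''(\theta_0)\bigr)\theta_0'(\rho)\nabla\rho+f'''(c_A)\,\eps^2\nabla b_\eps .
\end{equation*}
\begin{itemize}
\item[(i)] In the first term, $|f'''(c_A)-f'''(\theta_0)|\le C\eps^2$ and $|\nabla\rho|\le C\eps^{-1}$, so it is bounded by $C\eps\,e^{-\tilde\alpha|\rho|}$.
\item[(ii)] In the second term, $\nabla b_\eps$ contains $\frac1\eps\partial_\rho\hat c_k\,\nabla\rho$-type terms of size $\eps^{-1}e^{-\tilde\alpha|\rho|}$, bounded $x$-derivative terms, and exponentially small cut-off terms. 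Hence it is bounded by $C\eps\,e^{-\tilde\alpha|\rho|}+C\eps^2$.
\end{itemize}
Altogether $|\nabla a_\eps|\le C\eps\,e^{-\tilde\alpha|\rho|}+C\eps^2$. Taking $L^p(\Gamma_t(2\delta))$ norms via the change of variables \eqref{eq:IntChangeOfVariables} and $r=\eps(\rho+h_\eps)$ gives $\|e^{-\tilde\alpha|\rho|}\|_{L^p}\le C\eps^{1/p}$. This yields $\|\nabla a_\eps\|_{L^p}\le C(\eps^{1+1/p}+\eps^2)\le C\eps^{1+1/p}$, uniformly in $t$, which is the claimed bound.

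The main care is needed in the cut-off region. There one must check that the factor $\eps^{-2}$ in $b_\eps$ and the derivative losses of $\eps^{-1}$ are absorbed by the exponential smallness of $1\mp\theta_0(\rho)$ for $|\rho|\gtrsim\delta/\eps$. This is the same matching argument as in Theorem~\ref{thm:remainders}.
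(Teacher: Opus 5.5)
Your proposal is correct and follows the same route as the paper: a Taylor expansion of $f''$ about $\theta_0(\rho)$, with $a_\eps:=f''(c_A)-f''(\theta_0(\rho))$, and the cut-off region $\delta\le|d_\Gamma|\le 2\delta$ handled by the exponential convergence $\theta_0(\rho)\to\pm1$. You also supply the details the paper only calls ``straightforward'': the explicit formula for $\nabla a_\eps$, the pointwise bound $|\nabla a_\eps|\le C\eps e^{-\tilde\alpha|\rho|}+C\eps^2$, and the rescaling $\|e^{-\tilde\alpha|\rho|}\|_{L^p(\Gamma_t(2\delta))}\le C\eps^{1/p}$ (the bound on $\nabla\rho$ tacitly uses that $\nabla^\Gamma h_\eps$ is bounded, which holds for the constructed $h_\eps$).
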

\begin{proof}
  We use Taylor's theorem to expand $ f''(c_A) $ around $\theta_0$  such that
	\begin{align*}
		f''(c_A) = f''(\theta_0) +  b_\eps(x,t)((1-\zg)(\chi_{\Omega^+}-\chi_{\Omega^-}-\theta_0)+ \eps^2\zg\hc_2+O(\eps^3))
        \eqqcolon f''(\theta_0) + a_\eps, 
	\end{align*}
        where
        \begin{equation*}
          b_\eps(x,t)\coloneqq  \int_0^1 f'''(\theta+\tau((1-\zg)(\chi_{\Omega^+}-\chi_{\Omega^-}-\theta_0)+ \eps^2\zg\hc_2+O(\eps^3)) )\sd \tau 
        \end{equation*}
	Now for $ V(\rho) \coloneqq f''(\theta_0(\rho)) - \alpha $ with $\alpha=f''(\pm 1)$, we know
	\begin{align*}
		|V(\rho)|\leq Ce^{-\sqrt{\alpha}|\rho|} \qquad \text{for all }\rho\in\R.
	\end{align*}
        Using $\theta_0(\rho)\to \pm 1$ as $\rho\to \pm \infty$ exponentially and the structure of the terms, one can verify 
	$ a_\eps = O(\eps^2) $ in $ L^\infty(Q_{T_0}) $ and $ \nabla a_\eps = O(\eps^{1+1/p}) $ in $ L^p(Q_{T_0}) $ for $ 1 \leq p \leq \infty $ in a straightforward manner.
      \end{proof}

\subsection{Weighted Elliptic Regularity}

For uniform regularity estimates related to $\oeps$ the following elliptic regularity result will be useful:
\begin{prop}\label{prop:weightedEllipticReg}
  Let $k\in\N$, $f\in L^2(\Omega)$ and $u\in H^1_0(\Omega)$ be a weak solution of
  \begin{alignat*}{2}
    -\Delta u &=f &\qquad &\text{in }\Omega,\\
    u|_{\partial\Omega} &=0  &&\text{on }\partial\Omega.
  \end{alignat*}
  Then $u\in H^2(\Omega)$ and there is a constant $C_k>0$ independent of $\eps\in (0,1]$, such that
  \begin{equation}\label{eq:weightedH2Estim}
    \|\oeps^k\nabla^2 u\|_{L^2(\Omega)}\leq C_k\left(\|u\|_{L^2(\Omega)}+\|\oeps^{k-1}\nabla u\|_{L^2(\Omega)}+ \|\oeps^k f\|_{L^2(\Omega)}\right)
  \end{equation}
\end{prop}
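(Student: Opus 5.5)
The plan is to multiply by the weight and reduce to unweighted $H^2$-regularity, handling commutators with $\nabla\oeps$. $H^2$-regularity of $u$ is standard since $\Omega$ is smooth and $f\in L^2$. The basic facts about the weight are: $\oeps\ge\eps$, $\oeps\le C$, $|\nabla \oeps|\le C$, and $|\nabla^2\oeps|\le C\oeps^{-1}$. The last one follows because in $\Gamma(2\delta)$ we have $\nabla\oeps = \tilde d\nabla\tilde d/\oeps$ and $\nabla^2\oeps = (\nabla\tilde d\otimes\nabla\tilde d + \tilde d\nabla^2\tilde d)/\oeps - \tilde d^2\nabla\tilde d\otimes\nabla\tilde d/\oeps^3$. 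Outside $\Gamma(2\delta)$, $\oeps$ is smooth and bounded below uniformly in $\eps$.

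Set $v:=\oeps^k u\in H^1_0(\Omega)\cap H^2(\Omega)$. It solves
\begin{equation*}
-\Delta v = \oeps^k f - 2\nabla(\oeps^k)\cdot\nabla u - (\Delta \oeps^k)u \qquad\text{in }\Omega,\quad v|_{\partial\Omega}=0.
\end{equation*}
The standard estimate $\|\nabla^2 v\|_{L^2}\le C\|\Delta v\|_{L^2}$ for $H^2\cap H^1_0$ functions on smooth bounded domains then gives
\begin{equation*}
\|\nabla^2 v\|_{L^2}\le C\big(\|\oeps^k f\|_{L^2}+\|\oeps^{k-1}\nabla u\|_{L^2}+\|\oeps^{k-2}u\|_{L^2}\big),
\end{equation*}
using $|\nabla\oeps^k|\le C\oeps^{k-1}$ and $|\Delta\oeps^k|\le C\oeps^{k-2}$. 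Next, $\oeps^k\nabla^2u=\nabla^2 v - 2\nabla\oeps^k\otimes\nabla u-(\nabla^2\oeps^k)u$ up to symmetrisation, so the same commutator terms appear again. For $k\ge2$ the term $\|\oeps^{k-2}u\|_{L^2}\le C\|u\|_{L^2}$ is fine.

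The main obstacle is $k=1$, where the term $\oeps^{-1}u$ appears and is not controlled by $\|u\|_{L^2}$ uniformly in $\eps$. To avoid it, I would use a cutoff instead of the full weight near $\Gamma$, via a dyadic localisation. Cover $\Omega$ by the regions $A_j=\{2^{j}\eps\le \oeps\le 2^{j+1}\eps\}$ (finitely many, $j\lesssim\log(1/\eps)$) together with the exterior region. On each enlarged region take cutoffs $\chi_j$ with $|\nabla\chi_j|\le C(2^j\eps)^{-1}$ and $|\nabla^2\chi_j|\le C(2^j\eps)^{-2}$, and apply the interior/boundary $H^2$ estimate to $\chi_j u$. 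Multiplying by $(2^j\eps)^{2k}$ gives
\begin{equation*}
\|\oeps^k\nabla^2 u\|^2_{L^2(A_j)}\le C\big(\|\oeps^k f\|^2_{L^2(\tilde A_j)}+\|\oeps^{k-1}\nabla u\|^2_{L^2(\tilde A_j)}+\|\oeps^{k-2}u\|^2_{L^2(\tilde A_j)}\big).
\end{equation*}
The last term still involves $\oeps^{-1}u$ when $k=1$, so the better route is a localised Caccioppoli-free estimate that avoids the lower-order term. Since $\nabla^2(u-m_j)$ equals $\nabla^2 u$ for the mean $m_j$ of $u$ on $\tilde A_j$, apply the estimate to $\chi_j(u-m_j)$. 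Poincaré's inequality on the scaled annuli $\tilde A_j$ (width $\sim 2^j\eps$ in the normal direction) gives $\|u-m_j\|_{L^2(\tilde A_j)}\le C2^j\eps\|\nabla u\|_{L^2(\tilde A_j)}$. This is the key point: the Poincaré constant along the thin slabs must be controlled by the normal width. Since the slabs are thin in $r$ but wide in $s$, one should instead localise also tangentially into cubes of size $2^j\eps$ (a Whitney-type decomposition), where Poincaré holds with constant equal to the cube size. This turns $\|\oeps^{-1}(u-m)\|$ into $\|\nabla u\|\le\|\oeps^{0}\nabla u\|$, which is exactly the $\oeps^{k-1}\nabla u$ term for $k=1$.

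Summing over the cubes, which have bounded overlap, gives \eqref{eq:weightedH2Estim}. Near $\partial\Omega$ we have $\oeps\sim1$ because $\dist(\Gamma_t,\partial\Omega)>3\delta$, so standard boundary regularity applies there with $\|u\|_{L^2}$ as the lower-order term.
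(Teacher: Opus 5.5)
Your argument is correct, but it takes a different route from the paper's.

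\textbf{The paper's route.} The paper never forms $\oeps^k u$. It first handles a fixed neighbourhood of $\partial\Omega$, where $\oeps\in[1,2]$, by standard boundary regularity. It then tests $-\Delta u=f$ with $\eta\,\oeps^{2k}\Delta u$, where $\eta\in C_0^\infty(\Omega)$ is a cutoff. Integrating by parts twice turns $\int\eta\oeps^{2k}(\Delta u)^2$ into $\int\eta\oeps^{2k}|\nabla^2u|^2$. The only commutator terms are $\int\Delta u\,\nabla(\eta\oeps^{2k})\cdot\nabla u$ and $\int\nabla^2u:\nabla(\eta\oeps^{2k})\otimes\nabla u$. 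Since $|\nabla(\eta\oeps^{2k})|\le C\oeps^{2k-1}$, both are bounded by $\|\oeps^{k-1}\nabla u\|_{L^2}\|\oeps^k\nabla^2u\|_{L^2}$ and can be absorbed. No second derivative of the weight appears, so there is no $\oeps^{k-2}u$ term, and $k=1$ needs no special treatment.

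\textbf{Your route.} Multiplying $u$ by $\oeps^k$ puts two derivatives on the weight, which is exactly what produces the uncontrollable $\oeps^{-1}u$ at $k=1$. Your repair is sound: localise to balls or cubes of size comparable to $\oeps$, subtract local means, and use the scale-invariant Poincar\'e inequality.
\begin{itemize}
\item The localisation is legitimate because $\oeps$ is Lipschitz. Hence $\oeps$ is comparable to the radius on the enlarged balls, and such a cover has overlap bounded uniformly in $\eps$.
\item As you noticed, Poincar\'e on the thin two-sided annuli $\tilde A_j$ would not have worked; the tangential localisation is essential.
\item The Whitney-type argument proves the estimate for every $k$ at once, so the preliminary step with $v=\oeps^k u$ is redundant.
\end{itemize}

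\textbf{Trade-off.} The paper's proof is a few lines. It relies on the integration-by-parts identity relating $(\Delta u)^2$ and $|\nabla^2u|^2$, which would generate further commutator terms for variable-coefficient operators. Your argument uses only scaled local $H^2$ estimates and the slow variation of $\oeps$. It therefore carries over unchanged to general elliptic operators, to $L^p$ versions, and to other weights comparable to a Lipschitz function bounded below by $\eps$. The price is a covering lemma.
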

\begin{proof}
  First of all $u\in H^2(\Omega)$ follows for every fixed $\eps\in (0,1]$ from standard elliptic regularity results. Hence it only remains to prove \eqref{eq:weightedH2Estim}. Let $\delta'>0$ be so small that $2\delta'<\dist \left(\partial\Omega, \bigcup_{t\in [0,T_0]}\ol{\Gamma_t(3\delta)}\right)$ and define
  \begin{equation*}
    \Omega_{\kappa} = \{x\in\Omega: \dist (x,\partial\Omega) <\kappa\}\qquad \text{for }\kappa\in (0,2\delta'].
  \end{equation*}
  Then, by standard elliptic regularity results, there is some $C_{\delta'}>0$ such that
  \begin{align*}
    \|\oeps^k \nabla^2 u\|_{L^2(\Omega_{\delta'})}&\leq C\|\nabla^2 u\|_{L^2(\Omega_{\delta'})}\leq C_{\delta'}\left(\|f\|_{L^2(\Omega_{2\delta'})}+\|u\|_{L^2(\Omega)}\right)\\
    &\leq  C_{\delta'}\left(\|\oeps^kf\|_{L^2(\Omega)}+\|u\|_{L^2(\Omega)}\right)
  \end{align*}
  since $\oeps\equiv\sqrt{\eps^2+1}\in [1,2]$ in $\Omega\setminus \ol{\Gamma_t(3\delta)}$. Now we choose some $\eta\in C^\infty_0(\Omega)$ with $\eta \equiv 1$ on $\Omega\setminus \Omega_{\delta'}$. Then testing the equation with $\eta \oeps^{2k} \Delta u$,  and integrating by parts yields
  \begin{align*}
    \int_\Omega \eta \oeps^{2k} |\nabla^2 u|^2\,\dx &= -\int_\Omega\oeps^{k} f\oeps^k \Delta u\,\dx +\int_\Omega \Delta u \nabla (\eta \oeps^{2k})\cdot \nabla u \,\dx - \int_\Omega \nabla^2 u : \nabla (\eta \oeps^{2k})\otimes \nabla u\,\dx   \\
    &\leq C\left(\|\oeps^k f\|_{L^2(\Omega)}+ \|\oeps^{k-1}\nabla u\|_{L^2(\Omega)} \right)\|\oeps^k \nabla^2u\|_{L^2(\Omega)}
  \end{align*}
  due to $|\nabla(\eta \oeps^{2k})|\leq C \oeps^{2k-1}$. Altogether this implies \eqref{eq:weightedH2Estim}.
\end{proof}

The following weighted estimates for a special solution of the divergence equation will be important to deal with an inhomogeneous divergence.
\begin{lem}[Divergence Equation in Weighted Spaces]
	\label{lem:weighted-Dirichlet-Stokes-lifting}
	Let $t\in[0,T_0]$, $\varepsilon\in(0,1]$, and let $g\in H^1(\Omega)\cap L^2_{(0)}(\Omega)$.
	Suppose that $\mathbf z_g\in H^2(\Omega)^d$, $\pi_g\in H^1_{(0)}(\Omega)$ is the solution of
	\begin{subequations}
		\label{eq:Dirichlet-Stokes-lifting}
		\begin{alignat}{3}
			-\Delta\mathbf z_g+\nabla\pi_g&=0
			&&\quad \text{in }\Omega,  \\
			\Div\mathbf z_g&=g
			&&\quad \text{in }\Omega,\\
			\mathbf z_g&=0
			&&\quad \text{on }\partial\Omega.
		\end{alignat}
	\end{subequations}
	Then there is some $C>0$ that is independent of
	$\varepsilon\in(0,1]$ and $t\in[0,T_0]$ such that
	\begin{align}
		\|\mathbf z_g\|_{L^2(\Omega)}
		&\leq
		C\|g\|_{H^{-1}_{(0)}(\Omega)},\label{eq:negative-Stokes-lifting}\\
		\|\nabla\mathbf z_g\|_{L^2(\Omega)}
		+
		\|\pi_g\|_{L^2(\Omega)}
		&\leq
		C\|g\|_{L^2(\Omega)},
		\label{eq:Dirichlet-Stokes-lifting-low-order}
		\\
		\|\omega_\varepsilon\nabla^2\mathbf z_g\|_{L^2(\Omega)}
		+
		\|\omega_\varepsilon\nabla\pi_g\|_{L^2(\Omega)}
		&\leq
		C\left(
		\|g\|_{L^2(\Omega)}
		+
		\|\omega_\varepsilon\nabla g\|_{L^2(\Omega)}
		\right).
		\label{eq:Dirichlet-Stokes-lifting-weighted}
	\end{align}
	If, in addition,
	\begin{align}
		g=\Div \mathbf a
		\qquad\text{for some }\mathbf a\in H^1_0(\Omega)^d,
		\label{eq:divergence-representation}
	\end{align}
	then
	\begin{align}
		\|\omega_\varepsilon\nabla\mathbf z_g\|_{L^2(\Omega)}
		\leq
		C\left(
		\|\mathbf a\|_{L^2(\Omega)}
		+
		\|\omega_\varepsilon
		\nabla\mathbf a\|_{L^2(\Omega)}
		\right).
		\label{eq:weighted-first-order-lifting}
	\end{align}
\end{lem}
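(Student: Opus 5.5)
The plan is to derive the estimates in order of increasing strength. The unweighted ones come from standard Stokes theory, and the weighted ones come from localized energy/duality arguments in which every derivative falling on $\oeps$ costs one power of $\oeps$.

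\textbf{Unweighted estimates.} The bound \eqref{eq:Dirichlet-Stokes-lifting-low-order} is the classical Stokes estimate. Choose a Bogovskii field $\mathbf b\in H^1_0(\Omega)^d$ with $\Div\mathbf b=g$ and $\|\nabla\mathbf b\|_{L^2}\le C\|g\|_{L^2}$. Test $\mathbf z_g-\mathbf b\in V_\sigma$ against the system, and recover $\pi_g$ from $\nabla\pi_g=\Delta\mathbf z_g$ via the estimate $\|\pi\|_{L^2}\le C\|\nabla\pi\|_{H^{-1}}$ for mean-free $\pi$.

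For \eqref{eq:negative-Stokes-lifting} I use duality. Given $\mathbf h\in L^2(\Omega)^d$, let $(\bphi,\psi)$ solve the Stokes system $-\Delta\bphi+\nabla\psi=\mathbf h$, $\Div\bphi=0$, $\bphi|_{\partial\Omega}=0$, with $\psi$ mean free. Stokes regularity gives $\|\bphi\|_{H^2}+\|\psi\|_{H^1}\le C\|\mathbf h\|_{L^2}$. Then
\[
\int_\Omega \mathbf z_g\cdot\mathbf h\dx=\int_\Omega\bigl(\nabla\mathbf z_g:\nabla\bphi-\psi\Div\mathbf z_g\bigr)\dx=-\int_\Omega g\,\psi\dx .
\]
Here the term $\int\nabla\mathbf z_g:\nabla\bphi$ vanishes after integrating by parts, using $-\Delta\mathbf z_g=-\nabla\pi_g$ and $\Div\bphi=0$. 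Hence $|\int\mathbf z_g\cdot\mathbf h|\le\|g\|_{H^{-1}_{(0)}}\|\psi\|_{H^1}$, which gives the claim. The constants depend only on $\Omega$, so they are independent of $\eps$ and $t$.

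\textbf{Weighted second-order estimate \eqref{eq:Dirichlet-Stokes-lifting-weighted}.} This mimics Proposition~\ref{prop:weightedEllipticReg}. Near $\partial\Omega$ we have $\oeps\sim1$, so the standard $H^2\times H^1$ Stokes estimate, localized and with right-hand side $\|g\|_{H^1}$ there, suffices. In the interior, take the cutoff $\eta$ from that proof and consider $\eta\oeps\mathbf z_g$ and $\eta\oeps\pi_g$. The pair $(\mathbf Z,\Pi)=(\eta\oeps\mathbf z_g,\eta\oeps\pi_g)$ solves a Stokes system on $\R^d$ (or on $\Omega$ with zero boundary values) with
\[
-\Delta\mathbf Z+\nabla\Pi=-2\nabla(\eta\oeps)\cdot\nabla\mathbf z_g-\Delta(\eta\oeps)\mathbf z_g+\pi_g\nabla(\eta\oeps),
\qquad
\Div\mathbf Z=\eta\oeps g+\nabla(\eta\oeps)\cdot\mathbf z_g .
\]
Since $|\nabla\oeps|\le C$ and $|\nabla^2\oeps|\le C\oeps^{-1}$, the only dangerous term is $\Delta(\eta\oeps)\mathbf z_g$. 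I avoid it by not differentiating the weight twice. Concretely, I estimate $\nabla^2\mathbf Z$ by the standard $H^2$ Stokes bound only after writing the right-hand side in divergence form $\Div(\nabla(\eta\oeps)\otimes\mathbf z_g)$, or equivalently by testing with $\eta\oeps^2\Delta\mathbf z_g$ as in Proposition~\ref{prop:weightedEllipticReg}, which generates at most one derivative of the weight. Expanding the commutators gives
\[
\|\oeps\nabla^2\mathbf z_g\|_{L^2}+\|\oeps\nabla\pi_g\|_{L^2}\le C\bigl(\|\oeps\nabla g\|_{L^2}+\|g\|_{L^2}+\|\nabla\mathbf z_g\|_{L^2}+\|\pi_g\|_{L^2}\bigr),
\]
and \eqref{eq:Dirichlet-Stokes-lifting-low-order} then closes the bound. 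In the testing approach the pressure term $\int\oeps^2\eta\nabla\pi_g\cdot\Delta\mathbf z_g$ is handled by $\Delta\mathbf z_g=\nabla\pi_g$, so it yields $\|\oeps\nabla\pi_g\|^2$ directly. Cross terms such as $\int\nabla\pi_g\cdot\nabla(\eta\oeps^2)\,g$ are absorbed by Young's inequality.

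\textbf{Weighted first-order estimate \eqref{eq:weighted-first-order-lifting}, the main obstacle.} Here only $\oeps^1\nabla\mathbf z_g$ is available and $g=\Div\mathbf a$, so I test with $\oeps^2\mathbf z_g$ times a cutoff. This gives
\[
\|\oeps\nabla\mathbf z_g\|^2\lesssim\Bigl|\int\pi_g\,\Div(\oeps^2\mathbf z_g)\Bigr|+\int\oeps|\nabla\mathbf z_g||\mathbf z_g| .
\]
Controlling the pressure requires a weighted pressure bound, namely $\|\oeps\pi_g\|_{L^2}\lesssim\|\mathbf a\|_{H^1}$-type norms with weight $\oeps$. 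Note that $\oeps^2$ is not a Muckenhoupt-type singular weight: it is bounded and smooth at scale $\eps$, and $\oeps^{-2}$ is not integrable uniformly. I would therefore instead use the unweighted bounds $\|\mathbf z_g\|_{L^2}\le C\|g\|_{H^{-1}}\le C\|\mathbf a\|_{L^2}$ from \eqref{eq:negative-Stokes-lifting}, and handle the pressure by duality: $\|\oeps\pi_g\|$ is tested against the solution of a Stokes problem whose weighted estimates follow from \eqref{eq:Dirichlet-Stokes-lifting-weighted}. Splitting $\int\pi_g\Div(\oeps^2\mathbf z_g)=\int\pi_g\oeps^2\Div\mathbf a+2\int\pi_g\oeps\nabla\oeps\cdot\mathbf z_g$, the first term is bounded by $\|\oeps\pi_g\|\,\|\oeps\nabla\mathbf a\|$ and the second by $\|\oeps\pi_g\|\,\|\mathbf z_g\|$. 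It therefore remains to show $\|\oeps\pi_g\|\le C(\|\mathbf a\|+\|\oeps\nabla\mathbf a\|+\|\oeps\nabla\mathbf z_g\|)$ with a small constant on the last term, for instance via the weighted Bogovskii/Nečas inequality $\|\oeps\pi\|\lesssim\|\oeps\nabla\pi\|_{H^{-1}}+\|\pi\|_{H^{-1}}$. This is the step I expect to be hardest.
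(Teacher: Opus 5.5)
Your arguments for \eqref{eq:negative-Stokes-lifting} and \eqref{eq:Dirichlet-Stokes-lifting-low-order} are the paper's. For \eqref{eq:Dirichlet-Stokes-lifting-weighted}, your testing variant is the paper's argument in substance, but the step that carries the information is missing from your sketch. Testing with $\eta\oeps^2\Delta\mathbf z_g$ only converts $\oeps\Delta\mathbf z_g=\oeps\nabla\pi_g$ into $\oeps\nabla^2\mathbf z_g$; that is Proposition~\ref{prop:weightedEllipticReg} with $k=1$, and it says nothing about how $g$ controls $\nabla\pi_g$. The paper writes $\int_\Omega\eta^2\oeps^2|\nabla\pi_g|^2\dx=\int_\Omega\eta^2\oeps^2\nabla\pi_g\cdot\Delta\mathbf z_g\dx$ and integrates by parts twice. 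Then $\Div\mathbf z_g=g$ produces $\int_\Omega\eta^2\oeps^2\nabla g\cdot\nabla\pi_g\dx$, and every other term carries one derivative of $\eta^2\oeps^2$, so it is bounded by $C\|\nabla\mathbf z_g\|_{L^2}\|\eta\oeps\nabla\pi_g\|_{L^2}$. Your localized pair $(\eta\oeps\mathbf z_g,\eta\oeps\pi_g)$ is not a good alternative, because $\nabla\Div(\eta\oeps\mathbf z_g)$ contains $\nabla^2(\eta\oeps)\,\mathbf z_g$, which is as bad as the term you wanted to avoid.

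The genuine gap is \eqref{eq:weighted-first-order-lifting}. You correctly reduce it to a weighted pressure bound $\|\oeps\pi_g\|_{L^2}\lesssim\|\mathbf a\|_{L^2}+\|\oeps\nabla\mathbf a\|_{L^2}$, but you never prove it.
\begin{itemize}
\item The available bound $\|\pi_g\|_{L^2}\le C\|\Div\mathbf a\|_{L^2}$ is of no use. For $\mathbf a$ concentrated in the $\eps$-layer around $\Gamma_t$, $\|\Div\mathbf a\|_{L^2}$ can be of order $\eps^{-1}$ times the right-hand side.
\item The weighted Ne\v{c}as inequality you invoke is unproven, and its natural proof is not uniform in $\eps$. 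From $\nabla(\oeps\pi)=\oeps\nabla\pi+\pi\nabla\oeps$ you would need $\|\pi\nabla\oeps\|_{H^{-1}}\lesssim\|\pi\|_{H^{-1}}$, i.e.\ multiplication by $\nabla\oeps$ uniformly bounded on $H^1_0(\Omega)$. This fails: $\nabla\oeps$ turns from $-\no$ to $\no$ across a layer of width $\eps$, so $|\nabla^2\oeps|\sim\eps^{-1}$ there, which costs a factor $\eps^{-1/2}$ for functions not vanishing on $\Gamma_t$.
\item Even granting that inequality, it asks for $\|\oeps\nabla\pi_g\|_{H^{-1}}=\|\oeps\Delta\mathbf z_g\|_{H^{-1}}$. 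This reintroduces the same multiplier through $(\nabla\mathbf z_g)\nabla\oeps$.
\end{itemize}
The paper's own proof leaves the same point open. It tests the equation for $\mathbf y_g=\mathbf z_g-\mathbf a$ with $\oeps^2\mathbf y_g$ and records no pressure term. But $\Div(\oeps^2\mathbf y_g)=2\oeps\nabla\oeps\cdot\mathbf y_g\neq0$, so the term $2\int_\Omega\pi_g\oeps\nabla\oeps\cdot\mathbf y_g\dx$ appears and requires exactly the bound you flagged.

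One way to close the gap is to split off the gradient part of $\mathbf z_g$. Let $\phi\in H^2(\Omega)$ with mean zero solve $\Delta\phi=g$ in $\Omega$, $\no\cdot\nabla\phi=0$ on $\partial\Omega$. Since $\mathbf a\in H^1_0(\Omega)^d$, testing with $\phi$ gives $\|\nabla\phi\|_{L^2}\le\|\mathbf a\|_{L^2}$. The proof of Proposition~\ref{prop:weightedEllipticReg} with $k=1$, using Neumann instead of Dirichlet regularity on the boundary collar $\Omega_{2\delta'}$ where $\oeps\geq1$, gives $\|\oeps\nabla^2\phi\|_{L^2}\le C(\|\mathbf a\|_{L^2}+\|\oeps\nabla\mathbf a\|_{L^2})$. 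The field $\mathbf w:=\mathbf z_g-\nabla\phi$ satisfies $-\Delta\mathbf w+\nabla(\pi_g-g)=0$, $\Div\mathbf w=0$, and $\mathbf w|_{\partial\Omega}=-\nabla\phi|_{\partial\Omega}$, which is tangential and hence compatible. Since $\pi_g-g$ has mean zero, the standard Stokes estimate with Dirichlet data yields
\[
\|\nabla\mathbf w\|_{L^2}+\|\pi_g-g\|_{L^2}\le C\|\nabla\phi\|_{H^{1/2}(\partial\Omega)}\le C\|\nabla\phi\|_{H^1(\Omega_{\delta'})}\le C\bigl(\|\mathbf a\|_{L^2}+\|\oeps\nabla\mathbf a\|_{L^2}\bigr).
\]
Then $\|\oeps\nabla\mathbf z_g\|_{L^2}\le\|\oeps\nabla^2\phi\|_{L^2}+C\|\nabla\mathbf w\|_{L^2}$ gives \eqref{eq:weighted-first-order-lifting} without any weighted pressure estimate. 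Alternatively, $\|\oeps\pi_g\|_{L^2}\le\|\oeps g\|_{L^2}+C\|\pi_g-g\|_{L^2}$ supplies the missing bound. After that, your test with $\oeps^2\mathbf z_g$ (or the paper's with $\oeps^2\mathbf y_g$) closes via Young's inequality and $\|\mathbf z_g\|_{L^2}\le C\|\mathbf a\|_{L^2}$.
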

\begin{proof}
	To prove \eqref{eq:negative-Stokes-lifting}, let
	$\mathbf F\in L^2(\Omega)^d$ and let
	$(\boldsymbol\psi,\chi)\in
	(H^2(\Omega)\cap H^1_0(\Omega))^d\times H^1_{(0)}(\Omega)$ solve
	\begin{alignat*}{3}
		-\Delta\boldsymbol\psi+\nabla\chi&=\mathbf F, \quad
		\Div\boldsymbol\psi=0
		&&\quad\text{in }\Omega,\\
		\boldsymbol\psi&=0
		&&\quad\text{on }\partial\Omega.
	\end{alignat*}
	The standard estimates of the Stokes equation give
	\begin{align*}
		\|\boldsymbol\psi\|_{H^2(\Omega)}
		+
		\|\chi\|_{H^1(\Omega)}
		\leq
		C\|\mathbf F\|_{L^2(\Omega)}.
	\end{align*}
	Testing the equation for $\boldsymbol\psi$ by $\mathbf z_g$ and the
	equation for $\mathbf z_g$ by $\boldsymbol\psi$ yields
	\begin{align*}
		\int_\Omega \mathbf z_g\cdot\mathbf F\,\dx
		=
		-\langle g,\chi\rangle_
		{H^{-1}_{(0)}(\Omega),H^1_{(0)}(\Omega)},
	\end{align*}
	which proves
	\eqref{eq:negative-Stokes-lifting} by taking the supremum over $\mathbf F$. In particular, the solution map
	$g\mapsto\mathbf z_g$ extends to a bounded linear map
$		H^{-1}_{(0)}(\Omega)\to L^2(\Omega)^d$.
	The unweighted estimate
	\eqref{eq:Dirichlet-Stokes-lifting-low-order} follows from the standard weak theory for the Stokes system with prescribed divergence by using the weak formulation. 
	
	We now prove the weighted estimate.
	Let $\delta'>0$ be so small that $ 2\delta'<\dist 
	\left(\partial\Omega, \bigcup_{t\in[0,T_0]}\Gamma_t(3\delta)
	\right) $,
	and define $ \Omega_\kappa
	\coloneqq
	\left\{
	x\in\Omega:
	\dist (x,\partial\Omega)<\kappa
	\right\} $.
	Moreover, let $\eta\in C^\infty_0(\Omega)$ be such that $ 0\leq\eta\leq1,
	\eta=1
	$ on $\Omega\setminus\Omega_{\delta'} $.
	
	Since $ \nabla\pi_g=\Delta\mathbf z_g $,
	integration by parts twice gives
	\begin{align}
		&\int_\Omega
		\eta^2\omega_\varepsilon^2
		|\nabla\pi_g|^2\,\dx
		=
		\sum_{i,j=1}^d
		\int_\Omega
		\eta^2\omega_\varepsilon^2
		\partial_{jj}(\mathbf z_g)_i
		\partial_i\pi_g\,\dx
		=
		\int_\Omega
		\eta^2\omega_\varepsilon^2
		\nabla g\cdot\nabla\pi_g\,\dx
		\notag\\
		&\quad+
		\sum_{i,j=1}^d
		\int_\Omega
		\partial_i\left(
		\eta^2\omega_\varepsilon^2
		\right)
		\partial_j(\mathbf z_g)_i
		\partial_j\pi_g\,\dx
		-
		\sum_{i,j=1}^d
		\int_\Omega
		\partial_j\left(
		\eta^2\omega_\varepsilon^2
		\right)
		\partial_j(\mathbf z_g)_i
		\partial_i\pi_g\,\dx.
		\label{eq:weighted-pressure-lifting-identity}
	\end{align}
	Here we used $\Div\mathbf z_g=g$ to derive
	\begin{align*}
		\sum_{i,j=1}^d
		\int_\Omega
		\eta^2\omega_\varepsilon^2
		\partial_{ij}(\mathbf z_g)_i
		\partial_j\pi_g\,\dx
		=
		\int_\Omega
		\eta^2\omega_\varepsilon^2
		\nabla g\cdot\nabla\pi_g\,\dx.
	\end{align*}
	By $|\nabla (\eta^2\omega_\varepsilon^2)| \leq C\eta\omega_\varepsilon$, Cauchy's inequality, and
	\eqref{eq:Dirichlet-Stokes-lifting-low-order}, we obtain
	\begin{align*}
		\|\eta\omega_\varepsilon\nabla\pi_g\|_{L^2(\Omega)}^2
		\leq
		\|\omega_\varepsilon\nabla g\|_{L^2(\Omega)}
		\|\eta\omega_\varepsilon\nabla\pi_g\|_{L^2(\Omega)}
		+
		C\|\nabla\mathbf z_g\|_{L^2(\Omega)}
		\|\eta\omega_\varepsilon\nabla\pi_g\|_{L^2(\Omega)}.
	\end{align*}
	Consequently, Young's equality yields
	\begin{align}
		\|\eta\omega_\varepsilon\nabla\pi_g\|_{L^2(\Omega)}
		\leq
		C\left(
		\|g\|_{L^2(\Omega)}
		+
		\|\omega_\varepsilon\nabla g\|_{L^2(\Omega)}
		\right).
		\label{eq:interior-weighted-pressure-lifting}
	\end{align}
	
	It remains to estimate the pressure in the fixed boundary neighborhood. There
	$\omega_\varepsilon\in [1,2]$. The local boundary
	regularity estimate for the Stokes system with prescribed divergence (cf.\ e.g.\ \cite[Theorem~IV.5.2 and Exercise~IV.5.2]{Galdi}) gives 
	\begin{align}
		\|\nabla^2\mathbf z_g\|_{L^2(\Omega_{\delta'})}
		+
		\|\nabla\pi_g\|_{L^2(\Omega_{\delta'})}
		\leq
		C\Bigl(
		\|g\|_{H^1(\Omega_{2\delta'})}
		+
		\|\mathbf z_g\|_{H^1(\Omega)}
		+
		\|\pi_g\|_{L^2(\Omega)}
		\Bigr).
		\label{eq:boundary-Stokes-lifting}
	\end{align}
	Combining \eqref{eq:Dirichlet-Stokes-lifting-low-order} and
	\eqref{eq:boundary-Stokes-lifting} implies
	\begin{align}
		\|\omega_\varepsilon\nabla\pi_g\|_{L^2(\Omega_{\delta'})}
		\leq
		C\left(
		\|g\|_{L^2(\Omega)}
		+
		\|\omega_\varepsilon\nabla g\|_{L^2(\Omega)}
		\right),
		\label{eq:boundary-weighted-pressure-lifting}
	\end{align} 
	which thereafter proves
	\begin{align}
		\|\omega_\varepsilon\nabla\pi_g\|_{L^2(\Omega)}
		\leq
		C\left(
		\|g\|_{L^2(\Omega)}
		+
		\|\omega_\varepsilon\nabla g\|_{L^2(\Omega)}
		\right).
		\label{eq:global-weighted-pressure-lifting}
	\end{align}
	
	Finally, applying Proposition~\ref{prop:weightedEllipticReg} with $k=1$ componentwise for $ \nabla\pi_g=\Delta\mathbf z_g $ gives
	\begin{align*}
		\|\omega_\varepsilon\nabla^2\mathbf z_g\|_{L^2(\Omega)}
		\leq
		C\left(
		\|(\mathbf z_g,\nabla\mathbf z_g,\omega_\varepsilon\nabla\pi_g)\|_{L^2(\Omega)}
		\right)
		\leq
		C\left(
		\|g\|_{L^2(\Omega)}
		+
		\|\omega_\varepsilon\nabla g\|_{L^2(\Omega)}
		\right).
	\end{align*}
	Together with \eqref{eq:global-weighted-pressure-lifting}, this proves
	\eqref{eq:Dirichlet-Stokes-lifting-weighted}.
	
	It remains to prove \eqref{eq:weighted-first-order-lifting}. Assume
	\eqref{eq:divergence-representation} and define $ \mathbf y_g\coloneqq\mathbf z_g-\mathbf a $.
	Then
	\begin{alignat*}{3}
		-\Delta\mathbf y_g
		&=-\nabla\pi_g
		+ \Delta\mathbf a,
		\quad
		\Div \mathbf y_g=0
		&&\quad\text{in }\Omega,\\
		\mathbf y_g&=0
		&&\quad\text{on }\partial\Omega.
	\end{alignat*}
	Testing above equation with $\omega_\varepsilon^2 \mathbf y_g$ yields
	\begin{align*}
		\|\omega_\varepsilon\nabla\mathbf y_g\|_{L^2(\Omega)}^2
		\leq
		C\left(
		\|\mathbf y_g\|_{L^2(\Omega)}^2
		+
		\|\omega_\varepsilon\nabla\mathbf a\|_{L^2(\Omega)}^2
		\right).
	\end{align*}
	Moreover, by \eqref{eq:negative-Stokes-lifting},
	\begin{align*}
		\|\mathbf y_g\|_{L^2(\Omega)}
		\leq
		\|\mathbf z_g\|_{L^2(\Omega)}
		+
		\|\mathbf a\|_{L^2(\Omega)}
		\leq
		C\|\Div \mathbf a\|_{H^{-1}(\Omega)}
		+
		\|\mathbf a\|_{L^2(\Omega)}
		\leq
		C\|\mathbf a\|_{L^2(\Omega)}.
	\end{align*}
	Since $\mathbf z_g=\mathbf y_g+\mathbf a$, this proves
	\eqref{eq:weighted-first-order-lifting}.
\end{proof}

\subsection{Bi-linear Forms from the Allen--Cahn Operator}

For the following we introduce for $k\in\N_0$
\begin{alignat}{2}
	B_\eps^k (\tilde{u}, \tilde{v})&\coloneqq  \int_{\Sigma_{2\delta}}\left(\partial_r \tilde{u}\, \partial_r\tilde{v} + \tfrac{f''(\theta_0(\tfrac{r}\eps-h_\eps (s,t)))}{\eps^2} \tilde{u}\tilde{v}\right) \toeps^{2k}\tilde{J}_t\,\d(r,s), \label{bi-linear-1} \\
  B_\Sigma^k (\tilde{u},\tilde{v})&\coloneqq  \sum_{i,j=1}^d\int_{\Sigma_{2\delta}}a_{i,j}(\nabla_\Sigma \tilde{u})_i(\nabla_\Sigma \tilde{v})_j\toeps^{2k}\tilde{J}_t \,\d(r,s),  \label{bi-linear-2}\\
  B_\eps^k (\tilde u)&\coloneqq  B^k_\eps (\tilde u, \tilde{u}),\quad B_\Sigma^k (\tilde{u})=B_\Sigma^k (\tilde{u},\tilde{u}),\quad B_\eps(\cdot) = B_\eps^0(\cdot), \quad B_\Sigma (\cdot)= B_\Sigma^0(\cdot) \label{bi-linear-3}
 \end{alignat}
for all $\tilde{u},\tilde{v}\in H^1(\Sigma_{2\delta})$, where $\tilde{J}_t(r,s)=J(r,s,t)$ or $\tilde{J}_t(r,s)\equiv 1$, $\toeps= \sqrt{\eps^2+ r^2}$ and
\begin{align*}
	a_{i,j}(r,s,t)&\coloneqq\sum_{k=1}^{d} b_{i,k}(r,s,t) b_{j,k}(r,s,t)=\nabla S_i\cdot \nabla S_j  , \quad
	b_{i,k}(r,s,t)\coloneqq(\partial_{x_k} S_i)(X(r,s,t))            
\end{align*}
for all $i,j,k=1,\ldots, d$.

As in Lemma~\ref{lem:f''-decomposition} we use $f''(\theta_0(\rho))= \alpha +V(\rho)$, where $V(\rho)\to_{|\rho|\to \infty} 0$ exponentially. Then one obtains for every $k\in\N$
\begin{align}\nonumber
  B_\eps^k (\tilde{u}) &\geq c_0\int_{\Sigma_{2\delta}}\left(|\partial_r \tilde{u}|^2 + \tfrac{\alpha}{\eps^2} \tilde{u}^2\right) \toeps^{2k}\,\d(r,s)- C_0\int_{\Sigma_{2\delta}}\tfrac{\toeps^2}{\eps^2} |V(\tfrac{r}\eps-h_\eps)|\tilde u^2 \toeps^{2(k-1)}\,\d(r,s) \\\label{eq:CoercivBepsk}
  &\geq 
	c_0\|\toeps^k \partial_r \tilde{u}\|_{L^2(\Sigma_{2\delta})}^2
	+ c_0\alpha \|\tfrac{\toeps^k}{\eps}\tilde{u}\|_{L^2(\Sigma_{2\delta})}^2
	- C_0\|\toeps^{k-1} \tilde{u}\|_{L^2(\Sigma_{2\delta})}^2\\\nonumber
	B_\eps^k (\tilde{u},\tilde{v})
	&\leq C_0\|\toeps^k \partial_r \tilde{u}\|_{L^2(\Sigma_{2\delta})} \|\toeps^k \partial_r \tilde{v}\|_{L^2(\Sigma_{2\delta})}+ \alpha \|\tfrac{\toeps^k}{\eps}\tilde{u}\|_{L^2(\Sigma_{2\delta})}\|\tfrac{\toeps^k}{\eps}\tilde{v}\|_{L^2(\Sigma_{2\delta})}\\\nonumber
	&\quad + C_0\|\toeps^{k-1} \tilde{u}\|_{L^2(\Sigma_{2\delta})}\|\toeps^{k-1} \tilde{v}\|_{L^2(\Sigma_{2\delta})}\\\label{eq:BddBepsk}
	&\leq C\|(\toeps^k \partial_r \tilde{u},\tfrac{\toeps^k}{\eps}\tilde{u})\|_{L^2(\Sigma_{2\delta})}\|(\toeps^k \partial_r \tilde{v},\tfrac{\toeps^k}{\eps}\tilde{v})\|_{L^2(\Sigma_{2\delta})}
\end{align}
for some $c_0,C_0,C>0$ independent of $\eps$ and $\tilde{u}$, where we have used $1\leq \tfrac{\toeps}\eps$ as well as
\begin{equation}
  \label{eq:VEstim}
  \left|\tfrac{\toeps^2}{\eps^2}V(\tfrac{r}\eps -h_\eps(s,t))) \right|
  \leq (1+ 2(\tfrac{r}{\eps}-h_\eps(s,t))^2 + 2h_\eps(s,t)^2)\left|V(\tfrac{r}\eps -h_\eps(s,t)) \right|\leq C
\end{equation}
for all $r\in \R, s\in\Sigma, t\in [0,T_0],\eps\in (0,1]$.

For $u\in H^2(\Sigma_{2\delta})$ we define
\begin{equation}\label{eq:Leps}
  \widetilde{\mathcal{L}}_\eps\tilde{u}(r,s) = -\partial_r^2 \tilde{u}(r,s) + \tfrac1{\eps^2} f''(\theta_0(\tfrac{r}\eps-h_\eps(s,t)))\tilde{u}(r,s)
\end{equation}
for every $(r,s)\in\Sigma_{2\delta}$, $t\in [0,T_0]$, $\eps\in (0,1]$. Then we have for every $k\geq 2$ 
      \begin{align*}
           \|\toeps^{k}\widetilde{\mathcal{L}}_\eps \tilde u\|_{L^2(\Sigma_{2\delta})}&\geq \|\toeps^{k}(-\partial_r^2+\tfrac{\alpha}{\eps^2}) \tilde u\|_{L^2(\Sigma_{2\delta})}-c_k\|\toeps^{k-2}\tilde u\|_{L^2(\Sigma_{2\delta})}\\
            \|\toeps^{k}\widetilde{\mathcal{L}}_\eps \tilde u\|_{L^2(\Sigma_{2\delta})}&\leq \|\toeps^{k}(-\partial_r^2+\tfrac{\alpha}{\eps^2}) \tilde u\|_{L^2(\Sigma_{2\delta})}+ C_k\|\toeps^{k-2}\tilde u\|_{L^2(\Sigma_{2\delta})}
      \end{align*}
      for some $c_k,C_k>0$ independent of $\tilde u\in H^2(\Sigma_{2\delta})\cap H^1_0(\Sigma_{2\delta})$ and $\eps \in (0,1]$
      because of \eqref{eq:VEstim}. Here
      \begin{align*}
        \|\toeps^{k}(-\partial_r^2+\tfrac{\alpha}{\eps^2}) \tilde u\|_{L^2(\Sigma_{2\delta})}^2 &= \|\toeps^{k}\partial_r^2 \tilde u\|_{L^2(\Sigma_{2\delta})}^2 + \alpha^2\|\tfrac{\toeps^{k}}{\eps^2} \tilde u\|_{L^2(\Sigma_{2\delta})}^2 + 2\alpha\| \tfrac{\toeps^k}\eps \partial_r \tilde u\|_{L^2(\Sigma_{2\delta})}^2\\
        &\quad + 4k\alpha \int_{\Sigma_{2\delta}}\tfrac{\toeps^{2k-1}}{\eps^2}\partial_r \toeps \tilde u \partial_r \tilde u \,\d(r,s)  
      \end{align*}
      since $\tilde u|_{\partial\Sigma_{2\delta}}=0$, where $\partial_r \toeps =\tfrac{r}{\toeps}$, and
      \begin{align*}
        \left|2\int_{\Sigma_{2\delta}}\tfrac{\toeps^{2k-1}}{\eps^2}\partial_r \toeps \tilde u \partial_r \tilde u \,\d(r,s)\right|= \left|\int_{\Sigma_{2\delta}}\tfrac{\partial_r (\toeps^{2k-1}\partial_r \toeps)}{\eps^2}\tilde u^2  \,\d(r,s)\right|\leq C\|\tfrac{\toeps^{k}}{\eps^2} \tilde u\|_{L^2(\Sigma_{2\delta})}\|\toeps^{k-2} \tilde u\|_{L^2(\Sigma_{2\delta})}.
      \end{align*}
      Hence
        \begin{align}\label{eq:EquivLr1}
 \|(\toeps^{k}\partial_r^2 \tilde u,\tfrac{\toeps^{k}}{\eps^2} \tilde u,\tfrac{\toeps^k}\eps \partial_r \tilde u)\|_{L^2(\Sigma_{2\delta})}      &\leq   c_k\left(   \|\toeps^{k}\widetilde{\mathcal{L}}_\eps \tilde u\|_{L^2(\Sigma_{2\delta})} +\|\toeps^{k-2}\tilde u\|_{L^2(\Sigma_{2\delta})}\right),\\ \label{eq:EquivLr2}
            \|\toeps^{k}\widetilde{\mathcal{L}}_\eps \tilde u\|_{L^2(\Sigma_{2\delta})}&\leq C_k \|(\toeps^{k}\partial_r^2 \tilde u,\tfrac{\toeps^{k}}{\eps^2} \tilde u)\|_{L^2(\Sigma_{2\delta})}
      \end{align}
for some $c_k, C_k>0$ independent of $\tilde u\in H^2(\Sigma_{2\delta})\cap H^1_0(\Sigma_{2\delta})$ and $\eps \in (0,1]$ provided $k\geq 2$.

      \subsection{Elliptic Regularity Close to the  Interface}
      We consider the auxiliary elliptic problem  in  $\Sigma_{2\delta}$
\begin{align}\label{eq:TransformElliptic}
	- \widetilde{\Delta}_{\Sigma} \tilde{u}+\widetilde{\mathcal{L}}_\eps \tilde{u} =\tilde{g},\ \ \tilde u\in H^2(\Sigma_{2\delta})\cap H^1_0(\Sigma_{2\delta}),
\end{align}
with $\tilde g \in L^2(\Sigma_{2\delta})$,  where 
\begin{align}
	\widetilde{\Delta}_{\Sigma}\tilde{u}(r,s) &=  \sum_{i,j=1}^{d}(\nabla_\Sigma)_i\left(\nabla S_i\cdot \nabla S_j|_{x=X(r,s,t)}  (\nabla_\Sigma)_j  \tilde{u}\right),\label{eq:Deltaeps}
\end{align}
and $\widetilde{\mathcal{L}}_\eps$ is as in \eqref{eq:Leps}.

Now we present elliptic estimates regarding \eqref{eq:TransformElliptic}.
\begin{prop}
	\label{prop:EllipticEstim}
	Let $ \tilde{u} $ be a solution to \eqref{eq:TransformElliptic} and $ k \in \bbn $. Then it follows
	\begin{align}
		& \|(\toeps^k\nabla_\Sigma^2\tilde{u}, \toeps^k\partial_r \nabla_\Sigma \tilde{u}, \toeps^k\widetilde{\mathcal{L}}_\eps \tilde u,\tfrac{\toeps^k}{\eps} \nabla_\Sigma \tilde{u})\|_{L^2(\Sigma_{2\delta})}^2\nonumber\\
		\label{eq:EllipticEstim}
		&\quad \leq C_k\left(\|\toeps^k \tilde{g}\|_{L^2(\Sigma_{2\delta})}^2+\|(\toeps^{k-1}\tilde u,\toeps^{k-1}\nabla_\Sigma \tilde{u}, \toeps^k\partial_r \tilde{u})\|_{L^2(\Sigma_{2\delta})}^2\right),
	\end{align} 
	where $ C_k > 0 $ does not depend on $ \eps \in (0,1] $.
      \end{prop}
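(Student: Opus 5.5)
We prove \eqref{eq:EllipticEstim} with a weighted energy method that mirrors the proof of Proposition~\ref{prop:weightedEllipticReg}, but adapted to the anisotropic structure of $-\widetilde{\Delta}_\Sigma+\widetilde{\mathcal{L}}_\eps$. The key observation is that the weight $\toeps$ depends only on $r$, while $\widetilde{\Delta}_\Sigma$ only differentiates in $s$. Tangential derivatives therefore commute with the weight, and they commute with $\widetilde{\mathcal{L}}_\eps$ up to the lower order term coming from $\nabla_\Sigma h_\eps$ in $f''(\theta_0(\tfrac r\eps-h_\eps))$. We test \eqref{eq:TransformElliptic} with $\toeps^{2k}\widetilde{\mathcal{L}}_\eps\tilde u$. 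This gives
\begin{equation*}
\|\toeps^k\widetilde{\mathcal{L}}_\eps\tilde u\|_{L^2}^2+\int_{\Sigma_{2\delta}}(-\widetilde{\Delta}_\Sigma\tilde u)\,\widetilde{\mathcal{L}}_\eps\tilde u\,\toeps^{2k}\,\d(r,s)=\int_{\Sigma_{2\delta}}\tilde g\,\widetilde{\mathcal{L}}_\eps\tilde u\,\toeps^{2k}\,\d(r,s),
\end{equation*}
and the right-hand side is absorbed by Cauchy--Schwarz and Young.

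The main work is the cross term. We split $\widetilde{\mathcal{L}}_\eps\tilde u=-\partial_r^2\tilde u+\eps^{-2}f''(\theta_0)\tilde u$ and integrate by parts in $s$ in both pieces. Boundary terms vanish because $\Sigma$ is closed and $\tilde u\in H^1_0(\Sigma_{2\delta})$ in $r$; strictly one should first regularize, e.g.\ by difference quotients in $s$.

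For the piece $-\partial_r^2\tilde u$ we additionally integrate by parts in $r$. This yields $\int a_{ij}\partial_r(\nabla_\Sigma)_i\tilde u\,\partial_r(\nabla_\Sigma)_j\tilde u\,\toeps^{2k}$, which controls $\|\toeps^k\partial_r\nabla_\Sigma\tilde u\|^2$ by uniform ellipticity of $(a_{ij})$. The errors involve $\partial_r a_{ij}$ and $\partial_r\toeps^{2k}$ with $|\partial_r\toeps^{2k}|\le C\toeps^{2k-1}$. They are bounded by $C\|\toeps^k\partial_r\nabla_\Sigma\tilde u\|\,\|\toeps^{k-1}\nabla_\Sigma\tilde u\|$, and terms like $\|\toeps^k\partial_r\tilde u\|$ may appear after further integration by parts in $s$.

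For the potential piece we obtain $\eps^{-2}\int a_{ij}(\nabla_\Sigma)_i\tilde u(\nabla_\Sigma)_j\tilde u\, f''(\theta_0)\toeps^{2k}$ plus a commutator containing $\eps^{-2}f'''(\theta_0)\theta_0'\nabla_\Sigma h_\eps\,\tilde u\,\nabla_\Sigma\tilde u\,\toeps^{2k}$. We write $f''(\theta_0)=\alpha+V$. The $\alpha$-part gives the good term $\alpha\|\tfrac{\toeps^k}\eps\nabla_\Sigma\tilde u\|^2$. For the $V$-part, \eqref{eq:VEstim} gives $\toeps^2\eps^{-2}|V|\le C$, so it is bounded by $C\|\toeps^{k-1}\nabla_\Sigma\tilde u\|^2$. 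The same decay bound applies to the commutator, since $\toeps^2\eps^{-2}|\theta_0'|\le C$. This controls it by $C\|\toeps^{k-1}\tilde u\|\,\|\toeps^{k-1}\nabla_\Sigma\tilde u\|$.

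After Young's inequality this yields the bound for $\toeps^k\widetilde{\mathcal L}_\eps\tilde u$, $\toeps^k\partial_r\nabla_\Sigma\tilde u$ and $\tfrac{\toeps^k}\eps\nabla_\Sigma\tilde u$. Note $\toeps^{k-1}\le C\toeps^{k}/\eps$ but we keep the $k-1$ terms on the right as stated.

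It remains to bound $\toeps^k\nabla_\Sigma^2\tilde u$. From the equation, $-\widetilde{\Delta}_\Sigma\tilde u=\tilde g-\widetilde{\mathcal{L}}_\eps\tilde u$ is already controlled in $\toeps^k$-weighted $L^2$, for each fixed $r$. Since $\toeps$ is constant in $s$, elliptic regularity on the closed manifold $\Sigma$, applied for a.e.\ $r$ uniformly (coefficients $a_{ij}(r,\cdot,t)$ are smooth and uniformly elliptic), gives
\begin{equation*}
\|\nabla_\Sigma^2\tilde u(r,\cdot)\|_{L^2(\Sigma)}\le C\big(\|\widetilde{\Delta}_\Sigma\tilde u(r,\cdot)\|_{L^2(\Sigma)}+\|\tilde u(r,\cdot)\|_{L^2(\Sigma)}\big).
\end{equation*}
Multiplying by $\toeps(r)^{2k}$ and integrating in $r$ finishes the argument, using $\toeps^k\le C\toeps^{k-1}$.

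The expected obstacle is the cross term: checking that every commutator carries either a compensating power of $\toeps$ or the exponential decay of $\theta_0'$ and $V$. Each $1/\eps$ must be paired with a $\toeps$ through \eqref{eq:VEstim}-type bounds, so that only the lower-order quantities listed on the right of \eqref{eq:EllipticEstim} appear.
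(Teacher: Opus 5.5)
Your proof is correct, but it takes a different route from the paper's. You reverse which quantity comes from the energy identity and which is read off from the equation.

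The paper tests \eqref{eq:TransformElliptic} with $-\toeps^{2k}\partial_{s_l}^2\tilde u$, summed over $l$. After integrating by parts, this directly gives the tangential Hessian $\|\toeps^k\nabla_\Sigma^2\tilde u\|^2$, via ellipticity of $(a_{i,j})$ and commutators $b_{i,j,l}$ coming from $\nabla_\Sigma a_{i,j}$. It also gives $B_\eps^k(\nabla_\Sigma\tilde u)$, whose coercivity \eqref{eq:CoercivBepsk} yields $\toeps^k\partial_r\nabla_\Sigma\tilde u$ and $\tfrac{\toeps^k}{\eps}\nabla_\Sigma\tilde u$. Only then is $\toeps^k\widetilde{\mathcal L}_\eps\tilde u$ obtained by comparison in the equation.

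You test with $\toeps^{2k}\widetilde{\mathcal L}_\eps\tilde u$ instead. This puts $\|\toeps^k\widetilde{\mathcal L}_\eps\tilde u\|^2$ on the left immediately. The cross term becomes an $a_{i,j}$-weighted version of $B_\eps^k(\nabla_\Sigma\tilde u)$, with commutators from $\partial_r a_{i,j}$ and $\partial_r\toeps^{2k}$ rather than from $\nabla_\Sigma a_{i,j}$. You then recover $\toeps^k\nabla_\Sigma^2\tilde u$ slice by slice from $\widetilde\Delta_\Sigma\tilde u=\widetilde{\mathcal L}_\eps\tilde u-\tilde g$, using $H^2$-regularity on the closed manifold $\Sigma$. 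This works because $\toeps$ does not depend on $s$ and $(a_{i,j}(r,\cdot,t))$ is smooth and uniformly elliptic on tangent vectors, uniformly in $(r,t)$.

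The trade-off is as follows:
\begin{itemize}
\item The paper's argument is self-contained, a weighted Bochner-type identity that needs no external regularity theorem.
\item Your argument never commutes $\partial_{s_l}$ through the variable-coefficient operator $\widetilde\Delta_\Sigma$; its only commutator with $\widetilde{\mathcal L}_\eps$ is the $\nabla_\Sigma h_\eps$ term. The price is invoking tangential elliptic regularity with parameter-uniform constants.
\end{itemize}

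Both routes need the same regularization in $s$ to justify the third-order mixed derivatives. Both control the potential and commutator terms identically, through \eqref{eq:VEstim} and $\toeps^2\eps^{-2}|\theta_0'|\le C$.
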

\begin{proof}
	We test \eqref{eq:TransformElliptic} with $-\toeps^{2k}(\nabla_{\Sigma}(\nabla_{\Sigma}\tilde{u})_l)_l=-\toeps^{2k}\partial^2_{s_l}\tilde{u}$, $l=1,\ldots,d$, $k\in\N$, and integrate with respect to $(r,s)\in \Sigma_{2\delta}$. Using integration by parts, this yields
	\begin{align*}
		& \sum_{i,j=1}^d\int_{\Sigma_{2\delta}}a_{i,j}(\nabla_\Sigma(\nabla_{\Sigma}\tilde{u})_l)_i (\nabla_\Sigma(\nabla_{\Sigma}\tilde{u})_l)_j \toeps^{2k}\, \d(r,s)+B_\eps^k(\nabla_{\Sigma}\tilde{u}) \\ 
		&= -\sum_{i,j=1}^d\int_{\Sigma_{2\delta}}
                  b_{i,j,l}(\nabla_{\Sigma}\tilde{u})_i(\nabla_\Sigma(\nabla_{\Sigma}\tilde{u})_l)_j \toeps^{2k} \, \d(r,s)\\
		&\quad + 2k\int_{\Sigma_{2\delta}} \toeps^k \partial_r \tilde u\,  \toeps^{k-1}\tilde u\,  \partial_r \toeps \,\d(r,s) -\int_{\Sigma_{2\delta}}\toeps^k \tilde{g}\toeps^k(\nabla_{\Sigma}(\nabla_{\Sigma}\tilde{u})_l)_l\, \d(r,s)\\
          &\quad + \int_{\Sigma_{2\delta}} f'''(\theta_0(\tfrac{r}\eps-h_\eps))\tfrac{\toeps^2}{\eps^2}\theta_0'(\tfrac{r}\eps-h_\eps)(\nabla_\Sigma h_\eps)_l \toeps^{2k-2} \tilde u (\nabla_{\Sigma}\tilde{u})_l\, \d(r,s)
	\end{align*}
        for some uniformly bounded $b_{i,j,l}\colon \Sigma_{2\delta}\to \R$ arising from derivatives of $a_{i,j}$ and terms related to $\Sigma$.
	Here $\frac{\toeps^2}{\eps^2}\theta_0'(\tfrac{r}\eps-h_\eps)(\nabla_\Sigma h_\eps)_l$ is uniformly bounded as well.
	Summation with respect to $l=1,\ldots,d$, using Young's inequality, and the ellipticity of $\widetilde{\Delta}_{\Sigma}$ yields
	\begin{align*}
		& \|\toeps^k\nabla_\Sigma^2\tilde{u}\|_{L^2(\Sigma_{2\delta})}^2+ B_\eps^k(\nabla_{\Sigma}\tilde{u})  \leq C\left(\|\toeps^k \tilde{g}\|_{L^2(\Sigma_{2\delta})}^2+\|(\toeps^{k-1}\tilde u,\toeps^{k-1}\nabla_\Sigma \tilde{u}, \toeps^k \partial_r\tilde{u})\|_{L^2(\Sigma_{2\delta})}^2\right).
	\end{align*} 
	Furthermore, we use \eqref{eq:TransformElliptic} to estimate $\widetilde{\mathcal{L}}_\eps \tilde{u}$ and
	\begin{align*}
		B_\eps^k(\nabla_{\Sigma}\tilde{u})        &\geq 
		c_k\|\toeps^k \partial_r \nabla_\Sigma \tilde{u}\|_{L^2(\Sigma_{2\delta})}^2
		+ c_k\alpha \|\tfrac{\toeps^k}{\eps} \nabla_\Sigma \tilde{u}\|_{L^2(\Sigma_{2\delta})}^2
		-C_k\|\toeps^{k-1}\nabla_{\Sigma}\tilde{u}\|_{L^2(\Sigma)}^2.
	\end{align*}
	We then obtain the desired estimate \eqref{eq:EllipticEstim}, which completes the proof.
\end{proof}

      \begin{rem}
      In the case $k\geq 2$ \eqref{eq:EllipticEstim} implies
        	\begin{align}
		& \|(\toeps^k\nabla^2\tilde{u},\tfrac{\toeps^k}{\eps} \nabla \tilde{u}, \tfrac{\toeps^k}{\eps^2} \tilde{u} )\|_{L^2(\Sigma_{2\delta})}^2\nonumber\\
		\label{eq:EllipticEstim2}
		&\quad \leq C_k\left(\|\toeps^k \tilde{g}\|_{L^2(\Sigma_{2\delta})}^2+\|(\toeps^{k-2}\tilde u,\toeps^{k-1}\nabla_\Sigma \tilde{u}, \toeps^k\partial_r \tilde{u})\|_{L^2(\Sigma_{2\delta})}^2\right),
	\end{align} 
	where $ C_k > 0 $ does not depend on $ \eps \in (0,1] $, because of \eqref{eq:EquivLr1}.
      \end{rem}

\section{Energy-Type Estimates for a Modified Linearized System}\label{linearized system}
      In the first step we will consider the following modified linear system
\begin{subequations}
	\label{eq:linNSAC'}
	\begin{alignat}{2}\nonumber
		\partial_t \ol{\we} +\ve_A\cdot \nabla \ol\we&+\ol\we\cdot \nabla \ve_A-\Div(2\nu(c_A)D\ol\we) - \Div(2\nu'(c_A)\ol uD\ve_A) +\nabla \ol{q}\\\label{eq:linNSAC1'}
		& = -\eps \Div (\nabla \ol{u} \otimes \nabla c_A+\nabla c_A \otimes \nabla \ol{u})+\ol{\mathbf{r}}_1&\ & \text{in}\ Q_T,\\\label{eq:linNSAC2'}
		\Div \ol{\we}& =\ol{r}_2&\ & \text{in}\ Q_T,\\\label{eq:linNSAC3'}
		\partial_t \ol{u} +\ve_A\cdot \nabla \ol{u}&+ (\ol{\we} -\ol{\we}|_{\Gamma_t})\cdot \nabla c_A  =\Delta \ol{u} - \tfrac1{\eps^{2}} f''(c_A)\ol{u}+\ol{r}_3 &\ & \text{in}\ Q_T,\\
		\label{eq:linNSAC4'}
		(\ol{\we},\ol{u})|_{\partial\Omega}&= (\boldsymbol{0},0)&\ & \text{on }S_T,\\
		\label{eq:linNSAC5'}
		(\ol\we,\ol{u}) |_{t=0}& = (\ol{\we}_0,\ol{u}_0)&\ & \text{in }\Omega,
	\end{alignat}
      \end{subequations}
      where $T\in (0,T_0]$ and $\ol{\we}|_{\Gamma_t}:= \ol{\we}\circ P_{\Gamma_t}$.
      Here the additional term $\ol{\we}|_{\Gamma_t}\cdot \nabla c_A$ in \eqref{eq:linNSAC3'} leads to a decoupling of the Navier--Stokes system and Allen--Cahn equation ``in highest order'' concerning the asymptotics as $\eps\to 0$. We will reduce to this system in Section~\ref{sec:FullSystem} below by subtracting a certain leading part $(\we_A,u_A)$ of the solution $(\we,u)$ of \eqref{eq:linNSAC}.

      For the following analysis we assume that
      \begin{equation}
        \label{eq:v-1-eps}
        \ve_A= \ve_0 + \eps \ve_{1,\eps},
      \end{equation}
      where $\|\ve_{1,\eps}\|_{C^1(\ol\Omega)}$ is uniformly bounded in $\eps\in (0,1]$ and $t\in[0,T]$,
      \begin{align*}
        \mathbf{v}_0(x,t)&=\zg(x,t) \hv_0(\rho,x,t)+(1-\zg(x,t) )\left(\ve^{+}(x,t)\chi_{\Omega^+}(x,t) +\ve^-(x,t)\chi_{\Omega^-}(x,t)\right),\\
        \hv_0(\rho,x,t)&=\mathbf{v}^+(x,t)\eta(\rho)+\mathbf{v}^-(x,t)(1-\eta(\rho)),
      \end{align*}
 where $\rho$ is as in \eqref{eq:StretchedVariable}. In particular, if $\ve_A$ is as in Remark~\ref{rem:PropertiesAproxSol}, this is satisfied.
 \begin{rem}\label{rem:vAC1bounded}
   We note that, since $\ve^\pm, h_{\varepsilon}$ are smooth and uniformly bounded, we have
 \begin{align}
 	|\nabla\ve_0|&\leq \frac{1}{\varepsilon}\zg\left|\frac{\ve^+-\ve^-}{d_{\Gamma}}\cdot d_{\Gamma}\eta_{}'(\rho)\right|+\zg\big|(\ve^+-\ve^-)\eta_{}'(\rho) \nabla^{\Gamma}h_{\varepsilon}\big|+C\nonumber\\
 	&\leq C\zg\big|\eta_{}'(\rho)(\rho+h_{\varepsilon})\big|+C\leq \tilde{C}\qquad \text{in }Q_{T_0}.
 \end{align}
 which implies that  $\nabla \ve_A$ is uniformly bounded.  Similarly, we have
\begin{align*}
	\hv_0(\rho,x,t)-\bv^+(x,t)
	&=
	(1-\eta(\rho))
	\bigl(\bv^-(x,t)-\bv^+(x,t)\bigr)\\
	&=
	-(1-\eta(\rho))d_\Gamma(x,t)\bu_0(x,t) = O(\eps), \quad x\in\Om^+(t)\cap\Gamma_t(2\delta), \\
    \hv_0(\rho,x,t)-\bv^-(x,t)
	&=
	\eta(\rho)\bigl(\bv^+(x,t)-\bv^-(x,t)\bigr)\\
	&=
	\eta(\rho)d_\Gamma(x,t)\bu_0(x,t) = O(\eps), \quad x\in\Om^-(t)\cap\Gamma_t(2\delta),
\end{align*}
by the exponential decay of $\eta(\rho)$ and $d_\Gamma = \eps(\rho + h_\eps)$, where $\ue_0$ is as in Remark~\ref{rem:PropertiesAproxSol}.
Consequently, 
 \begin{align*}
    \mathbf{v}_A=\ve^{+}\chi_{\Omega^+} +\ve^-\chi_{\Omega^-}+O(\varepsilon) \quad \text{ in } L^\infty(\Omega \times (0,T_0)).
 \end{align*} 
 \end{rem}

In this section we will show:
\begin{theorem}\label{thm:main-0}
  Let $\eps\in (0,1]$, let 
  \begin{align*}
  \ol\we_0&\in H^1_0(\Omega)^d, \quad \ol u_0\in L^2(\Omega),\quad  \ol{\mathbf{r}}_1\in L^2(0,T;V'_\sigma),  \quad   \ol r_3 \in L^2(Q_T), \\
    \ol r_2 &\in L^2(0,T;H^1_{(0)}(\Omega))\cap H^{1}(0,T;H^{-1}_{(0)}(\Omega))\quad \text{with }\ol r_2|_{t=0}= \Div \ol\we_0,
  \end{align*}
  and $(\ol \we, \ol u)$ with
  \begin{align*}
     \ol \we \in C([0,T];L^2(\Omega)^d)\cap L^2(0,T; H^1_0(\Omega)^d),\quad \ol u \in L^2(0,T;H^1_0(\Omega))\cap H^1(0,T;H^{-1}(\Omega))
  \end{align*}
  be the unique weak solution of \eqref{eq:linNSAC'}. 
   Then there exist $C>0$ and $\eps_1\in (0,1]$ independent of $(\ol \we,\ol u)$ and $(\ol\we_0,\ol u_0, \ol r_1,\ol r_2, \ol r_3)$ such that 
   \begin{align}
      	&\|(\ol{u},\ol{\we})\|_{L^\infty(0,T;L^2)}+\|\ol u\|_{L^2(0,T;\Veps)}+ \|\nabla_{\btau} \ol u\|_{L^2(0,T;L^2(\Gamma_t(2\delta)))}+\|(\oeps \nabla \ol u, \tfrac{\oeps}\eps \ol u,\nabla \ol{\we})\|_{L^2(Q_T)}
          \nonumber\\
        &\quad \leq C\left(\|\ol{u}_0\|_{L^2(\Omega)}
        +\|\ol\we_0\|_{H^1(\Omega)} + \|\ol{\mathbf{r}}_1\|_{L^2(0,T;V_\sigma')}
        + \|(\ol r_2,\oeps \nabla \ol r_2)\|_{L^2(Q_T)}\right.\nonumber\\
   &\qquad\qquad  \left. + \| \ol{r}_2\|_{H^1(0,T; H^{-1}_{(0)})} +\|\ol{r}_3\|_{L^2(0,T;(\Veps)')}+\|\oeps^2\ol{r}_3\|_{L^2(Q_T)}\right)\label{ineq:uw energy-1}
   \end{align}
   for all $\eps\in (0,\eps_1]$ and $T\in (0,T_0]$.
 \end{theorem}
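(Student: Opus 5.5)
I will derive \eqref{ineq:uw energy-1} by a coupled energy argument. First the inhomogeneous divergence is removed, then the unweighted energy estimate for $(\ol u,\ol\we)$ is closed with the spectral estimate, and finally a separate weighted test with $\oeps^2\ol u$ supplies $\oeps\nabla\ol u$ and $\tfrac{\oeps}{\eps}\ol u$.

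\emph{Step 1: lifting of the divergence.} For each $t$ let $\mathbf z(t)=\mathbf z_{\ol r_2(t)}$ be the solution from Lemma~\ref{lem:weighted-Dirichlet-Stokes-lifting} and set $\ol\we=\mathbf z+\mathbf y$ with $\Div\mathbf y=0$. Lemma~\ref{lem:weighted-Dirichlet-Stokes-lifting} controls
\begin{equation*}
\|\mathbf z\|_{L^\infty(0,T;L^2)}+\|\partial_t\mathbf z\|_{L^2(0,T;L^2)}\leq C\|\ol r_2\|_{H^1(0,T;H^{-1}_{(0)})},
\end{equation*}
where we use that $\partial_t\mathbf z=\mathbf z_{\partial_t\ol r_2}$ by linearity. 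It also controls $\|\nabla\mathbf z\|_{L^2(Q_T)}\le C\|\ol r_2\|_{L^2(Q_T)}$ and $\|\oeps\nabla^2\mathbf z\|_{L^2(Q_T)}\le C\|(\ol r_2,\oeps\nabla\ol r_2)\|_{L^2(Q_T)}$. Moving all terms containing $\mathbf z$ to the right-hand side gives data in $L^2(0,T;V_\sigma')$ for the $\mathbf y$-equation, and a term $(\mathbf z-\mathbf z|_{\Gamma_t})\cdot\nabla c_A$ in the $\ol u$-equation. The initial value $\mathbf y(0)=\ol\we_0-\mathbf z_{\Div\ol\we_0}$ is bounded in $L^2$ by $\|\ol\we_0\|_{H^1}$.

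\emph{Step 2: unweighted energy estimate.} Test the $\mathbf y$-equation with $\mathbf y$ and the $\ol u$-equation with $\ol u$, and add the results. The convective terms are harmless since $\nabla\ve_A$ is bounded (Remark~\ref{rem:vAC1bounded}) and $\Div\ve_A=O(\eps^{N+2})$. Theorem~\ref{thm:Spectral}, with the definition \eqref{eq:VepsNorm}, gives $\tfrac12\ddt\|\ol u\|_{L^2}^2+\|\ol u\|_{\Veps}^2\le C\|\ol u\|_{L^2}^2+\dots$, together with control of $\nabla_\btau\ol u$ via \eqref{eq:VepsChar}.

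The dangerous couplings are $-\eps\Div(\nabla\ol u\otimes\nabla c_A+\nabla c_A\otimes\nabla\ol u)$ tested with $\mathbf y$, the term $\Div(2\nu'(c_A)\ol u D\ve_A)$, and $(\ol\we-\ol\we|_{\Gamma_t})\cdot\nabla c_A$ tested with $\ol u$. For the last one, write $\ol\we-\ol\we|_{\Gamma_t}=\int_0^r\partial_r\ol\we$ and use $|d_\Gamma\nabla c_A|\le C|\rho\,\theta_0'(\rho)|+O(\eps)$. This gives a bound $C\|\nabla\ol\we\|_{L^2}\|\ol u\|_{L^2}$, possibly with an extra $\sqrt\eps$, which is absorbed into the viscous dissipation $\nu_0\|\nabla\mathbf y\|^2$ (Korn's inequality) and Gronwall. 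This is exactly why the modified term was introduced.

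For the capillary term, integrate by parts to get $\eps\int(\nabla\ol u\otimes\nabla c_A):\nabla\mathbf y$, and decompose $\ol u=Q\ol u+P\ol u$ via Corollary~\ref{cor:Pu}. The $P$-part satisfies $\|\eps\nabla c_A\,\nabla P\ol u\|\le C\|\ol u\|_{\Veps}$ after the rescaling of Lemma~\ref{lem:rescale}. The $Q$-part has the structure $\theta_0'\theta_0''\,\tilde w(s)/\sqrt\eps$, whose normal component is an exact $\rho$-derivative. It is therefore handled by Lemma~\ref{lem:f'-phi-x-dependent}, or by Lemma~\ref{lem:EstimMeanValueFree}, yielding $C\|\tilde w\|_{H^1}\|\mathbf y\|_{H^1}$ with a small factor. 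Both parts are absorbed by Young's inequality, after making $\eps_1$ small. The term $\ol r_3$ is paired through $(\Veps)'$, and $\ol{\mathbf r}_1$ through $V_\sigma'$. Gronwall then yields all unweighted quantities on the left of \eqref{ineq:uw energy-1}.

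\emph{Step 3: weighted estimate.} Test the $\ol u$-equation with $\oeps^2\ol u$. Using Lemma~\ref{lem:f''-decomposition} and the coercivity \eqref{eq:CoercivBepsk} with $k=1$, we get
\begin{equation*}
\|\oeps\nabla\ol u\|^2+\|\tfrac{\oeps}{\eps}\ol u\|^2\lesssim\|\ol u\|^2+\text{(commutator }\nabla\oeps^2\cdot\nabla\ol u\,\ol u)+\dots .
\end{equation*}
The commutator is bounded by $\|\oeps\nabla\ol u\|\,\|\ol u\|$, using $|\nabla\oeps|\le1$. The source term gives $\|\oeps^2\ol r_3\|\,\|\ol u\|$. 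The coupling term $\oeps^2(\ol\we-\ol\we|_\Gamma)\cdot\nabla c_A\,\ol u$ is now easier, since $\oeps|\nabla c_A|\le C$ by the decay of $\theta_0'$.

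I expect the main obstacle to be Step 2, specifically the capillary coupling $\eps\nabla\ol u\otimes\nabla c_A$ in the velocity energy. It is formally of order $\eps^{-1/2}$ relative to the energy norms. Closing it requires the cancellation that comes from the kernel direction $\theta_0'$ (mean-value-free structure) together with the $\ol\we|_{\Gamma_t}$ modification. I would check this part carefully first, treating $Q\ol u$ explicitly in $(r,s)$-coordinates.
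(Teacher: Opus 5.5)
Your Steps 1 and 3 are sound. Your bound $C\|\nabla\ol{\we}\|_{L^2}\|\ol u\|_{L^2}$ for the $(\ol\we-\ol\we|_{\Gamma_t})\cdot\nabla c_A$ coupling is also correct; it follows from $|\ol\we(r,s)-\ol\we(0,s)|\le\sqrt{|r|}\,\|\partial_r\ol\we(\cdot,s)\|_{L^2}$ and $\|\sqrt{|r|}\,\theta_0'(\tfrac r\eps-h_\eps)\|_{L^2_r}\le C\eps$. The gap is the $Q$-part of the capillary coupling in Step 2.

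First, the scaling is off. Since $\eps\nabla c_A\approx\theta_0'(\rho)\nn$ and $\partial_\nn Q\ol u\approx(\eps\sigma)^{-1/2}\eps^{-1}\theta_0''(\rho)\tilde w(s)$, the leading term is
\begin{equation*}
\frac{1}{\sqrt{\sigma}\,\eps^{3/2}}\int_{\Gamma_t(2\delta)}\theta_0'(\rho)\theta_0''(\rho)\,\tilde w(s)\,(\nn\otimes\nn:\nabla\mathbf{y})\,dx .
\end{equation*}
Its naive size is $\eps^{-1}\|\tilde w\|_{L^2(\Sigma)}\|\nabla\mathbf y\|_{L^2}$, not $O(1)$ and certainly not small. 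Lemma~\ref{lem:f'-phi-x-dependent} does recover the missing $\eps^{3/2}$, but only by integrating by parts in the normal direction. That costs $\|\partial_\nn\Psi\|_{L^2}$ of the test function, and here $\Psi=\tilde w\,\nn\otimes\nn:\nabla\mathbf y$. You would therefore need $\partial_\nn^2\mathbf y\in L^2$, which is not available at the energy level. Lemma~\ref{lem:EstimMeanValueFree} has the same defect, since it is an estimate in $(\Veps(2\delta))'$, i.e., against $H^1$ test functions. So, as proposed, this term does not close. The $\ol\we|_{\Gamma_t}$ modification does not help here either, since it sits in the $\ol u$-equation.

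The missing idea, which the paper uses, is the divergence constraint. Since $\Div\mathbf y=0$, one has $\nn\otimes\nn:\nabla\mathbf y=-\Div_\btau\mathbf y$, and this is the real reason the divergence must be lifted off before testing. The dangerous term therefore becomes $\int\zg\theta_0'(\rho)\,\partial_\nn\ol u\,\Div_\btau\mathbf y\,dx$. One then integrates by parts \emph{tangentially}, moving the derivative onto $\tilde w$ (controlled in $H^1(\Sigma)$ by Corollary~\ref{cor:Pu}) and onto $h_\eps$ (which again produces exact $\rho$-derivatives). Only after that does one exploit the exact $\rho$-derivative structure, with $\mathbf y\in H^1$ as test function; the paper refers to the argument of Abels--Marquardt for this step. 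The outcome is $C\|\ol u\|_{\Veps}\|\nabla\mathbf y\|_{L^2}$ with an $O(1)$ constant, and the same holds for your $P$-part.

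Hence ``absorb after making $\eps_1$ small'' is not the right mechanism. You must weight the $\ol u$-energy by a large constant $K$, so that $K\|\ol u\|_{\Veps}^2$ absorbs $C^2\nu_0^{-1}\|\ol u\|_{\Veps}^2$. This works in your scheme precisely because your $\ol u$-coupling bound involves only $\|\ol u\|_{L^2}$ and can be sent to Gronwall. The paper distributes the smallness differently: it extracts $\ol r_2$ from $\partial_\nn(\ol\we\cdot\nn)$ and gains a factor $\sqrt\eps$ in the $\ol u$-coupling. That lets it combine the separately derived velocity and phase-field estimates directly.
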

 
We note that existence of a unique weak solution of the linear system \eqref{eq:linNSAC'} in the spaces stated in the theorem can be shown by standard methods.
First we prove an auxiliary results before we prove Theorem \ref{thm:main-0}.
\begin{prop}[Weighted estimate for the nonstationary Stokes equations]
	\label{prop:weighted-divergence-corrector}~\\
        Let $T\in(0,T_0]$ and $\varepsilon\in(0,1]$. Assume that
	\begin{align*}
		\ol{\bw}_0\in H^1_0(\Omega)^d, \quad \ol{r}_2\in L^2\bigl(0,T;H^1_{(0)}(\Omega)\bigr)
		\cap H^1\bigl(0,T;H^{-1}_{(0)}(\Omega)\bigr)
	\end{align*}
        with $\ol{r}_2|_{t=0}=\Div \ol{\bw}_0$ in $L^2_{(0)}(\Omega)$.
	Then there is a pair $(\bw_1,q_1)$ with
	\begin{align*}
		\bw_1\in L^2\bigl(0,T;H^2(\Omega)^d\cap H^1_0(\Omega)^d\bigr)
		\cap H^1\bigl(0,T;L^2(\Omega)^d\bigr),\quad
		q_1\in L^2\bigl(0,T;H^1_{(0)}(\Omega)\bigr)
	\end{align*}
	that satisfies
	\begin{align*}
		\partial_t\bw_1-\Delta\bw_1+\nabla q_1&=0
		&&\text{in }Q_T,\\
		\Div \bw_1&=-\ol{r}_2
		&&\text{in }Q_T,\\
		\bw_1|_{\partial \Omega}&=0
		&&\text{on }\partial\Omega\times (0,T),\\
		\bw_1|_{t=0}&=-\ol{\bw}_0
		&&\text{in }\Omega.
	\end{align*}
	Moreover, there are constant $C>0$, $\eps_1\in (0,1]$ independent of $\varepsilon\in(0,\eps_1]$ and $T\in(0,T_0]$, such that
	\begin{align}
		& \|(\nabla\bw_1,\partial_t\bw_1,\oeps\nabla^2\bw_1)\|_{L^2(Q_T)} \notag\\
		& \qquad \leq C\Bigl(
		\|(\ol{r}_2,\oeps\nabla \ol{r}_2)\|_{L^2(Q_T)}
		+\|\partial_t \ol{r}_2\|_{L^2(0,T;H^{-1}_{(0)})}
		+\|\ol{\bw}_0\|_{H^1(\Omega)}
		\Bigr).
		\label{eq:w1Estim}
	\end{align}
\end{prop}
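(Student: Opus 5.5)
We split $\bw_1$ into a divergence-correcting part and a solenoidal part. Set $\bz(t):=\mathbf z_{-\ol r_2(t)}$ and $\pi(t):=\pi_{-\ol r_2(t)}$ as in Lemma~\ref{lem:weighted-Dirichlet-Stokes-lifting}, applied pointwise in time. Since the solution map $g\mapsto \mathbf z_g$ is linear and bounded both $L^2_{(0)}\to H^1_0$ and $H^{-1}_{(0)}\to L^2$, by \eqref{eq:negative-Stokes-lifting} and \eqref{eq:Dirichlet-Stokes-lifting-low-order}, it commutes with $\partial_t$. Hence
\begin{equation*}
\|\partial_t\bz\|_{L^2(Q_T)}\leq C\|\partial_t\ol r_2\|_{L^2(0,T;H^{-1}_{(0)})},\qquad \|\nabla \bz\|_{L^2(Q_T)}\leq C\|\ol r_2\|_{L^2(Q_T)},
\end{equation*}
and by \eqref{eq:Dirichlet-Stokes-lifting-weighted}
\begin{equation*}
\|(\oeps\nabla^2\bz,\oeps\nabla\pi)\|_{L^2(Q_T)}\leq C\|(\ol r_2,\oeps\nabla\ol r_2)\|_{L^2(Q_T)}.
\end{equation*}
We then make the ansatz $\bw_1=\bz+\by$, $q_1=\pi+\tilde q$. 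Since $-\Delta\bz+\nabla\pi=0$, the remainder $\by$ must solve the homogeneous-divergence Stokes problem
\begin{equation*}
\partial_t\by-\Delta\by+\nabla\tilde q=-\partial_t\bz,\qquad \Div\by=0,\qquad \by|_{\partial\Omega}=0,\qquad \by|_{t=0}=-\ol\bw_0-\bz(0).
\end{equation*}
The compatibility condition $\ol r_2|_{t=0}=\Div\ol\bw_0$ gives $\Div(-\ol\bw_0-\bz(0))=-\Div\ol\bw_0+\ol r_2(0)=0$. Moreover $\bz(0)\in H^1_0$ with $\|\bz(0)\|_{H^1}\leq C\|\ol r_2(0)\|_{L^2}\leq C\|\ol\bw_0\|_{H^1}$, so the initial value lies in $V_\sigma$.

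Standard maximal $L^2$-regularity for the Stokes operator, with data in $L^2(0,T;L^2_\sigma)$ after applying the Helmholtz projection and initial data in $V_\sigma=D(A^{1/2})$, gives
\begin{equation*}
\|(\partial_t\by,\nabla^2\by,\nabla \by,\nabla\tilde q)\|_{L^2(Q_T)}\leq C\bigl(\|\partial_t\bz\|_{L^2(Q_T)}+\|\ol\bw_0\|_{H^1}\bigr).
\end{equation*}
The constant is uniform in $T\leq T_0$ because the estimate on $(0,T_0)$ restricts to any shorter interval, for instance by extending the data by zero. Since $\oeps\leq 2$, the unweighted bound $\|\nabla^2\by\|_{L^2(Q_T)}$ immediately controls $\oeps\nabla^2\by$. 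Summing the bounds for $\bz$ and $\by$ gives \eqref{eq:w1Estim}, and the stated regularity of $(\bw_1,q_1)$ follows for each fixed $\eps$; note that $q_1=\pi+\tilde q$ can be normalized to have mean zero.

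The main technical point is the justification that $\partial_t\bz=\mathbf z_{-\partial_t\ol r_2}$ holds in $L^2(0,T;L^2)$, which is needed to obtain the $H^{-1}$ bound on $\partial_t\bz$. This follows from the bounded extension of the lifting to $H^{-1}_{(0)}$ in \eqref{eq:negative-Stokes-lifting}: a bounded linear operator applied to $\ol r_2\in H^1(0,T;H^{-1}_{(0)})$ yields an element of $H^1(0,T;L^2)$, with derivative equal to the operator applied to $\partial_t\ol r_2$. One must also check that $\bz\in L^2(0,T;H^2)$ for each fixed $\eps$, which holds by unweighted Stokes regularity since $\ol r_2\in L^2(0,T;H^1)$. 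The weight enters only through $\bz$, which is why the right-hand side of \eqref{eq:w1Estim} involves $\oeps\nabla\ol r_2$ rather than $\nabla\ol r_2$.
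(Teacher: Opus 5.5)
Your proposal is correct and follows the paper's proof essentially step by step. The paper also sets $\mathbf z(t)=\mathcal R(-\ol r_2(t))$ via Lemma~\ref{lem:weighted-Dirichlet-Stokes-lifting} and uses the bounded extension of $\mathcal R$ to $H^{-1}_{(0)}(\Omega)$ to get $\partial_t\mathbf z=\mathcal R(-\partial_t\ol r_2)$; it then corrects with a solenoidal Stokes solution $\mathbf y$ with right-hand side $-\partial_t\mathbf z$ and initial value $-\ol\bw_0-\mathbf z|_{t=0}$, which is divergence free by the compatibility condition. Your extra remarks (uniformity in $T$ by extending the data by zero, and controlling $\oeps\nabla^2\mathbf y$ by the unweighted bound since $\oeps$ is bounded) only make explicit what the paper leaves implicit.
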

\begin{proof}
	We define $ \mathbf z(t)=\mathcal R\bigl(-\ol{r}_2(t)\bigr) $ by Lemma \ref{lem:weighted-Dirichlet-Stokes-lifting}, where for $g\in L^2_{(0)}(\Omega) $  $ \mathcal R g:= \mathbf{z}_g$  is the solution of \eqref{eq:Dirichlet-Stokes-lifting}. We note that $\mathcal{R}$ extend to a linear operator from  $H^{-1}_{(0)}(\Omega)$ to $L^2_{(0)}(\Omega)^d$  by \eqref{eq:negative-Stokes-lifting}. Since $\mathcal R\in\mathcal L(H^{-1}_{(0)}(\Omega),L^2(\Omega)^d)$ is independent of time and
	$\ol{r}_2\in H^1(0,T;H^{-1}_{(0)}(\Omega))$, the standard composition property for
	Bochner--Sobolev functions yields $ \mathbf z\in H^1\bigl(0,T;L^2(\Omega)^d\bigr) $ and
	\begin{align*}
		\partial_t\mathbf z=\mathcal R\bigl(-\partial_t \ol{r}_2\bigr)
		\quad\text{in }L^2\bigl(0,T;L^2(\Omega)^d\bigr).
	\end{align*}
	In particular,
	\begin{align}
		\|\partial_t\mathbf z\|_{L^2(Q_T)}
		\leq C\|\partial_t \ol{r}_2\|_{L^2(0,T;H^{-1}_{(0)}(\Omega))}.
		\label{eq:z-time-derivative-unweighted}
	\end{align}
	Combining \eqref{eq:z-time-derivative-unweighted} with  \eqref{eq:Dirichlet-Stokes-lifting-low-order} for fixed time, \eqref{eq:Dirichlet-Stokes-lifting-weighted} gives
	\begin{align}
		\|(\nabla\mathbf z,\oeps\nabla^2\mathbf z,\partial_t\mathbf z)\|_{L^2(Q_T)}
		\leq C\Bigl(
		\|\ol{r}_2\|_{L^2(Q_T)}
		+\|\oeps\nabla \ol{r}_2\|_{L^2(Q_T)}
		+\|\partial_t \ol{r}_2\|_{L^2(0,T;H^{-1}_{(0)})}
		\Bigr).
		\label{eq:z-time-dependent-estimate}
	\end{align}
	Next, let $\mathbf y$ solve
	\begin{subequations}
		\label{eq:y-equation}
		\begin{alignat}{3}
			\partial_t\mathbf y-\Delta\mathbf y+\nabla\varpi&=-\partial_t\mathbf z
			&&\quad \text{in }Q_T,
			\label{eq:y-equation-one}\\
			\Div \mathbf y&=0
			&&\quad \text{in }Q_T,
			\label{eq:y-equation-two}\\
			\mathbf y|_{\partial \Omega}&=0
			&&\quad \text{on }\partial\Omega\times (0,T),
			\label{eq:y-equation-three}\\
			\mathbf y|_{t=0}&=-\ol{\bw}_0-\mathbf z|_{t=0}
			&&\quad \text{in }\Omega.
			\label{eq:y-equation-four}
		\end{alignat}
	\end{subequations}
	The initial value in \eqref{eq:y-equation-four} is divergence free since $\Div \bigl(-\ol{\bw}_0-\mathbf z|_{t=0}\bigr)
	=-\ol{r}_2|_{t=0}+\ol{r}_2|_{t=0}=0$.
	Moreover, \eqref{eq:Dirichlet-Stokes-lifting-low-order} and the compatibility condition give
	\begin{align}
		\|\mathbf z|_{t=0}\|_{H^1(\Omega)}
		\leq C\|\ol{r}_2|_{t=0}\|_{L^2(\Omega)}
		=C\|\Div \ol{\bw}_0\|_{L^2(\Omega)}
		\leq C'\|\ol{\bw}_0\|_{H^1(\Omega)}.
		\label{eq:z-initial-estimate}
	\end{align}
	By standard results for the non-stationary Stokes system (e.g., by \cite[Theorem~1.2]{SolonnikovLpStokes}), we have
	\begin{align}
		\|\partial_t\mathbf y\|_{L^2(Q_T)}
		+\|\mathbf y\|_{L^2(0,T;H^2)}
		+\|\nabla\mathbf y\|_{L^2(Q_T)}
		\leq C\Bigl(
		\|\partial_t\mathbf z\|_{L^2(Q_T)}
		+\|\ol{\bw}_0\|_{H^1(\Omega)}
		\Bigr).
		\label{eq:y-estimate}
	\end{align}
	
	Finally, we define $ (\bw_1,q_1) = (\mathbf z+\mathbf y,\pi+\varpi) $. Then $(\bw_1,q_1)$ solves the asserted system. Combining
	\eqref{eq:z-time-dependent-estimate} and \eqref{eq:y-estimate} proves
	\eqref{eq:w1Estim}. 
\end{proof}

 Now we define $\widetilde{\we}=\ol{\we}+\we_1$. Then the system for $\widetilde{\we}$ reads
 \begin{subequations}
 	\label{eq:linNSAC'-1}
 	\begin{alignat}{2}\nonumber
 		\partial_t \widetilde{\we} +\ve_A\cdot \nabla \widetilde\we&+\widetilde\we\cdot \nabla \ve_A-\Div(2\nu(c_A)D\widetilde\we)- \Div(2\nu'(c_A)\ol uD\ve_A) +\nabla \ol{q}\\\label{eq:linNSAC1'-1}
 		& = -\eps \Div (\nabla \ol{u} \otimes \nabla c_A+\nabla c_A \otimes \nabla \ol{u})+\ol{\mathbf{r}}_1 +\mathbf{g}
 		 &\quad & \text{in}\ Q_T,\\ \label{eq:linNSAC2'-1}
 		\Div \widetilde{\we}& = 0&\ & \text{in}\ Q_T,\\ \label{eq:linNSAC3'-1}
 		\widetilde{\we}|_{\partial\Omega}&= 0&\ & \text{on }S_T,\\
 		\label{eq:linNSAC5'-1}
 		\widetilde\we|_{t=0}& = 0&\ & \text{in }\Omega,
 	\end{alignat}
 \end{subequations}
 where
$
   \mathbf{g}:=\partial_t \we_1+\ve_A\cdot \nabla \we_1+\we_1\cdot \nabla \ve_A-\Div(2\nu(c_A)D\we_1).
$
 \begin{lem} Under the assumptions of Theorem~\ref{thm:main-0} we have
  \begin{align}
 	&\frac{1}{2}\frac{\d}{\dt}\int_{\Omega} |	\widetilde{\we}|^2dx+2\int_{\Omega} \nu(c_A) |D	\widetilde{\we}|^2dx
\leqslant C\int_{\Omega}\left| \widetilde\we \right|^2dx
 	\nonumber\\&\quad
 	+C\bigg(\|\ol{u}\|_{\Veps}+\|\oeps\nabla \ol u\|_{L^2(\Omega)}+\|\partial_t\we_1\|_{V'_\sigma}+ \|\nabla\we_1\|_{L^2(\Omega)}+\|\ol{\mathbf{r}}_1\|_{V'_\sigma}\bigg)\|\nabla\widetilde{\we}\|_{L^2(\Omega)}\label{wenergyequ-old-1-1}
 \end{align}
for almost every $t\in(0,T)$ and $\eps\in (0,\eps_1]$ if $\eps_1\in (0,1]$ is sufficiently small. Here $C$ is independent of $T\in (0,T_0]$.
  \end{lem}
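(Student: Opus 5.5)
We test \eqref{eq:linNSAC1'-1} with $\widetilde{\we}$ itself; this is admissible since $\widetilde{\we}(t)\in V_\sigma$ for a.e.\ $t$. The pressure term then drops out by \eqref{eq:linNSAC2'-1} and \eqref{eq:linNSAC3'-1}. The time derivative gives $\frac12\frac{\d}{\dt}\|\widetilde{\we}\|_{L^2}^2$. The viscous term gives $2\int_\Omega\nu(c_A)|D\widetilde{\we}|^2\,\dx$ after integration by parts, using the symmetry of $D\widetilde{\we}$.

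We then estimate the remaining terms one at a time.

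\emph{Convection terms.} Since $\Div\ve_A=G_\eps$ and $\widetilde{\we}|_{\partial\Omega}=0$, we have $\int_\Omega \ve_A\cdot\nabla\widetilde{\we}\cdot\widetilde{\we}\,\dx=-\frac12\int_\Omega G_\eps|\widetilde{\we}|^2\,\dx$. This is bounded by $C\|\widetilde{\we}\|_{L^2}^2$ by \eqref{eq:RemainderApprox2'}. The term $\int_\Omega\widetilde{\we}\cdot\nabla\ve_A\cdot\widetilde{\we}\,\dx$ is bounded by $C\|\widetilde{\we}\|_{L^2}^2$ because $\nabla\ve_A$ is uniformly bounded (Remark~\ref{rem:vAC1bounded}).

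\emph{Viscosity coupling.} For $\Div(2\nu'(c_A)\ol u D\ve_A)$ we integrate by parts and use the boundedness of $\nu'$ and $\nabla\ve_A$. This gives the bound $C\|\ol u\|_{L^2}\|\nabla\widetilde{\we}\|_{L^2}\le C\|\ol u\|_{\Veps}\|\nabla\widetilde{\we}\|_{L^2}$.

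\emph{Data terms.} The term $\ol{\mathbf r}_1$ contributes $\|\ol{\mathbf r}_1\|_{V_\sigma'}\|\nabla\widetilde{\we}\|_{L^2}$. For $\mathbf g$ we pair $\partial_t\we_1$ in the $V'_\sigma$--$V_\sigma$ duality. We integrate $\Div(2\nu(c_A)D\we_1)$ by parts to get $\|\nabla\we_1\|_{L^2}\|\nabla\widetilde{\we}\|_{L^2}$. The two lower-order terms involving $\we_1$ are handled by Poincaré: since $\we_1\in H^1_0$, $\|\we_1\|_{L^2}\le C\|\nabla\we_1\|_{L^2}$.

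\emph{Capillary term (main obstacle).} This is the term
\[
\eps\int_\Omega(\nabla\ol u\otimes\nabla c_A+\nabla c_A\otimes\nabla\ol u):\nabla\widetilde{\we}\,\dx .
\]
A crude bound gives $\eps|\nabla c_A|\sim 1$ in the layer, hence $\|\nabla\ol u\|_{L^2}$, which is \emph{not} controlled by $\|\ol u\|_{\Veps}$ uniformly in $\eps$. The idea is to use the weight. Since $c_A=\theta_0(\rho)+O(\eps^2)$ in $\Gamma(2\delta)$, we have
\[
\eps|\nabla c_A|\le C|\theta_0'(\rho)|+C\eps \le C\frac{\oeps}{\eps}|\theta_0'(\rho)|\,\frac{\eps}{\oeps}+C\eps .
\]
Because $\frac{\oeps}{\eps}|\theta_0'(\rho)|\le C(1+|\rho|+|h_\eps|)e^{-\sqrt\alpha|\rho|}\le C$, we obtain $\eps|\nabla c_A|\le C\oeps$ in $\Gamma(2\delta)$. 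Outside the layer $\nabla c_A=O(e^{-c/\eps})$, and there $\oeps\ge c>0$.

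Hence the capillary integral is bounded by $C\|\oeps\nabla\ol u\|_{L^2}\|\nabla\widetilde{\we}\|_{L^2}$. If one prefers to avoid the weighted norm on part of the integral, one can instead split $\ol u$ via Corollary~\ref{cor:Pu}. Then $P\ol u$ contributes $\|\ol u\|_{\Veps}$. The $Q\ol u$ part is $\eps^{-1/2}\theta_0'\tilde w(s)$, whose tangential derivatives are controlled by $\|\tilde w\|_{H^1(\Sigma)}$. Its normal derivative paired with $\nabla d_\Gamma$ is handled by integrating $\partial_\rho$ by parts as in Lemma~\ref{lem:f'-phi-x-dependent}, using $\Div\widetilde{\we}=0$. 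However, the weighted bound above already suffices for the stated inequality, since $\|\oeps\nabla\ol u\|_{L^2}$ appears on its right-hand side.

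Collecting all terms yields \eqref{wenergyequ-old-1-1} with $C$ independent of $T$. The only delicate point is the pointwise estimate $\eps|\nabla c_A|\le C\oeps$, where one must also check that the $\eps^2\hc_2$ and $h_\eps$ corrections in $c_A$ stay bounded; both follow from the exponential decay in Remark~\ref{rem:PropertiesAproxSol} and from \eqref{eq:BoundhcA}.
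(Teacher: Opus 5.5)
Your treatment of the time-derivative, viscous, convective, $\nu'(c_A)$-coupling and data terms is fine and matches the paper. The gap is in the capillary term, which is the real content of the lemma.

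The pointwise bound $\eps|\nabla c_A|\le C\oeps$ on which your argument rests is false. At points with $\rho=0$, i.e.\ $d_{\Gamma_t}=\eps h_\eps$, one has $\eps|\nabla c_A|=\theta_0'(0)+O(\eps)$, whereas $\oeps=\eps\sqrt{1+h_\eps^2}\le C\eps$. Your computation only shows $|\theta_0'(\rho)|\le C\eps/\oeps$, i.e.\ $\eps|\nabla c_A|\le C\eps/\oeps$ (equivalently $\oeps|\nabla c_A|\le C$). Passing from $C\eps/\oeps$ to $C\oeps$ would require $\eps\le C\oeps^2$, which fails throughout the diffuse layer. The weight cannot help: it is of size $\eps$ exactly where $\eps\nabla c_A$ is of size one. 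With the correct bound, Cauchy--Schwarz leaves you with $\|\tfrac{\eps}{\oeps}\nabla\ol u\|_{L^2}$, essentially the unweighted $\|\nabla\ol u\|_{L^2(\Gamma_t(\delta))}$ that you yourself identified as uncontrolled.

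In fact no estimate of the capillary integral by $\|\eps\,|\nabla c_A|\,|\nabla\ol u|\|_{L^2}\|\nabla\widetilde{\we}\|_{L^2}$ can close. Take $\ol u=\eps^{-1/2}\zg\,\theta_0'(\rho)\,w(s)$. Then $\|\ol u\|_{\Veps}+\|\oeps\nabla\ol u\|_{L^2}\le C\|w\|_{H^1(\Sigma)}$, since the normal part of the $\Veps$-norm almost cancels ($\theta_0'$ spans the kernel of $-\partial_\rho^2+f''(\theta_0)$). On the other hand, $\|\eps\,|\nabla c_A|\,|\nabla\ol u|\|_{L^2}$ is of order $\eps^{-1}\|w\|_{L^2(\Sigma)}$. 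So a cancellation coming from $\Div\widetilde{\we}=0$ is indispensable.

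That cancellation is what the paper uses, and what your ``alternative'' (which you declare unnecessary) would have to carry out. In $\Gamma_t(2\delta)$ write $\eps\nabla c_A=\zg\theta_0'(\rho)\no+O(\eps)$ and split the integral as follows.
\begin{itemize}
\item The $O(\eps)$ part gives $\eps\|\nabla\ol u\|_{L^2}\le\|\oeps\nabla\ol u\|_{L^2}$; this is the only place the weighted term enters.
\item The part containing $\nabla_\btau\ol u$ is bounded by $\|\nabla_\btau\ol u\|_{L^2(\Gamma_t(2\delta))}\le C\|\ol u\|_{\Veps}$ via \eqref{eq:VepsChar}.
\item The remaining term $\int\zg\theta_0'(\rho)\,\partial_\no\ol u\,(\no\otimes\no:\nabla\widetilde{\we})\,dx$ is rewritten with $\Div\widetilde{\we}=0$ as $-\int\zg\theta_0'(\rho)\,\partial_\no\ol u\,\Div_\btau\widetilde{\we}\,dx$. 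It is then bounded by $C\|\ol u\|_{\Veps}\|\nabla\widetilde{\we}\|_{L^2}$ using the decomposition $\tilde u=P\tilde u+Q\tilde u$ of Corollary~\ref{cor:Pu}.
\end{itemize}
The $P\tilde u$-part is harmless, since $\|P\tilde u\|_{H^1}\le C\|\ol u\|_{\Veps}$.

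For the $Q\tilde u$-part the integrand is $\sigma^{-1/2}\eps^{-3/2}\theta_0'\theta_0''(\rho)\,\tilde w(s)\,\Div_\btau\widetilde{\we}$, and here your sketch is incomplete. You cannot simply integrate $\partial_\rho$ by parts as in Lemma~\ref{lem:f'-phi-x-dependent}: that lemma needs $\partial_\no$ of the test function, i.e.\ second derivatives of $\widetilde{\we}$, which are not available. One first integrates by parts tangentially on $\Sigma$ to move $\Div_\btau$ onto $\theta_0'\theta_0''(\rho)\tilde w$; derivatives falling on $h_\eps$ again produce $\rho$-derivatives. Only then does one apply Lemma~\ref{lem:f'-phi-x-dependent}, with $\varphi$ a component of $\widetilde{\we}$ and $w\in\{\tilde w,\nabla_\Sigma\tilde w\}$, using that $\theta_0'\theta_0''=\tfrac12\partial_\rho(\theta_0')^2$ is a $\rho$-derivative of an exponentially decaying function. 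The resulting gain of $\eps^{3/2}$ exactly compensates the prefactor and yields $C\|\tilde w\|_{H^1(\Sigma)}\|\widetilde{\we}\|_{H^1}\le C\|\ol u\|_{\Veps}\|\nabla\widetilde{\we}\|_{L^2}$.
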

  \begin{proof}
 Testing \eqref{eq:linNSAC1'-1} with $\widetilde{\we}$ and integrating by parts yield that
 \begin{align}
   &\frac{1}{2}\frac{\d}{\dt}\int_{\Omega} |	\widetilde{\we}|^2\dx+2\int_{\Omega} \nu(c_A) |D	\widetilde{\we}|^2\dx \nonumber\\
   &\quad \leqslant C\int_{\Omega}\left| \widetilde\we \right|^2\sd x+\eps\int_{\Omega}\left(\nabla \ol{u}\otimes_s  \nabla c_A:\nabla	\widetilde{\we}\right)\sd x+\int_{\Omega}\ol{\mathbf{r}}_1\cdot \widetilde{\we}\sd x+ \weight{\partial_t \we_1, \widetilde{\we}}_{V'_\sigma,V_\sigma}\nonumber\\
&\qquad +\int_{\Omega}(\ve_A\cdot \nabla \we_1+\we_1\cdot\nabla \ve_A )\cdot\widetilde{\we}\sd x+\int_\Omega2(\nu(c_A) D\we_1-\nu'(c_A)\ol uD\ve_A):\nabla\widetilde{\we}\sd x. 
\label{wenergyequ-old}
 \end{align} 
 Here we have used that $\Div\ve_A$ and  $\nabla\ve_A$ are bounded, cf.\ Remark~\ref{rem:vAC1bounded}. To estimate the right-hand side the essential step is to show:

 \medskip
 
 \noindent
 \emph{Claim:} There is some constant $C>0$ such that
  \begin{align}
 	&\left|\eps\int_{\Omega}\nabla \ol u\otimes \nabla c_A:\nabla\widetilde{\we}\dx\right|\leq C\left(\|\ol u\|_{\Veps}+\|\oeps\nabla \ol u\|_{L^2(\Omega)}\right)\|\nabla\widetilde{\we}\|_{L^2(\Omega)}.\label{w1est-1}
 \end{align}
\emph{Proof of claim:} We split $\Omega$ into  $\Omega\backslash\Gamma_t(2\delta)$ and $\Gamma_t(2\delta)$ and then the proof of \eqref{w1est-1}
 consists of two parts.
 Firstly, since $\nabla c_A$ is bounded $\Omega\setminus \Gamma_t(2\delta)$ and \eqref{eq:VepsExterior} there holds
 \begin{align}
 	&\varepsilon\bigg|\int_{\Omega\backslash\Gamma_t(2\delta)}\nabla \ol{u}\otimes \nabla c_A:\nabla\widetilde{\we}\dx\bigg|
 	\leq C\eps\|\ol{u}\|_{\Veps}\|\nabla\widetilde{\we}\|_{L^2(\Omega)}.\label{w1est-11}
 \end{align}
 Secondly, noting that
 \begin{equation}\label{eq:LowerBdd}
 	\big|d_\Gamma(x,t) - \eps h_\eps(S(x,t),t)\big|\geq \frac{\delta}{2}\quad \text{for all } x\in \Gamma_t(2\delta)\backslash\Gamma_t(\delta), \eps\in (0,\eps_1]
 \end{equation}
 if $\eps_1\in (0,1]$ is sufficiently small,
 we have 
 \begin{align}
   \nabla c_A (x,t)&=\varepsilon^{-1}\zg\theta_0'(\rho)\nabla d_\Gamma+O(1)\quad \text{in }L^\infty(Q_T). 	\label{innappgrad}
 \end{align}
 Using \eqref{innappgrad} we obtain
 \begin{align}
 	&\left|\eps\int_{\Gamma_t(2\delta)}\nabla \ol{u}\otimes \nabla c_A:\nabla\widetilde{\we}\dx\right|
 	\leq \left|\int_{\Gamma_t(2\delta)}\zg\theta_0'(\rho)\partial_{\nn} \ol{u}\big(\nn\otimes\nn:\nabla\widetilde{\we}\big)\dx \right|
 	\nonumber\\&\qquad + C\|\nabla_{\btau} \ol{u}\|_{L^2(\Gamma_t(2\delta))}\|\nabla\widetilde{\we}\|_{L^2(\Omega)}+ C\varepsilon\|\nabla \ol{u}\|_{L^2(\Gamma_t(2\delta))}\|\nabla\widetilde{\we}\|_{L^2(\Omega)},
\label{w1est-3}
 \end{align}
 where
 \begin{equation*}
   \left|\int_{\Gamma_t(2\delta)}\zg\theta_0'(\rho)\partial_{\nn} \ol{u}\big(\nn\otimes\nn:\nabla\widetilde{\we}\big)\sd x\right| = \left|\int_{\Gamma_t(2\delta)}\zg\theta_0'(\rho)\partial_{\nn} \ol{u}\Div_{\btau}\widetilde{\we}\sd x\right|=:I
 \end{equation*}
 since  $0=\Div\widetilde{\we}=\big(\nn\otimes\nn:\nabla\widetilde{\we}\big)+\Div_{\btau}\widetilde{\we}$.

 Similarly as in the proof of \cite[Lemma 3.4]{AbelsMarquardt1} (mainly by decomposition of $\ol{u}$ and  \cite[Corollary 2.12]{AbelsMarquardt1})  one shows
 \begin{align}
 	I& \leq C_1 \left(\int_{\Gamma_t(2\delta)} \left(|\nabla \ol u|^2 +\tfrac1{\eps^2} f''(c_A) \ol u^2 \right) \sd x + C_L \|\ol{u}\|_{L^2(\Gamma_t(2\delta))}^2\right)^{\frac12}\|\nabla\widetilde{\we}\|_{L^2(\Omega)} \notag\\
    &\quad +C_2\|\ol{u}\|_{L^2(\Gamma_t(2\delta))}\|\nabla\widetilde{\we}\|_{L^2(\Omega)}.\label{w1est-7}
 \end{align}
 Here we need to point out that Lemma 3.4 in \cite{AbelsMarquardt1}  still holds in the case of three space dimensions due to the following embedding
 \begin{equation*}
H^1(\Gamma_t(2\delta))\hookrightarrow L^{p,\infty}(\Gamma_t(2\delta))\quad \text{for all } \ 1\leq p\leq 4.
\end{equation*}
Using \eqref{eq:VepsChar} and $\|\ol u\|_{L^2(\Omega)}\leq C\|\ol u\|_{\Veps} $ we arrive at
 \begin{align}
	I\leq C\|\ol{u}\|_{\Veps}\|\nabla\widetilde{\we}\|_{L^2(\Omega)}.\label{w1est-8}
\end{align}
Substituting \eqref{w1est-8} into \eqref{w1est-3} we conclude the claim \eqref{w1est-1}.

Moreover, it holds
\begin{align*}
	\abs{\int_{\Omega} 2\nu'(c_A)\ol uD\ve_A : \nabla \tw \sd x}
	\leq C \norm{\ol{u}}_{L^2(\Gamma_t(2\delta))} \norm{\nabla \tw}_{L^2(\Omega)},
\end{align*}
where we have used the boundedness of $ \nabla \bv_A $. For the source term we have
\begin{align*}
  \left|  \int_{\Omega} \ol{\mathbf{r}}_1\cdot \widetilde{\we}\sd x \right|
    \leq C \norm{\ol{\mathbf{r}}_1}_{V_\sigma'} \norm{\nabla \tw}_{L^2(\Omega)}.
\end{align*}

 \medskip

 It remains to deal with  the terms containing $\we_1$ in \eqref{wenergyequ-old}. One  easily has
 \begin{align}\nonumber
 	& \big|\weight{\partial_t \we_1, \widetilde{\we} }_{V'_\sigma,V_\sigma} \big| +\bigg|\int_{\Omega}\bigg(  (\ve_A\cdot \nabla \we_1+\we_1\cdot\nabla \ve_A)\cdot\widetilde{\we}  +\big(2\nu(c_A) D\we_1:\nabla\widetilde{\we} \big)\bigg)\dx\bigg|\\
 	&\quad \leq C\|\partial_t\we_1\|_{V'_\sigma}\|\nabla\widetilde{\we}  \|_{L^2(\Omega)}+C \|\nabla\we_1\|_{L^2(\Omega)}\|\nabla\widetilde{\we}\|_{L^2(\Omega)}
 	\label{w2est-5}
 \end{align}
here we have used the fact that $\ve_A$ and $\nabla\ve_A$ are bounded. Combining all the estimates above implies the result.
  \end{proof}

 \begin{lem} Under the assumptions of Theorem~\ref{thm:main-0} there are $C>0,\eps_1\in (0,1]$ independent of $T\in (0,T_0]$ such that
 	\begin{alignat}{2}
 	&\left|\int_{\Omega}(\ol{\we} -\ol{\we}|_{\Gamma_t})\cdot \nabla c_A  \ol{u} \sd x\right|	\nonumber\\&\quad \leq C\|\ol{r}_2\|_{L^2(\Omega)}\|\ol{u}\|_{L^2(\Omega)}
          + C\varepsilon^{\frac{1}{2}}\|\nabla\ol{\we}\|_{L^2(\Omega)}\bigg(\|\nabla_{\btau}\ol{u}\|_{L^2(\Gamma_t(2\delta))}+\|\ol{u}\|_{L^2(\Omega)}\bigg) ,\label{est:diffw1couple}\\
           	&\left|\int_{\Omega}\oeps^2(\ol{\we} -\ol{\we}|_{\Gamma_t})\cdot \nabla c_A  \ol{u} \dx\right|\leq C\|\ol{\we}\|_{H^1(\Omega)}\varepsilon\|\ol{u}\|_{L^2(\Omega)}\label{est:diffw1coupleWeighted}
 	\end{alignat}
        for all $\eps \in (0,\eps_1]$.
      \end{lem}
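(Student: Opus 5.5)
Both estimates are local: outside $\Gamma_t(2\delta)$ we have $\zg=0$, so there $\nabla c_A=O(\eps^{N})$ with exponentially small corrections. Away from the interface the integrand is therefore bounded by $C\eps\|\ol\we\|_{L^2}\|\ol u\|_{L^2}$ (with $\ol\we|_{\Gamma_t}$ understood as cut off by $\zg$). So it suffices to work in $\Gamma_t(2\delta)$. There I use \eqref{innappgrad}, $\nabla c_A=\eps^{-1}\zg\theta_0'(\rho)\no+O(1)$. The $O(1)$ part again contributes at most $C\|\ol\we\|_{H^1}\|\ol u\|_{L^2}$ times a smallness factor: since $|\ol\we-\ol\we|_{\Gamma_t}|$ vanishes on $\Gamma_t$, we have $\|\ol\we-\ol\we|_{\Gamma_t}\|_{L^2(\Gamma_t(2\delta))}\le C\|\nabla\ol\we\|_{L^2}$. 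The leading term is
\[
I:=\int_{\Gamma_t(2\delta)}\tfrac1\eps\zg\theta_0'(\rho)\,(\ol\we-\ol\we|_{\Gamma_t})\cdot\no\,\ol u\dx .
\]

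For \eqref{est:diffw1coupleWeighted} I use $r$-coordinates. By the fundamental theorem of calculus, $|(\ol\we-\ol\we|_{\Gamma_t})(r,s)|\le\sqrt{|r|}\,\|\partial_r\ol\we(\cdot,s)\|_{L^2(-2\delta,2\delta)}$. With the weight, the integrand is then bounded by $\eps^{-1}\toeps^2\theta_0'(\rho)\sqrt{|r|}\,|\ol u|\,\|\partial_r\ol\we(\cdot,s)\|$. Cauchy--Schwarz in $r$ gives
\[
\Bigl(\int \eps^{-2}\toeps^4\theta_0'(\rho)^2|r|\,\dr\Bigr)^{1/2}\le C\bigl(\eps^{-2}\eps^{5}\bigr)^{1/2}\cdot\eps^{1/2}\dots ,
\]
and substituting $r=\eps(\rho+h_\eps)$ shows this factor is $O(\eps^{3/2}\cdot\eps^{-1/2})\le C\eps$. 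Integrating over $s$ then yields $C\eps\|\nabla\ol\we\|_{L^2}\|\ol u\|_{L^2}$, which is \eqref{est:diffw1coupleWeighted}.

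For \eqref{est:diffw1couple} the unweighted version of this bound gives only $O(1)$, so cancellation is needed. I split $(\ol\we-\ol\we|_{\Gamma_t})\cdot\no=\int_0^r\partial_r(\ol\we\cdot\no)\,\d r'$ and use $\Div\ol\we=\ol r_2$ to write $\partial_r(\ol\we\cdot\no)=\ol r_2-\Div_{\btau}\ol\we+O(|\ol\we|)$. The $\ol r_2$ part contributes $\eps^{-1}\theta_0'(\rho)\int_0^r\ol r_2$. Because $\int_0^r$ over a length $|r|\lesssim\eps(|\rho|+M)$ carries a factor $\sqrt{|r|}$, this part is bounded by $C\|\ol r_2\|_{L^2}\|\ol u\|_{L^2}$ after Cauchy--Schwarz; the lower order $O(|\ol\we|)$ part is treated likewise.

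The tangential part $\Div_{\btau}\ol\we$ is the main obstacle. Here I move the tangential derivative onto $\ol u$ by integrating by parts in $s$ on $\Sigma$: for fixed $r$ and $r'$, $\int_\Sigma \theta_0'(\rho)\ol u\,\Div_{\btau}\ol\we(r',\cdot)\,\d s$ becomes $-\int_\Sigma\ol\we\cdot\nabla^\Gamma(\theta_0'(\rho)\ol u)\,\d s$ plus curvature terms. The derivative $\nabla^\Gamma\theta_0'(\rho)=-\theta_0''\nabla^\Gamma h_\eps$ is harmless, so what remains is $\nabla_{\btau}\ol u$ paired with $\ol\we(r',s)$. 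The trouble is that this leaves $\ol\we$ undifferentiated. To avoid this, I first subtract the trace: I integrate by parts $\Div_{\btau}(\ol\we(r')-\ol\we(0))$, so that $|\ol\we(r')-\ol\we(0)|\le\sqrt{|r'|}\,\|\partial_r\ol\we\|$. Then I apply the $r$-integral weighting $\eps^{-1}\theta_0'(\rho)|r|\sqrt{|r|}$, whose $L^2_r$ norm is $O(\eps)\ge O(\eps^{1/2})$. This produces $C\eps^{1/2}\|\nabla\ol\we\|(\|\nabla_{\btau}\ol u\|_{L^2(\Gamma_t(2\delta))}+\|\ol u\|)$. If subtracting $\ol\we(0)$ causes difficulty because it breaks $\Div_{\btau}$, I would instead keep the trace term $\int_\Sigma\ol\we|_{\Gamma_t}\cdot\nabla_\Gamma(\cdots)$. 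I would then use $\int\eps^{-1}\theta_0'(\rho)|r|\,\dr=O(\eps)$ together with the trace inequality $\|\ol\we|_{\Gamma_t}\|_{L^2(\Sigma)}\le C\|\ol\we\|_{H^1}$, combined with Poincaré, since $\ol\we=0$ on $\partial\Omega$.
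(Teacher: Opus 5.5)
Your argument is correct, and for \eqref{est:diffw1couple} it follows the paper's route (the paper defers to Lemma~5.1 of the earlier Stokes/Allen--Cahn work). You split $\nabla c_A$ via \eqref{innappgrad}, write $(\ol\we-\ol\we|_{\Gamma_t})\cdot\no=\int_0^r\no\cdot\partial_r\ol\we\,\d r'$, use $\no\cdot\partial_r\ol\we=\ol r_2-\Div_{\btau}\ol\we$, and move the tangential divergence onto $\theta_0'(\rho)\ol u$ by integrating by parts on $\Sigma$.

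Two points differ from the paper.
\begin{itemize}
\item For \eqref{est:diffw1coupleWeighted} the paper uses no fundamental-theorem-of-calculus argument. Since $\tfrac{\oeps^2}{\eps^2}\theta_0'(\rho)$ is bounded, $\|\oeps^2\nabla c_A\|_{L^\infty}\le C\eps$, and $\|\ol\we-\ol\we|_{\Gamma_t}\|_{L^2}\le C\|\ol\we\|_{H^1}$ finishes it in one line. Your route actually gives $\eps^2$, which is not needed.
\item After the tangential integration by parts, the undifferentiated $\ol\we$ you worry about is harmless. Bound $\bigl|\int_0^r|\ol\we(r',s)|\,\d r'\bigr|\le|r|\sup_{r'}|\ol\we(r',s)|$ and use the one-dimensional inequality $\sup_{r}|\ol\we(r,s)|^2\le C\|\ol\we(\cdot,s)\|_{L^2_r}\|\ol\we(\cdot,s)\|_{H^1_r}$. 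This gives exactly the paper's intermediate factor $\|\ol\we\|_{L^2}^{1/2}\|\ol\we\|_{H^1}^{1/2}$, and Poincar\'e ($\ol\we\in H^1_0$) turns it into $\|\nabla\ol\we\|$. Your subtraction of the trace followed by the trace theorem is an equivalent detour.
\end{itemize}

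Some bookkeeping needs fixing.
\begin{itemize}
\item The piece $\Div_{\btau}(\ol\we|_{\Gamma_t})$ left over after subtracting the trace is not optional. The trace is only in $H^{1/2}$, so this piece must itself be integrated by parts. It is also what limits the gain to $\eps^{1/2}$: it is paired with $\nabla_{\btau}\ol u(\cdot,s)\in L^2_r$, so the relevant factor is $\|\eps^{-1}\theta_0'(\rho)\,r\|_{L^2_r}\sim\eps^{1/2}$, not the $L^1_r$-norm $O(\eps)$ you quote.
\item The smallness of the $O(1)$ part of $\nabla c_A$ does not follow from $\|\ol\we-\ol\we|_{\Gamma_t}\|_{L^2}\le C\|\nabla\ol\we\|_{L^2}$ alone. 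That bound only yields an $O(1)\,\|\nabla\ol\we\|\,\|\ol u\|$ contribution, which the statement does not allow. You need that this part equals $-\theta_0'(\rho)\nabla^\Gamma h_1+O(\eps)$ by \eqref{eq:nablacA}; then your $\sqrt{|r|}$ argument gives the factor $\eps$ the paper states for this term.
\item Since $\no=\no(s,t)$ does not depend on $r$, there is no $O(|\ol\we|)$ remainder in $\partial_r(\ol\we\cdot\no)$. Had there been one, treating it like $\ol r_2$ would not produce the required $\eps^{1/2}$.
\end{itemize}
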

      \begin{proof}
        Using \eqref{innappgrad} we have
        \begin{align}
 		&	\int_{\Omega}(\ol{\we} -\ol{\we}|_{\Gamma_t})\cdot \nabla c_A  \ol{u} \sd x=J_1+J_{2},
 		\label{est:diffw1-1}
 	\end{align}
 	where
 	\begin{align}
 		J_1&:= \varepsilon^{-1}\int_{\Gamma_t(2\delta)}\zg(\ol{\we} -\ol{\we}|_{\Gamma_t})\cdot\nn\theta_0'(\rho)\ol{u}\sd x,
\quad |J_{2}|\leq  C\varepsilon\|\ol{u}\|_{L^2(\Omega)}\|\nabla\ol{\we}\|_{L^2(\Omega)}
 		.\label{est:j2}
 	\end{align}         
 	In the same way as in the proof of \cite[Lemma 5.1]{StokesAllenCahn} one obtains
 	\begin{align}
 		|J_1|&\leq \varepsilon^{-1} \left| \int_{-2\delta}^{2\delta}\int_{\Sigma}\int_0^r  \tilde{\ol{r}}_2(r',s,t)\dr'\theta_0'(\tfrac{r}{\eps} - h_\eps)\ol{u}(r,s,t)J(r,s,t)\sd s\sd r\right|
 		\nonumber\\&\quad+ C\varepsilon^{\frac{1}{2}}\bigg(\|\nabla_{\btau}\ol{u}\|_{L^2(\Gamma_t(2\delta))}+\|\ol{u}\|_{L^2(\Omega)}\bigg) \|\ol{\we}\|_{L^2(\Omega)}^{\frac{1}{2}}\|\ol{\we}\|_{H^1(\Omega)}^{\frac{1}{2}}
 	 		\nonumber\\	&\leq C\|\ol{r}_2\|_{L^2(\Omega)}\|\ol{u}\|_{L^2(\Omega)}+ C\varepsilon^{\frac{1}{2}}\bigg(\|\nabla_{\btau}\ol{u}\|_{L^2(\Gamma_t(2\delta))}+\|\ol{u}\|_{L^2(\Omega)}\bigg) \|\nabla\ol{\we}\|_{L^2(\Omega)}
 		\label{est:j1}
 	\end{align}
 Using \eqref{est:j1} and \eqref{est:j2} in
 \eqref{est:diffw1-1}  leads to the desired conclusion \eqref{est:diffw1couple}.

 Finally, one easily obtains
 	\begin{align*}
         \left| \int_{\Omega}\oeps^2(\ol{\we} -\ol{\we}|_{\Gamma_t})\cdot \nabla c_A  \ol{u} \sd x \right|
          &\leq \|\ol{\we} -\ol{\we}|_{\Gamma_t}\|_{L^2(\Gamma_t(3\delta))}\|\oeps^2\nabla c_A\|_{L^\infty(\Gamma_t(3\delta))}\|\ol u\|_{L^2(\Omega)}\\
          &\leq C \varepsilon \|\ol{\we}\|_{H^1(\Omega)} \|\ol{u}\|_{L^2(\Omega)}.
 	\end{align*}
\end{proof}

\begin{lem}
There are some $\eps_1\in (0,1]$, $C>0$, independent of $T\in (0,T_0]$ such that
  \begin{alignat}{2}
 	&\|\ol{u}\|_{L^\infty(0,T;L^2(\Omega))}+\|\ol u\|_{L^2(0,T;\Veps)}+ \|\nabla_{\btau} \ol u\|_{L^2(0,T;L^2(\Gamma_t(2\delta)))}+\|(\oeps \nabla \ol u, \tfrac{\oeps}\eps \ol u)\|_{L^2(Q_T)}
          \nonumber\\&\leq C\left(\|\ol{u}_0\|_{L^2(\Omega)}+\|\ol{r}_2\|_{L^2(0,T;L^2(\Gamma_t(2\delta)))} \label{ineq:u energy-000}
   +\|\ol{r}_3\|_{L^2(0,T;(\Veps)')} + \|(\oeps^2\ol{r}_3,\sqrt\eps\nabla\ol{\we})\|_{L^2(Q_T)}\right).
  \end{alignat}
  provided $\eps\in (0,\eps_1]$.
   \end{lem}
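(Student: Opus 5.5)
We combine two energy estimates for the Allen--Cahn part \eqref{eq:linNSAC3'}: an unweighted one (testing with $\ol u$) and a weighted one (testing with $\oeps^2\ol u$). In each, the coupling term with $\ol\we$ is controlled by the previous lemma, and the result is closed with Gronwall.

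\emph{Step 1 (unweighted estimate).} Test \eqref{eq:linNSAC3'} with $\ol u$ and integrate by parts; the boundary terms vanish since $\ol u|_{\partial\Omega}=0$. The convective term gives $-\frac12\int_\Omega \Div\ve_A\,\ol u^2\,\dx$, which is bounded by $C\|\ol u\|_{L^2}^2$ because $\Div \ve_A=G_\eps$ is bounded. By the definition \eqref{eq:VepsNorm},
\begin{equation*}
\int_\Omega\left(|\nabla\ol u|^2+\tfrac1{\eps^2}f''(c_A)\ol u^2\right)\dx=\|\ol u\|_{\Veps}^2-(C_L+1)\|\ol u\|_{L^2}^2 .
\end{equation*}
Moreover, by \eqref{eq:VepsChar} (with \eqref{eq:VepsExterior} outside $\Gamma_t(2\delta)$), $\|\ol u\|_{\Veps}^2$ controls $\|\nabla_\btau\ol u\|_{L^2(\Gamma_t(2\delta))}^2$. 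The source term is bounded by $\|\ol r_3\|_{(\Veps)'}\|\ol u\|_{\Veps}$ and absorbed with Young's inequality. The coupling term is estimated by \eqref{est:diffw1couple}. The contribution $\eps^{1/2}\|\nabla\ol\we\|_{L^2}\|\nabla_\btau \ol u\|_{L^2(\Gamma_t(2\delta))}$ is split by Young into $\eta\|\ol u\|_{\Veps}^2+C_\eta\eps\|\nabla\ol\we\|_{L^2}^2$, and $\|\ol r_2\|_{L^2}\|\ol u\|_{L^2}\le \|\ol r_2\|^2+\|\ol u\|^2$. There is one gap: \eqref{est:diffw1couple} involves $\|\ol r_2\|_{L^2(\Omega)}$, while the claim only has $\|\ol r_2\|_{L^2(\Gamma_t(2\delta))}$. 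I expect this to be harmless, since the $J_1$-term only sees $\ol r_2$ inside $\Gamma_t(2\delta)$, so the proof of that lemma actually yields the localized norm. This step gives
\begin{equation*}
\tfrac{d}{dt}\|\ol u\|_{L^2}^2+\|\ol u\|_{\Veps}^2\le C\|\ol u\|_{L^2}^2+C\left(\|\ol r_3\|_{(\Veps)'}^2+\|\ol r_2\|_{L^2(\Gamma_t(2\delta))}^2+\eps\|\nabla\ol\we\|_{L^2}^2\right).
\end{equation*}

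\emph{Step 2 (weighted estimate).} Test with $\oeps^2\ol u$. We have $|\partial_t\oeps^2|+|\nabla\oeps^2|\le C\oeps$, so the time-derivative and convective commutators are bounded by $C\|\ol u\|_{L^2}^2$, and the Laplacian commutator is bounded by $\frac14\|\oeps\nabla\ol u\|^2+C\|\ol u\|^2$. For the potential term, use Lemma~\ref{lem:f''-decomposition}: $f''(c_A)=\alpha+V(\rho)+a_\eps$. The $\alpha$ part gives $\alpha\|\frac{\oeps}{\eps}\ol u\|^2$. The $V$ part satisfies $\frac{\oeps^2}{\eps^2}|V|\le C$ as in \eqref{eq:VEstim}, hence is bounded by $C\|\ol u\|^2$, and $a_\eps=O(\eps^2)$ is harmless. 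Outside $\Gamma(2\delta)$ we have $f''(c_A)\ge\alpha/2$. The source term is bounded by $\|\oeps^2\ol r_3\|\|\ol u\|$. The coupling term is bounded by \eqref{est:diffw1coupleWeighted}, i.e.\ by $C\eps\|\ol\we\|_{H^1}\|\ol u\|\le C\eps^2\|\nabla\ol\we\|^2+C\|\ol u\|^2$, using Poincaré on $H^1_0$.

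\emph{Step 3 (closing).} Add Steps 1 and 2, use $\|\oeps\ol u\|\le C\|\ol u\|$, and apply Gronwall on $(0,T)$ with constants independent of $T\le T_0$. This yields the $L^\infty L^2$ bound and all the $L^2$-in-time terms in \eqref{ineq:u energy-000}. The main point needing care is Step 1: absorbing the coupling term into $\|\ol u\|_{\Veps}^2$ relies on $\nabla_\btau \ol u$ being controlled by $\Veps$ via \eqref{eq:VepsChar}, and on the localization of $\ol r_2$ in \eqref{est:diffw1couple}. If that localization fails, I would redo the $J_1$ bound from \cite[Lemma 5.1]{StokesAllenCahn} directly on $\Gamma_t(2\delta)$.
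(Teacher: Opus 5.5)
Your proof is correct and is essentially the paper's argument. The paper tests \eqref{eq:linNSAC3'} with $(1+\oeps^2)\ol u$, using Theorem~\ref{thm:Spectral} and \eqref{eq:CoercivBepsk} for the lower bound, and separately with $\ol u$ to recover $\|\ol u\|_{\Veps}$; this amounts to your Steps~1--2, with the coupling terms handled by \eqref{est:diffw1couple}--\eqref{est:diffw1coupleWeighted} and the estimate closed by Young and Gronwall. Your remark on $\ol r_2$ is also right: the $J_1$ bound only sees $\ol r_2$ on $\Gamma_t(2\delta)$, and the paper uses this localization tacitly when it passes from $\|\ol r_2\|_{L^2(\Omega)}$ to the norm appearing in \eqref{ineq:u energy-000}.
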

 	 \begin{proof}
 Multiplying \eqref{eq:linNSAC3'} by $ (1+\oeps^2) \ol{u} $ and integrating by  parts yield that
 \begin{alignat}{2}\nonumber
   &\frac{1}{2}\frac{\d}{\dt}\int_{\Omega}  (1+\oeps^2)\ol{u} ^2\dx+ \int_{\Omega}(1+\oeps^2)\left( |\nabla  \ol{u} |^2+\tfrac{f''(c_A)}{\eps^2} \ol{u}^2\right) \sd x\\
   &\quad = \int_\Omega \oeps \partial_t \oeps \ol u^2\dx 
  -\int_{\Omega}(1+\oeps^2)\bv_A\cdot \nabla \ol{u} \ol{u}\dx
   -\int_{\Omega}(1+\oeps^2)(\ol{\we} -\ol{\we}|_{\Gamma_t})\cdot \nabla c_A\ol{u}\dx \notag\\
   & \qquad+\int_{\Omega}(1+\oeps^2) \ol{r}_3 \ol{u}\sd x+  \int_\Omega 2\ol u \oeps \nabla \oeps \cdot \nabla \ol u\,\sd x.
     \label{ineq:u energy-00}
 \end{alignat}
 By the spectral estimate due to Theorem \ref{thm:Spectral} and \eqref{eq:CoercivBepsk} one gets
 \begin{alignat}{2}
   &\int_{\Omega}(1+ \oeps^2)\left( |\nabla  \ol{u} |^2+\frac{f''(c_A)}{\eps^2} \ol{u}^2\right) \sd x \geq-\max(C_L,C_1)\int_\Om(1+ \oeps^2) \ol{u}^2 \sd x\nonumber\\
   &\qquad + c_1  \left(\|\nabla_\btau \ol{u}\|^2_{L^2(\Gamma_t({2\delta}))}+ \|\oeps\nabla \ol u\|_{L^2(\Omega)}^2 + \|\tfrac{\oeps}\eps\ol u\|_{L^2(\Omega)}^2  \right).\label{spectral} 
 \end{alignat}
  Moreover, integration by parts yields
   \begin{equation*}
     \left| \int_{\Omega}(1+\oeps^2)\bv_A\cdot \nabla \ol{u}\, \ol{u}\dx\right|=
     \left| \frac12 \int_{\Omega}\Div ((1+\oeps^2)\bv_A)\ol{u}^2\dx\right|\leq C\|\ol u\|_{L^2(\Omega)}^2.
   \end{equation*}
 Thus we obtain
 \begin{alignat}{2}
   &\frac{1}{2}\frac{\d}{\dt}\int_{\Omega} (1+ \oeps^2)\ol{u}^2\sd x +c_1  \left(\|\nabla_\btau \ol{u}\|^2_{L^2(\Gamma_t({2\delta}))}+ \|\oeps\nabla \ol u\|_{L^2(\Omega)}^2 + \|\tfrac{\oeps}\eps\ol u\|_{L^2(\Omega)}^2  \right)\nonumber\\&\quad \leq(C_L+C_1)\int_\Om \ol{u}^2 \sd x+\left|\int_{\Omega}(1+ \oeps^2)(\ol{\we} -\ol{\we}|_{\Gamma_t})\cdot \nabla c_A\ol{u}\sd x\right|
   \nonumber\\\label{ineq:u energy-0}
   &\qquad +\|\ol{r}_3\|_{(\Veps)'}\|\ol{u}\|_{\Veps}
   +  C\left(\|\ol u\|_{L^2(\Omega)}^2 + \|\ol u\|_{L^2(\Omega)}\|\oeps \nabla \ol u\|_{L^2(\Omega)} + \|\oeps^2\ol{r}_3\|_{L^2(\Omega)}^2 \right).
 \end{alignat}
Using \eqref{est:diffw1couple} and \eqref{est:diffw1coupleWeighted} in \eqref{ineq:u energy-0} and Young's inequality one has
\begin{alignat}{2}
  &\frac{1}{2}\frac{\d}{\dt}\int_{\Omega} (1+ \oeps^2)\ol{u}^2\sd x-\max(C_L,C_1)\int_\Om (1+ \oeps^2)\ol{u}^2 \sd x\nonumber\\
  &\leq C\|\ol{r}_2\|_{L^2(\Omega)}\|\ol{u}\|_{L^2(\Omega)}+C\|\ol{r}_3\|_{(V_t^\eps)'}\|\ol{u}\|_{\Veps}+ C\varepsilon^{\frac{1}{2}}\bigg(\|\nabla_{\btau}\ol{u}\|_{\Gamma_t(2\delta)}+\|\ol{u}\|_{L^2(\Omega)}\bigg) \|\nabla\ol{\we}\|_{L^2(\Omega)}\nonumber\\\label{ineq:u energy}
  &\quad +  \left(\|\ol u\|_{L^2(\Omega)}^2 + \|\ol u\|_{L^2(\Omega)}\|\oeps \nabla \ol u\|_{L^2(\Omega)} +\eps \|\ol \we\|_{L^2(\Omega)}^2 + \|\oeps^2\ol{r}_3\|_{L^2(\Omega)}^2 \right).
 \end{alignat}
Similarly, by using the definition of $\|\ol u\|_{\Veps}$ and \eqref{ineq:u energy-00} there holds
 \begin{alignat}{2}
   &\frac{1}{2}\frac{\d}{\dt}\int_{\Omega} \ol{u}^2\sd x+\|\ol u\|_{\Veps}^2-(C_L+1)\int_{\Gamma_t({2\delta})} | \ol{u}|^2\sd x\leq C\|\ol{r}_2\|_{L^2(\Omega)}\|\ol{u}\|_{L^2(\Omega)}\nonumber\\
   &\quad +C\|\ol{r}_3\|_{(V_t^\eps)'}\|\ol u\|_{\Veps}
   \nonumber + C\varepsilon^{\frac{1}{2}}\bigg(\|\nabla_{\btau}\ol{u}\|_{\Gamma_t(2\delta)}+\|\ol{u}\|_{L^2(\Omega)}\bigg) \|\nabla\ol{\we}\|_{L^2(\Omega)}\\\label{ineq:u energy lambda}
 &\quad +  C\left(\|\ol u\|_{L^2(\Omega)}^2 + \|\ol u\|_{L^2(\Omega)}\|\oeps \nabla \ol u\|_{L^2(\Omega)} +\eps \|\ol \we\|_{L^2(\Omega)}^2+ \|\oeps^2\ol{r}_3\|_{L^2(\Omega)}^2 \right).
 \end{alignat}
Combining \eqref{ineq:u energy} with \eqref{ineq:u energy lambda} and using Young's inequality leads to  \eqref{ineq:u energy-000}.
   \end{proof}
   
   \noindent
   \begin{proof*}{of Theorem~\ref{thm:main-0}}
     Applying   Gronwall's and  Young's inequality  in  \eqref{wenergyequ-old-1-1} and using \eqref{eq:w1Estim} we immediately obtain
    \begin{align}
      &\|\ol{\we} \|_{L^\infty(0,T;L^2)}^2 +\|\nabla \ol{\we}\|_{L^2(Q_T)}^2\leq C\bigg( \|\ol{\we}_0\|^2_{H^1(\Omega)} +\|\ol{\mathbf{r}}_1\|^2_{L^2(0,T;V_\sigma')} 
          \nonumber\\
      &\qquad + \|(\ol r_2,\oeps\nabla \ol r_2) \|_{L^2(Q_T)} + \| \ol{r}_2\|^2_{H^1(0,T; H^{-1}_{(0)}(\Omega))}
+\|\ol{u}\|^2_{L^2(0,T;\Veps)}+\|\oeps \nabla \ol u\|^2_{L^2(Q_{T})}\bigg),\label{ineq:uw energy-0}
 \end{align}
   which together with \eqref{ineq:u energy-000} implies that \eqref{ineq:uw energy-1} holds for sufficiently small $\eps$. 
 \end{proof*}

\section{Higher Order Estimates for the Modified Linearized System}
\label{sec:higher-order}

In this section, our goal is to perform higher order estimates taking the singular behavior at the diffuse interface into account. More precisely, for the Allen--Cahn equation, we are going to get better weighted estimates close to the interface, where the weight $\oeps$ is as in \eqref{eq:weight}. It will compensate singularities of normal and higher order derivatives.
Let us recall that $\wW^m$ is normed by
\begin{align*}
  \|u\|_{\wW^m(\Omega)}^2 = \sum_{k+l\leq m}\left\|\tfrac{\oeps^{k+l}}{\eps^l}\nabla^k u\right\|_{L^2(\Omega)}^2. 
\end{align*}
Moreover, we denote for the following
  \begin{equation}\label{eq:Leps2}
  \mathcal{L}_\eps u(x) = -\partial_{\no}^2 u(x) + \tfrac1{\eps^2} f''(\theta_0(\rho(x,t)))u(x)\qquad \text{for all }x\in \Gamma_t(2\delta)
\end{equation}
for all $u\in H^2(\Gamma_t(2\delta))$, $t\in [0,T_0]$, which is the operator defined in \eqref{eq:Leps} with respect to the variable $x\in\Gamma_t(2\delta)$. Here $\no=\no_{\Gamma_t}$.
\subsection{Second Order Estimates for Linearized Convective Allen--Cahn Equation}

In this subsection we start with a result on optimal second order estimates in space, which is as follows:
\begin{thm}
  \label{thm:SecondOrderEstimate}
  Let $ \ol{u}\in L^2(0,T;H^3(\Omega))\cap H^1(0,T;L^2(\Omega)) $ fulfill the linearized convective Allen--Cahn equation \eqref{eq:linNSAC3'} and $\ol u|_{\partial\Omega}=0$ for some $\ol\we\in L^2(0,T;H^1(\Omega))$. Then there is some $C>0$ such that for all $ \eps \in(0,1] $ and $T\in (0,T_0]$, it holds
  \begin{align}
    &\left\|\ol u\right\|_{L^\infty(0,T;\wW^1(\Omega))}+
      \sum_{l=0,1}\left\|\nabla_\btau^l \ol u\right\|_{L^2 (0,T;\wW^1(\Gamma_t(2\delta)))}\nonumber\\
      &\quad +\left\|\ol u\right\|_{L^2 (0,T;\wW^2)} + \left\|\oeps\partial_t \ol u\right\|_{L^2 (Q_T)}\nonumber
      +\left\|\oeps\mathcal{L}_\eps \ol u\right\|_{L^2 (0,T;L^2(\Gamma_t(2\delta)))}\\
      \label{eq:UniformSecondOrderEstim}
    &\leq C\left(\|\oeps \ol{r}_3\|_{L^2(Q_T)} +\|\ol u\|_{L^\infty(0,T;L^2)}+ \|\ol u\|_{L^2(0,T;\Veps)}+ \|\oeps \ol \we \|_{L^2(0,T;H^1)}+ \|\ol u_0\|_{\wW^1(\Omega)}\right).
  \end{align}
\end{thm}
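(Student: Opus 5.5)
We obtain the estimate by weighted energy methods, with the weight $\oeps^2$ playing the role that $1+\oeps^2$ played in the energy estimates of Section~\ref{linearized system}. Throughout, the terms $\|\ol u\|_{L^\infty(0,T;L^2)}$ and $\|\ol u\|_{L^2(0,T;\Veps)}$ on the right-hand side, together with the already controlled quantities $\|(\oeps\nabla\ol u,\tfrac{\oeps}\eps \ol u)\|_{L^2(Q_T)}$, absorb all lower order contributions.

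\emph{Step 1: weighted $H^1$-energy.} We test \eqref{eq:linNSAC3'} with $-\Div(\oeps^2\nabla\ol u)+\tfrac{\oeps^2}{\eps^2}f''(c_A)\ol u$. Alternatively, we test with $\oeps^2\partial_t\ol u$ and separately with $\oeps^2\mathcal{L}$-type multipliers. Writing $\mathcal{A}\ol u=-\Delta\ol u+\eps^{-2}f''(c_A)\ol u$, this produces
\begin{equation*}
\tfrac12\ddt\int_\Omega \oeps^2\bigl(|\nabla\ol u|^2+\tfrac{f''(c_A)}{\eps^2}\ol u^2\bigr)\dx+\|\oeps\mathcal{A}\ol u\|_{L^2}^2
\end{equation*}
plus commutator terms containing $\nabla\oeps^2$ and $\partial_t\oeps^2$. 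These commutator terms are of size $\oeps$ times one lower order of derivatives, so Young's inequality and the lower order quantities control them.

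The time derivative of $f''(c_A)$ costs a factor $\eps^{-1}\theta_0'$. Multiplied by $\tfrac{\oeps^2}{\eps^2}\ol u^2$, this is handled by \eqref{eq:VEstim}-type bounds, since $\tfrac{\oeps^2}{\eps^2}|\rho V'(\rho)|$-type factors are bounded. We estimate the convection term $\ve_A\cdot\nabla\ol u$ by $\|\oeps\nabla\ol u\|\,\|\oeps\mathcal{A}\ol u\|$. We rewrite the coupling $(\ol\we-\ol\we|_{\Gamma_t})\cdot\nabla c_A$ as $\oeps\nabla c_A=O(\oeps/\eps)\theta_0'$ times $\ol\we-\ol\we|_{\Gamma_t}=O(d_\Gamma)\nabla\ol\we$. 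Because $|d_\Gamma|\,\tfrac{\oeps}{\eps}\theta_0'(\rho)$ is bounded, $\|\oeps(\ol\we-\ol\we|_{\Gamma_t})\cdot\nabla c_A\|_{L^2}\le C\|\oeps\ol\we\|_{H^1}$ after a Hardy-type argument in $r$. The source contributes $\|\oeps\ol r_3\|$.

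Using the spectral-type coercivity \eqref{eq:CoercivBepsk} with $k=1$, we obtain control of $\|\ol u\|_{L^\infty(0,T;\wW^1)}$ and of $\|\oeps\mathcal{A}\ol u\|_{L^2(Q_T)}$. The equation then gives $\|\oeps\partial_t\ol u\|_{L^2(Q_T)}$. The initial term is bounded by $\|\ol u_0\|_{\wW^1}$.

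\emph{Step 2: splitting $\mathcal{A}$ into normal and tangential parts.} Away from $\Gamma(\delta)$ we use Proposition~\ref{prop:weightedEllipticReg} with $k=1$, applied after multiplication by a cutoff. In $\Gamma_t(2\delta)$ we pass to $(r,s)$-coordinates with $\zg\ol u$. There $\mathcal{A}$ equals $-\widetilde\Delta_\Sigma+\widetilde{\mathcal{L}}_\eps$ plus lower order terms: the $\Delta d_\Gamma\partial_r$ term, the difference $f''(c_A)-f''(\theta_0)=a_\eps=O(\eps^2)$ from Lemma~\ref{lem:f''-decomposition}, and cutoff commutators. Each of these is bounded in $\oeps$-weighted $L^2$ by the Step~1 quantities.

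Proposition~\ref{prop:EllipticEstim} with $k=1$ then yields $\oeps\nabla_\Sigma^2\ol u$, $\oeps\partial_r\nabla_\Sigma\ol u$, $\tfrac{\oeps}\eps\nabla_\Sigma\ol u$ and $\oeps\widetilde{\mathcal{L}}_\eps\ol u$. This gives the terms $\|\nabla_\btau\ol u\|_{L^2(0,T;\wW^1(\Gamma_t(2\delta)))}$ and $\|\oeps\mathcal{L}_\eps\ol u\|$.

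\emph{Step 3: the full $\wW^2$ norm, the main obstacle.} We need $\tfrac{\oeps^2}{\eps^2}\ol u$ and $\tfrac{\oeps^2}{\eps}\nabla\ol u$ in $L^2(Q_T)$, which correspond to $k=2$ weights. Note that \eqref{eq:EquivLr1} requires $k\ge2$, whereas we only control $\oeps\mathcal{L}_\eps\ol u$ with a single weight. However, $\|\ol u\|_{\wW^2}$ contains $\oeps^2\nabla^2\ol u$, $\tfrac{\oeps^2}{\eps}\nabla\ol u$ and $\tfrac{\oeps^2}{\eps^2}\ol u$. Since $\oeps\le C$, these are dominated by multiplying the $k=1$ quantities by one extra factor $\oeps$.

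We therefore rerun Step~1 with weight $\oeps^4$, i.e.\ with the multiplier $\oeps^4\mathcal{A}\ol u$. This is where the coupling and source terms appear with $\oeps^2$ and are thus weaker. We then apply \eqref{eq:EllipticEstim2} with $k=2$, whose right-hand side contains only $\toeps^{0}\ol u$, $\toeps\nabla_\Sigma\ol u$ and $\toeps^2\partial_r\ol u$. All of these are already controlled by Theorem~\ref{thm:main-0}-type quantities and Step~1.

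The delicate point is the coercivity for weight $k=1$ in Step~1. Near $\rho=0$, the kernel direction $\theta_0'$ makes $\tfrac{\oeps}{\eps}\ol u$ coercive only up to $-C_0\|\ol u\|^2$, which is exactly what \eqref{eq:CoercivBepsk} provides. A further delicate point is the $\partial_t f''(c_A)$ term, which must be bounded with $\tfrac{\oeps^2}{\eps^3}|\theta_0'|\le C\eps^{-1}$. This term has to be absorbed using $\|\ol u\|_{\Veps}$ via the decomposition of Corollary~\ref{cor:Pu}: the kernel part $Q\ol u$ produces an odd integrand $\theta_0'^2V'$, which yields cancellation, while $P\ol u$ gains a factor $\eps$. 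I expect this cancellation argument to be the hardest step.
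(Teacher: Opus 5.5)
\paragraph{There is a genuine gap in Step~1.}
Your Step~3 is sound. It is essentially the paper's first step: the paper uses the multiplier $-\Div(\oeps^4\nabla\ol u)+\oeps^4\tfrac{\alpha}{\eps^2}\ol u$ and treats $\tfrac1{\eps^2}(V+a_\eps)\ol u$ perturbatively, since $\tfrac{\oeps^2}{\eps^2}(V+a_\eps)$ is bounded. With weight $\oeps^4$, every commutator carries enough powers of $\oeps$.

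The gap is in Step~1, i.e.\ in every quantity at the $\oeps^1$ level: $\sup_t\|\ol u\|_{\wW^1}$, $\|\oeps\partial_t\ol u\|_{L^2(Q_T)}$, $\|\oeps\mathcal L_\eps\ol u\|$, and hence the input to Step~2. For each multiplier you propose, a term $\int_\Omega F\,\nabla(\oeps^2)\cdot\nabla\ol u\,dx$ appears, with $F=\partial_t\ol u$ or $F=\mathcal A\ol u$. Since $\nabla(\oeps^2)=2\oeps\nabla\oeps$ and $F$ is only controlled with one factor $\oeps$, you need $\|\nabla\oeps\cdot\nabla\ol u\|_{L^2}$, which is essentially an unweighted normal derivative. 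This is not a lower order quantity. Take a cut-off of $\ol u=\eps^{-1/2}\theta_0'(\rho)w(s)$. Then $\|\ol u\|_{\Veps}+\|(\oeps\nabla\ol u,\tfrac{\oeps}{\eps}\ol u)\|_{L^2}\lesssim\|w\|_{H^1(\Sigma)}$, but $\|\nabla\oeps\cdot\nabla\ol u\|_{L^2}\sim\eps^{-1}\|w\|_{L^2(\Sigma)}$.

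Your crude bound $|\partial_t\oeps^2|\lesssim\oeps$ fails for the same profile, since $\int\oeps(|\nabla\ol u|^2+\eps^{-2}\ol u^2)\,dx\sim\eps^{-1}\|w\|^2_{L^2(\Sigma)}$.

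The oddness cancellation for the $\partial_t f''(c_A)$-term also does not work. Here $\partial_t c_A$ contains $\tfrac1\eps\theta_0'(\rho)\partial_t d_\Gamma$, and the weight $\oeps^2/\eps^2=1+(\rho+h_\eps)^2$ is not even in $\rho$. So for the kernel part a residual of order $\eps^{-1}h_\eps\,\partial_t d_\Gamma\,\|w\|^2_{L^2(\Sigma)}$ remains, with coefficient proportional to $\int_\R\rho f'''(\theta_0)(\theta_0')^3\,d\rho=2\int_\R(\theta_0'')^2\,d\rho\neq0$. This $O(\eps^{-1})$ term is cancelled only by the corresponding part of the weight commutator. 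Indeed, for an exactly translating kernel mode one has $\mathcal A\ol u=0$ and the weighted energy is constant in time. So estimating these terms one at a time with Young's inequality cannot close.

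\paragraph{The missing idea is the paper's mechanism for exploiting this structure.}
The paper works in the moving coordinates $(r,s)$, which has three effects.
\begin{itemize}
\item The weight $\toeps$ is independent of $t$.
\item The normal motion enters only through the coefficient $\partial_t d_\Gamma+\ve_A\cdot\nabla d_\Gamma-\Delta d_\Gamma=O(\toeps)$ of $\partial_r\tilde u$, which is put into $\tilde g$.
\item The time derivative of $f''(\theta_0(\tfrac r\eps-h_\eps))$ involves only $\partial_t h_\eps$. This is harmless because $\tfrac{\toeps^2}{\eps^2}\theta_0'$ is bounded.
\end{itemize}

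Since $\widetilde{\mathcal L}_\eps(Q\tilde u)=0$, the paper writes the equation as $\partial_t\tilde u-\widetilde\Delta_\Sigma\tilde u+\widetilde{\mathcal L}_\eps(P\tilde u)=\tilde g$ and tests it with $\toeps^2\partial_t\tilde u$. After integrating by parts in $r$, the only unweighted derivative appears in $\int\partial_r(P\tilde u)\,2r\,\partial_t\tilde u$. By Corollary~\ref{cor:Pu}, $\|\partial_r P\tilde u\|_{L^2}\le C\|u\|_{\Veps(2\delta)}$. The remaining part $\int\widetilde{\mathcal L}_\eps(P\tilde u)\,\partial_t(Q\tilde u)\,\toeps^2$ is controlled using $\|\tfrac{\toeps^2}{\eps}Q\partial_t\tilde u\|\le C\|\eps\partial_t\tilde u\|$ together with the lower-order commutator $[\partial_t,Q]$.

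This yields control of $\sup_t[B^1_\Sigma(\tilde u)+B^1_\eps(P\tilde u)]+\|\toeps\partial_t\tilde u\|^2_{L^2}$. The kernel part $Q\tilde u$ contributes to $\sup_t\|\ol u\|_{\wW^1}$ only a term $\lesssim\|\ol u\|_{L^\infty(0,T;L^2)}$, which is why that norm appears on the right-hand side. Only after this does Proposition~\ref{prop:EllipticEstim} with $k=1$ give $\toeps\widetilde{\mathcal L}_\eps\tilde u$ and the tangential terms.

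So Corollary~\ref{cor:Pu} is indeed the key tool. It must be used to remove the unweighted normal derivative produced by the weight, not to treat $\partial_t f''(c_A)$.
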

\begin{rem}
  We note that the  estimate \eqref{eq:UniformSecondOrderEstim} is optimal in the  sense that
  $$
  \|\oeps \ol{r}_3- \oeps(\ol{\we} - \ol{\we}|_{\Gamma_t})\cdot \nabla c_A\|_{L^2(Q_T)}
  $$
  (as well as $\|\ol u\|_{L^2(0,T;\wW^1)}$ and $\|\ol u\|_{\wW^1}$) can be estimated by the norms on the left-hand side of \eqref{eq:UniformSecondOrderEstim} uniformly in $\eps\in (0,1]$. This is obvious in $\Omega\setminus \Gamma_t(\delta)$ and in $\Gamma_t(2\delta)$ one uses that $-\Delta u +\tfrac{1}{\eps^2} f''(c_A)u$ transforms to
  \begin{equation*}
    -\widetilde{\Delta}_{\Sigma} \tilde u +\widetilde{\mathcal{L}}_\eps \tilde u \qquad \text{in }\Sigma_{2\delta} 
  \end{equation*}
  up to lower order terms, cf.~\eqref{eq:TransformConvAC} below. Here $\|\oeps \widetilde{\Delta}_{\Sigma} \tilde u\|_{L^2(\Sigma_{2\delta})}\leq C \|\nabla_{\btau} u\|_{\wW^1(\Omega)}$.
  In particular, if $\ol\we, \ol u_0 \equiv 0$, after transformation in $\Gamma_t(2\delta)$ the terms
  \begin{equation*}
    \tilde\omega_\eps\partial_t \ol u, -\tilde \omega_\eps\widetilde{\Delta}_{\Sigma} \tilde u, \tilde\oeps\mathcal{L}_\eps \tilde u
  \end{equation*}
  in \eqref{eq:linNSAC3'} can be estimated uniformly in $L^2((0,T)\times \Sigma_{2\delta})$ by $\|\oeps \ol r_3\|_{L^2(Q_T)}$. In $Q_T\setminus\Gamma(\delta)$
  \begin{equation*}
    \partial_t u,\nabla^2 u, \tfrac{f''(c_A)}{\eps^2} u
  \end{equation*}
  are estimated uniformly in $L^2(Q_T\setminus \Gamma(\delta))$ by $\|\oeps \ol r_3\|_{L^2(Q_T)}$. In that sense we obtain a kind of \emph{uniform maximal regularity result}.
\end{rem}
\begin{proof*}{of Theorem~\ref{thm:SecondOrderEstimate}} 
\emph{Step 1: Suboptimal weighted higher order estimates for $ \ol{u} $ in $ H^2 $.}
First we test \eqref{eq:linNSAC3'} with $ - \Div (\oeps^4\nabla  \ol{u})+\oeps^4\frac{\alpha}{\eps^2} \ol u$. This yields
\begin{align}\nonumber
	&\int_\Omega \partial_t \nabla \ol{u}\cdot \nabla \ol u \oeps^4\,\sd x
	+ \int_\Omega (\partial_t \ol{u})\ol u   \oeps^4 \tfrac{\alpha}{\eps^2} \,\sd x 
	+ \int_\Omega \ve_A \cdot \nabla \ol{u}  \left(- \Div(\oeps^4\nabla  \ol{u})+\oeps^4\tfrac{\alpha}{\eps^2} \ol u\right)\,\sd x\\\nonumber
	&\qquad + \int_\Omega (\ol{\we}-\ol\we|_{\Gamma})\cdot \nabla c_A(- \Div (\oeps^4\nabla  \ol{u})+\oeps^4\tfrac{\alpha}{\eps^2} \ol u)\,\sd x+  \|\oeps^2\Delta \ol{u}\|_{L^2(\Omega)}^2
	+ \alpha^2 \|\tfrac{\oeps^2}{\eps^2} \ol{u}\|_{L^2(\Omega)}^2 \\\nonumber
	&\quad = 
	-  \int_\Omega \Delta \ol{u} \nabla \ol{u} \cdot \nabla(\oeps^4)\,\sd x
	-  \int_{\Gamma_t(2\delta)} \tfrac1{\eps^2}(V(\tfrac{d_{\Gamma_t}}{\eps}-h_\eps)+a_\eps) (- \Div(\oeps^4\nabla  \ol{u})+\oeps^4\tfrac{\alpha}{\eps^2} \ol u) \ol u\,\sd x 
	\\\nonumber
	&\qquad 
	- 2  \alpha \|\tfrac{\oeps^2}{\eps} \nabla \ol{u}\|_{L^2(\Omega)}^2 
	-  \int_\Omega \tfrac{\alpha}{\eps^2} \nabla \ol{u} \cdot \nabla(\oeps^4) \ol{u}\,\sd x
	+ \int_\Omega \ol{r}_3 (- \Div(\oeps^4\nabla  \ol{u})+\oeps^4\tfrac{\alpha}{\eps^2} \ol u)\, \sd x, 
\end{align}
where
\begin{align*}
	 \abs{\int_\Omega \Delta \ol{u} \nabla \ol{u} \cdot \nabla(\oeps^4)\,\sd x}
	& \leq 4  \int_\Omega \abs{\oeps^2\Delta \ol{u} \oeps \nabla \ol{u} \cdot \nabla \oeps}\,\sd x 
	 \leq \tfrac{1}{2} \|\oeps^2\Delta \ol{u}\|_{L^2(\Omega)}^2
	+ C  \|\oeps \nabla \ol{u}\|_{L^2(\Omega)}^2,\\
    \abs{\int_\Omega \tfrac{\alpha}{\eps^2} \nabla \ol{u} \cdot \nabla (\oeps^4) \ol{u}\,\sd x}
	& \leq 4  \alpha \int_\Omega \abs{\tfrac{\oeps^2}{\eps^2} \nabla \ol{u} \cdot \nabla \oeps \oeps \ol{u}}\,\sd x \leq \tfrac{ \alpha^2}{2} \|\tfrac{\oeps^2}{\eps^2} \ol{u}\|_{L^2(\Omega)}^2
	+ C  \|\oeps \nabla \ol{u}\|_{L^2(\Omega)}^2,
\end{align*}
and
\begin{align*}
	\abs{\int_\Omega \bv_A \cdot \nabla \ol{u} \Div (\oeps^4\nabla  \ol{u})\,\sd x}
	& \leq C 
	\norm{\oeps \nabla \ol{u}}_{L^2(\Omega)} \big(
	\norm{\oeps^2 \Delta \ol{u}}_{L^2(\Omega)} 
	+ \norm{\oeps \ptial{r} \ol{u}}_{L^2(\Omega)}\big), \\
	\abs{\int_\Omega \bv_A \cdot \nabla \ol{u} \alpha \tfrac{\oeps^4}{\eps^2} \ol{u}\,\sd x}
	& \leq C 
	\norm{\oeps \nabla \ol{u}}_{L^2(\Omega)}
	\normm{\tfrac{\oeps^2}{\eps^2} \ol{u}}_{L^2(\Omega)}, \\
	\abs{\int_\Omega (\ol{\we}-\ol\we|_{\Gamma})\cdot \nabla c_A \Div (\oeps^4\nabla  \ol{u})\,\sd x}
	& \leq C \norm{\nabla\ol\bw}_{L^2(\Omega)} \big(
	\norm{\oeps^2 \Delta \ol{u}}_{L^2(\Omega)}
	+ \norm{\oeps \ptial{r} \ol{u}}_{L^2(\Omega)}\big), \\
	\abs{\int_\Omega (\ol{\we}-\ol\we|_{\Gamma})\cdot \nabla c_A \alpha \tfrac{\oeps^4}{\eps^2} \ol{u}\,\sd x}
	& \leq C \norm{\nabla\ol\bw}_{L^2(\Omega)}
	\normm{\tfrac{\oeps^2}{\eps^2} \ol{u}}_{L^2(\Omega)}
\end{align*}
since $\|\oeps \nabla c_A\|_{L^\infty(\Omega)}$ is uniformly bounded.
For the second term on the right-hand side, we derive
	\begin{align*}
		&\left|
		\int_{\Gamma_t(2\delta)}
		\tfrac1{\eps^2} (V(\tfrac{d_{\Gamma}}{\eps}-h_\eps)+a_\eps)
		\left(
		-\Div (\oeps^4\nabla\ol u)
		+\alpha \tfrac{\oeps^4}{\eps^2}\ol u
		\right)\ol u\,\dx
		\right|  
		\leq
		C \|(\oeps^2\Delta\ol u, \oeps\nabla\ol  u, \ol u)\|_{L^2(\Omega)}\|\ol u\|_{L^2(\Omega)}
\end{align*}
because of $-\Div (\oeps^4\nabla\ol u) = -\oeps^4 \Delta\ol u -\nabla(\oeps^4)\cdot\nabla\ol u$ and $|\nabla(\oeps^4)|\leq C\oeps^3$,
where we have employed Lemma \ref{lem:f''-decomposition} and \eqref{eq:VEstim}, to obtain
	\begin{align*}
		\left\|
		\tfrac{\oeps^2}{\eps^2} (V(\tfrac{d_{\Gamma}}{\eps}-h_\eps)+a_\eps) \right\|_{L^\infty(\Gamma_t(2\delta))}
		\leq C.
	\end{align*}
Moreover,
\begin{align*}
	&\int_\Omega \partial_t \nabla \ol{u}\cdot \nabla \ol u   \oeps^4\,\sd x+ \int_\Omega (\partial_t \ol{u})\ol u \oeps^4 \tfrac{\alpha}{\eps^2}\,\sd x\\
	&\quad = \frac{\d}{\dt} \int_\Omega \left(\tfrac{|\nabla \ol u|^2}2 + \tfrac{\alpha|\ol u|^2}{2\eps^2}\right)\oeps^4\, \sd x
	- \int_\Omega \left(\tfrac{|\nabla \ol u|^2}2 + \tfrac{\alpha|\ol u|^2}{2\eps^2}\right)4\oeps^3 \partial_t \oeps\, \sd x,  
\end{align*}
where
\begin{equation*}
	\left|\int_0^{T}\int_\Omega \left(\tfrac{|\nabla \ol u|^2}2 + \tfrac{\alpha|\ol u|^2}{2\eps^2}\right)4\oeps^3 \partial_t \tilde r\, \sd x\dt\right|\leq C \left(\|\oeps \nabla \ol u\|_{L^2(Q_{T})}^2 + \|\tfrac{\oeps}{\eps} \ol u\|_{L^2(Q_{T})}^2\right).
\end{equation*}
Combining the estimates before, integrating in time, and using \eqref{eq:linNSAC3'} to estimate $\oeps^2 \pt \ol{u} \in L^2(Q_{T})$  we finally obtain
\begin{align*}
	&\|(\oeps^2 \nabla \ol u,\tfrac{\oeps^2}{\eps} \ol u)\|_{L^\infty(0,T;L^2)}+ \|(\oeps^2 \pt \ol{u},\oeps^2 \Delta \ol u,\tfrac{\oeps^2}{\eps} \nabla \ol u,\tfrac{\oeps^2}{\eps^2} \ol u)\|_{L^2(Q_T)}\\
	&\quad \leq C  \left(\|(\ol u, \oeps \nabla \ol u,\tfrac{\oeps}\eps \ol u) \|_{L^2(Q_T)}
	+ \|\oeps^2 \ol r_3\|_{L^2(Q_T)}
	+ \|\nabla \ol \bw\|_{L^2(Q_T)}+ \|(\oeps^2\nabla u_0, \tfrac{\oeps}\eps u_0)\|_{L^2(\Omega)}\right).
\end{align*}

\medskip

\noindent
\emph{Step 2: Refined estimates close to the interface.}
The starting point is
\begin{align}\label{eq:ConvAC}
	\partial_t \ol{u} +\ve_A\cdot \nabla \ol{u}&+ (\ol{\we} -\ol{\we}|_{\Gamma_t})\cdot \nabla c_A  =\left(\Delta \ol{u} - \tfrac{1}{\eps^{2}} f''(c_A)\ol{u}\right) +\ol{r}_3.
\end{align}
In $\Gamma(2\delta)$ we use the coordinate transform given by $X(.,.,t)\colon (-2\delta,2\delta)\times \Sigma\to \Gamma_t(2\delta)$. Moreover, we localize close to $\Gamma$ with the aid of a cut-off function $\zeta\in C^\infty_0(\R)$ with $\supp \zeta \subset [-2\delta,2\delta]$ and $\zeta(r)=1$ for all $r\in [-\delta,\delta]$ as before. 
More precisely, we define
\begin{align*}
	\tilde{u}(r,s,t):= \zeta(r)\ol{u}(X(r,s,t),t),\         \widetilde{\we}(r,s,t):= \ol{\we}(X(r,s,t),t), \ \tilde{r}_3(r,s,t):= \zeta(r)\ol{r}_3(X(r,s,t),t)
\end{align*}
for all $(r,s,t)\in \Sigma_{2\delta}\times [0,T]$ and $\eps\in (0,1]$.
 This yields the relations
\begin{align*}
  \zeta(d_{\Gamma_t}(x))\partial_t \ol{u}(x,t) &= \partial_t d_{\Gamma_t} (x)\partial_r \tilde{u}(r,s,t) + \partial_t S(x,t)\cdot \nabla_\Sigma \tilde{u}(r,s,t)+ \partial_t \tilde{u}(r,s,t)\\
  &\quad -\partial_t d_{\Gamma_t}(x)\zeta'(d_{\Gamma_t}(x)) \ol{u}(x,t),\\
	\zeta(d_{\Gamma_t}(x))\nabla \ol{u}(x,t) &= \nabla d_{\Gamma_t}(x) \partial_r \tilde{u}(r,s,t) +D_xS(x,t)^T\nabla_\Sigma \tilde{u}(r,s,t)- \nabla d_{\Gamma_t}(x)\zeta'(d_{\Gamma_t}(x)) \ol{u}(x,t),\\
  \zeta(d_{\Gamma_t}(x))\Delta \ol{u}(x,t) &= \left(\Delta d_{\Gamma_t}(x)\right)\partial_r \tilde{u}(r,s,t) + \Delta S(x,t)\cdot \nabla_\Sigma \tilde{u}(r,s,t) +\tilde{g}_1(r,s,t)\\
    &\quad +\sum_{i,j=1}^d\nabla S_i(x,t)\cdot \nabla S_j(x,t) (\nabla_\Sigma)_i (\nabla_\Sigma)_j  \tilde{u}(r,s,t) + \partial_r^2 \tilde{u}(r,s),\\
  \tilde{g}_1(r,s,t)&:= - \Div_x (\nabla d_{\Gamma_t}(x,t)\zeta'(r) \ol{u}(x,t)).
\end{align*}
for all $(r,s)\in \Sigma_{2\delta}, t\in [0,T]$, and $x=X(r,s,t)$.
Using $c_A= \theta_0 + \eps^2c_{A,2+}$ in $ \Gamma_t(2\delta) $ and a Taylor expansion of  $f''(c_A)$ at $\theta_0$ 
we obtain that \eqref{eq:ConvAC} transforms to
\begin{align}\label{eq:TransformConvAC}
	\partial_t \tilde{u} - \widetilde{\Delta}_{\Sigma} \tilde{u}+ \widetilde{\mathcal{L}}_\eps \tilde{u} =\tilde{g} \quad \text{in } \Sigma_{2\delta}\times (0,T), 
\end{align}
where $\widetilde{\Delta}_{\Sigma} \tilde{u}$ is defined in \eqref{eq:Deltaeps}, $\widetilde{\mathcal{L}}_\eps \tilde{u} $ is defined in \eqref{eq:Leps}, and 
\begin{align*}
	\tilde{g}(r,s,t)&= \tilde{r}_3(r,s,t)- \tilde{\mathbf{a}}\cdot\nabla_\Sigma \tilde{u}-\partial_r \tilde{u}\left(\partial_t d_{\Gamma_t}+ \ve_A\cdot \nabla d_{\Gamma_t} - \Delta d_{\Gamma_t}\right) - \tfrac1{\eps^2}(f''(c_A) - f''(\theta_0))\tilde u\\
&\quad - (\widetilde{\we}-\widetilde{\we}|_{r=0}) \cdot \nabla c_A - \tilde{g}_1(r,s,t) +(\partial_t d(x,t)+\ve_A(x,t)\cdot \nabla d(x,t))\zeta'(r) \ol{u}(x,t),\\
  	\tilde{\mathbf{a}} &= \partial_tS+ \ve_A\cdot (D_xS)^T-\Delta S|_{(x,t)}-\sum_{i,j=1}^d(\nabla_\Sigma(\nabla S_i\cdot \nabla S_j|_{(x,t)}))_i e_j.
\end{align*}
Here $\partial_t d_{\Gamma_t}+ \ve_A\cdot \nabla d_{\Gamma_t} - \Delta d_{\Gamma_t}=O(|r|+\eps) = O(\toeps)$ and $\tfrac1{\eps^2}(f''(c_A) - f''(\theta_0)), (\widetilde{\we}-\widetilde{\we}|_{r=0}) \cdot \nabla c_A=O(1)$ in $L^\infty(Q_T)$. Therefore $\tilde{g}$ can be estimated as
\begin{align}\notag
  &\|\toeps \tilde g\|_{L^2(0,T;L^2(\Sigma_{2\delta}))}\\\label{eq:gEstim1}
  &\quad \leq  C\left(\|\toeps \ol r_3\|_{L^2(\Omega\times (0,T))}+ \|\toeps\ol \we\|_{L^2(0,T; H^1)}+ \|(\toeps \ol u, \toeps \nabla \ol u, \toeps^2 \nabla ^2 \ol u)\|_{L^2(Q_T)} \right),\\\notag
    &\|(\toeps^2 \nabla_\Sigma \tilde g, \toeps^2 \partial_r \tilde g, \tfrac{\toeps^2}\eps \tilde g) \|_{L^2(0,T;L^2(\Sigma_{2\delta}))}\leq  C\left(\|(\toeps^2 \nabla \ol r_3, \tfrac{\toeps^2}\eps \ol r_3)\|_{L^2(\Omega\times (0,T))}\right.\\\label{eq:gEstim2}
  &\quad\left. + \|\toeps \ol \we\|_{L^2(0,T; H^1(\Omega))}
  + \|(\tfrac{\toeps^2}\eps\ol u, \tfrac{\toeps^2}\eps \nabla \ol u, \toeps^2 \nabla^2 \ol u, \toeps^3 \nabla ^3 \ol u)\|_{L^2(\Omega\times (0,T))} \right).
\end{align}
For the following estimate it is essential to use an equivalent form of \eqref{eq:TransformConvAC}. Namely, since $\widetilde{\cL}_\eps \tilde{u} = \widetilde{\cL}_\eps (P\tilde{u})$, we have
    \begin{align}
        \label{eq:TransformConvAC-proj}
	       \partial_t \tilde{u} - \widetilde{\Delta}_{\Sigma} \tilde{u}+ \widetilde{\mathcal{L}}_\eps (P\tilde{u}) =\tilde{g} \quad \text{in } \Sigma_{2\delta}\times (0,T),
    \end{align}
    where $P\tilde{u} = \tilde{u} - Q\tilde{u}$ denotes the projection defined by \eqref{eq:DefnPu} in Corollary \ref{cor:Pu}, which is orthogonal to $\theta_0'$ in $L^2(\Sigma_{2\delta})$.
    Now we test \eqref{eq:TransformConvAC-proj} with $\toeps^2\partial_t\tilde{u} = \toeps^2 \pt (P \tilde{u} + Q \tilde{u})$. This yields
	\begin{align*}
		&\frac12\frac{\d}{\dt} \left[B_\Sigma^1(\tilde{u}) + B_\eps^1 (P\tilde{u})\right]
		+ \|\toeps\partial_t \tilde{u}\|_{L^2(\Sigma_{2\delta})}^2 = \frac12\sum_{i,j=1}^d\int_{\Sigma_{2\delta}}\partial_t a_{i,j}(\nabla_{\Sigma}\tilde{u})_i (\nabla_\Sigma\tilde{u})_j\toeps^2 \, \d(r,s)\\
        & \qquad
        + \frac12 \int_{\Sigma_{2\delta}} \tfrac{\toeps^2}{\eps^2} f'''(\theta_0(\tfrac{r}{\eps} - h_\eps)) \theta_0'(\tfrac{r}{\eps} - h_\eps) (P\tilde{u})^2 (-\pt h_\eps) \,\d(r,s) \\
        &\qquad - \int_{\Sigma_{2\delta}} \pr (P\tilde{u}) 2 r \pt \tilde{u}\,\d(r,s) 
        - \int_{\Sigma_{2\delta}} \widetilde{\cL}_\eps (P\tilde{u}) \pt(Q\tilde{u}) \toeps^2 \,\d(r,s)
        + \int_{\Sigma_{2\delta}} \toeps \tilde{g} \toeps \partial_t \tilde{u}\, \d(r,s),
	\end{align*}
        where $B_\Sigma^1$ and $ B_\eps^1 $ are defined in \eqref{bi-linear-3}. By Corollary \ref{cor:Pu} it holds
        \begin{align*}
            \abs{\int_{\Sigma_{2\delta}} \pr (P\tilde{u}) 2 r \pt \tilde{u}\,\d(r,s)}
            \leq C \normm{\pr(P\tilde{u})}_{L^2(\Sigma_{2\delta})} \normm{\toeps \pt \tilde{u}}_{L^2(\Sigma_{2\delta})} \leq C \normm{u}_{V_t^\eps(2\delta)} \normm{\toeps \pt \tilde{u}}_{L^2(\Sigma_{2\delta})}.
        \end{align*}
        and
        \begin{align*}
            \left|\frac12 \int_{\Sigma_{2\delta}} \tfrac{\toeps^2}{\eps^2} f'''(\theta_0(\tfrac{r}{\eps} - h_\eps)) \theta_0'(\tfrac{r}{\eps} - h_\eps) (P\tilde{u})^2 (-\pt h_\eps) \,\d(r,s)\right| \leq C\|P\tilde u\|_{H^1(\Sigma_{2\delta})}^2
        \end{align*}
        since $\tfrac{\toeps^2}{\eps^2}\theta_0'(\tfrac{r}\eps-h_\eps)$ is uniformly bounded.
    Moreover, 
    \begin{align*}
        \int_{\Sigma_{2\delta}} \widetilde{\cL}_\eps (P\tilde{u}) \pt(Q\tilde{u}) \toeps^2 \,\d(r,s)
        & = \int_{\Sigma_{2\delta}} \widetilde{\cL}_\eps (P\tilde{u}) (Q\pt\tilde{u}) \toeps^2 \,\d(r,s)
        + \int_{\Sigma_{2\delta}} \widetilde{\cL}_\eps (P\tilde{u}) [\pt,Q]\tilde{u} \toeps^2 \,\d(r,s),
    \end{align*}
    where by integration by parts we have
    \begin{align*}
        & \left|\int_{\Sigma_{2\delta}} \widetilde{\cL}_\eps (P\tilde{u}) (Q\pt\tilde{u}) \toeps^2 \,\d(r,s)\right| \\
        &\quad \leq \left|\int_{\Sigma_{2\delta}} \pr (P\tilde{u}) \pr \big(\toeps^2 Q\pt\tilde{u}\big) \,\d(r,s)\right|
        + \left| \int_{\Sigma_{2\delta}} \tfrac{f''(\theta_0)}{\eps^2} P\tilde{u} (Q\pt\tilde{u}) \toeps^2 \,\d(r,s)\right| \\
        &\quad \leq C \normm{\pr(P \tilde{u})}_{L^2(\Sigma_{2\delta})} \normm{\pr \big(\toeps^2 Q\pt\tilde{u}\big)}_{L^2(\Sigma_{2\delta})}
            + \tfrac{C}{\eps}\normm{P \tilde{u}}_{L^2(\Sigma_{2\delta})} \normm{\tfrac{\toeps^2}{\eps} Q \pt \tilde{u}}_{L^2(\Sigma_{2\delta})} \\
        &\quad \leq C \big(\normm{P \tilde{u}}_{H^1(\Sigma_{2\delta})} 
        + \tfrac{1}{\eps}\normm{P \tilde{u}}_{L^2(\Sigma_{2\delta})}\big) \normm{\toeps \pt \tilde{u}}_{L^2(\Sigma_{2\delta})}
    \end{align*}
    because of
    \begin{align*}
        \int_{\Sigma_{2\delta}} \abs{\tfrac{\toeps^2}{\eps} Q\pt\tilde{u}}^2 \d(r,s)
        \leq C\normm{\eps\pt \tilde{u}}_{L^2(\Sigma_{2\delta})}^2 
        \leq C' \normm{\toeps \pt \tilde{u}}_{L^2(\Sigma_{2\delta})}^2 
    \end{align*}
    since $\left\|\tfrac{\toeps^2}{\eps} \tfrac1{\sqrt{\sigma \eps}}\theta'_0(\tfrac{.}{\eps}-h_\eps)\right\|_{L^2(-2\delta,2\delta)}\leq C\eps$.
    The commutator $[\partial_t,Q]$ can be controlled in the following standard way as it is lower-order:
    \begin{align*}
        &\left|\int_{\Sigma_{2\delta}} \widetilde{\cL}_\eps (P\tilde{u}) [\pt,Q]\tilde{u} \toeps^2 \,\d(r,s)\right|\\
        &\quad \leq \left|\int_{\Sigma_{2\delta}} \pr (P\tilde{u}) \tfrac{1}{\eps \sigma} 
        \pr(\theta_0''(\tfrac{r}\eps-h_\eps) \toeps^2) (-\pt h_\eps) \int_{-2\delta}^{2\delta}\theta_0'(\tfrac{\tilde r}\eps-h_\eps)\tilde{u}(\tilde r,s) \d \tilde r \,\d(r,s)\right| \\
        & \qquad
        + \left|\int_{\Sigma_{2\delta}} \pr (P\tilde{u}) \tfrac{1}{\eps \sigma} 
        \pr(\theta_0'(\tfrac{r}\eps-h_\eps) \toeps^2)  \int_{-2\delta}^{2\delta}\theta_0''(\tfrac{\tilde r}\eps-h_\eps)(-\pt h_\eps) \tilde{u}(\tilde r,s) \d \tilde r \,\d(r,s)\right| \\
        & \qquad 
        + \left|\int_{\Sigma_{2\delta}} \tfrac{\toeps^2}{\eps^2} f''(\theta_0(\tfrac{r}\eps-h_\eps)) P\tilde{u}(\tilde r,s) \tfrac{1}{\eps \sigma} 
        \theta_0''(\tfrac{r}\eps-h_\eps) (-\pt h_\eps)  \int_{-2\delta}^{2\delta}\theta_0'(\tfrac{\tilde r}\eps-h_\eps) \tilde{u}(\tilde r,s) \d \tilde r \,\d(r,s)\right| \\
        & \qquad 
        + \left|\int_{\Sigma_{2\delta}} \tfrac{\toeps^2}{\eps^2} f''(\theta_0(\tfrac{r}\eps-h_\eps)) P\tilde{u} \tfrac{1}{\eps \sigma} 
        \theta_0'(\tfrac{r}\eps-h_\eps)  \int_{-2\delta}^{2\delta}\theta_0''(\tfrac{\tilde r}\eps-h_\eps)(-\pt h_\eps) \tilde{u}(\tilde{r},s) \d \tilde r \,\d(r,s)\right| \\
        &\quad \leq C \Big(
        \normm{P\tilde{u}}_{H^1(\Sigma_{2\delta})}^2
        + \normm{\tilde{w}}_{H^1(\Sigma)}^2
        \Big).
    \end{align*}
    Hence integrating over $(0,T)$ together with Young's inequality and Corollary \ref{cor:Pu} leads to
	\begin{align*}
		&
        \sup_{t\in [0,T]} \left[B_\Sigma^1 (\tilde{u}(t)) + B_\eps^1 (P\tilde{u}(t))\right] + \|\toeps\partial_t\tilde{u}\|_{L^2(\Sigma_{2\delta}\times (0,T))}^2\\
		&\quad \leq C\left(\|\toeps \tilde{g}\|_{L^2(\Sigma_{2\delta}\times (0,T))}^2+\|\toeps  \nabla_\Sigma\tilde{u}\|_{L^2(\Sigma_{2\delta}\times (0,T))}^2
        +\|u\|_{L^2(0,T;\Veps(2\delta))}^2+ B_\Sigma^1 (\tilde{u}_0)
        + B_\eps^1 (P\tilde{u}_0)\right).
	\end{align*}
    In view of the elliptic regularity \eqref{eq:EllipticEstim} close to the interface, one infers from \eqref{eq:TransformConvAC}
        \begin{align}\nonumber
		& \|(\toeps\nabla_\Sigma^2\tilde{u}, \toeps\partial_r \nabla_\Sigma \tilde{u}, \toeps\widetilde{\mathcal{L}}_\eps \tilde u,\tfrac{\toeps}{\eps} \nabla_\Sigma \tilde{u})\|_{L^2(\Sigma_{2\delta})}^2\\
		&\quad \leq C\left(\|\toeps \tilde{g}\|_{L^2(\Sigma_{2\delta})}^2
        +\|\toeps \pt \tilde{u}\|_{L^2(\Sigma_{2\delta})}^2
        +\|(\tilde u,\nabla_\Sigma \tilde{u}, \toeps\partial_r \tilde{u})\|_{L^2(\Sigma_{2\delta})}^2\right).
        \label{eq:second-elliptic}
	\end{align}
    where the right-hand side terms are well controlled by above estimates.

	Now a combination with \eqref{eq:CoercivBepsk}, 
     \eqref{ineq:uw energy-1},
    \eqref{eq:gEstim1}, and \eqref{eq:second-elliptic} yields
	\begin{align*}
          &\|(\toeps \nabla \tilde{u}, \tfrac{\toeps}\eps \tilde u)\|_{L^\infty(0,T;L^2(\Sigma_{2\delta}))}
          + \|(\toeps\partial_t\tilde{u},\toeps \nabla\nabla_\Sigma \tilde{u},\tfrac{\toeps}\eps\nabla_\Sigma\tilde{u}, \toeps\mathcal{L}_\eps \tilde{u})\|_{L^2(\Sigma_{2\delta}\times (0,T))}\\
		&\quad \leq C\left(\|\toeps \ol{r}_3 \|_{L^2(\Omega\times (0,T))}+\|(\toeps \nabla  \ol u, \toeps^2\nabla^2 \ol{u})\|_{L^2(\Omega\times (0,T))}+\|\ol u\|_{L^\infty(0,T;L^2(\Omega))}\right.\\
          &\qquad\qquad \left. +\|u\|_{L^2(0,T;\Veps(2\delta))}^2 + \|\toeps \ol \we \|_{L^2(0,T;H^1(\Omega))}+ \|(\toeps \nabla \tilde{u}_0, \tfrac{\toeps}\eps \tilde u_0)\|_{L^2(\Sigma_{2\delta})}\right).
	\end{align*}
        A collection with the estimates from the first step and Gronwall's lemma finishes the proof of the theorem.
\end{proof*}

\subsection{Weighted Second Order Estimates for $\ol{\bw}$}
In this section, we derive the $\oeps$-weighted second-order estimates for $ \ol{\bw} $. Before heading to the main result, let us introduce a weighted second-order estimate for the Stokes equation, which might be of independent interest.
\begin{lem}
	\label{lem:weighted-stokes-estimate}
	Let $M>0$, $c_A\colon Q_{T_0}\to \R$ be continuously differentiable such that $\|\oeps \nabla c_A\|_{L^\infty(Q_{T_0})}\leq M$ and let $(\tw,\ol q)\in H^1_{0}(\Omega)^d \times L^2_{(0)}(\Omega)$ be a solution of
	\begin{subequations}
	    \begin{alignat}{2}
		\label{eq:stokes-tw-qbar}
		-\Div\bigl(2\nu(c_A)D\tw\bigr)+\nabla\ol q&=\mathbf f
		&\qquad &\text{in } \Omega,\\
		\Div\tw&=0 &&\text{in } \Omega,\\
		\tw&=0&	&\text{on } \partial \Omega
	\end{alignat}
	\end{subequations}
	for some $\mathbf f\in L^2(\Omega)^d$.
	Then $\tw\in H^2 (\Omega)^d$, $\ol q\in H^1(\Omega)$ and satisfies
	\begin{equation}
		\label{eq:weighted-pressure-laplacian-estimate}
		\|\oeps \nabla^2\tw\|_{L^2(\Omega)}
		+
		\|\oeps\nabla\ol q\|_{L^2(\Omega)}
		\leq
		C\left(
		\|\oeps \mathbf f\|_{L^2(\Omega)}
		+
		\|\nabla\tw\|_{L^2(\Omega)}
		\right),
	\end{equation}
	where $C=C(M)$ is independent of $\eps$, $t\in [0,T_0]$, $\tw, \ol q$, $\mathbf f$ and $c_A$ satisfying the assumptions.
\end{lem}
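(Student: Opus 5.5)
The plan is to mirror the proof of Lemma~\ref{lem:weighted-Dirichlet-Stokes-lifting} and Proposition~\ref{prop:weightedEllipticReg}. The key point is that the variable viscosity produces lower-order terms involving $\nabla c_A$, and these are harmless once multiplied by $\oeps$, because $\|\oeps\nabla c_A\|_{L^\infty}\leq M$.

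First, for fixed $\eps$, $\tw\in H^2$ and $\ol q\in H^1$ follow from standard regularity for Stokes systems with smooth (for fixed $\eps$) viscosity. Since $\Div\tw=0$, we have $\Div(2\nu(c_A)D\tw)=\nu(c_A)\Delta\tw+2\nu'(c_A)D\tw\nabla c_A$. Dividing by $\nu(c_A)\geq\nu_0$, the system becomes
\begin{equation*}
-\Delta\tw+\tfrac{1}{\nu(c_A)}\nabla\ol q=\mathbf F:=\tfrac{1}{\nu(c_A)}\bigl(\mathbf f+2\nu'(c_A)D\tw\,\nabla c_A\bigr).
\end{equation*}
Here $\|\oeps\mathbf F\|_{L^2}\leq C(\|\oeps\mathbf f\|_{L^2}+M\|\nabla\tw\|_{L^2})$.

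\emph{Interior pressure estimate.} Take the cutoff $\eta$ from the proof of Lemma~\ref{lem:weighted-Dirichlet-Stokes-lifting}. I would test the original equation with $\eta^2\oeps^2\nabla\ol q/\nu(c_A)$, or equivalently test the rewritten equation with $\eta^2\oeps^2\nabla\ol q$. The term $\int\eta^2\oeps^2\Delta\tw\cdot\nabla\ol q$ is integrated by parts twice as in \eqref{eq:weighted-pressure-lifting-identity}. The contribution $\int\eta^2\oeps^2\partial_i\partial_j\tw_i\partial_j\ol q$ vanishes since $\Div\tw=0$. The commutator terms involve $\nabla(\eta^2\oeps^2)=O(\eta\oeps)$ and are bounded by $C\|\nabla\tw\|_{L^2}\|\eta\oeps\nabla\ol q\|_{L^2}$. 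If the weight $1/\nu(c_A)$ is kept inside the pressure test, its derivative $\nu'(c_A)\nabla c_A/\nu^2$ only costs a factor $M/\oeps$, which the $\oeps^2$ weight absorbs. This yields
\begin{equation*}
\|\eta\oeps\nabla\ol q\|_{L^2}\leq C\bigl(\|\oeps\mathbf f\|_{L^2}+\|\nabla\tw\|_{L^2}\bigr).
\end{equation*}
This step, the pressure estimate, is the main obstacle: it is the only place where the $\eps$-singular coefficient $\nabla c_A$ could enter at top order, and the divergence-free cancellation must be arranged so that every occurrence of $\nabla c_A$ carries a factor $\oeps$.

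\emph{Boundary region.} Near $\partial\Omega$, in the set $\Omega_{2\delta'}$, we have $c_A\equiv-1$ and $\oeps\in[1,2]$, so the equation there is a constant-viscosity Stokes system. Local boundary regularity as cited from \cite{Galdi} gives
\begin{equation*}
\|\nabla^2\tw\|_{L^2(\Omega_{\delta'})}+\|\nabla\ol q\|_{L^2(\Omega_{\delta'})}\leq C\bigl(\|\mathbf f\|_{L^2(\Omega_{2\delta'})}+\|\tw\|_{H^1}+\|\ol q\|_{L^2}\bigr).
\end{equation*}
The term $\|\ol q\|_{L^2}$ is controlled by the weak formulation and the Bogovskii/Ne\v{c}as inequality. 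Indeed, $\|\ol q\|_{L^2}\leq C\|\nabla\ol q\|_{H^{-1}}\leq C(\|\nabla\tw\|_{L^2}+\|\mathbf f\|_{H^{-1}})$. Moreover, $\|\mathbf f\|_{H^{-1}}\leq C\|\oeps\mathbf f\|_{L^2}$, since $\oeps^{-1}\in L^p$ uniformly for $p<$ ... Here I would instead use $\|\oeps^{-1}\varphi\|_{L^2}\leq C\|\varphi\|_{H^1}$, which is a Hardy-type inequality in the normal direction that is uniform in $\eps$.

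\emph{Second derivatives.} Finally, apply Proposition~\ref{prop:weightedEllipticReg} with $k=1$ componentwise to $-\Delta\tw=\mathbf F-\nabla\ol q/\nu(c_A)$. This bounds $\|\oeps\nabla^2\tw\|_{L^2}$ by $\|\tw\|_{L^2}+\|\nabla\tw\|_{L^2}+\|\oeps\mathbf F\|_{L^2}+\|\oeps\nabla\ol q\|_{L^2}$. Combining this with Poincar\'e's inequality and the two pressure estimates gives \eqref{eq:weighted-pressure-laplacian-estimate}, with a constant depending only on $M$, $\nu_0$, and the bounds on $\nu$.
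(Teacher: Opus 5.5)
Your interior pressure estimate and the final appeal to Proposition~\ref{prop:weightedEllipticReg} are essentially the paper's argument. The paper keeps $\nu(c_A)$ inside the weight $a=\nu(c_A)\eta^2\oeps^2$ instead of dividing by it, which is equivalent, since $|\nabla a|\le C\eta\oeps$ either way.

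The gap is in the boundary step. There you need $\|\ol q\|_{L^2(\Omega)}$, and you propose to get it from $\|\mathbf f\|_{H^{-1}}\le C\|\oeps\mathbf f\|_{L^2}$, i.e.\ from $\|\oeps^{-1}\varphi\|_{L^2}\le C\|\varphi\|_{H^1}$ uniformly in $\eps$. That inequality is false. A Hardy inequality in the normal variable requires $\varphi|_{\Gamma_t}=0$. For $\varphi\equiv 1$ near $\Gamma_t$ one gets $\|\oeps^{-1}\varphi\|_{L^2}^2\geq c\int_{-\delta}^{\delta}\frac{\d r}{\eps^2+r^2}\geq \frac{c'}{\eps}$. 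Likewise, $\oeps^{-1}$ lies in $L^p$ uniformly only for $p<1$.

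No other argument can rescue this particular step, because the global $L^2$-norm of the pressure is genuinely not controlled by the right-hand side of \eqref{eq:weighted-pressure-laplacian-estimate}. Take $\tw=0$ and $\ol q=\phi_\eps-\frac1{|\Omega|}\int_\Omega\phi_\eps\dx$, where $\phi_\eps=\theta_0(d_{\Gamma_t}/\eps)$ near $\Gamma_t$ and $\phi_\eps=\pm1$ in $\Omega^\pm(t)$ away from it, and set $\mathbf f=\nabla\ol q$. Then $\|\oeps\mathbf f\|_{L^2}\sim\eps^{1/2}$, while $\ol q$ is a nonzero constant of order one on the boundary collar $\Omega_{2\delta'}$. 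This is consistent with the lemma itself, whose left-hand side involves only $\oeps\nabla\ol q$.

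The repair uses the fact that the local boundary estimate sees the pressure only modulo constants. On each connected component of the collar $\Omega_{2\delta'}$, replace $\ol q$ by $\ol q-c$, where $c$ is its mean there. Ne\v{c}as' inequality on the collar and the weak form of the equation give
$\|\ol q-c\|_{L^2(\Omega_{2\delta'})}\le C\|\nabla\ol q\|_{H^{-1}(\Omega_{2\delta'})}\le C(\|\mathbf f\|_{L^2(\Omega_{2\delta'})}+\|\nabla\tw\|_{L^2(\Omega)})$.
Moreover, $\|\mathbf f\|_{L^2(\Omega_{2\delta'})}\le\|\oeps\mathbf f\|_{L^2(\Omega)}$ because $\oeps\geq 1$ there. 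This local normalization is also what makes the term $\ol q\nabla\eta$ on the right-hand side of the paper's localized Stokes system harmless.

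Finally, $c_A\equiv-1$ near $\partial\Omega$ is not a hypothesis of the lemma, but you do not need it. Since $\oeps\geq 1$ on the collar, $|\nabla c_A|\le M$ there, so $\nu(c_A)$ is uniformly Lipschitz. That suffices for the local $H^2\times H^1$ Stokes estimate.
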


\noindent
\begin{proof}
First of all, $\tw\in H^2(\Omega)^d$ and $\ol q\in H^1(\Omega)$ follows from standard regularity results for the Stokes equation with variable viscosity and fixed $\eps>0$, cf.\ e.g.\ \cite[Lemma~4]{ModelH}. In view of Proposition~\ref{prop:weightedEllipticReg} it is sufficient to prove the estimate for $\|\oeps \nabla \ol q\|_{L^2(\Omega)}$. 
It is enough to show the estimate for smooth solutions; the general case follows by a standard approximation argument. 
Since $\Div\tw=0$, the equation \eqref{eq:stokes-tw-qbar} may be written as
\begin{equation}
	\label{eq:reduced-stokes-localized-fixed}
	-\nu(c_A)\Delta\tw+
	\nabla\ol q
	=
	\mathbf f+2D\tw\,\nabla\nu(c_A)
	=:\widetilde{\mathbf f}.
\end{equation}
Since $\oeps \nabla c_A$ is bounded by $M$ and $\nu \in C^1(\R)$,
\begin{equation}
	\label{eq:ftilde-weighted-bound-localized-fixed}
	\|\oeps\widetilde{\mathbf f}\|_{L^2(\Omega)}
	\leq
	C(M) \big(\|\oeps \mathbf f\|_{L^2(\Omega)}
	+
	\|\nabla\tw\|_{L^2(\Omega)}\big).
\end{equation}

\noindent
\emph{Step 1: Interior pressure estimate.}
Choose $\eta\in C_0^\infty(\Omega)$ such that
\[
0\leq\eta\leq 1,
\qquad
\eta=1\quad\text{on }\bigcup_{t\in [0,T_0]} \ol{\Gamma_t(\delta)}.
\] 
Testing \eqref{eq:reduced-stokes-localized-fixed} by $\eta^2\oeps^2\nabla\ol q$ gives
\begin{align}
	\int_\Omega \eta^2\oeps^2|\nabla\ol q|^2\dx
	&=
	\int_\Omega \widetilde{\mathbf f}\cdot
	\eta^2\oeps^2\nabla\ol q\dx
	+
	\int_\Omega \nu(c_A)\Delta\tw\cdot
	\eta^2\oeps^2\nabla\ol q\dx 
	=: I_1+I_2 .
	\label{eq:localized-pressure-identity-fixed}
\end{align}
Clearly,
\begin{equation}
	\label{eq:I1-localized-fixed}
	|I_1|
	\leq
	\|\eta\oeps\widetilde{\mathbf f}\|_{L^2(\Omega)}
	\|\eta\oeps\nabla\ol q\|_{L^2(\Omega)} .
\end{equation}

To estimate $I_2$, we set
$
a:=\nu(c_A)\eta^2\oeps^2 .
$
Since $a$ is compactly supported in $\Omega$, integration by parts gives
\begin{align*}
	I_2
	&=
	-\sum_{i,j=1}^d\int_\Omega a\,\partial_{x_j}\tw_i\,\partial_{x_j}\partial_{x_i}\ol q\dx
	-\sum_{i,j=1}^d
	\int_\Omega \partial_{x_j} a\,\partial_{x_j}\tw_i\,\partial_{x_i}\ol q\dx .
\end{align*}
Integrating the first term by parts in $x_i$ and using
$\Div\tw=0$, we obtain
\begin{align*}
	&-\sum_{i,j=1}^d\int_\Omega a\,\partial_{x_j}\tw_i\,\partial_{x_j}\partial_{x_i}\ol q\dx
=\sum_{i,j=1}^d
	\int_\Omega \partial_{x_i} a\,\partial_{x_j}\tw_i\,\partial_{x_j}\ol q\dx .
\end{align*}
Hence
\begin{equation}
	\label{eq:I2-identity-fixed}
	I_2
	=\sum_{i,j=1}^d
	\int_\Omega \partial_{x_i} a\,\partial_{x_j}\tw_i\,\partial_{x_j}\ol q\dx
	-\sum_{i,j=1}^d
	\int_\Omega \partial_{x_j} a\,\partial_{x_j}\tw_i\,\partial_{x_i}\ol q\dx .
\end{equation} 
Moreover,
\[
\nabla a
=
\eta^2\nabla\bigl(\nu(c_A)\oeps^2\bigr)
+
2\eta\nu(c_A)\oeps^2\nabla\eta .
\]
With the choice of $\eta$, we get
\begin{equation}
	\label{eq:gradient-a-bound-fixed}
	|\nabla a|
	\leq
	C\eta\oeps .
\end{equation}
Consequently,
\begin{equation}
	\label{eq:I2-localized-fixed}
	|I_2|
	\leq
	C
	\|\nabla\tw\|_{L^2(\Omega)}
	\|\eta\oeps\nabla\ol q\|_{L^2(\Omega)} .
\end{equation}
Combining \eqref{eq:localized-pressure-identity-fixed}, \eqref{eq:I1-localized-fixed}, \eqref{eq:I2-localized-fixed}, and \eqref{eq:ftilde-weighted-bound-localized-fixed}, we obtain
\begin{equation}
	\label{eq:interior-pressure-weighted-final-fixed}
	\|\eta\oeps\nabla\ol q\|_{L^2(\Omega)}
	\leq
	C \big(\|\oeps \mathbf f\|_{L^2(\Omega)}
	+
	\|\nabla\tw\|_{L^2(\Omega)}\big).
\end{equation}

\noindent
\emph{Step 2: Pressure estimate close to the boundary.}
Let $\Omega_\kappa$, $\kappa \in (0,2\delta']$ and $\delta'>0$ be as in the proof of Proposition~\ref{prop:weightedEllipticReg} and choose some $\eta\in C^\infty(\ol\Omega)$ with $\eta \equiv 1$ in $\Omega_{\delta'}$ and $\eta\equiv 0$ in $\Omega\setminus \Omega_{2\delta'}$. Multiplying \eqref{eq:reduced-stokes-localized-fixed} by $\eta$, we obtain
\begin{alignat*}{2}
	-\Delta(\nu(c_A) \eta\tw)+
	\nabla(\eta \ol q-m)
  &= \eta \widetilde{\mathbf f}- 2\nabla (\eta \nu(c_A))\cdot \nabla \tw - \tw \Delta (\eta \nu(c_A))
  + q \nabla \eta &\quad &\text{in }\Omega_{2\delta'},\\
  \Div (\nu(c_A) \eta\tw) &= \tw\cdot \nabla (\nu(c_A) \eta)&\quad &\text{in }\Omega_{2\delta'},\\
  \nu(c_A) \eta\tw|_{\partial\Omega} &=0 &&\text{in }\partial\Omega_{2\delta'},
\end{alignat*}
where  $m= \tfrac1{|\Omega|}\int_{\Omega}\eta \ol q\sd x$. Hence standard results for the Stokes system yield
\begin{align*}
  &\|\nabla^2 \tw\|_{L^2(\Omega_{\delta'})}+          \|\nabla \ol q\|_{L^2(\Omega_{\delta'})}\leq C\left(\|\nu(c_A) \eta \tw\|_{H^2(\Omega_{2\delta'})}+          \|\eta\ol q-m\|_{H^1(\Omega_{2\delta'})} \right)\\
  & \qquad \leq C\left(\|\widetilde{\mathbf f}\|_{L^2(\Omega_{2\delta'})}+ \|\nabla \tw\|_{L^2(\Omega)}\right)\leq  C\left(\|\oeps \mathbf f\|_{L^2(\Omega)}+ \|\nabla \tw\|_{L^2(\Omega)}\right),
\end{align*}
since $\nabla (\eta \nu(c_A))$ and $\Delta (\eta \nu(c_A))$ are uniformly bounded. Since $\oeps \in [1,2]$ in $\Omega_{\delta'}$, this finishes the proof.
\end{proof}

With the aid of the previous lemma we prove the following higher-order estimate for $ \ol{\bw} $.
\begin{thm}\label{thm:HigherOrderW}
	Let $ \ol \bw \in L^2(0,T;H^2(\Omega)\cap H^1_0(\Omega))^d \cap H^1(0,T;L^2(\Omega))^d $ be a solution to \eqref{eq:linNSAC1'}-\eqref{eq:linNSAC2'} with $\ol\bw|_{t=0}= \ol\bw_0$. Then it holds
	\begin{align*}
		& \norm{\oeps \nabla \ol{\bw}}_{L^\infty(0,T;L^2)}
		+ \norm{\oeps \pt \ol{\bw}}_{L^2(Q_{T})}
		+ \norm{\oeps \nabla^2 \ol{\bw}}_{L^2(Q_{T})}
		 \leq C \Big(\|(\ol u,\oeps \nabla \ol u,\nabla \tw)\|_{L^2(Q_{T})}\\
        &\ + \|\oeps \ol{\mathbf{r}}_1\|_{L^2(Q_{T})}
                  + \| \ol{\mathbf{r}}_1 \|_{L^2(0,T;V_\sigma')}
              + \|(\ol r_2,\oeps \nabla \ol r_2)\|_{L^2(Q_T)}
          + \|\partial_t \ol r_2\|_{L^2(0,T;H^{-1}_{(0)}(\Omega))}+\|\ol{\we}_0\|_{H^1(\Omega)} \Big),
	\end{align*}
	where $ C > 0 $ is independent of $ \eps \in (0,\eps_1] $ and $T\in (0,T_0]$.
\end{thm}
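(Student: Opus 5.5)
We reduce to the divergence-free part $\tw=\ol\we+\we_1$ from Proposition~\ref{prop:weighted-divergence-corrector}. Since $\we_1$ already satisfies \eqref{eq:w1Estim}, which controls $\|(\nabla\we_1,\partial_t\we_1,\oeps\nabla^2\we_1)\|_{L^2(Q_T)}$ by the $\ol r_2$- and $\ol\we_0$-terms on the right-hand side, it suffices to prove the asserted bounds for $\tw$. For $\|\oeps\nabla\we_1\|_{L^\infty(0,T;L^2)}$ we use $\oeps\le 2$ together with $H^1(0,T;L^2)\cap L^2(0,T;H^2)\hookrightarrow C([0,T];H^1)$ and the unweighted Stokes estimate \eqref{eq:y-estimate}. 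The function $\tw$ solves \eqref{eq:linNSAC1'-1} with zero initial value.

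\emph{Step 1 (weighted energy estimate).} We test \eqref{eq:linNSAC1'-1} with $\mathbb P(\oeps^2\partial_t\tw)$, or more conveniently with $\oeps^2\partial_t\tw$, keeping the pressure term $-\int\ol q\,\nabla(\oeps^2)\cdot\partial_t\tw$. This gives
\[
\tfrac{d}{dt}\int\nu(c_A)\oeps^2|D\tw|^2+\|\oeps\partial_t\tw\|^2_{L^2} .
\]
The commutator terms are absorbed as follows.
\begin{itemize}
\item The term $\partial_t(\nu(c_A)\oeps^2)|D\tw|^2$ is bounded by $C|D\tw|^2$, because $\oeps|\partial_t c_A|\le C$.
\item The term $\nabla(\nu(c_A)\oeps^2)\,D\tw\,\partial_t\tw$ is bounded by $C|\nabla\tw|\,\oeps|\partial_t\tw|$, because $|\oeps\nabla c_A|\le C$.
\item The convective terms are bounded using the boundedness of $\ve_A$ and $\nabla\ve_A$ (Remark~\ref{rem:vAC1bounded}).
\item The term $\oeps\,\ol q\,\partial_t\tw$ needs $\|\ol q\|_{L^2}$. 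For this we use the equation in $L^2(0,T;H^{-1})$, which is controlled by $\|\oeps\partial_t\tw\|$ plus lower-order terms.
\end{itemize}
We handle $\mathbf g$, $\ol{\mathbf r}_1$ and $\Div(2\nu'(c_A)\ol u D\ve_A)$ as they stand. Note that $\oeps\nabla\ol u$ appears on the right-hand side, and $|\nabla^2\ve_A|\lesssim\eps^{-1}$, so $\oeps|\nabla^2\ve_A|\le C$. We integrate the capillary term $-\eps\Div(\nabla\ol u\otimes\nabla c_A+\nabla c_A\otimes\nabla\ol u)$ by parts onto $\nabla(\oeps^2\partial_t\tw)$.

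\emph{Step 2 (main obstacle).} The capillary term is the delicate one. Tested directly, it costs $\eps\,\oeps\nabla c_A\,\nabla\ol u\cdot\oeps\nabla\partial_t\tw$, which is not available. Instead, we move the time derivative off: we write $\int\eps\,\nabla\ol u\otimes\nabla c_A:\nabla(\oeps^2\partial_t\tw)$ as $\tfrac{d}{dt}(\cdots)$ minus terms involving $\partial_t\nabla\ol u$ and $\partial_t\nabla c_A$. Both of these are bad, so we avoid this route. We prefer to avoid time-testing for this term: we treat $\eps\Div(\nabla\ol u\otimes\nabla c_A)$ as a forcing $\mathbf F$ and use $\|\oeps\mathbf F\|_{L^2}$. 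Expanding, $\eps\oeps|\nabla^2\ol u||\nabla c_A|\le C|\nabla^2\ol u|$, which is too singular. We therefore keep it in divergence form and test the Stokes system with $\oeps^2\partial_t\tw$ only after subtracting the stationary problem
\[
-\Div(2\nu(c_A)D\mathbf z)+\nabla\pi=-\eps\Div(\nabla\ol u\otimes_s\nabla c_A),\qquad \Div\mathbf z=0,\quad \mathbf z|_{\partial\Omega}=0 .
\]
For $\mathbf z$ we have $\|\nabla\mathbf z\|_{L^2}\le C(\|\ol u\|_{\Veps}+\|\oeps\nabla\ol u\|_{L^2})$ by claim \eqref{w1est-1}. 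Bounding $\partial_t\mathbf z$ similarly is the step I expect to require the most care. Failing that, the fallback is to use the first-order bound $\|\oeps\nabla\ol u\|$ from Theorem~\ref{thm:main-0} interpolated with the time-integrated version.

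\emph{Step 3 (second derivatives).} For a.e.\ $t$ we apply Lemma~\ref{lem:weighted-stokes-estimate} with
\[
\mathbf f=-\partial_t\tw-\ve_A\cdot\nabla\tw-\tw\cdot\nabla\ve_A+\Div(2\nu'(c_A)\ol u D\ve_A)+\ol{\mathbf r}_1+\mathbf g+\text{capillary term}.
\]
This yields $\|\oeps\nabla^2\tw\|_{L^2}\le C(\|\oeps\mathbf f\|_{L^2}+\|\nabla\tw\|_{L^2})$. Every piece of $\oeps\mathbf f$ is bounded by Step~1 together with $\|(\ol u,\oeps\nabla\ol u)\|_{L^2}$ and the data. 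For the capillary piece we again use the decomposition from Step~2. Finally, Gronwall's inequality closes the estimate, with $\|\nabla\tw\|_{L^2(Q_T)}$ kept on the right-hand side as stated.
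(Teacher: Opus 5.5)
There are two genuine gaps, and they are exactly where the paper's proof uses ideas that your proposal lacks.

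\emph{First, the pressure.} Testing with the unprojected $\oeps^2\pt\tw$ leaves the term $2\int_\Omega\ol q\,\oeps\nabla\oeps\cdot\pt\tw\dx$. You propose to bound $\|\ol q\|_{L^2}$ through the momentum equation in $H^{-1}$ by $\|\oeps\pt\tw\|_{L^2}$ plus lower-order terms. This fails for two reasons.

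By duality, $\|f\|_{H^{-1}}\le C\|\oeps f\|_{L^2}$ is equivalent to the Hardy-type bound $\|\varphi/\oeps\|_{L^2}\le C\|\varphi\|_{H^1}$. That bound is false across a hypersurface: for $\varphi\equiv 1$ near $\Gamma_t$ one has $\int_{-\delta}^{\delta}(\eps^2+r^2)^{-1}\dr\approx\pi/\eps$, so you lose a factor $\eps^{-1/2}$.

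Even an $O(1)$ bound $\|\ol q\|_{L^2}\le C_1\|\oeps\pt\tw\|_{L^2}+\dots$ would not help. It puts $2C_1\|\oeps\pt\tw\|_{L^2}^2$ on the right-hand side, and this cannot be absorbed.

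The paper never estimates $\ol q$ in the energy step. It tests with $P_\sigma(\oeps^2\pt\tw)=\oeps^2\pt\tw-\nabla p$, where $\Delta p=2\oeps\nabla\oeps\cdot\pt\tw$ and $\no\cdot\nabla p|_{\partial\Omega}=0$, so the pressure drops out exactly. The price is $\int_\Omega 2\nu(c_A)D\tw:\nabla^2 p\dx$ plus a boundary integral over $\partial\Omega$. Both are harmless because $\|\nabla p\|_{H^1}\le C\|\oeps\pt\tw\|_{L^2}$ and $\oeps$ is constant near $\partial\Omega$. The boundary integral costs only a small multiple of $\|\oeps\nabla^2\tw\|_{L^2}^2$, which is absorbed later via Lemma~\ref{lem:weighted-stokes-estimate}. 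You mention the projection and then discard it, but it is essential.

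\emph{Second, the capillary term.} By your own account Step~2 is unresolved, and the $\mathbf z$-route does not close. The term $\pt\mathbf z$ brings back $\pt\nabla\ol u$ and $\pt\nabla c_A$. The only weighted control of $\pt\nabla\ol u$ is Theorem~\ref{thm:HighOrderEstimate}, whose right-hand side contains $\|\oeps\nabla^2\ol\bw\|_{L^2(Q_T)}$, the very quantity you are estimating.

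Step~3 fails for the same underlying reason: $\|\oeps\,\eps\Div(\nabla\ol u\otimes_s\nabla c_A)\|_{L^2}$ is not uniformly bounded by the norms you allow. Take the kernel component $\ol u=\eps^{-1/2}Z(s)\theta_0'(\rho)$, for which $\|(\ol u,\oeps\nabla\ol u)\|_{L^2}\lesssim\|Z\|_{H^1(\Sigma)}$. For it the capillary term is, to leading order, $-\eps^{-5/2}Z\,\big((\theta_0')^2\big)''(\rho)\,\no$, whose $\oeps$-weighted $L^2$-norm is of order $\eps^{-1}\|Z\|_{L^2(\Sigma)}$.

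The missing idea is the identity \eqref{eq:CapId},
\[
-\eps\Div(\nabla\ol u\otimes\nabla c_A+\nabla c_A\otimes\nabla\ol u)
=\Big(-\eps\Delta\ol u+\tfrac{f''(c_A)}{\eps}\ol u\Big)\nabla c_A
+\Big(-\eps\Delta c_A+\tfrac{f'(c_A)}{\eps}\Big)\nabla\ol u
-\nabla\Big(\tfrac{f'(c_A)}{\eps}\ol u+\eps\nabla\ol u\cdot\nabla c_A\Big),
\]
which shows that the singular part is a gradient. Each piece is then controllable:
\begin{itemize}
\item The gradient is absorbed into a modified pressure. It vanishes against $P_\sigma(\oeps^2\pt\tw)$ and is harmless in Lemma~\ref{lem:weighted-stokes-estimate}.
\item The factor $-\eps\Delta c_A+f'(c_A)/\eps$ is bounded.
\item The first term involves only the linearized Allen--Cahn combination, in which the $\theta_0'$-mode cancels. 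It is controlled with weight $\oeps$ by $\|(\oeps\mathcal L_\eps\ol u,\oeps\nabla_\btau\nabla\ol u)\|_{L^2(\Gamma_t(2\delta))}$ from Theorem~\ref{thm:SecondOrderEstimate}.
\end{itemize}
Hence your claim that every piece of $\oeps\mathbf f$ is bounded by $\|(\ol u,\oeps\nabla\ol u)\|_{L^2}$ and the data cannot hold. These second-order Allen--Cahn norms must enter, as they do in the paper's proof.

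The two gaps are linked. Set $\pi=\tfrac{f'(c_A)}{\eps}\ol u+\eps\nabla\ol u\cdot\nabla c_A$. Without the projection, even the extracted gradient $\nabla\pi$ pairs with $\oeps^2\pt\tw$ to give $\int\pi\,\nabla(\oeps^2)\cdot\pt\tw$, and $\|\pi\nabla\oeps\|_{L^2}\approx\eps^{-1}\|Z\|_{L^2}$.

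A smaller point concerns $\sup_t\|\oeps\nabla\we_1\|_{L^2}$. Your embedding argument covers $\mathbf y$ but not $\mathbf z=\mathcal R(-\ol r_2)$, which is not uniformly bounded in $L^2(0,T;H^2)$. Use instead
\[
\ddt\|\oeps\nabla\mathbf z\|_{L^2}^2\le C\|(\nabla\mathbf z,\oeps\nabla^2\mathbf z)\|_{L^2}\,\|(\nabla\mathbf z,\pt\mathbf z)\|_{L^2},
\]
together with \eqref{eq:z-time-dependent-estimate} and \eqref{eq:z-initial-estimate}.
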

\begin{proof}
	To justify the estimate, we recall \eqref{eq:linNSAC1'-1} and \eqref{eq:linNSAC2'-1}:
	\begin{subequations}
	    \begin{align}
    		\nonumber
    		\partial_t \widetilde{\we} +\ve_A\cdot \nabla \widetilde\we&+\widetilde\we\cdot \nabla \ve_A-\Div(2\nu(c_A)D\widetilde\we) - \Div(2\nu'(c_A)\ol uD\ve_A) +\nabla \ol{q}\\ 
    		\label{eq:NS-tw-2}
    		& = -\eps \Div (\nabla \ol{u} \otimes \nabla c_A+\nabla c_A \otimes \nabla \ol{u})+\ol{\mathbf{r}}_1
    		+ \mathbf{g}, \\ \label{eq:NS-tw-3}
    		\Div \tw & = 0,
    	\end{align}
	\end{subequations}
	and $\tw|_{\partial \Omega} = 0$, $\tw|_{t=0} = 0$, where $ \mathbf{g} \coloneqq \partial_t \we_1+\ve_A\cdot \nabla \we_1+\we_1\cdot \nabla \ve_A -\Div(2\nu(c_A)D\we_1) $. By \eqref{eq:w1Estim}
	\begin{align*}
        & \|(\nabla \we_1,\partial_t \we_1,  \oeps \nabla^2\we_1)\|_{L^2(Q_T)} \leq C \left(\|(\ol r_2,\oeps \nabla \ol r_2)\|_{L^2(Q_T)}
          + \|\partial_t \ol r_2\|_{L^2(0,T;H^{-1}_{(0)})}
          +\|\ol\we_0\|_{H^1(\Omega)}\right),
	\end{align*}
	which implies
        \begin{align*}
          & \|\oeps\mathbf{g}\|_{L^2(Q_T)}
          + \|\mathbf{g}\|_{L^2(0,T;V_\sigma')}  \leq C(T_0) \left(\|(\ol r_2,\oeps \nabla \ol r_2)\|_{L^2(Q_T)}
          + \|\partial_t \ol r_2\|_{L^2(0,T;H^{-1}_{(0)})}
          +\|\ol\we_0\|_{H^1(\Omega)}\right).
        \end{align*}
        
        	Now testing \eqref{eq:NS-tw-2} with $ P_\sigma(\oeps^2 \pt \tw) $, where $P_\sigma\colon L^2(\Omega)^d\to L^2_\sigma(\Omega)$ denotes the Helmholtz projection, yields 
	\begin{align}
		& \int_\Omega 2 \nu(c_A) D \tw : D (\oeps^2 \pt \tw) \sd x - \int_\Omega 2 \nu(c_A) D \tw : \nabla^2 p \dx
		+ \int_\Omega \oeps^2 \abs{\pt \tw}^2 \dx \nonumber\\
		& \quad= \int_{\partial\Omega} 2\nu(c_A)D\tw : \no_{\partial\Omega}\otimes \nabla p\sd\sigma - \int_\Omega (\bv_A \cdot \nabla) \tw \cdot P_\sigma(\oeps^2 \pt \tw) \dx
		- \int_\Omega (\tw \cdot \nabla) \bv_A \cdot P_\sigma(\oeps^2 \pt \tw) \dx \nonumber\\
		& \qquad + \int_\Omega \Div (2\nu'(c_A)\ol uD\ve_A) \cdot (\oeps^2 \pt \tw) \dx
		+ \int_\Omega 2\nu'(c_A)\ol uD\ve_A:  \nabla^2 p \dx \nonumber\\
		& \qquad - \int_\Omega \eps  \Div (\nabla \ol{u} \otimes_s \nabla c_A) \cdot P_\sigma(\oeps^2 \pt \tw) \dx
		+ \int_\Omega (\ol{\mathbf{r}}_1
                  + \mathbf{g}) \cdot P_\sigma(\oeps^2 \pt \tw) \dx\label{energy-0000}
	\end{align}
        since $P_\sigma (\oeps^2 \pt \tw)= \oeps^2 \pt \tw-\nabla p$, where
        \begin{alignat*}{2}
          \Delta p &= 2\oeps \nabla  \oeps\cdot \partial_t \tw &\qquad & \text{in }\Omega,\\
          \no\cdot \nabla p|_{\partial\Omega}&=0 &\qquad &  \text{on }\partial\Omega.
        \end{alignat*}
        Hence
        \begin{equation*}
          \|\nabla p\|_{H^1(\Omega)}\leq C\|\oeps \pt \tw\|_{L^2(\Omega)}
        \end{equation*}
        and therefore
        \begin{align*}
          \left|\int_\Omega 2 \nu(c_A) D \tw : \nabla^2 p \dx\right|&\leq C\|\nabla \tw\|_{L^2(\Omega)}\|\oeps \pt \tw\|_{L^2(\Omega)}, \\
          \left|\int_\Omega 2\nu'(c_A)\ol uD\ve_A: \nabla^2 p \dx\right|&\leq C\|\ol{u}\|_{L^2(\Omega)}\|\oeps \pt \tw\|_{L^2(\Omega)}.
        \end{align*}
	Moreover, we have
	\begin{align*}
		\int_\Omega 2 \nu(c_A) D \tw : D (\oeps^2 \pt \tw) \dx
		& = \ddt \int_\Omega \nu(c_A) \abs{\oeps D \tw}^2 \dx 
		+ \int_\Omega 2 \oeps \nu(c_A) D \tw : (\nabla \oeps \otimes_s \pt \tw) \dx \\
		& \quad 
		- \int_\Omega \big(2 \nu'(c_A) \pt c_A \oeps^2 + 4 \nu(c_A) \oeps \pt \oeps\big) \abs{D \tw}^2 \dx, 
	\end{align*}
        where $2 \nu'(c_A) \pt c_A \oeps^2 + 4 \nu(c_A) \oeps \pt \oeps$ is bounded in $L^\infty(Q_{T})$,
    and
	\begin{align*}
		\abs{\int_\Omega \Div (2\nu'(c_A)\ol uD\ve_A) \cdot \oeps^2 \pt \tw \dx}
		& \leq C(\norm{\ol{u}}_{L^2(\Omega)} + \norm{\oeps \nabla \ol{u}}_{L^2(\Omega)}) \norm{\oeps \pt \tw}_{L^2(\Omega)}, \\
		\abs{\int_\Omega \bv_A \cdot \nabla \tw \cdot P_\sigma( \oeps^2 \pt \tw) \dx}
		& \leq \norm{\bv_A}_{L^\infty(\Omega)} \norm{\nabla \tw}_{L^2(\Omega)} \norm{\oeps \pt \tw}_{L^2(\Omega)} , \\
		\abs{\int_\Omega \tw \cdot \nabla \bv_A \cdot P_\sigma(\oeps^2 \pt \tw) \dx}
		& \leq \norm{\tw}_{L^2(\Omega)} \norm{\nabla \bv_A}_{L^\infty(\Omega)} \norm{\oeps \pt \tw}_{L^2(\Omega)},\\
          \abs{\int_\Omega (\ol{\mathbf{r}}_1+\mathbf{g}) \cdot P_\sigma(\oeps^2 \pt \tw) \dx}&\leq C (\norm{\oeps (\ol{\mathbf{r}}_1+\mathbf{g})}_{L^2(\Omega)}+ \norm{\ol{\mathbf{r}}_1+\mathbf{g}}_{H^1(\Omega)'}) \norm{\oeps \pt \tw}_{L^2(\Omega)}.
	\end{align*}
        Since $\nabla \oeps$ vanish in a $\delta$-neighborhorhood $\partial\Omega_{\delta}= \{x\in\Omega: \dist(x,\Omega)<\delta\}$ for sufficiently small $\delta>0$, we have
        \begin{equation*}
          \|\nabla p\|_{H^1(\partial\Omega_\delta)}\leq C_\delta \|\nabla p\|_{L^2(\Omega)}\leq C_\delta'\|\oeps \pt \tw\|_{L^2(\Omega)}.
        \end{equation*}
        Using this together with $\|a\|_{L^2(\partial\Omega)}\leq C\|a\|_{L^2(\partial\Omega_\delta)}^{\frac12}\|a\|_{H^1(\partial\Omega_\delta)}^{\frac12}$, we obtain
        \begin{align*}
          &\left|\int_{\partial\Omega} 2\nu(c_A)D\tw : \no_{\partial\Omega}\otimes \nabla p\sd\sigma\right|\leq C\|D\tw\|_{L^2(\partial\Omega_\delta)}^{\frac12}\|D\tw\|_{H^1(\partial\Omega_\delta)}^{\frac12}\|\nabla p\|_{H^1(\partial\Omega_\delta)} \\
          &\qquad \leq C\left(\|\oeps D\tw\|_{L^2(\Omega)}^{\frac12}\|\oeps \nabla^2\tw\|_{L^2(\Omega)}^{\frac12}+ \|\oeps D\tw\|_{L^2(\Omega)}\right)\norm{\oeps \pt \tw}_{L^2(\Omega)}.
        \end{align*}
	Furthermore, we have
	\begin{align}\nonumber
		-\eps \Div (\nabla \ol{u} \otimes \nabla c_A+\nabla c_A \otimes \nabla \ol{u})
		& = \Big(-\eps \Delta \ol{u} + \tfrac{f''(c_A)}{\eps} \ol{u}\Big) \nabla c_A
		+ \Big(-\eps \Delta c_A + \tfrac{f'(c_A)}{\eps}\Big)\nabla \ol{u} \\ \label{eq:CapId}
		& \quad - \nabla \Big(\tfrac{f'(c_A)}\eps \ol{u} + \eps\nabla \ol{u} \cdot \nabla c_A\Big),
	\end{align}
    where $-\eps \Delta c_A + \tfrac{f'(c_A)}{\eps}$ is bounded in $L^\infty(Q_{T})$ due to \eqref{eq:OptProfile}, cf.\ \eqref{eq:DeltacA} below.
	Then 
	\begin{align*}
		&\abs{\int_\Omega \eps \Div (\nabla \ol{u} \otimes_s \nabla c_A) \cdot P_\sigma(\oeps^2 \pt \tw) \dx}\\
		& \quad \leq C \left(\norm{(\oeps \nabla \ol{u},\oeps \cL_\eps \ol{u},\oeps \nabla_\btau\nabla \ol{u})}_{L^2(\Gamma_t(2\delta))} \norm{\eps \nabla c_A}_{L^\infty(\Omega)}  + \norm{\oeps \nabla \ol{u}}_{L^2(\Omega)}\right)
		\norm{\oeps \pt \tw}_{L^2(\Omega)},
	\end{align*}
    where $\cL_\eps$ is defined in \eqref{eq:Leps2}.

	Hence integrating in \eqref{energy-0000} over $ (0,T) $ together with Gronwalls', Cauchy--Schwarz's, and Young's inequality yields
	\begin{align}\nonumber
		&\norm{\oeps D \tw}_{L^\infty(0,T;L^2)}
		+ \norm{\oeps \pt \tw}_{L^2(Q_{T})} -\tilde\delta\|\oeps \nabla^2\tw\|_{L^2(\Omega)}^2 \\
		&\quad \leq  
		C_{\tilde\delta} \Big(\|(\oeps \nabla \ol u,\ol u,\nabla \tw)\|_{L^2(Q_{T})}
		+ \norm{\oeps \cL_\eps \ol{u}}_{L^2(0,T;L^2(\Gamma_t(2\delta)))} 
		+ \|(\oeps \ol{\mathbf{r}}_1,\oeps \mathbf{g}) \|_{L^2(Q_{T})}\Big), \label{eq:oeps-Dw}
	\end{align}
	where $\tilde\delta>0$ is arbitrary and $\|\nabla \tw\|_{L^2(Q_{T})}$ is bounded by the energy estimate in Section \ref{linearized system} and $ \norm{\oeps \nabla \ol{u}}_{L^2(Q_{T})} $, $ \norm{\oeps \cL_\eps \ol{u}}_{L^2(Q_{T})} $ are controlled by the refined estimates above.
	Now by \eqref{eq:NS-tw-2} and \eqref{eq:CapId} with $\ol q$ replaced by $\ol q + \tfrac{f'(c_A)}\eps \ol{u} + \eps\nabla \ol{u} \cdot \nabla c_A$ and invoking Lemma \ref{lem:weighted-stokes-estimate}, we find
	\begin{align*}
		\|\oeps \nabla^2\tw\|_{L^2(\Omega)}
		\leq
		C\left(
		\|\oeps \mathbf f\|_{L^2(\Omega)} 
		+
		\|\nabla\tw\|_{L^2(\Omega)}
		\right),
	\end{align*}
	where
        \begin{align*}
            \mathbf f &= - \pt \tw - \bv_A \cdot \nabla \tw - \tw \cdot \nabla \bv_A 
            + \Div(2\nu'(c_A)\ol uD\ve_A)
            \\
            &\quad+\left(-\eps \Delta \ol{u} + \tfrac{f''(c_A)}{\eps} \ol{u}\right) \nabla c_A 
            + \left(-\eps \Delta c_A + \tfrac{f'(c_A)}{\eps}\right) \nabla \ol{u} + \ol{\br}_1 + \mathbf g,
        \end{align*}
        which by the arguments above and \eqref{eq:oeps-Dw} satisfies
	\begin{align*}
		\norm{\oeps \mathbf f}_{L^2(\Omega)}
		& \leq C \Big(\norm{(\oeps \nabla_\btau \nabla \ol u, \oeps \cL_\eps \ol{u})}_{L^2(Q_{T})}
		+ \|\nabla \tw\|_{L^2(Q_{T})}
		+ \|\oeps \ol{\mathbf{r}}_1 \|_{L^2(Q_{T})}
		+ \|\oeps \mathbf{g} \|_{L^2(Q_{T})}\Big).
	\end{align*} 
	Finally, choosing $\tilde\delta>0$ sufficiently small, one arrives at
	\begin{align*}
		& \norm{\oeps  D \tw}_{L^\infty(0,T;L^2)}
		+ \norm{\oeps \pt \tw}_{L^2(Q_{T})}
		+ \norm{\oeps  \nabla^2 \tw}_{L^2(Q_{T})}
		+ \norm{\oeps \nabla \ol{q}}_{L^2(Q_{T})} \\
		& \quad \leq C \Big(\|(\oeps \nabla \ol u,\ol u)\|_{L^2(Q_{T})}
		+ \norm{\oeps \tw}_{L^2(Q_{T})}
		+ \|\nabla \tw\|_{L^2(Q_{T})}
		+ \|\oeps \ol{\mathbf{r}}_1 \|_{L^2(Q_{T})}
		+ \|\oeps \mathbf{g} \|_{L^2(Q_{T})}\Big)
	\end{align*}
        which together with the estimate \eqref{eq:w1Estim} for $\we_1$ finishes the proof.
\end{proof}

\subsection{Higher Order Estimates for Linearized Convective Allen--Cahn Equation}
With the aid of the second-order estimates we derived above, now we record the third-order spatial estimates for the convective Allen--Cahn equation, which is essential for closing the arguments.
\begin{thm}
  \label{thm:HighOrderEstimate}
  Let $ \ol{u}\in L^2(0,T;H^3(\Omega)\cap H^1_0(\Omega))\cap H^1(0,T;H^1_0(\Omega)) $ fulfill the linearized convective Allen--Cahn equation \eqref{eq:linNSAC3'} for some $\ol{r}_3\in L^2(0,T;H^1_0(\Omega))$, $\ol\we\in L^2(0,T;H^2(\Omega)^d\cap H^1_0(\Omega))$. Then there are $C>0, \eps_1\in (0,1]$ such that for all $ \eps \in(0,\eps_1] $ and $T\in (0,T_0]$, it holds
  \begin{align}
    &\left\|\ol u\right\|_{L^\infty(0,T;\wW^2)}
    + \left\|\oeps\partial_t \ol u\right\|_{L^2 (0,T;\wW^1)}\nonumber\\\nonumber
    &\quad
    +\left\|\nabla_\btau \ol u\right\|_{L^2 (0,T;\wW^2(\Gamma_t(2\delta)))} + \left\|\oeps\mathcal{L}_\eps \ol u\right\|_{L^2 (0,T;\wW^1(2\delta))}
    + \|(\oeps^2 \nabla^2\ol u, \oeps^3 \nabla^3\ol u )\|_{L^2 (Q_T)}\nonumber\\\nonumber
    &\leq C\left(\|\oeps\ol{r}_3\|_{L^2(0,T;\wW^1)}
    + \|(\ol u,\oeps \nabla^2 \ol{\bw})\|_{L^2(Q_{T})} + \|\ol u\|_{L^2(0,T;\Veps)}
    + \|\oeps \ol\we \|_{L^2(0,T;H^1)}+ \left\|\ol u_0\right\|_{\wW^2}\right).
  \end{align}
\end{thm}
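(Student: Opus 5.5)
The plan is to mimic the proof of Theorem~\ref{thm:SecondOrderEstimate} one derivative higher. Each extra spatial derivative gets one extra power of $\oeps$ (or $\oeps/\eps$). The previous theorem, together with Theorem~\ref{thm:main-0}, supplies the lower order quantities on the right-hand side.

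\emph{Step 1 (suboptimal global estimate).} Test \eqref{eq:linNSAC3'} with
\[
\Delta\bigl(\oeps^{4}\Delta\ol u\bigr)-\tfrac{\alpha}{\eps^2}\Div\bigl(\oeps^{4}\nabla\ol u\bigr)+\tfrac{\alpha^2\oeps^4}{\eps^4}\ol u .
\]
Equivalently, apply $\nabla$ to the equation and test with $-\Div(\oeps^4\nabla\nabla\ol u)+\tfrac{\alpha\oeps^4}{\eps^2}\nabla\ol u$, which is admissible since $\ol r_3,\ol u|_{\partial\Omega}=0$. This should control $\|(\oeps^2\nabla^2\ol u,\tfrac{\oeps^2}{\eps}\nabla\ol u,\tfrac{\oeps^2}{\eps^2}\ol u)\|_{L^\infty L^2}$ and $\|(\oeps^2\nabla\partial_t\ol u,\oeps^2\nabla\Delta\ol u,\dots)\|_{L^2L^2}$. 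The error terms are handled as follows:
\begin{itemize}
\item Commutators with $\nabla(\oeps^4)$ lose one power of $\oeps$ and are bounded by the left-hand side of \eqref{eq:UniformSecondOrderEstim}.
\item The term $\tfrac1{\eps^2}(V+a_\eps)$ and its gradient $\tfrac{1}{\eps^3}V'$ are absorbed via \eqref{eq:VEstim}, using that $\tfrac{\oeps^3}{\eps^3}|V'(\rho)|$ is bounded.
\item The coupling $(\ol\we-\ol\we|_{\Gamma_t})\cdot\nabla c_A$, after one derivative, produces $\nabla\ol\we\cdot\nabla c_A$ and $(\ol\we-\ol\we|_{\Gamma_t})\nabla^2c_A$. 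These are bounded by $\|\oeps\nabla^2\ol\we\|$ and $\|\nabla\ol\we\|$ because $\oeps\nabla c_A$ and $\oeps^2\nabla^2c_A$ are bounded and $|\ol\we-\ol\we|_{\Gamma_t}|\le C|d_\Gamma|\,|\nabla\ol\we|$ in the averaged sense.
\end{itemize}
As before, this step is only suboptimal near $\Gamma$, because $\tfrac{\oeps^2}{\eps}$ and not $\tfrac{\oeps^3}{\eps^2}$ appears.

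\emph{Step 2 (refined estimates near the interface).} Work with the transformed equation \eqref{eq:TransformConvAC}. First differentiate it tangentially, i.e. apply $\nabla_\Sigma$. Since $\widetilde{\mathcal L}_\eps$ commutes with $\nabla_\Sigma$ up to the term $\tfrac1{\eps^2}f'''(\theta_0)\theta_0'\nabla_\Sigma h_\eps\,\tilde u$, which has size $\tfrac{1}{\eps^2}\theta_0'$, the function $\nabla_\Sigma\tilde u$ satisfies an equation of the same type. Its right-hand side is controlled in $\toeps$-weighted $L^2$ by \eqref{eq:gEstim2} and the second-order result. Repeating the $\toeps^2\partial_t$-testing argument of Theorem~\ref{thm:SecondOrderEstimate} with the projection $P$ and Corollary~\ref{cor:Pu} gives $\nabla_\btau\ol u\in L^2(0,T;\wW^2)$ and $\oeps\partial_t\nabla_\btau\ol u\in L^2$.

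Normal derivatives are then recovered without differentiating in $r$. Using the equation, $\widetilde{\mathcal L}_\eps\tilde u=\tilde g-\partial_t\tilde u+\widetilde\Delta_\Sigma\tilde u$, we estimate $\toeps^2\nabla(\widetilde{\mathcal L}_\eps\tilde u)$ and $\tfrac{\toeps^2}{\eps}\widetilde{\mathcal L}_\eps\tilde u$. These come from the tangential bounds, \eqref{eq:gEstim2} and the time-derivative bound. We then apply \eqref{eq:EquivLr1} with $k=3$, and also with $k=2$ to $\partial_r\tilde u$, using the commutator $[\partial_r,\widetilde{\mathcal L}_\eps]=\tfrac{1}{\eps^3}f'''(\theta_0)\theta_0'$, which is acceptable by \eqref{eq:VEstim}. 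This yields $\toeps^3\partial_r^3\tilde u$, $\tfrac{\toeps^3}{\eps^2}\partial_r\tilde u$ and the full $\wW^3$-type control. An elliptic estimate analogous to Proposition~\ref{prop:EllipticEstim} with $k=2$, applied to $\nabla_\Sigma\tilde u$, handles the mixed derivatives.

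\emph{Main obstacle.} The main difficulty should be the time-derivative term $\oeps\partial_t\ol u\in L^2(0,T;\wW^1)$ near $\Gamma$, specifically its normal part $\oeps^2\partial_r\partial_t\tilde u$. Testing the $\partial_t$-differentiated equation with $\toeps^4\partial_t\tilde u$ would require $\partial_t\ol r_3$, which we do not have. Our first attempt is therefore to obtain it from the $\nabla$-differentiated equation tested with $\toeps^4\nabla\partial_t\tilde u$, in the form $\toeps^4\partial_t(\nabla\tilde u)$. The key is the splitting $\tilde u=P\tilde u+Q\tilde u$ again, so that the singular product $\widetilde{\mathcal L}_\eps(P\nabla\tilde u)\,\partial_t Q$ is integrated by parts exactly as in the proof of Theorem~\ref{thm:SecondOrderEstimate}. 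The quadratic forms $B^2_\eps$ and $B^2_\Sigma$ are then coercive by \eqref{eq:CoercivBepsk}. If commutators with $h_\eps$ cause trouble, we fall back on bounding $\oeps^2\nabla\partial_t\ol u$ directly from the equation once $\oeps^2\nabla(\Delta\ol u-\eps^{-2}f''\ol u)$ is known, and close the argument with Gronwall's lemma.
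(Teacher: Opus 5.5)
You have located the main obstacle correctly, but the proposal gives no working mechanism for the normal part $\toeps^2\partial_r\partial_t\tilde u$ of $\oeps\partial_t\ol u\in L^2(0,T;\wW^1)$. Without it you also have no source for the optimal bound $\|\ol u\|_{L^\infty(0,T;\wW^2)}$ near $\Gamma$.

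\emph{Why your suggestions fail.} Consider a profile component $\eps^{-1/2}\theta_0'(\rho)\tilde w(s)$, which is compatible with $O(1)$ data.
\begin{itemize}
\item Testing the $\partial_r$-differentiated equation with $\toeps^4\partial_t\partial_r\tilde u$ creates the commutator $\eps^{-3}f'''(\theta_0)\theta_0'\tilde u$ and the weight term $\int\partial_r^2\tilde u\,\partial_r(\toeps^4)\,\partial_t\partial_r\tilde u$. Their natural bounds need $\|\eps^{-1}\tilde u\|$ and $\|\toeps\partial_r^2\tilde u\|$, both of order $\eps^{-1}$ for this profile.
\item The $P/Q$ device does not carry over, because $\partial_r Q\tilde u\propto\theta_0''$ is not annihilated by $\widetilde{\mathcal L}_\eps$.
\item Your fallback is circular: $\toeps^2\nabla\widetilde{\mathcal L}_\eps\tilde u$ can be read off from the equation only once $\toeps^2\nabla\partial_t\tilde u$ is known.
\end{itemize}

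\emph{The missing idea.} Test the undifferentiated equation \eqref{eq:TransformConvAC} with $\toeps^4\partial_t(-\widetilde{\Delta}_{\Sigma}\tilde u+\widetilde{\mathcal L}_\eps\tilde u)$. This yields $\tfrac12\tfrac{\mathrm d}{\mathrm dt}\|\toeps^2(-\widetilde{\Delta}_{\Sigma}+\widetilde{\mathcal L}_\eps)\tilde u\|^2+B^2_\Sigma(\partial_t\tilde u)+B^2_\eps(\partial_t\tilde u)$.
\begin{itemize}
\item The forcing enters only as $B^2_\Sigma(\tilde g,\partial_t\tilde u)+B^2_\eps(\tilde g,\partial_t\tilde u)$. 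So only $\toeps^2\nabla\tilde g$ and $\eps^{-1}\toeps^2\tilde g$ are needed, as in \eqref{eq:gEstim2}, and never $\partial_t\ol r_3$.
\item At weight $\toeps^4$, \eqref{eq:CoercivBepsk} makes $B^2_\eps(\partial_t\tilde u)$ coercive up to $\|\toeps\partial_t\tilde u\|^2$, which Theorem~\ref{thm:SecondOrderEstimate} already controls.
\item The weight commutator $4\int(\tilde g-\partial_t\tilde u)\,\partial_r\partial_t\tilde u\,\toeps^2 r$ is then absorbed with no projection.
\end{itemize}
This single identity gives $\toeps^2\nabla\partial_t\tilde u,\ \eps^{-1}\toeps^2\partial_t\tilde u\in L^2$ and an $L^\infty_t$ bound on $\toeps^2(-\widetilde{\Delta}_{\Sigma}+\widetilde{\mathcal L}_\eps)\tilde u$. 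Applying \eqref{eq:EllipticEstim2} to $\tilde u$ and to $\nabla_\Sigma\tilde u$ turns these into the $L^\infty(0,T;\wW^2)$ and tangential bounds. The normal derivatives then follow from the equation roughly as you describe.

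\emph{Step 1 also fails as written.} The weight $\oeps^4$ is one power too small. Your test function puts $\|\oeps^2\nabla\Delta\ol u\|^2$ and $\alpha^3\|\oeps^2\eps^{-3}\ol u\|^2$ into the dissipation, and both are of order $\eps^{-2}$ for the profile above. Correspondingly, the error terms are not controlled:
\begin{itemize}
\item The $\nabla(\oeps^4)$-commutators need $\|\oeps\Delta\ol u\|_{L^2(Q_T)}$. This is not bounded by \eqref{eq:UniformSecondOrderEstim}, since $\oeps\partial_r^2\ol u=-\oeps\mathcal L_\eps\ol u+\oeps\eps^{-2}f''(\theta_0)\ol u$ and only the first term is controlled.
\item The $V$-terms need $\|\eps^{-1}\ol u\|$ or worse.
\end{itemize}
The paper instead uses $\Delta(\oeps^6\Delta\ol u)-\Div(\oeps^6\tfrac{\alpha}{\eps^2}\nabla\ol u)$, whose errors involve only $L^2(0,T;\wW^2)$-quantities. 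It accepts a suboptimal $L^\infty$ bound on $\oeps^3\Delta\ol u$, which is only used away from $\Gamma$.

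\emph{The tangential step also fails as written.} Rerunning the second-order argument for $\nabla_\Sigma\tilde u$ at weight $\toeps$ needs $\toeps\nabla_\Sigma\tilde r_3\in L^2$, which is not a hypothesis. The assumption $\oeps\ol r_3\in L^2(0,T;\wW^1)$ only gives $\oeps^2\nabla\ol r_3$, and \eqref{eq:gEstim2} only gives $\toeps^2\nabla_\Sigma\tilde g$. It also needs $\toeps\eps^{-2}\theta_0'\tilde u\in L^2$, which is not uniformly bounded. Tangential derivatives must instead be handled at weight $\toeps^2$, where $\toeps^2\eps^{-2}\nabla_\Sigma f''(\theta_0)$ is bounded, and only after $\toeps^2\nabla_\Sigma\partial_t\tilde u$ is available.
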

\begin{proof}
\noindent
\emph{Step 1: Suboptimal weighted third order estimates for $ \ol{u} $.} First of all, we note that $\ol u|_{\partial\Omega}= \partial_t \ol u|_{\partial\Omega}=0$ by the homogeneous Dirichlet boundary conditions for $\ol u$. Therefore, using \eqref{eq:linNSAC3'}, $\ve_A|_{\partial\Omega}=\ol\we|_{\partial\Omega}=0$, and $\ol r_3|_{\partial\Omega}=0$ by assumption, we also have $\Delta \ol u|_{\partial\Omega}=0$. Now
testing \eqref{eq:ConvAC} with $\Delta(\oeps^6 \Delta \ol{u}) - \Div(\oeps^6 \tfrac{\alpha}{\eps^2} \nabla \ol{u})$ in $\Omega$ and using $\ol u|_{\partial\Omega}=\Delta \ol u|_{\partial\Omega}=\ol r_3|_{\partial\Omega}=0$  yields
\begin{align}\nonumber
	&\frac12\ddt \left(
	\int_\Omega \abs{\oeps^3\Delta \ol{u}}^2 \sd x
	+  \int_\Omega \oeps^6\tfrac{\alpha}{\eps^2}\abs{\nabla \ol{u}}^2 \sd x 
	\right)
	+ \int_\Omega \ve_A \cdot \nabla \ol{u}\,  \left(\Delta(\oeps^6 \Delta \ol{u}) - \Div(\oeps^6 \tfrac{\alpha}{\eps^2} \nabla \ol{u})\right)\sd x\\\nonumber
	&\qquad + \int_\Omega (\ol{\we}-\ol\we|_{\Gamma})\cdot \nabla c_A\, \left(\Delta(\oeps^6 \Delta \ol{u}) - \Div(\oeps^6 \tfrac{\alpha}{\eps^2} \nabla \ol{u})\right)\sd x
          - \frac12\int_\Omega (\partial_t \oeps^6)\left((\Delta \ol u)^2+ \tfrac{\alpha}{\eps^2}|\nabla \ol u|^2\right)\sd x\\\nonumber
	&\quad = -  \|\oeps^3 \nabla\Delta \ol{u}\|_{L^2(\Omega)}^2- \int_\Omega \Delta \ol u \nabla \Delta \ol u \cdot \nabla (\oeps^6)\sd x  
	- \alpha^2 \|\tfrac{\oeps^3}{\eps^2} \nabla \ol{u}\|_{L^2(\Omega)}^2
	- 2  \alpha \|\tfrac{\oeps^3}{\eps} \Delta \ol{u}\|_{L^2(\Omega)}^2 
	\\\nonumber
	&\qquad 
          -  \int_{\Gamma_t(2\delta)} \tfrac1{\eps^2}(V(\tfrac{d_{\Gamma}}{\eps}-h_\eps)+a_\eps) \ol{u} \left(\Delta(\oeps^6 \Delta \ol{u}) - \Div(\oeps^6 \tfrac{\alpha}{\eps^2} \nabla \ol{u})\right)\sd x \\
          &\qquad 
	+ \int_\Omega \nabla \ol{r}_3 \cdot (-\nabla ( \oeps^6  \Delta \ol{u})+\tfrac{\oeps^6}{\eps^2} \alpha \nabla \ol u)\, dx-\int_\Omega \Delta \ol u (\nabla \oeps^6)\cdot\tfrac{\alpha}{\eps^2} \nabla \ol u\sd x. 
\end{align}
     Here it holds 
\begin{align*}
   & \abs{\int_\Omega \ve_A \cdot \nabla \ol{u}  \Delta (\oeps^6 \Delta\ol{u})\sd x}
	 = \abs{\int_\Omega \nabla(\bv_A \cdot \nabla \ol{u}) \cdot \nabla(\oeps^6  \Delta \ol{u}) \dx} \\
	&\quad  \leq C \Big(\norm{\nabla \bv_A}_{L^\infty(\Omega)} \norm{\oeps^3 \nabla \ol{u}}_{L^2(\Omega)} + \norm{\bv_A}_{L^\infty(\Omega)} \norm{\oeps^3 \Delta \ol{u}}_{L^2(\Omega)}\Big)
	\norm{(\oeps^3 \nabla \Delta \ol{u},\oeps^2\Delta \ol u)}_{L^2(\Omega)},
\end{align*}
\begin{align*}
    & \abs{\int_\Omega (\ol{\we}-\ol\we|_{\Gamma}) \cdot \nabla c_A \Delta(\oeps^6 \Delta \ol{u})\sd x}
	= \abs{\int_\Omega \nabla \big((\ol{\we}-\ol\we|_{\Gamma}) \cdot \nabla c_A\big)  \nabla(\oeps^6 \Delta \ol{u})\sd x} \\
	& \qquad \qquad \leq \Big(\norm{\oeps^2 \nabla^2 \ol{\bw}}_{L^2(\Omega)} \norm{\oeps \nabla c_A}_{L^\infty(\Omega)} + \norm{\ol{\bw}}_{H^1(\Omega)} \norm{\oeps^3 \Delta c_A}_{L^\infty(\Omega)}\Big)
	\norm{(\oeps^3 \nabla \Delta \ol{u},\oeps^2\Delta \ol u)}_{L^2(\Omega)}, 
\end{align*}
and analogously
\begin{align*}
   & \abs{\int_\Omega \ve_A \cdot \nabla \ol{u}  \Div (\oeps^6 \tfrac{\alpha}{\eps^2} \nabla\ol{u})\sd x}
	 = \abs{\int_\Omega \nabla(\bv_A \cdot \nabla \ol{u}) \cdot \oeps^6 \tfrac{\alpha}{\eps^2} \nabla \ol{u} \dx} \\
	&\quad  \leq C \Big(\norm{\nabla \bv_A}_{L^\infty(\Omega)} \norm{\oeps^3 \nabla \ol{u}}_{L^2(\Omega)} + \norm{\bv_A}_{L^\infty(\Omega)} \norm{\oeps^3 \Delta \ol{u}}_{L^2(\Omega)}\Big)
	\norm{ \tfrac{\oeps^3}{\eps^2}\nabla \ol{u}}_{L^2(\Omega)},
\end{align*}
\begin{align*}
    & \abs{\int_\Omega (\ol{\we}-\ol\we|_{\Gamma}) \cdot \nabla c_A \Div(\oeps^6 \tfrac{\alpha}{\eps^2}\nabla \ol{u})\sd x}
	= \abs{\int_\Omega \nabla \big((\ol{\we}-\ol\we|_{\Gamma}) \cdot \nabla c_A\big) \cdot \oeps^6\tfrac{\alpha}{\eps^2} \nabla \ol{u})\sd x} \\
	& \qquad \qquad \leq \Big(\norm{\oeps^2 \nabla^2 \ol{\bw}}_{L^2(\Omega)} \norm{\oeps \nabla c_A}_{L^\infty(\Omega)} + \norm{\ol{\bw}}_{H^1(\Omega)} \norm{\oeps^3 \Delta c_A}_{L^\infty(\Omega)}\Big)
	\norm{\tfrac{\oeps^3}{\eps^2}\nabla \ol u}_{L^2(\Omega)}. 
\end{align*}
Moreover, we know
\begin{align*}
    \abs{\int_{\Gamma_t(2\delta)} \tfrac{1}{\eps^2}(V(\rho(x,t))+a_\eps) \ol{u} \Delta (\oeps^6 \Delta \ol{u}) \sd x}
	& =  \abs{\int_{\Gamma_t(2\delta)} \tfrac1{\eps^2}\nabla \big( (V(\rho(x,t)) + a_\eps) \ol{u} \big) \cdot \nabla (\oeps^6  \Delta \ol{u}) \sd x} \\
	& \leq C \norm{(\ol{u},\oeps \nabla \ol{u})}_{L^2(\Omega)}
	\norm{(\oeps^3 \nabla \Delta \ol{u},\oeps^2\Delta \ol{u})}_{L^2(\Omega)}, \\
    \abs{\int_{\Gamma_t(2\delta)} (V(\rho(x,t))+a_\eps) \ol{u} \Div (\oeps^6 \tfrac{\alpha}{\eps^2} \nabla \ol u) \sd x} 
	& \leq C \norm{(\ol{u},\oeps \nabla \ol{u})}_{L^2(\Omega)}
	\norm{\oeps^3\tfrac{\alpha}{\eps^2} \nabla  \ol{u}}_{L^2(\Omega)},
\end{align*}
since $\tfrac{\oeps^2}{\eps^2}V(\rho)$ is uniformly bounded and $V(\rho)+a_\eps|_{\partial\Gamma_t(2\delta)}= f''(c_A)-f''(\pm 1)|_{\partial\Gamma_t(2\delta)}=0$.
Here by Lemma \ref{lem:f''-decomposition}, we have used $ a_\eps \in O(\eps^2) $ in $ L^\infty(Q_{T}) $ and $ \nabla a_\eps = O(\eps^{1+1/p}) $ in $ L^p(Q_{T}) $ for $ 1 \leq p \leq \infty $.
In addition,
\begin{align*}
  &\abs{\int_\Omega \nabla \ol{r}_3 \cdot (-\nabla ( \oeps^6  \Delta \ol{u})+\oeps^4 \alpha \nabla \ol u)\, \sd x}\\
  &\quad\leq C\norm{\oeps^3 \nabla \ol{r}_3}_{L^2(\Omega)} \Big(\norm{(\oeps^3 \nabla \Delta \ol{u},\oeps^2\Delta \ol u)}_{L^2(\Omega)} + \norm{\oeps^3\tfrac{\alpha}{\eps^2} \nabla \ol{u}}_{L^2(\Omega)}\Big)
        \end{align*}
        and
        \begin{align*}
          \left|\frac12\int_\Omega (\partial_t \oeps^6)\left((\Delta \ol u)^2+ \tfrac{\alpha}{\eps^2}|\nabla \ol u|^2\right)\sd x\right|&\leq C\|(\oeps^2\Delta \ol u, \tfrac{\oeps^2}{\eps}\nabla \ol u)\|_{L^2(\Omega)}^2,\\
          \left|\int_\Omega \Delta \ol u \nabla \Delta \ol u \cdot \nabla (\oeps^6)\sd x\right|&\leq C\|\oeps^2\Delta \ol u\|_{L^2(\Omega)}\|\oeps^3\nabla \Delta\ol u\|_{L^2(\Omega)},\\
          \left|\int_\Omega \Delta \ol u (\nabla \oeps^6)\cdot\tfrac{\alpha}{\eps^2} \nabla \ol u\sd x \right|&\leq C \|\tfrac{\oeps^3}\eps\Delta \ol u\|_{L^2(\Omega)}\|\tfrac{\oeps^2}\eps \nabla\ol u\|_{L^2(\Omega)}.
        \end{align*}
Then Young's and Gronwall's inequality implies
\begin{align}
	\label{eqs:eps3-Delta-u}
	&\|(\oeps^3 \Delta \ol u,\tfrac{\oeps^3}{\eps} \nabla \ol u)\|_{L^\infty(0,T;L^2)}
	+ \|(\oeps^3 \nabla \Delta \ol u,\tfrac{\oeps^3}{\eps} \Delta \ol u,\tfrac{\oeps^3}{\eps^2} \nabla \ol u)\|_{L^2(Q_{T})} \\
	\nonumber
	&\quad \leq \|(\oeps^3 \Delta \ol u_0,\tfrac{\oeps^3}{\eps} \nabla \ol u_0)\|_{L^2(\Omega)}
    + C  \left(\|(\ol u, \oeps^2 \Delta \ol u, \tfrac{\oeps^2}\eps \nabla \ol u)\|_{L^2(Q_{T})}
	+ \|(\oeps^3 \nabla \ol r_3,\oeps^2 \nabla^2 \ol{\bw})\|_{L^2(Q_{T})}\right).
    \end{align}
Furthermore, it follows from the equation \eqref{eq:ConvAC} that $\oeps^3 \pt \nabla \ol{u} \in L^2(Q_{T})$ can be bounded in the same way.

\smallskip

\noindent
\emph{Step 2: Refined third order estimates for $\ol u$.}
	Finally, we test  \eqref{eq:TransformConvAC} with $\toeps^4\partial_t(-\widetilde{\Delta}_{\Sigma} \tilde u +\widetilde{\mathcal L}_\eps \tilde u)$ and obtain
	\begin{align*}
		&\frac12\frac{\mathrm{d}}{\mathrm{d} t} \int_{\Sigma_{2\delta}} (-\widetilde{\Delta}_{\Sigma} \tilde u +\widetilde{\mathcal L}_\eps \tilde u)^2\toeps^4\,\d(r,s)+ B_\Sigma^2(\partial_t \tilde u)+ B_\eps^2 (\partial_t \tilde u)\\
		&= \sum_{i,j= 1}^d\int_{\Sigma_{2\delta}} (\partial_ta_{i,j})(\nabla_\Sigma \tilde u)_i(\nabla_\Sigma \tilde g - \nabla_\Sigma \partial_t \tilde u)_j \toeps^4\,\d(r,s) 
        + B^2_\Sigma(\tilde{g}, \partial_t \tilde{u})+ B^2_\eps(\tilde{g}, \partial_t \tilde{u}) 
          \\
        & \quad 
        + 4\int_{\Sigma_{2\delta}} (\tilde{g} - \pt \tilde u) \partial_r\partial_t\tilde u\,\toeps^2 r \,\d(r,s)
        - \int_{\Sigma_{2\delta}} f'''(\theta_0) \theta_0'(\tfrac{r}\eps-h_\eps)\tfrac{\toeps^2}{\eps^2} \partial_t h_\eps  \tilde u\, \oeps^2 (\tilde g - \pt \tilde u) \,\d(r,s).
	\end{align*}
	Here
    \begin{align*}
		& \abs{\sum_{i,j= 1}^d \int_{\Sigma_{2\delta}} (\partial_ta_{i,j})(\nabla_\Sigma \tilde u)_i(\nabla_\Sigma \tilde g - \nabla_\Sigma \partial_t \tilde u)_j \toeps^4\,\d(r,s)} \\
        & \qquad \leq
		C \|\toeps \nabla_\Sigma \tilde u\|_{L^2(\Sigma_{2\delta})}\|(\toeps^2 \nabla_\Sigma \tilde g, \toeps^2 \nabla_\Sigma \partial_t\tilde u)\|_{L^2(\Sigma_{2\delta})},
    \end{align*}
    and
	\begin{align*}
        \abs{\int_{\Sigma_{2\delta}} (\tilde g - \pt \tilde u) \partial_r\partial_t\tilde u\,\toeps^2 r \,\d(r,s)}
		&\leq C\|(\tfrac{\toeps^2}{\eps} \tilde g,\toeps \pt \tilde{u})\|_{L^2(\Sigma_{2\delta})}\|\toeps r\partial_r\partial_t \tilde u\|_{L^2(\Sigma_{2\delta})}, \\
        \abs{\int_{\Sigma_{2\delta}} f'''(\theta_0) \theta_0'(\tfrac{r_\eps}\eps)\tfrac{\oeps^2}{\eps^2} \partial_t h_\eps  \tilde u\, \oeps^2 (\tilde g - \pt \toeps^2) \,\d(r,s)}
        & \leq C \|\tfrac{\toeps^2}{\eps^2}\tilde u\|_{L^2(\Sigma_{2\delta})} \|(\tfrac{\toeps^2}{\eps} \tilde g,\toeps \pt \tilde{u})\|_{L^2(\Sigma_{2\delta})},
	\end{align*}
	where
	\begin{equation*}
		\|\toeps r\partial_r\partial_t \tilde u\|_{L^2(\Sigma_{2\delta})}^2\leq \|\toeps^2\partial_r\partial_t \tilde u\|_{L^2(\Sigma_{2\delta})}^2 \leq B^2_\eps (\partial_t \tilde u) + C\|\toeps \partial_t \tilde{u}\|_{L^2(\Sigma_{2\delta})}^2.  
	\end{equation*}
	Hence using Young's inequality we obtain
	\begin{align*}
		&\frac12\frac{\mathrm{d}}{\mathrm{d} t} \int_{\Sigma_{2\delta}} (-\widetilde{\Delta}_{\Sigma} \tilde u +\widetilde{\mathcal L}_\eps \tilde u)^2\toeps^4\,\d(r,s)+ \|\toeps^2\nabla_\Sigma\partial_t \tilde u\|_{L^2(\Sigma_{2\delta})}^2+ B_\eps^2 (\partial_t \tilde u)\\
		&\quad \leq  C\left(\|\toeps \nabla_\Sigma \tilde u\|_{L^2(\Sigma_{2\delta})}^2+ \|\tilde u\|_{L^2(\Sigma_{2\delta})}^2+ \|\toeps \partial_t \tilde u\|_{L^2(\Sigma_{2\delta})}^2  
                 + \left\|(\toeps^2\nabla_\Sigma \tilde{g},\toeps^2 \partial_r \tilde{g}, \tfrac{\toeps^2}\eps \tilde{g}\right\|_{L^2(\Sigma_{2\delta})}^2\right).
	\end{align*}
	This yields 
	\begin{align}\nonumber
		&\|\toeps^2 (-\widetilde{\Delta}_{\Sigma} \tilde u +\widetilde{\mathcal L}_\eps \tilde u)\|_{L^\infty(0,T;L^2(\Sigma_{2\delta}))}+ \left\|\left(\toeps^2\nabla\partial_t\tilde{u}, \tfrac{\toeps^2}\eps \partial_t \tilde u\right)\right\|_{L^2(\Sigma_{2\delta}\times (0,T))}\\\nonumber
		&\quad \leq C\Big(\|\toeps^2(\nabla_\Sigma\tilde{g}, \partial_r\tilde{g}, \tfrac1\eps\tilde{g})\|_{L^2(\Sigma_{2\delta}\times (0,T))}+\|(\tilde u,\toeps\nabla_\Sigma \tilde{u},\toeps \partial_t \tilde u)\|_{L^2(\Sigma_{2\delta}\times (0,T))}\\\label{eq:HigherOrder1}
          &\qquad\quad   + \|\toeps^2(-\widetilde{\Delta}_{\Sigma}+ \widetilde{\mathcal L}_\eps) \tilde{u}_0\|_{L^2(\Sigma_{2\delta})}\Big).
	\end{align}

	Next we apply $\nabla_\Sigma$ to \eqref{eq:TransformConvAC} and obtain
	\begin{equation*}
		-\widetilde{\Delta}_{\Sigma} (\nabla_\Sigma \tilde u)   +\widetilde{\mathcal{L}}_\eps(\nabla_\Sigma \tilde u) = \nabla_\Sigma \left(-\partial_t\tilde u +\tilde g \right)+ [\nabla_\Sigma,\widetilde{\Delta}_{\Sigma}]\tilde u + \tfrac1{\eps^2} (\nabla_\Sigma f''(\theta_0)) \tilde u,     
	\end{equation*}
	where $\tfrac{\toeps^2}{\eps^2} (\nabla_\Sigma f''(\theta_0))$ is bounded and all terms on the right-hand side multiplied by $\toeps^2$ are bounded in $L^2(\Sigma_{2\delta}\times (0,T))$ by the previous estimates. Hence \eqref{eq:EllipticEstim2} yields
	\begin{align}\nonumber
		& \|(\toeps^2\nabla^2\nabla_\Sigma\tilde{u}, \tfrac{\toeps^2}{\eps} \nabla \nabla_\Sigma \tilde{u}, \tfrac{\toeps^2}{\eps^2} \nabla_\Sigma \tilde{u})\|_{L^2(\Sigma_{2\delta}\times (0,T))}^2\leq C\left(\|\toeps^2(\nabla_\Sigma\tilde{g}, \partial_r\tilde{g}, \tfrac1\eps\tilde{g})\|_{L^2(\Sigma_{2\delta}\times (0,T))}^2\right.\\\nonumber
		&\qquad\quad \left.+\|(\tilde u,\toeps\nabla_\Sigma \tilde{u},\toeps \partial_t \tilde u)\|_{L^2(\Sigma_{2\delta}\times (0,T))}^2+ \|\toeps^2(-\widetilde{\Delta}_{\Sigma}+ \mathcal L_\eps) \tilde{u}_0\|_{L^2(\Sigma_{2\delta})}^2\right).
	\end{align}
    Note that by \eqref{eq:EllipticEstim} we additionally derive
	\begin{align*}
		\|\toeps^2\partial_r^2\nabla_\Sigma \tilde u\|_{L^2(0,T;L^2(\Sigma_{2\delta}))}&\leq \|\toeps^2 \mathcal{L}_\eps(\nabla_\Sigma \tilde u)\|_{L^2(0,T;L^2(\Sigma_{2\delta}))}+C\|\tfrac{\toeps^2}{\eps^2} \nabla_\Sigma \tilde u \|_{L^2(0,T;L^2(\Sigma_{2\delta}))}.
	\end{align*}
	Using the equation \eqref{eq:TransformConvAC} we also obtain by comparison a control of
	\begin{equation*}
		\toeps^2 \partial_r \widetilde{\mathcal{L}}_\eps(\tilde u)\in L^2(0,T;L^2(\Sigma_{2\delta})).
	\end{equation*}
	Furthermore, using $- \toeps^3 \partial_r^3 \tilde u = \toeps^3 \partial_r \widetilde{\mathcal{L}}_\eps \tilde u -\tfrac{\toeps^2}{\eps^2} \toeps \partial_r (f''(\theta_0(\tfrac{r}{\eps}-h_\eps)\tilde u)$, we obtain additionally
	\begin{align}\nonumber
		& \|(\toeps^2\partial_r \widetilde{\mathcal{L}}_\eps \tilde u,\toeps^3 \partial_r^3 \tilde{u})\|_{L^2(\Sigma_{2\delta}\times (0,T))}^2 \leq C\left(\|\toeps^2(\nabla_\Sigma\tilde{g}, \partial_r\tilde{g}, \tfrac1\eps\tilde{g})\|_{L^2(\Sigma_{2\delta}\times (0,T))}^2\right.\\\nonumber
          &\quad \qquad \left.+\|(\tilde u,\nabla_\Sigma \tilde{u},\toeps \partial_t \tilde u)\|_{L^2(\Sigma_{2\delta}\times (0,T))}^2+ \|\toeps^2(-\widetilde{\Delta}_{\Sigma}+ \widetilde{\mathcal L}_\eps) \tilde{u}_0\|_{L^2(\Sigma_{2\delta})}^2\right)
	\end{align}
	Finally, combining the $L^\infty$-estimate with respect to $t\in (0,T)$ in \eqref{eq:HigherOrder1} with \eqref{eq:EllipticEstim2} yields
	\begin{align}\nonumber
		& \|(\toeps^2\nabla^2\tilde{u}, \tfrac{\toeps^2}{\eps} \nabla \tilde{u},  \tfrac{\toeps^2}{\eps^2} \tilde{u})\|_{L^\infty(0,T;L^2(\Sigma_{2\delta}))}\leq C\left(\|\toeps^2(\nabla_\Sigma\tilde{g}, \partial_r\tilde{g}, \tfrac1\eps\tilde{g})\|_{L^2(\Sigma_{2\delta}\times (0,T))}\right.\\\nonumber
          &\quad \qquad \left. +\|(\tilde u,\nabla_\Sigma \tilde{u},\toeps \partial_t \tilde u)\|_{L^2(\Sigma_{2\delta}\times (0,T))}
          + \|\toeps^2(-\widetilde{\Delta}_{\Sigma}+ \widetilde{\mathcal{L}}_\eps) \tilde{u}_0\|_{L^2(\Sigma_{2\delta})}\right).
	\end{align}
        Here the norm of $\tilde{g}$ can be estimated by \eqref{eq:gEstim2}.
\end{proof}

\section{Results for the Full Linearized System}\label{sec:FullSystem}

The goal is to show suitable uniform estimates of the solutions of \eqref{eq:linNSAC} by reducing it to the modified system \eqref{eq:linNSAC'}.
The idea is to use the ansatz
\begin{alignat}{2}
	\label{eq:w_A}
	\we&= \we_A + \ol{\we},&\quad \we_A&\coloneqq \we_A^{\mathrm{out}}
	- \zeta_\Gamma \bw_A^{\mathrm{in}},\quad\\
	\label{eq:q_A}
	q&= q_A +\ol q,&\quad   q_A&\coloneqq q_A^{\mathrm{out}} - \zg q_A^{\mathrm{in}}, \\ \label{eq:u_A}
	u &= u_A + \ol{u}, &\quad u_A &\coloneqq  - \zg u_A^{\mathrm{in}},
\end{alignat}
where 
\begin{alignat*}{2}
	\bw_A^{\mathrm{out}} & = (1-\zg)(\we^+\chi_{\Omega^+}+\we^- \chi_{\Omega^-})+ \zg(\we^+\eta(\rho)+ \we^-(1-\eta(\rho))), \\
  q_A^{\mathrm{out}} & = (1-\zg) (q^+\chi_{\Omega^+}+q^- \chi_{\Omega^-}) + \zg(q^+\eta(\rho)+ q^-(1-\eta(\rho))), \\
  \bw_A^{\mathrm{in}} &= \tfrac{h}\eps \prho \hv_A^{\mathrm{in}} , 
                        \qquad                       q_A^{\mathrm{in}}  
	= \tfrac1\eps \prho \hp_0 h
	-\eps \nabla u^{\mathrm{in}}_A\cdot \nabla c^{\mathrm{in}}_A-\tfrac1\eps f'(c_A^{\mathrm{in}}) u_A^{\mathrm{in}}
	, \\
	u_A^{\mathrm{in}} & = \tfrac1\eps(\theta_0'(\rho)+\eps^2\partial_\rho \hc_2) h,
\end{alignat*}
$\hv_0,\hv_1,\hc_2$ are as in Remark~\ref{rem:PropertiesAproxSol}, and $(\we^\pm, h)$ solves the sharp interface limit of the linearized system \eqref{eq:linNSAC}:
\begin{subequations}
	\label{eq:Limit}
	\begin{alignat}{3}
		\label{eq:Limit1}
		\p_t \we^\pm+\ve^\pm\cdot\nabla\we^\pm+\we^\pm\cdot\nabla\ve^\pm&-\nu^\pm\Delta \we^\pm  +\nabla q^\pm = 0 &\quad &\text{in }\Omega^\pm_t, t\in (0,T),\\\label{eq:Limit2}
		\Div \we^\pm &= 0 &&\text{in }\Omega^\pm_t, t\in (0,T),\\\label{eq:Limit3}
		\llbracket 2\nu^\pm D\we^\pm -q^\pm \tn{I}\rrbracket\no_{\Gamma_t} &= 
		-\sigma(\Delta_{\Gamma_t}h\,\no_{\Gamma_t}- H_{\Gamma_t}\nabla_{\Gamma_t} h)
		&& \text{on }\Gamma_t, t\in (0,T),\\ \label{eq:Limit4}
		\llbracket\we^\pm \rrbracket &=0 && \text{on }\Gamma_t, t\in (0,T),\\
		\we^-|_{\partial\Omega}&= 0&&\text{on }\partial\Omega\times(0,T),\\
		\we|_{t=0} &= 0 &&\text{in }\Omega,
	\end{alignat}
	where $\nu^\pm\coloneqq\nu(\pm 1)$, together with
	\begin{alignat}{2}\nonumber
		\partial_t^{\Gamma} h&-(\no_{\Gamma_t} \cdot \we^\pm)|_{\Gamma_t}\circ X_0^{-1}+ \ve^\pm\circ X_0^{-1} \cdot \nabla_{\Gamma_t}h + g_0 h  - \Delta_{\Gamma_t}h \\\label{eq:linTwoPhase5}
		&+(\no_{\Gamma_t}\cdot\partial_\rho \hv_1)|_{\Gamma_t}\circ X_0^{-1}h  = (\no_{\Gamma_t}\cdot \ol{\we})|_{\Gamma_t}\circ X_0^{-1}
		&\quad &\text{on }\Sigma\times (0,T),\\\label{eq:linTwoPhase6}
		h|_{t=0}& = 0&&\text{on } \Sigma.
	\end{alignat}
\end{subequations}
        Here $g_0$ is as in Remark~\ref{rem:PropertiesAproxSol} and $(\we_A,u_A)$ is the leading order of the linearized system \eqref{eq:linNSAC} as $\eps\to 0$ as will be shown in Theorem~\ref{thm:FullLinearizedSystem} below.

\begin{rem}\label{rem:uA}
	We note that $\partial_\rho \hat{\ve}_0= \ue_0d_\Gamma \eta'=0$ on $\Gamma_t$, cf.\ Remark~\ref{rem:PropertiesAproxSol}. Hence \eqref{eq:linTwoPhase5}-\eqref{eq:linTwoPhase6} are equivalent to
	\begin{alignat*}{2}
          \partial_t^{\Gamma} h+ \ve^\pm\circ X_0^{-1} \cdot \nabla_{\Gamma_t}h + g_0 h - \Delta_{\Gamma_t}h 
		&= (\no_{\Gamma_t}\cdot \we)|_{\Gamma_t}\circ X_0^{-1}
		&\quad &\text{on }\Sigma\times (0,T),\\
		h|_{t=0}& = 0&&\text{on } \Sigma,
	\end{alignat*}
	where $\we$ is known. Therefore existence of a solution to \eqref{eq:Limit} follows easily from known results for these linear systems. Moreover, one obtains
	\begin{align}
		& \|h\|_{L^2(0,T;H^{\frac52}(\Sigma))}
		+\|\partial_t h\|_{L^2(0,T;H^{\frac12}(\Sigma))}
		+ \norm{h}_{L^\infty(0,T;H^{\frac32}(\Sigma))} \leq C(T_0)\|\we\|_{L^2(0,T;H^1)}.
		\label{eq:h-w-estimate2}
	\end{align}
	e.g.\ by applying \cite[Theorem~2.9]{StokesAllenCahn}.  Actually, the latter result is stated in the case $\Sigma=\T^1$, $d=2$. But the result and its proof directly carries over to the present situation. Moreover, the extra zero order term $(\no_{\Gamma_t}\cdot\partial_\rho \hv_1)|_{\Gamma_t}\circ X_0^{-1}h $ can be controlled with aid of Gronwall's inequality.
\end{rem}
\begin{prop}
	\label{prop:wpm-h}
	Let $ T \in (0,T_0] $ and 
	 $ \ol{\bw} \in L^2(0,T;H^1(\Omega))$.
	Then there is a unique solution $ (\bw^\pm,q^\pm,h) $ to \eqref{eq:Limit} fulfilling
	\begin{gather*}
		\widetilde{\bw}_A^{\mathrm{out}}\coloneqq\bw^+\chi_{\Omega^+}+\bw^-\chi_{\Omega^-} \in L^2(0,T; H^2(\Omega \setminus \Gamma_t)) \cap H^1(0,T; L^2(\Omega)), \\
		  \widetilde{q}_A^{\mathrm{out}}\coloneqq q^+\chi_{\Omega^+}+q^-\chi_{\Omega^-} \in L^2(0,T; H^1_{(0)}(\Omega \setminus \Gamma_t)), \\
		h \in X_{T,h}\coloneqq L^2(0,T; H^{\frac52}(\Sigma)) \cap H^1(0,T; H^{\frac12}(\Sigma)).
	\end{gather*}
	Moreover, there is a constant $C(T_0)$, independent of $T\in (0,T_0]$, solution and data, such that
	\begin{align}
		\|h\|_{X_{T,h}} & + \|\pt \widetilde{\bw}_A^{\mathrm{out}}\|_{L^2(0,T;L^2)} \nonumber \\
		& + \|\widetilde{\bw}_A^{\mathrm{out}}\|_{L^2(0,T; H^2(\Omega\setminus \Gamma_t))}
		+ \|\widetilde{\bw}_A^{\mathrm{out}}\|_{L^\infty(0,T; H^1(\Omega\setminus\Gamma_t))} 
		\leq C(T_0)\|\ol{\we}\|_{L^2(0,T;H^1)},
		\label{eq:h-w-estimate}
	\end{align}
	where
	\begin{align*}
		\|h\|_{X_{T,h}}\coloneqq \|h\|_{L^2(0,T;H^{\frac52}(\Sigma))}
		+\|\partial_t h\|_{L^2(0,T;H^{\frac12}(\Sigma))}
		+ \norm{h}_{L^\infty(0,T;H^{\frac32}(\Sigma))}.
	\end{align*}
\end{prop}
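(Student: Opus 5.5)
We treat \eqref{eq:Limit} as a coupled linear two-phase problem: a linearized two-phase Stokes/Navier--Stokes system driven through \eqref{eq:Limit3} by the linearized mean curvature terms in $h$, coupled to a parabolic equation for $h$ on $\Sigma$ driven by $\no_{\Gamma_t}\cdot\we^\pm$ and the given datum $(\no_{\Gamma_t}\cdot\ol\we)|_{\Gamma_t}$. By Remark~\ref{rem:uA}, since $\partial_\rho\hv_0=0$ on $\Gamma_t$, the $h$-equation is a linear parabolic equation of second order on $\Sigma$ whose right-hand side is the trace of $\no_{\Gamma_t}\cdot\ol\we$. The datum satisfies $\|(\no\cdot\ol\we)|_{\Gamma_t}\|_{L^2(0,T;H^{1/2}(\Sigma))}\le C\|\ol\we\|_{L^2(0,T;H^1)}$ by the trace theorem. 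The plan is therefore: construct a solution by a fixed point or Galerkin argument on short time intervals, derive the a priori estimate \eqref{eq:h-w-estimate}, and extend to $[0,T]$ using that the constants do not depend on $T\le T_0$.

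\emph{Step 1 (fluid part for given $h$).} For $h\in X_{T,h}$ the jump datum $\sigma(\Delta_{\Gamma_t}h\,\no-H_{\Gamma_t}\nabla_{\Gamma_t}h)$ lies in $L^2(0,T;H^{1/2}(\Gamma_t))\cap H^{1/4}(0,T;L^2(\Gamma_t))$, where the second space comes from interpolating $h\in H^1(0,T;H^{1/2})\cap L^2(0,T;H^{5/2})$. We then invoke maximal $L^2$-regularity for the linear two-phase Stokes system with prescribed stress jump on smoothly moving interfaces. After transforming to a fixed reference configuration with the smooth diffeomorphism induced by $X_0$, this is covered by the theory in \cite{AM2018,PruessSimonettMovingInterfaces}. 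The lower order terms $\ve^\pm\cdot\nabla\we^\pm+\we^\pm\cdot\nabla\ve^\pm$ are perturbations. The result is $\widetilde\we_A^{\mathrm{out}}\in L^2(0,T;H^2(\Omega\setminus\Gamma_t))\cap H^1(0,T;L^2)$, the corresponding pressure, and an estimate by $C\|h\|_{X_{T,h}}$. The $L^\infty(0,T;H^1)$ bound follows from the trace method, since $\we|_{t=0}=0$.

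\emph{Step 2 (closing the coupling).} Inserting $\no\cdot\we^\pm|_{\Gamma_t}$ into the $h$-equation and applying \eqref{eq:h-w-estimate2} for the parabolic part gives a coupled estimate. We close it with an energy identity rather than smallness. We test the fluid equations with $\we^\pm$ and the $h$-equation with $-\sigma\Delta_{\Gamma_t}h$. The surface tension work $\int_{\Gamma_t}\sigma\Delta_{\Gamma_t}h\,\no\cdot\we$ then cancels against the term $-(\no\cdot\we)h$ tested with $-\sigma\Delta_{\Gamma_t} h$. This is the linearized version of the energy structure of \eqref{eq:TPNS}. What remains is
\[
\tfrac{\d}{\dt}\Big(\tfrac12\|\we\|_{L^2}^2+\tfrac\sigma2\|\nabla_{\Gamma}h\|_{L^2}^2\Big)+c\|\nabla\we\|_{L^2}^2+\sigma\|\Delta_\Gamma h\|_{L^2}^2\le C(\|\we\|_{L^2}^2+\|h\|_{H^1}^2)+C\|\ol\we\|_{H^1}^2 ,
\]
where the lower order terms are absorbed and Gronwall's inequality is applied. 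This yields $\|h\|_{L^2(0,T;H^2)}+\|\we\|_{L^2(0,T;H^1)}\le C\|\ol\we\|_{L^2(0,T;H^1)}$. We then feed this back: $\no\cdot\we|_{\Gamma_t}\in L^2(0,T;H^{1/2})$, so \eqref{eq:h-w-estimate2} gives $h\in X_{T,h}$, and Step 1 upgrades $\we$ to the stated regularity. For existence we use a Galerkin scheme based on this energy identity, or a contraction on short intervals of uniform length followed by concatenation. Uniqueness follows from the energy estimate with $\ol\we=0$.

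The main obstacle is Step 1. It requires the maximal regularity result for the variable-in-time two-phase Stokes problem with the $H^{1/4}$-in-time stress jump, with constants uniform in $T$. To avoid a blow-up of the time-trace constants as $T\to0$, we extend the data by zero to $(0,T_0)$, using that $h$ and $\we$ vanish at $t=0$, and apply the result on $(0,T_0)$.
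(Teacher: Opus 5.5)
Your argument is correct in outline, but it takes a different route from the paper's. The paper proves the proposition essentially by citation. Well-posedness and the estimate are taken from \cite[Theorem~A.10]{AbelsFei}, which treats the linearized two-phase Navier--Stokes/mean curvature system; here there is no extra datum. The only argument added is interpolation: $L^2(0,T;H^2(\Omega\setminus\Gamma_t))\cap H^1(0,T;L^2(\Omega))\hookrightarrow L^\infty(0,T;H^1(\Omega\setminus\Gamma_t))$ and $X_{T,h}\hookrightarrow L^\infty(0,T;H^{3/2}(\Sigma))$ give the $L^\infty$-in-time parts of \eqref{eq:h-w-estimate}.

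You instead reconstruct what lies behind that citation, in three pieces:
\begin{itemize}
\item maximal $L^2$-regularity of the two-phase Stokes problem for frozen $h$. The jump datum does lie in $L^2(0,T;H^{1/2})\cap H^{1/4}(0,T;L^2)$, since $X_{T,h}\hookrightarrow H^{1/4}(0,T;H^2(\Sigma))$ by the mixed derivative theorem;
\item the parabolic estimate \eqref{eq:h-w-estimate2} for $h$;
\item the linearized energy identity. There the work $\sigma\int_{\Gamma_t}\Delta_{\Gamma_t}h\,\no_{\Gamma_t}\cdot\we\,\mathrm{d}\sigma$ of the stress jump \eqref{eq:Limit3} cancels the coupling term $-(\no_{\Gamma_t}\cdot\we)$ of \eqref{eq:linTwoPhase5} tested with $-\sigma\Delta_{\Gamma_t}h$. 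With $\no_{\Gamma_t}$ the interior normal of $\Omega^+$ and the jump taken as ``$+$ minus $-$'', the signs do match.
\end{itemize}

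What your route buys is transparency. It shows why a bound with a constant depending only on $T_0$ holds without smallness, and it is visibly independent of $d$, whereas \cite{AbelsFei} works in two space dimensions. It does not remove the hard ingredient, maximal regularity for the two-phase Stokes problem on a moving interface, which is also the core of the cited theorem. The energy identity is also a convenience rather than a necessity: for a linear problem, a contraction on time intervals of uniform length plus finitely many concatenations already yields a $C(T_0)$-bound.

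Three small points to fix when writing it out:
\begin{itemize}
\item The functional $\frac12\|\we\|_{L^2}^2+\frac\sigma2\|\nabla_{\Gamma_t}h\|_{L^2}^2$ does not control $\|h\|_{L^2(\Sigma)}$. Add the $h$-equation tested with $h$ before applying Gronwall's inequality.
\item The $h$-equation must be tested with respect to the surface measure of $\Gamma_t$, i.e.\ including the Jacobian of $X_0(\cdot,t)$, for the cancellation to be exact. The time derivatives of the metric and Jacobian then produce only lower order terms.
\item For uniformity in $T$, the cleanest device is to extend $\ol\we$, the only datum, by zero to $(0,T_0)$, solve the coupled problem there, and restrict using uniqueness on $(0,T)$. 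This avoids tracking how the $H^{1/4}(0,T;L^2)$-norm of the jump datum behaves under extension as $T\to0$.
\end{itemize}
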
     
\begin{proof}
	The proof essentially follows from \cite[Theorem A.10]{AbelsFei}. In our case, it is even simpler as we do not have extra datum. Moreover, we have by interpolation
	\begin{gather*}
		L^2(0,T; H^2(\Omega \setminus \Gamma_t)) \cap H^1(0,T; L^2(\Omega)) 
		\hookrightarrow L^\infty(0,T; H^1(\Omega \setminus \Gamma_t)), \\
		L^2(0,T; H^{\frac52}(\Sigma)) \cap H^1(0,T; H^{\frac12}(\Sigma))
		\hookrightarrow L^\infty(0,T; H^{\frac32}(\Sigma)).
	\end{gather*}
	This completes the proof.
      \end{proof}

      We remark that $\we_A^{\mathrm{out}}$, which is used in the definition of $\we_A$, is a ``smoothed version'' of $\widetilde{\we}_A^{\mathrm{out}}$. In particular, $\we_A^{\mathrm{out}}\in L^2(0,T;H^2(\Omega)^d)$ while $\widetilde{\we}_A^{\mathrm{out}}\in L^2(0,T;H^2(\Omega\setminus\Gamma_t)^d\cap H^1(\Omega)^d)$ only. The error can be estimated by the following:
      \begin{lem}\label{lem:wout}
        	Let $ T \in (0,T_0] $ and 
                $\we_A^{\mathrm{out}}, \we_A^{\mathrm{in}}, \widetilde{\we}_A^{\mathrm{out}}$ as before. Then there are $C>0$, $\eps_1\in (0,1]$ such that
                \begin{align}
                  \|\we_A^{\mathrm{out}}-\widetilde{\we}_A^{\mathrm{out}}\|_{L^{\infty}(0,T;H^1)}+     \|(\partial_t\we_A^{\mathrm{out}}-\partial_t \widetilde{\we}_A^{\mathrm{out}}, q_A^{\mathrm{out}}-\widetilde{q}_A^{\mathrm{out}}\|_{L^2(Q_T)} &\leq C\sqrt{\eps}\|\ol{\we}\|_{L^2(0,T;H^1)} \label{eq:woutEstim}\\
                                    \|\Div (\we_A^{\mathrm{out}})\|_{L^{\infty}(0,T;H^1)}+ \|\partial_t \Div (\we_A^{\mathrm{out}})\|_{L^2(Q_T)} &\leq C\sqrt{\eps}\|\ol{\we}\|_{L^2(0,T;H^1)} \label{eq:DivwoutEstim}\\
                       \|(\we_A^{\mathrm{in}},\oeps \nabla \we_A^{\mathrm{in}})\|_{L^\infty(0,T;L^2)}+\|(\oeps \partial_t \we_A^{\mathrm{in}},\oeps^2 \nabla^2 \we_A^{\mathrm{in}}) \|_{L^2(Q_T)}&\leq C\sqrt{\eps}\|\ol{\we}\|_{L^2(0,T;H^1)}\label{eq:winEstim} 
                \end{align}
                for all $T\in (0,T_0]$ and $\eps\in (0,\eps_1]$.
      \end{lem}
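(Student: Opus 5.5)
The plan is to reduce all three estimates to the bound \eqref{eq:h-w-estimate} of Proposition~\ref{prop:wpm-h}. That bound controls $h$ in $X_{T,h}$ and $\widetilde{\we}_A^{\mathrm{out}}$ in $L^2(0,T;H^2(\Omega\setminus\Gamma_t))\cap H^1(0,T;L^2)\cap L^\infty(0,T;H^1(\Omega\setminus\Gamma_t))$ by $C\|\ol\we\|_{L^2(0,T;H^1)}$. Every remaining factor has the form $\eta(\rho)$, $1-\eta(\rho)$ or $\partial_\rho\hv_A^{\mathrm{in}}$, and these are localized to a layer of width $O(\eps)$ around $\Gamma_t$. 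Each $\sqrt\eps$ will come from the measure of this layer, via \eqref{eq:RemEstim2} of Lemma~\ref{lem:rescale} for the $L^2$ norm in $r$ of exponentially decaying profiles, combined with traces in $s$.

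\emph{For \eqref{eq:woutEstim}.} In $\Gamma_t(2\delta)$ we have
\[
\we_A^{\mathrm{out}}-\widetilde{\we}_A^{\mathrm{out}}=\zg\bigl[(\eta(\rho)-\chi_{\Omega^+})(\we^+-\we^-)\bigr],
\]
where $\we^\pm$ denote smooth (Sobolev) extensions of the one-sided solutions across $\Gamma_t$. Because of \eqref{eq:Limit4}, $\we^+-\we^-$ vanishes on $\Gamma_t$, so we may write $\we^+-\we^-=d_\Gamma\,\mathbf{U}$ with $\|\mathbf{U}\|_{H^1}\le C\|\widetilde\we_A^{\mathrm{out}}\|_{H^2(\Omega\setminus\Gamma_t)}$, in the same way as $\ue_0$ is built in Remark~\ref{rem:PropertiesAproxSol}.

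The $L^2$ term then carries a factor $d_\Gamma(\eta-\chi)=\eps(\rho+h_\eps)(\eta-\chi)$, which gives $O(\eps)$. In the gradient the dangerous term is $\tfrac1\eps\eta'(\rho)\,d_\Gamma\mathbf{U}$; it is bounded by $|\rho\eta'(\rho)|\,|\mathbf{U}|$, and \eqref{eq:RemEstim2} with the trace of $\mathbf{U}$ on $\Gamma_t$ yields $C\sqrt\eps\|\mathbf{U}\|_{H^1}$. The remaining gradient terms are treated the same way.

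For $L^\infty$ in time, I would use $\widetilde\we_A^{\mathrm{out}}\in L^\infty H^1(\Omega\setminus\Gamma_t)$ and estimate $\|\mathbf U\|_{L^2(\Gamma_t)}$ by interpolation. This is the delicate point: one needs $\|\nabla(\we^+-\we^-)\|_{L^2(\Gamma_t)}$, which is controlled by $H^{3/2}$, i.e.\ by interpolating $L^\infty H^1$ and $L^2H^2$. I expect this step to be the main obstacle, and I would treat it by keeping the trace in $L^2$ in time where possible. If that fails, I would accept using $\partial_t$ together with the fundamental theorem of calculus in time, with zero initial data.

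The time derivative and the pressure difference are handled identically. The only change is that $\partial_t\eta(\rho)$ contributes $\tfrac1\eps\eta'(\rho)(\partial_t d_\Gamma-\eps\partial_t h_\eps)$, and the second part needs the bound on $\partial_t h$ contained in $X_{T,h}$.

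\emph{For \eqref{eq:DivwoutEstim}.} Here $\Div\widetilde\we^{\mathrm{out}}_A=0$ in $\Omega^\pm_t$, so
\[
\Div\we_A^{\mathrm{out}}=\zg\tfrac1\eps\eta'(\rho)(\nabla d_\Gamma-\eps\nabla^\Gamma h_\eps)\cdot(\we^+-\we^-)+\zg'\text{-terms}.
\]
The $\zg'$-terms are supported where $|\rho|\gtrsim\delta/\eps$ and are therefore exponentially small. The main term is again $\eta'(\rho)(\rho+h_\eps)\,\mathbf{U}\cdot\nabla d_\Gamma$ plus lower-order terms, and one further derivative is estimated exactly as above.

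\emph{For \eqref{eq:winEstim}.} We have $\we_A^{\mathrm{in}}=\tfrac{h}{\eps}\partial_\rho\hv_A^{\mathrm{in}}$, and by Remark~\ref{rem:PropertiesAproxSol}, $\partial_\rho\hv_0=\ue_0 d_\Gamma\eta'=\eps\ue_0(\rho+h_\eps)\eta'(\rho)$. Hence $\we_A^{\mathrm{in}}=h\,\tilde a(\rho,x,t)$ with $\tilde a$ bounded and exponentially decaying in $\rho$. Then \eqref{eq:RemEstim2} gives $\|\we_A^{\mathrm{in}}\|_{L^2}\le C\sqrt\eps\|h\|_{L^2(\Sigma)}$. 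Each normal derivative costs $\eps^{-1}$, which is compensated by $\oeps\le C\eps(1+|\rho|)$ on the support, while tangential derivatives and $\partial_t$ fall on $h$. This gives the bounds via $\|h\|_{L^\infty H^{3/2}}$, $\|h\|_{L^2H^{5/2}}$ and $\|\partial_th\|_{L^2H^{1/2}}$, all of which are controlled by \eqref{eq:h-w-estimate}.
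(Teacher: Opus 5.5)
Your strategy is the paper's:
\begin{itemize}
\item the same representation $\we_A^{\mathrm{out}}-\widetilde{\we}_A^{\mathrm{out}}=\zg(\eta(\rho)-\chi_{\Omega^+})(\we^+-\we^-)$;
\item the same source of $\sqrt\eps$, namely the $O(\eps)$-width of $\{|\rho|\le 1\}$ combined with bounds that are $L^\infty$ in $r$ and $L^2$ in $s$;
\item the vanishing of $\we^+-\we^-$ on $\Gamma_t$ to absorb $\nabla(\eta(\rho))$ (your factorization $\we^+-\we^-=d_\Gamma\mathbf U$ repackages the paper's $C^1$-in-$r$ argument);
\item the identity $\partial_\rho\hv_0=d_\Gamma\eta'(\rho)\ue_0$ for \eqref{eq:winEstim}.
\end{itemize}
This yields the $L^2(Q_T)$ bounds for the difference, its gradient and the pressure difference, the $L^\infty(0,T;L^2)$ bound, and \eqref{eq:winEstim}.

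The step you flag, however, is a genuine gap, and neither of your fallbacks closes it. Interpolating $L^\infty(0,T;H^1)$ with $L^2(0,T;H^2)$ only gives $L^4(0,T;H^{3/2})$. An argument via $\partial_t$ and zero initial data would need spatial regularity of $\partial_t\we^\pm$ (e.g.\ $L^2(0,T;H^{1/2})$ together with $\we^\pm\in L^2(0,T;H^{5/2})$), whereas \eqref{eq:h-w-estimate} gives only $\partial_t\we^\pm\in L^2(Q_T)$. At a fixed time $\we^\pm(t)$ is bounded only in $H^1$. So $\zg(\eta(\rho)-\chi_{\Omega^+})\nabla(\we^+-\we^-)(t)$ and $\zg\nabla(\eta(\rho))\otimes(\we^+-\we^-)(t)$ are controlled only by $\|\nabla(\we^+-\we^-)(t)\|_{L^2(\{|\rho|\le 1\})}$, which carries no rate. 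Parabolic scaling (interface data of tangential frequency $\eps^{-1}$ concentrated on a time window of length $\eps^2$) suggests that no $\sqrt\eps$-gain holds in $L^\infty(0,T;H^1)$ at all. The paper does not supply this step either: it proves only the $L^2(Q_T)$ bounds, via $\widetilde{\we}^\pm\in L^2((0,T)\times\Sigma;C^1([-2\delta,2\delta]))$, and then asserts \eqref{eq:woutEstim}.

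The same lack of spatial regularity invalidates ``the time derivative is handled identically''. The expression $\partial_t[\zg(\eta(\rho)-\chi_{\Omega^+})(\we^+-\we^-)]$ contains $\zg(\eta(\rho)-\chi_{\Omega^+})\partial_t(\we^+-\we^-)$. Since $\partial_t\we^\pm$ is only in $L^2(Q_T)$, there is no $L^\infty$-in-$r$ bound and the layer gives no $\sqrt\eps$. The gain does survive in weaker norms, e.g.\ in $L^2(0,T;H^{-1})$ by putting the $L^\infty$-in-$r$ bound on the test function, or with the weight $\oeps$. The paper's ``in the same way'' has the same defect.

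You also conflate $h_\eps$ with $h$: $\partial_t\rho=\tfrac1\eps\partial_t d_\Gamma-\partial_t^\Gamma h_\eps$ involves the fixed smooth shift $h_\eps$ of the approximate solution, not the unknown $h$. So $X_{T,h}$ plays no role for $\we_A^{\mathrm{out}}$.

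For \eqref{eq:DivwoutEstim}, ``one further derivative exactly as above'' fails. Differentiating $\eta'(\rho)(\rho+h_\eps)\,\no\cdot\mathbf U$ produces $\tfrac1\eps\partial_\rho[\eta'(\rho)(\rho+h_\eps)]\,\no\cdot\mathbf U$, which is of order $\eps^{-1/2}$ in $L^2$ unless $\no\cdot\mathbf U$ vanishes on $\Gamma_t$. That is precisely the cancellation $\no\cdot\partial_{\no}(\we^+-\we^-)=-\Div_{\btau}(\we^+-\we^-)=0$ on $\Gamma_t$, coming from $\Div\we^\pm=0$ and \eqref{eq:Limit4}; the paper uses it and you omit it. Even with it, the unweighted $H^1$-norm of $\Div\we_A^{\mathrm{out}}$ is bounded only by $\|\nabla^2(\we^+-\we^-)\|_{L^2(Q_T\cap\{|\rho|\le 1\})}$, without a rate. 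The $\sqrt\eps$-gain does hold for $\|\oeps\nabla\Div\we_A^{\mathrm{out}}\|_{L^2(Q_T)}$, since $\oeps\lesssim\eps$ on the layer.
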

\begin{proof}
     First of all, we have that
    \begin{equation*}
      \we_A^{\mathrm{out}}-\widetilde{\we}_A^{\mathrm{out}} = -\zg\left(\we^+(\chi_{\Omega^+}- \eta(\rho))+\we^-(\chi_{\Omega^-}-(1-\eta(\rho)))\right)
    \end{equation*}
    and therefore
    \begin{align*}
      &\|\we_A^{\mathrm{out}}-\widetilde{\we}_A^{\mathrm{out}}\|_{L^2(Q_T)}\\
      &\quad \leq C\left(\|\zg(\chi_{\Omega^+}- \eta(\rho))\|_{L^2(\Omega)} + \|\zg(\chi_{\Omega^-}- (1-\eta(\rho)))\|_{L^2(\Omega)}\right)\|\ol{\we}\|_{L^2(0,T;H^1)}\\
      &\quad \leq C\sqrt{\eps}\|\ol{\we}\|_{L^2(0,T;H^1)}
    \end{align*}
    since $\eta(\rho)= 1$ if $\rho\geq 1$ and $\eta(\rho)=0$ if $\rho\leq -1$. In the same way, one obtains
    \begin{equation*}
     \|q_A^{\mathrm{out}}-\widetilde{q}_A^{\mathrm{out}}\|_{L^2(Q_T)}\leq C\sqrt{\eps}\|\ol{\we}\|_{L^2(0,T;H^1)}.
    \end{equation*}
    Moreover,
      \begin{align*}
        \nabla (\we_A^{\mathrm{out}}-\widetilde{\we}_A^{\mathrm{out}}) &= \zg\left(\nabla \we^+(\chi_{\Omega^+}- \eta(\rho))+\nabla \we^-(\chi_{\Omega^-}-(1-\eta(\rho)))+\nabla (\eta(\rho))\otimes (\we^+-\we^-)\right)\\
        &\quad + \nabla \zg\otimes \left(\we^+(\chi_{\Omega^+}- \eta(\rho))+\we^-(\chi_{\Omega^-}-(1-\eta(\rho)))\right)
    \end{align*}
    since $\we^+|_{\Gamma}= \we^-|_{\Gamma}$. Here the first term can be estimated as before and the third term vanishes for sufficiently small $\eps$ since $\eta(\rho)$ is constant for $|\rho|\geq 1$. Using $(r,s)$-coordinates in $\Gamma(2\delta)$ and $\widetilde{\we}^\pm(r,s,t)\in L^2((0,T)\times \Sigma;C^1(-2\delta,2\delta)))$ by Sobolev-embedding as well as $\widetilde{\we}^\pm|_{r=0}=0$, one obtains
    \begin{align*}
      \left\|\zg \nabla (\eta(\rho))\otimes (\we^+-\we^-) \right\|_{L^2(Q_T)}\leq C\sqrt{\eps}\|\ol{\we}\|_{L^2(0,T;H^1)} 
    \end{align*}
    and finally \eqref{eq:woutEstim}. In the same way one shows
      \begin{equation*}
           \|\partial_t \we_A^{\mathrm{out}}-\partial_t\widetilde{\we}_A^{\mathrm{out}}\|_{L^2(Q_T)}\leq C\sqrt{\eps}\|\ol{\we}\|_{L^2(0,T;H^1)} 
         \end{equation*}
         For the divergence we obtain
           \begin{equation*}
        \Div (\we_A^{\mathrm{out}}) = \nabla (\eta(\rho))\cdot (\we^+-\we^-)+ \nabla \zg\cdot \left(\we^+(\chi_{\Omega^+}- \eta(\rho))+\we^-(\chi_{\Omega^-}-(1-\eta(\rho)))\right)
    \end{equation*}
    since $\Div \we^\pm =\widetilde{\we}_A^{\mathrm{out}}=0$ in $Q_T$. Now we use that $\no_\Gamma \cdot \nabla (\we^+-\we^-)|_{\Gamma_t} = -\Div_{\tau} (\we^+-\we^.)|_{\Gamma_t} =0$ since $\Div \we^\pm=0$ and $\we^+|_{\Gamma_t}=\we^-|_{\Gamma_t}$. This implies   $\nabla d_\Gamma\cdot \partial_t\we^+|_{\Gamma_t}=\nabla d_\Gamma\cdot \partial_t\we^-|_{\Gamma_t}$.
    From this one derives \eqref{eq:DivwoutEstim} easily.
         
         Finally, using $\partial_\rho\hat{\ve}_0(\rho,x,t)= d_\Gamma\eta'(\rho) \ue_0(x,t)$ one can show \eqref{eq:winEstim}
         in a straight forward manner.
      \end{proof}

We define 
\begin{align}\nonumber
	-\mathbf{r}_{1,A}&\coloneqq \partial_t\we_A +\ve_A\cdot\nabla\we_A+\we_A\cdot\nabla\ve_A-\Div\left(2\nu(c_A)D\we_A\right)
	- \Div(2\nu'(c_A) u_A D \bv_A)
	\\
	&\quad +\nabla q_A+\eps\Div\left(\nabla u_A\otimes\nabla c_A+\nabla c_A\otimes\nabla u_A\right),\label{eq:r1}\\\label{eq:r2}
    - r_{2,A}&\coloneqq\Div \bw_A, \\\label{eq:r3}
	- r_{3,A}&\coloneqq\partial_t u_A+ \ve_A\cdot \nabla u_A+\we_A\cdot \nabla c_A-\Delta u_A+ \tfrac1{\eps^2}f''(c_A) u_A+ \zg\ol\we|_{\Gamma}\cdot \nabla c_A.
\end{align}
where $(\ve_A, c_A)$ are as in Theorem~\ref{thm:ApproximateSolution}. We note that for the following analysis only the properties in Remark~\ref{rem:PropertiesAproxSol} are important. The form of the other higher order terms does not matter.

Then $(\ol{\we}, \ol{u})$ solves the modified linearized system \eqref{eq:linNSAC'},
where $\ol{\mathbf{r}}_1= \mathbf{r}_1+\mathbf{r}_{1,A}$, $\ol{r}_2= r_2+r_{2,A}$, $\ol{r}_3=r_3+r_{3,A}$, and $\ol\we_0=\we_0$ with perturbation of the data $(\mathbf{r}_{1,A}, r_{2,A}, r_{3,A})$.
Using this we will show the main result of this section:
\begin{theorem}\label{thm:FullLinearizedSystem}
  Let $0 < T\leq T_0$, $(\ve_A, c_A)$ be as in Theorem~\ref{thm:ApproximateSolution}, $\we_0\in H^1(\Omega)^d$, $u_0 \in H^2(\Omega)$, $\mathbf{r}_1\in L^2(Q_T)^d$, $r_2\in L^2(0,T;H^1_{(0)}(\Omega))\cap C([0,T];L^2(\Omega))\cap H^1(0,T;H_{(0)}^{-1}(\Omega))$, $r_3\in L^2(0,T;H^1_{0}(\Omega))$
  with $\Div \we_0 = r_2|_{t=0}$. Moreover, let $\we=\we_A+\ol \we, u= u_A+\ol u$ be the solution of \eqref{eq:linNSAC} with $\ol \we, \we_A,\ol u, u_A$ defined as before. Then there are constants  $\eps_1 \in(0,1]$ and $C(T_0)$ independent of $T\in (0,T_0]$ and $\eps\in (0,\eps_1]$ such that
	\begin{align}
		&\left\|\ol u\right\|_{L^\infty(0,T;\wW^2)}+
		\left\|\nabla_\btau \ol u\right\|_{L^2 (0,T;\wW^2(\Gamma_t(2\delta)))} + \|\oeps \partial_t \ol u\|_{L^2 (0,T;\wW^1)}\nonumber\\
		& \quad + \left\|(\oeps \nabla_{\btau}\nabla \ol u, \oeps\mathcal{L}_\eps \ol u)\right\|_{L^2 (0,T;\wW^1(\Gamma_t(2\delta))}+ \|(\oeps^2 \nabla^2\ol u, \oeps^3 \nabla^3\ol u )\|_{L^2 (Q_T)}\nonumber\\
          & \quad+ \|(\ol\we,\oeps \nabla \ol\we)\|_{L^\infty(0,T;L^2)}+ \|(\nabla \ol\we, \oeps\nabla^2 \ol\we, \oeps\partial_t \ol\we)\|_{L^2(Q_T)}  \nonumber \\
		& \quad + \|(\we_A,\oeps^{\frac12} \nabla \we_A)\|_{L^\infty(0,T;L^2)}+ \|(\oeps^{\frac12}\nabla \we_A, \oeps^{\frac{3}2}\nabla^2 \we_A, \oeps\partial_t \we_A)\|_{L^2(Q_T)}       \label{eq:MainUbarEstim}
		 \leq C(T_0)D_\eps,
	\end{align}
	where
	\begin{align*}
		D_\eps&\coloneqq\left(\left\| u_0\right\|_{\wW^2} +\|\we_0\|_{H^1(\Omega)}+\|\mathbf{r}_1\|_{L^2(0,T;V_\sigma')}+\|\oeps\mathbf{r}_1\|_{L^2(Q_T)} 
        + \|(\ol r_2,\oeps \nabla \ol r_2)\|_{L^2(Q_T)}\right.\\
		&\qquad \left.  + \| r_2\|_{H^1(0,T; H^{-1}_{(0)}(\Omega))}+\|r_3\|_{L^2(0,T;(\Veps)')}+\|\oeps r_3\|_{L^2(0,T;\wW^1)}\right).
	\end{align*}
	Moreover, for every $j\in\N_0$, $1\leq q<\infty$ and any $1 \leq  p \leq 4$ if $d = 3$, $1\leq p <\infty$ if $d=2$, respectively, $u_A$ satisfies
	\begin{align}\nonumber
		& \eps^{\frac12} \|u_A\|_{L^\infty(Q_T)}
		+ \eps^{\frac12-\frac1q-j} \|(\oeps^j u_A, \oeps^j\nabla_\btau u_A, \oeps^{1+j} \nabla u_A)\|_{L^\infty(0,T;L^q(\Omega))}\\\nonumber
		&\qquad
		+ \eps^{\frac12-\frac1p} \|\omega_\eps\partial_t u_A\|_{L^2(0,T;L^p(\Omega))}
		+ \|\omega_\eps\partial_t u_A\|_{L^4(0,T;L^2(\Omega))}\\\nonumber
		&\qquad
		+ \eps^{\frac12-\frac1p} \|(\omega_\eps \nabla_{\btau}\nabla u_A,\omega_\eps^2 \nabla^2 u_A)\|_{L^2(0,T;L^p(\Omega))}
		+ \|\omega_\eps^2 \nabla^2 u_A\|_{L^4(0,T;L^2(\Omega))}\\\label{eq:uAEstim2}
		& \quad \leq C(T_0,p,q,j)\eps^{-\frac12} \|\we \|_{L^2(0,T;H^1(\Omega))}
	\end{align}
	for  some constant $C(T_0,p,q,j)$ independent of $\we$, $T\in (0,T_0]$, and $\eps \in (0,\eps_1]$.
\end{theorem}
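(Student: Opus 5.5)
The plan is to reduce \eqref{eq:linNSAC} to the modified system \eqref{eq:linNSAC'} for $(\ol\we,\ol u)$ with data $\ol{\mathbf r}_1=\mathbf r_1+\mathbf r_{1,A}$, $\ol r_2=r_2+r_{2,A}$, $\ol r_3=r_3+r_{3,A}$. We then apply Theorem~\ref{thm:main-0}, Theorem~\ref{thm:SecondOrderEstimate}, Theorem~\ref{thm:HigherOrderW} and Theorem~\ref{thm:HighOrderEstimate} in this order. The difficulty is that $(\we_A,u_A)$, and hence $(\mathbf r_{1,A},r_{2,A},r_{3,A})$, depend linearly on $\ol\we$ through $(\we^\pm,h)$, which solve \eqref{eq:Limit} with right-hand side $\no\cdot\ol\we|_{\Gamma_t}$. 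So the argument must be a closed a priori estimate in which every contribution of the perturbed data carries a small factor $\eps^{\frac12}$ times $\|\ol\we\|_{L^2(0,T;H^1)}$ (or a Gronwall-absorbable term), and can therefore be absorbed into the left-hand side for $\eps\le\eps_1$.

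\emph{Step 1: bounds for the leading part.} By Proposition~\ref{prop:wpm-h}, $(\we^\pm,h)$ is bounded in $X_{T,h}$ and in the piecewise $H^2$-spaces by $C\|\ol\we\|_{L^2(0,T;H^1)}$. Lemma~\ref{lem:wout} then gives the stated bounds for $\we_A^{\mathrm{out}}-\widetilde\we_A^{\mathrm{out}}$, $\Div\we_A^{\mathrm{out}}$ and $\we_A^{\mathrm{in}}$. The estimates for $\we_A$ in \eqref{eq:MainUbarEstim} follow from these by the scaling $\oeps\sim\eps$ on the $\rho$-support, with exponential decay outside it. The bounds \eqref{eq:uAEstim2} for $u_A=-\zg\eps^{-1}(\theta_0'+\eps^2\p_\rho\hc_2)h$ come from direct rescaling $r=\eps(\rho+h_\eps)$ together with the Sobolev embeddings $H^{3/2}(\Sigma)\hookrightarrow L^\infty$ and $H^{5/2}$, $H^{1/2}$ in time–space interpolation (for instance $L^4(0,T;H^{1}(\Sigma))$ for the $L^4_tL^2_x$ norms). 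This is routine.

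\emph{Step 2: residuals.} We expand $r_{3,A}$ using $\widetilde{\mathcal L}_\eps\theta_0'=0$, the $\hc_2$ equation \eqref{eq:c2}, and the $h$ equation \eqref{eq:linTwoPhase5}. The $O(\eps^{-2})$ and $O(\eps^{-1})$ terms cancel exactly. In particular, $(\we_A^{\mathrm{out}}-\ol\we|_\Gamma+\ol\we|_\Gamma)\cdot\nabla c_A$ is matched against $\p_t^\Gamma h-\no\cdot\we^\pm$, and the added term $\zg\ol\we|_\Gamma\cdot\nabla c_A$ in \eqref{eq:r3} is precisely what produces the $\no\cdot\ol\we$ source. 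What remains is of the form $a(\rho)w(s)$ with $a$ exponentially decaying, plus terms with an extra factor $\eps$ or $r$. Using Lemma~\ref{lem:EstimMeanValueFree} for the $\theta_0'$-orthogonal parts and Lemma~\ref{lem:rescale} for the rest, we get $\|r_{3,A}\|_{L^2(0,T;(\Veps)')}+\|\oeps r_{3,A}\|_{L^2(0,T;\wW^1)}\le C\eps^{1/2}\|\ol\we\|_{L^2(0,T;H^1)}$.

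Similarly, for $\mathbf r_{1,A}$ we use the identity \eqref{eq:CapId} and the choice of $q_A^{\mathrm{in}}$ to absorb gradients. The jump condition \eqref{eq:Limit3} then matches the $\eps^{-1}\theta_0'^2$-concentrated capillary term via Lemma~\ref{lem:2delta-Gamma_t}, and \eqref{eq:A.42}, \eqref{eq:A.44} and \eqref{eq:InnerDivergence} handle the viscous and divergence terms of $\zg\we_A^{\mathrm{in}}$. This yields $\|\mathbf r_{1,A}\|_{L^2(0,T;V_\sigma')}+\|\oeps\mathbf r_{1,A}\|_{L^2(Q_T)}\le C\eps^{1/2}\|\ol\we\|_{L^2(0,T;H^1)}$. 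For $r_{2,A}$ we use \eqref{eq:DivwoutEstim} and \eqref{eq:InnerDivergence}, including the $H^1(0,T;H^{-1}_{(0)})$ part; the compatibility $r_{2,A}|_{t=0}=0$ holds since $h|_{t=0}=0$ and $\we^\pm|_{t=0}=0$.

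\emph{Step 3: closing.} Inserting Steps 1 and 2 into Theorem~\ref{thm:main-0}, Theorem~\ref{thm:SecondOrderEstimate}, Theorem~\ref{thm:HigherOrderW} and Theorem~\ref{thm:HighOrderEstimate} gives the left-hand side of \eqref{eq:MainUbarEstim} bounded by $C(D_\eps+\eps^{1/2}\|\nabla\ol\we\|_{L^2(Q_T)})$ plus lower-order terms. These lower-order terms ($\|\ol u\|_{L^2(Q_T)}$, $\|\oeps\ol\we\|_{L^2(0,T;H^1)}$) are already controlled by the energy estimate \eqref{ineq:uw energy-1}. Choosing $\eps_1$ small absorbs the $\eps^{1/2}$ term, and \eqref{eq:uAEstim2} then follows from Step 1 with $\|\we\|\le\|\we_A\|+\|\ol\we\|$.

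\textbf{Main obstacle.} I expect the main obstacle to be Step 2 for $\mathbf r_{1,A}$ in the weighted norm $\|\oeps\cdot\|_{L^2}$. The capillary term $\eps\Div(\nabla u_A\otimes\nabla c_A)$ is formally $O(\eps^{-2})$ in the layer, so exact cancellation at orders $\eps^{-2}$ and $\eps^{-1}$ must be verified, including the $H_{\Gamma_t}\nabla_{\Gamma_t}h$ tangential part of \eqref{eq:Limit3} and the pressure correction. My first approach is to write everything in $(\rho,s)$ variables, collect terms by powers of $\eps$, and check that the leading coefficients integrate to zero against $\theta_0'$ in $\rho$, so that Lemma~\ref{lem:EstimMeanValueFree} applies.
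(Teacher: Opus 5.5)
Your architecture matches the paper's. You treat $(\ol\we,\ol u)$ as the solution of \eqref{eq:linNSAC'} with data $\mathbf r_1+\mathbf r_{1,A}$, $r_2+r_{2,A}$, $r_3+r_{3,A}$, and your observation that $r_{2,A}|_{t=0}=0$ and $\ol\we_0=\we_0$ is a useful detail. You bound the added residuals by $C\sqrt\eps\|\ol\we\|_{L^2(0,T;H^1)}$, absorb this in Theorem~\ref{thm:main-0} for small $\eps$, run the higher-order estimates, and take the $\we_A$-terms from Proposition~\ref{prop:wpm-h} and Lemma~\ref{lem:wout}.

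\textbf{The gap is your derivation of \eqref{eq:uAEstim2}.} Its right-hand side is $\|\we\|_{L^2(0,T;H^1)}$, but your Step~1 controls $u_A$ only through $\|h\|_{X_{T,h}}\le C\|\ol\we\|_{L^2(0,T;H^1)}$. The inequality $\|\we\|\le\|\we_A\|+\|\ol\we\|$ points the wrong way. The reversed inequality $\|\ol\we\|\le\|\we\|+\|\we_A\|$ does not close either, for two reasons. First, $\we_A^{\mathrm{out}}\approx\we^\pm$ is not small compared with $\ol\we$. Second, $\nabla(\zg\we_A^{\mathrm{in}})$ has size $\eps^{-1}|h|$ on a layer of width $\eps$, so $\|\nabla\we_A\|_{L^2(Q_T)}$ is only $O(\eps^{-1/2}\|h\|_{X_{T,h}})$. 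This is why \eqref{eq:MainUbarEstim} contains $\oeps^{1/2}\nabla\we_A$ rather than $\nabla\we_A$. At best you would lose a factor $\eps^{-1/2}$.

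The missing step is Remark~\ref{rem:uA}:
\begin{itemize}
\item On $\Gamma_t$ one has $\ol\we=\we-\we^\pm+\we_A^{\mathrm{in}}$.
\item Since $\partial_\rho\hv_0=0$ there, $\no\cdot\we_A^{\mathrm{in}}|_{\Gamma_t}$ reproduces the term $(\no\cdot\partial_\rho\hv_1)h$, up to harmless zero-order terms.
\item Substituting into \eqref{eq:linTwoPhase5} eliminates $\we^\pm$ and leaves a closed parabolic equation for $h$ on $\Sigma$ with source $\no\cdot\we|_{\Gamma_t}$.
\item The trace theorem and parabolic regularity give $\|h\|_{X_{T,h}}\le C\|\we\|_{L^2(0,T;H^1)}$, which is \eqref{eq:h-w-estimate2}.
\item Proposition~\ref{prop:uAEstim} then yields \eqref{eq:uAEstim2}.
\end{itemize}

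\textbf{A secondary problem is your proposed attack on $\mathbf r_{1,A}$.} Orthogonality to $\theta_0'$ together with Lemma~\ref{lem:EstimMeanValueFree} controls the dual of $V_t^\eps$. That is the right mechanism for $r_{3,A}$. Note that there the $O(\eps^{-1})$ terms do not cancel exactly: after using the equations for $h_1$ and $h$, what remains is $\eps^{-1}a(\rho,x,t)$ with $\int_\R a\theta_0'\,d\rho=0$.

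The momentum residual, however, is measured in $V_\sigma'$, whose test functions are only uniformly $H^1$. By Lemma~\ref{lem:2delta-Gamma_t}, pairing a layer term $\eps^{-1}a(\rho)w$ with such a $\varphi$ gives $\int_\R a\,d\rho\int_{\Gamma_t}w\varphi\,d\sigma$ up to $O(\sqrt\eps)$. So the relevant cancellation is zero $\rho$-mean, not orthogonality to $\theta_0'$. Concretely:
\begin{itemize}
\item The leftover coefficients must be $\rho$-derivatives of exponentially decaying functions, which Lemma~\ref{lem:f'-phi-x-dependent} handles.
\item The non-zero-mean part $\eps^{-1}(\theta_0')^2(\Delta d_\Gamma\nabla^\Gamma h+\nabla d_\Gamma\Delta^\Gamma h)$ must be matched against the jump term coming from \eqref{eq:Limit3}.
\end{itemize}
Checking orthogonality to $\theta_0'$ would not produce the $V_\sigma'$ bound.
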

The proof is given at the end of this section.

For the following we denote
\begin{alignat*}{2}
	u_A^{\mathrm{in}}(x,t)&\coloneqq \frac1\eps(\theta_0'(\rho)+\eps^2\partial_\rho \hc_2(\rho,x,t)) h &\quad& \text{for all }(x,t)\in \Gamma(2\delta),
\end{alignat*}
and $h_\eps=h_1+\eps h_2+O(\eps^2)$.

We will frequently use the following identities:
\begin{align}
	\nabla c_A^{\mathrm{in}}&=(\theta_0'(\rho)+\eps^2\partial_\rho \hc_2)\left(\tfrac{\nabla d_{\Gamma}}{\varepsilon} - \nabla^\Gamma h_\varepsilon\right)+\eps^2 \nabla_x \hc_2\label{eq:nablacA}
	=\theta_0'(\rho)\tfrac{\nabla d_{\Gamma}}{\varepsilon} - \theta_0'(\rho)\nabla^\Gamma h_1+O(\eps),\\\nonumber
	\Delta c_A^{\mathrm{in}}& = (\theta_0''(\rho) + \varepsilon^2 \prho^2 \hc_2) \left(\tfrac{1}{\varepsilon^2} + \abs{\nabla^\Gamma h_\varepsilon}^2\right) + (\theta_0'(\rho) + \varepsilon^2 \prho \hc_2) \left(\tfrac{\Delta d_\Gamma}{\varepsilon} - \Delta^\Gamma h_\varepsilon\right) \\\nonumber
	& \quad 
	+ 2 \varepsilon^2 \nablax \prho \hc_2 \cdot \left(\tfrac{\nabla d_{\Gamma}}{\varepsilon} - \nabla^\Gamma h_\varepsilon\right) 
	+ \varepsilon^2 \Deltax \prho \hc_2 \\\label{eq:DeltacA}
	& = \tfrac{1}{\varepsilon^2} \theta_0''(\rho)
	+ \tfrac{1}{\varepsilon} \theta_0'(\rho) \Delta d_\Gamma
	+ \left(\prho^2 \hc_2
	+ \theta_0''(\rho) \absm{\nabla^\Gamma h_1}^2
	- \theta_0'(\rho) \Delta^\Gamma h_1\right) 
	+ O(\varepsilon)
\end{align}
in $L^\infty(Q_T)$ as well as
\begin{align}\nonumber
	\pt u_A^{\mathrm{in}} & = \tfrac{h}{\varepsilon} (\theta_0''(\rho) + \varepsilon^2 \prho^2 \hc_2) \left(\tfrac{\pt d_\Gamma}{\varepsilon} - \pt^\Gamma h_\varepsilon\right)+ \varepsilon h \pt \prho \hc_2 	+ \tfrac1{\varepsilon} (\theta_0'(\rho) + \varepsilon^2 \prho \hc_2)\pt^\Gamma h\\ \label{eq:partialtuA}
	&= \tfrac{h}{\varepsilon^2} \theta_0''(\rho) (\pt d_{\Gamma}- \eps \pt^\Gamma h_1-\eps^2\pt^\Gamma h_2)
	+ \tfrac1\eps\theta_0'(\rho) \pt^\Gamma h
	+  h \prho^2 \hc_2 \pt d_{\Gamma}+ O_{1,\eps},\\\nonumber
	\nabla u_A^{\mathrm{in}}&=\tfrac{h}\eps (\theta_0''(\rho) + \varepsilon^2 \prho^2 \hc_2) \left(\tfrac{\nabla d_{\Gamma}}{\varepsilon} - \nabla^\Gamma h_\varepsilon\right)
	+ \varepsilon h \nablax \hc_2 
	+  \tfrac1\eps\left(\theta_0'(\rho) + \varepsilon^2 \prho \hc_2\right)\nabla^\Gamma h, \\\label{eq:nablauA}
	\eps \nabla u_A^{\mathrm{in}} & = \tfrac{h}{\varepsilon} \theta_0''(\rho) \nabla d_{\Gamma}
	- h \theta_0''(\rho) \nabla^\Gamma h_1
	+ \theta_0'(\rho) \nabla^\Gamma h +O_{2,\eps}
\end{align}
where $O_{j,\eps}= O(\eps\|h\|_{X_{T,h}})$ in $L^2(0,T;L^\infty(\Gamma_t(2\delta))$ and $O_{j,\eps}= O(\eps^{\frac32}\|h\|_{X_{T,h}})$ in $L^\infty(0,T;L^2(\Gamma_t(2\delta)))$ for $j=1,2$, and
\begin{align}\nonumber
	\nabla^2 u_A^{\mathrm{in}}& = \tfrac{h}\eps (\theta_0'''(\rho) + \varepsilon^2 \prho^3 \hc_2) \left(\tfrac{\nabla d_\Gamma\otimes \nabla d_\Gamma}{\varepsilon^2} + \nabla^\Gamma h_\varepsilon\otimes\nabla^\Gamma h_\varepsilon \right) \\\nonumber
	& \quad + \tfrac{h}\eps (\theta_0''(\rho) + \varepsilon^2 \prho^2 \hc_2) \left(\tfrac{\nabla^2 d_\Gamma}{\varepsilon} - \nabla^\Gamma(\nabla^\Gamma h_\varepsilon)\right) \\\nonumber
	& \quad + \eps h  \left[\nablax (\prho^2 \hc_2) \otimes \left(\tfrac{\nabla d_\Gamma}{\varepsilon} - \nabla^\Gamma h_\varepsilon\right) +   \left(\tfrac{\nabla d_\Gamma}{\varepsilon} - \nabla^\Gamma h_\varepsilon\right)\otimes \nablax (\prho^2 \hc_2) \right] \\\label{eq:nabla2uA}
	& \quad -  \tfrac1\eps(\theta_0''(\rho) + \varepsilon \prho^2 \hc_2)\left[\nabla^\Gamma h \otimes \nabla^\Gamma h_\varepsilon+ \nabla^\Gamma h_\eps \otimes \nabla^\Gamma h\right] + \nabla^2_x \left[ (\theta_0'(\rho) + \varepsilon^2 \prho \hc_2) h \right]. 
\end{align}
In particular, we have
\begin{align}\nonumber
	\Delta u_A^{\mathrm{in}}
	& = \tfrac{h}{\varepsilon^3} \theta_0'''(\rho)
	+ \tfrac{h}{\varepsilon^2} \theta_0''(\rho) \Delta d_\Gamma
	+ \tfrac{h}\eps \prho \left(
	\prho^2 \hc_2
	+ \theta_0''(\rho) \absm{\nabla^\Gamma h_1}^2
	- \theta_0'(\rho) \Delta^\Gamma h_1
	\right) \\\label{eq:DeltauA}
	& \quad 
	- \tfrac{2}\eps \theta_0''(\rho) (\nabla^\Gamma h \cdot \nabla^\Gamma h_1) 
	+ \tfrac1\eps \theta_0'(\rho) \Delta^\Gamma h
	+ O(\sqrt{\eps}\|h\|_{X_{T,h}}) \qquad \text{in }L^2(\Gamma(2\delta)).      
\end{align}

\begin{prop}\label{prop:uAEstim}
	Let $u_A$ be defined as in \eqref{eq:u_A}. Then for every $j\in\N_0$, $1\leq q<\infty$ and any $1 \leq  p \leq 4$ if $d = 3$, $1\leq p <\infty$ if $d=2$, respectively, there is some $C_{j,p,q}>0$, independent of $T\in (0,T_0]$, $\eps\in (0,\eps_1]$, $u_A$ and $\ol \we$, such that 
	\begin{align}\nonumber
		&\eps^{\frac12} \|u_A\|_{L^\infty(Q_T)}
		+ \eps^{\frac12-\frac1q-j} \|(\oeps^j u_A, \oeps^j\nabla_\btau u_A, \oeps^{1+j} \nabla u_A)\|_{L^\infty(0,T;L^q(\Omega))}+ \|\omega_\eps\partial_t u_A\|_{L^4(0,T;L^2(\Omega))}\\\nonumber
		&\quad
		+ \eps^{\frac12-\frac1p} \|(\omega_\eps\partial_t u_A,\omega_\eps \nabla_{\btau}\nabla u_A,\omega_\eps^2 \nabla^2 u_A)\|_{L^2(0,T;L^p(\Omega))}
		+ \|\omega_\eps^2 \nabla^2 u_A\|_{L^4(0,T;L^2(\Omega))}\\\label{eq:uAEstim1}
		&\leq C_{j,p,q} \eps^{-\frac12} \|h\|_{X_{T,h}}.
	\end{align}
\end{prop}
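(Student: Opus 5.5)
The plan is to work entirely in the tubular coordinates $(r,s)$ on $\Gamma(2\delta)$, since $u_A=-\zg u_A^{\mathrm{in}}$ is supported there. In these coordinates every term of $u_A$ and of its derivatives factorizes into a profile in $\rho=\tfrac r\eps-h_\eps(s,t)$ with exponential decay, times a power of $\eps$, times $h$ or one of its tangential derivatives. The profile factor is $\theta_0'$, $\theta_0''$, $\theta_0'''$ or $\partial_\rho^k\hc_2$. The $(s,t)$-dependent factor is $h$, $\nabla^\Gamma h$, $\nabla^\Gamma\nabla^\Gamma h$ or $\partial_t^\Gamma h$. The identities \eqref{eq:partialtuA}, \eqref{eq:nablauA}, \eqref{eq:nabla2uA}, \eqref{eq:DeltauA} display exactly this structure.

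\emph{The basic scaling estimate.} For a profile $g$ with $|g(\rho)|\le Ce^{-\tilde\alpha|\rho|}$, the change of variables \eqref{eq:IntChangeOfVariables}, $r=\eps(\rho+h_\eps)$, the bound \eqref{eq:BoundhcA}, and $\toeps^j\le C\eps^j(1+|\rho|)^j$ give
\begin{equation*}
\big\|\toeps^j g(\rho)\,a(s)\big\|_{L^q(\Gamma_t(2\delta))}\le C\eps^{j+\frac1q}\|a\|_{L^q(\Sigma)}.
\end{equation*}
This is just Lemma~\ref{lem:rescale} with general $L^q$ and a weight.

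Each spatial derivative of $u_A^{\mathrm{in}}$ falls either on the profile, costing $\eps^{-1}$ and absorbed by one factor $\oeps$, or on $h$, which is tangential and costs nothing in $\eps$. So $u_A$, $\oeps\nabla u_A$ and $\nabla_\btau u_A$ all behave like $\eps^{-1}\cdot\eps^{1/q}$ times $\|(h,\nabla_\Sigma h)\|_{L^q(\Sigma)}$, uniformly in time. The cut-off $\zg$ only produces terms supported where $|\rho|\gtrsim\delta/\eps$, which are exponentially small. We then use $H^{3/2}(\Sigma)\hookrightarrow W^1_q(\Sigma)$ for all $q<\infty$, valid since $\dim\Sigma\le2$, together with $h\in L^\infty(0,T;H^{3/2}(\Sigma))$. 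This yields the $L^\infty(0,T;L^q)$ bounds with prefactor $\eps^{-1+\frac1q+j}$, which is exactly what the claimed weight $\eps^{\frac12-\frac1q-j}$ times $\eps^{-1/2}$ requires. The $L^\infty(Q_T)$ bound follows from $|u_A|\le C\eps^{-1}\|h\|_{L^\infty(\Sigma)}$ and $H^{3/2}(\Sigma)\hookrightarrow L^\infty(\Sigma)$.

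\emph{Second derivatives and time derivative.} Here $\oeps^2\nabla^2u_A$, $\oeps\nabla_\btau\nabla u_A$ and $\oeps\partial_tu_A$ involve $\nabla_\Sigma^2 h$ and $\partial_t^\Gamma h$. The $\partial_t d_\Gamma/\eps$ term in \eqref{eq:partialtuA} is harmless after multiplication by $\oeps$, because $\partial_t d_\Gamma$ is bounded. For these terms the only regularity available is $h\in L^2(0,T;H^{5/2})$ and $\partial_th\in L^2(0,T;H^{1/2})$, so $\nabla^2_\Sigma h,\partial_t h\in L^2(0,T;H^{1/2}(\Sigma))\hookrightarrow L^2(0,T;L^p(\Sigma))$. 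The embedding holds for $p\le4$ when $\dim\Sigma=2$ and for all $p<\infty$ when $\dim\Sigma=1$, which explains the restriction on $p$. The scaling estimate then gives the $L^2(0,T;L^p)$ bounds with $\eps^{-1+1/p}$.

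For the $L^4(0,T;L^2)$ bounds we interpolate in time. First, $L^2(0,T;H^{5/2})\cap L^\infty(0,T;H^{3/2})\hookrightarrow L^4(0,T;H^2)$. Second, interpolating $\partial_t h\in L^2(0,T;H^{1/2})$ against $\partial_th\in L^\infty(0,T;H^{-1/2})$ gives $L^4(0,T;L^2)$. The $L^\infty(0,T;H^{-1/2})$ bound follows from $H^1(0,T;H^{1/2})\cap L^2(0,T;H^{5/2})\hookrightarrow C([0,T];H^{3/2})$ and the equation in Remark~\ref{rem:uA}, which expresses $\partial_t h$ through $\Delta_\Gamma h$ and $\we|_\Gamma$. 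This time interpolation is the step I expect to be most delicate. If the trace term $\we|_\Gamma$ fails to cooperate, I would instead interpolate directly between $X_{T,h}$ norms, using that $X_{T,h}$ embeds into $H^{1/2}(0,T;H^{3/2})$ and hence into $L^4(0,T;H^{3/2-\theta})$ for suitable $\theta$. Finally, all constants must be independent of $T$. This holds because $h(0)=0$ allows extension by zero, so the embeddings are uniform in $T\le T_0$.
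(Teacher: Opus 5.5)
Your route is the one the paper's one-line proof intends: factorize $u_A$ and its derivatives into exponentially decaying profiles in $\rho$ times $h,\nabla_\Sigma h,\nabla_\Sigma^2 h,\partial_t^\Gamma h$, rescale via Lemma~\ref{lem:rescale}, then use Sobolev embeddings on $\Sigma$. The following parts go through as you describe:
\begin{itemize}
\item the $L^\infty(Q_T)$ bound;
\item the $L^\infty(0,T;L^q)$ bound for $\oeps^j u_A$;
\item all $L^2(0,T;L^p)$ bounds;
\item the $L^4(0,T;L^2)$ bound for $\oeps^2\nabla^2u_A$, via $L^\infty(0,T;H^{3/2})\cap L^2(0,T;H^{5/2})\hookrightarrow L^4(0,T;H^2)$.
\end{itemize}

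Two steps fail, however. First, the embedding $H^{3/2}(\Sigma)\hookrightarrow W^1_q(\Sigma)$ for all $q<\infty$ is false when $\dim\Sigma=2$, i.e.\ $d=3$. It amounts to $H^{1/2}(\Sigma)\hookrightarrow L^q(\Sigma)$, which holds only for $q\le 4$; you invoke exactly this restriction for $p$ a few lines later. No cleverer argument can fix it. The leading term $\eps^{-1}\theta_0'(\rho)\nabla^\Gamma h$ of $\nabla_\btau u_A$ (and of the tangential part of $\nabla u_A$) is a product of a function of $\rho$ and a function of $s$. Hence $\|\oeps^j\eps^{-1}\theta_0'(\rho)\nabla^\Gamma h\|_{L^q(\Gamma_t(2\delta))}$ is comparable to $\eps^{-1+j+1/q}\|\nabla_\Sigma h(t)\|_{L^q(\Sigma)}$. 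The time-trace space of $X_{T,h}$ is exactly $H^{3/2}(\Sigma)$, so $\|h\|_{X_{T,h}}$ does not control $\nabla_\Sigma h$ in $L^\infty(0,T;L^q(\Sigma))$ for $q>4$. In $d=3$ your argument therefore gives the bounds for $\oeps^j\nabla_\btau u_A$ and $\oeps^{1+j}\nabla u_A$ only for $q\le4$. This matches the embedding $H^{1/2}(\Sigma)\hookrightarrow L^4(\Sigma)$ the paper cites, and it is all that is used later, in \eqref{eq:u_A-eps} with $1\le p\le4$.

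Second, neither of your routes gives $\|\oeps\partial_t u_A\|_{L^4(0,T;L^2)}$. Route (a) needs $\no\cdot\ol{\we}|_{\Gamma_t}\in L^\infty(0,T;H^{-1/2}(\Sigma))$, which is controlled neither by $\|h\|_{X_{T,h}}$ nor by $\|\ol{\we}\|_{L^2(0,T;H^1)}$. Route (b) puts $h$, not $\partial_t h$, into $L^4$ in time.

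This is not a technicality. By \eqref{eq:partialtuA}, $\oeps\partial_t u_A$ contains $-\oeps\eps^{-1}(\theta_0'(\rho)+\eps^2\partial_\rho\hc_2)\partial_t^\Gamma h$. Its $L^2(\Omega)$ norm is comparable to $\eps^{1/2}\|\partial_t^\Gamma h(t)\|_{L^2(\Sigma)}$, while all other terms are bounded in $L^\infty(0,T;L^2)$ by $C\eps^{-1/2}\|h\|_{X_{T,h}}$. But $X_{T,h}$ gives no time integrability of $\partial_t h$ beyond $L^2$. For example, $h(s,t)=\phi(s)\,t^{2/3}$ with smooth $\phi\neq0$ lies in $X_{T,h}$ and vanishes at $t=0$, yet $\partial_t h\notin L^4(0,T;L^2(\Sigma))$. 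Such $h$ do arise from \eqref{eq:Limit}: solve \eqref{eq:Limit1}--\eqref{eq:Limit4} for the given $h$, then choose $\ol{\we}\in L^2(0,T;H^1)$ whose normal trace equals the resulting $L^2(0,T;H^{1/2}(\Sigma))$-valued left-hand side of \eqref{eq:linTwoPhase5}.

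So this component needs extra information, for instance $\ol{\we}\in L^\infty(0,T;L^2)$, which Theorem~\ref{thm:main-0} provides in the application. With it, three facts combine:
\begin{itemize}
\item the trace inequality $\|\ol{\we}|_{\Gamma_t}\|_{L^2(\Gamma_t)}\le C\|\ol{\we}\|_{L^2}^{1/2}\|\ol{\we}\|_{H^1}^{1/2}$;
\item $\Delta_\Gamma h\in L^4(0,T;L^2(\Sigma))$;
\item the equation for $h$ in Remark~\ref{rem:uA}.
\end{itemize}
Together they give $\partial_t h\in L^4(0,T;L^2(\Sigma))$, at the cost of the extra term $\|\ol{\we}\|_{L^\infty(0,T;L^2)}^{1/2}\|\ol{\we}\|_{L^2(0,T;H^1)}^{1/2}$ on the right-hand side. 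The paper's sketch does not address this point either, and this bound is not used later.
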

\begin{proof}
	Using \eqref{eq:partialtuA}-\eqref{eq:nabla2uA}, $\omega_\eps^k \theta_0^{(l)}, \omega_\eps^k \prho^l \hc_2\in \mathcal{R}_{k,\alpha}$, Lemma~\ref{lem:rescale}, Proposition~\ref{prop:wpm-h}, $H^{1/2}(\Sigma) \hookrightarrow L^4(\Sigma)$ if $d=3$ and $H^{1/2}(\Sigma) \hookrightarrow L^q(\Sigma)$ for any $1\leq q<\infty$ if $d=2$, and the product rule, one can obtain the claimed estimate in a straight-forward manner.
\end{proof}

An essential result for the leading error is:
\begin{theorem}\label{th_leading_velocity}
	Let $\mathbf{r}_{1,A}$, $r_{2,A}$ be defined as in \eqref{eq:r1}, \eqref{eq:r2}.
	Then there are $\eps_1\in (0,1]$ and $C>0$ such that
	\begin{align*}
		\|\mathbf{r}_{1,A}\|_{L^2(0,T;V_\sigma')}+ \|\oeps\mathbf{r}_{1,A}\|_{L^2(Q_T)} \leq C\sqrt{\eps}\|\ol{\we}\|_{L^2(0,T;H^1)}, \\
        \norm{r_{2,A}}_{L^2(Q_{T})}
	       + \norm{r_{2,A}}_{H^{1}(0,T; H^{-1}_{(0)}(\Omega))}
	       + \norm{\oeps \nabla r_{2,A}}_{L^2(Q_{T})} \leq C\sqrt{\eps}\|\ol{\we}\|_{L^2(0,T;H^1)}
	\end{align*}
        for all $\eps \in (0,\eps_1]$, $T\in (0,T_0]$.
\end{theorem}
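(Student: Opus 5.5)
We expand $\mathbf{r}_{1,A}$ and $r_{2,A}$ by inserting the ansatz \eqref{eq:w_A}--\eqref{eq:u_A} into \eqref{eq:r1}, \eqref{eq:r2}. The terms are sorted into an \emph{outer} part, built from $\we_A^{\mathrm{out}},q_A^{\mathrm{out}}$, and an \emph{inner} part, built from $\zg\we_A^{\mathrm{in}},\zg q_A^{\mathrm{in}},u_A$. The aim is to show that the leading singular contributions cancel, order by order in $\eps$, against the limit system \eqref{eq:Limit}. What is left should be either $O(\sqrt\eps)\|h\|_{X_{T,h}}$- or $O(\sqrt\eps)\|\ol\we\|_{L^2(0,T;H^1)}$-small in the stated norms, or of the form of Lemma~\ref{lem:f'-phi-x-dependent} or Lemma~\ref{lem:EstimMeanValueFree}. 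Once this is done, Proposition~\ref{prop:wpm-h} converts $\|h\|_{X_{T,h}}$ and $\|\widetilde\we_A^{\mathrm{out}}\|$ into $C\|\ol\we\|_{L^2(0,T;H^1)}$.

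\emph{Step 1: the divergence.} Write $-r_{2,A}=\Div\we_A^{\mathrm{out}}-\Div(\zg\we_A^{\mathrm{in}})$. The outer part is handled by \eqref{eq:DivwoutEstim}. For the inner part, $\Div(\tfrac h\eps\partial_\rho\hv_A^{\mathrm{in}})$ is computed with \eqref{eq:InnerDivergence}. The $\eps^{-2}$ term $\tfrac h{\eps^2}\partial_\rho^2\hv_A^{\mathrm{in}}\cdot\nabla(d_\Gamma-\eps h_\eps)$ combines with $\tfrac h\eps\partial_\rho\Div_x\hv_A^{\mathrm{in}}$ into $\tfrac h\eps\partial_\rho\hat g_\eps=O(\eps^{N+1})$. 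Only $\tfrac1\eps\partial_\rho\hv_A^{\mathrm{in}}\cdot\nabla^\Gamma h$ and the cut-off terms then remain. Since $\partial_\rho\hv_0=d_\Gamma\eta'(\rho)\ue_0=\eps(\rho+h_\eps)\eta'\ue_0$, these terms are $O(1)$ with exponential decay in $\rho$. Lemma~\ref{lem:rescale} then gives an $L^2$ bound of order $\sqrt\eps\|h\|_{X_{T,h}}$. The same computation for $\oeps\nabla r_{2,A}$ and for $\partial_t r_{2,A}$ in $H^{-1}_{(0)}$ works because each extra derivative costs $\eps^{-1}$ and is compensated by $\oeps$, or, for the $H^{-1}$ norm, by integrating by parts against a test function.

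\emph{Step 2: the momentum residual, outer part.} In $\Omega^\pm$ away from $\Gamma_t$, the outer terms reduce, up to $O(\eps)$ differences $\ve_A-\ve^\pm$ (Remark~\ref{rem:vAC1bounded}), to \eqref{eq:Limit1}. They are therefore controlled by \eqref{eq:woutEstim}, together with the difference $\nu(c_A)-\nu^\pm$, which is supported in an $O(\eps)$ layer. Near $\Gamma_t$, $\Div(2\nu(c_A)D\we_A^{\mathrm{out}})-\nabla q_A^{\mathrm{out}}$ produces a layer term $\tfrac1\eps\eta'(\rho)\llbracket2\nu D\we-q\rrbracket\no$ plus bounded terms.

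\emph{Step 3: the momentum residual, inner part.} The inner part contributes $-\Div(2\nu(c_A)D(\zg\tfrac h\eps\partial_\rho\hv_A^{\mathrm{in}}))+\nabla(\zg q_A^{\mathrm{in}})$ and the capillary term. For the capillary term we use the identity \eqref{eq:CapId} with $u_A$ in place of $\ol u$, i.e.
\[
\bigl(-\eps\Delta u_A+\tfrac{f''(c_A)}\eps u_A\bigr)\nabla c_A+\bigl(-\eps\Delta c_A+\tfrac{f'(c_A)}\eps\bigr)\nabla u_A-\nabla\bigl(\tfrac{f'(c_A)}\eps u_A+\eps\nabla u_A\cdot\nabla c_A\bigr).
\]
The gradient is absorbed by the corresponding part of $q_A^{\mathrm{in}}$. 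Using \eqref{eq:DeltauA}, \eqref{eq:nablacA} and $\theta_0'''=f''(\theta_0)\theta_0'$, the first bracket reduces at leading order to $\tfrac1\eps\theta_0'(\rho)(-\Delta^\Gamma h+\dots)$. Multiplied by $\tfrac1\eps\theta_0'\nabla d_\Gamma$, this gives $\tfrac1\eps\theta_0'^2(-\Delta_{\Gamma_t}h\,\no+H\nabla_{\Gamma_t}h)$. Since $\int\theta_0'^2=\sigma$, this term cancels the layer term from Step 2 via \eqref{eq:Limit3}, up to a remainder $\tfrac1\eps g(\rho)$ with $\int g\,\d\rho=0$. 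Such a remainder is estimated in $V_\sigma'$ by Lemma~\ref{lem:2delta-Gamma_t} and Lemma~\ref{lem:f'-phi-x-dependent}, giving the bound $\sqrt\eps\|h\|_{X_{T,h}}$. The viscous $\eps^{-2}$-singular terms of the inner velocity cancel by \eqref{eq:A.42}, \eqref{eq:A.44} (multiplied by $h$), and $\tfrac1\eps\partial_\rho\hp_0h$ in $q_A^{\mathrm{in}}$ is designed precisely for the next order. For the weighted bound $\|\oeps\mathbf r_{1,A}\|_{L^2}$, all residual terms are of the form $\oeps\,\eps^{-1}a(\rho)\times(\text{data})$ with $a$ exponentially decaying; since $\oeps\eps^{-1}|a(\rho)|$ is bounded by $C(1+|\rho|)|a(\rho)|$, Lemma~\ref{lem:rescale} yields $\sqrt\eps$. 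Here $\partial_th$ enters only through $H^{1/2}(\Sigma)\hookrightarrow L^2$, and $\nabla^2 h$ through $H^{5/2}$.

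\emph{Main obstacle.} The hard step is Step 3: the exact algebraic cancellation of the $\eps^{-2}$ and $\eps^{-1}$ layer terms in the momentum equation. This needs the precise choice of $q_A^{\mathrm{in}}$, the equations \eqref{eq:A.42}--\eqref{eq:A.44} and those for $\hv_1$, and the jump condition \eqref{eq:Limit3}. I would first carry out the bookkeeping in $(\rho,s)$-coordinates, collecting coefficients of $\eps^{-2}$ and $\eps^{-1}$ separately and checking that they vanish or become mean-value-free in $\rho$. Only afterwards would I apply the remainder lemmas to everything of order $\eps^{0}$.
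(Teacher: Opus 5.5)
\textbf{Gap in Step~3: the $O(\eps^{-2})$ bookkeeping is wrong.} Your overall architecture is the paper's: the outer/inner splitting, cancellation against \eqref{eq:Limit}, the capillary identity \eqref{eq:CapId} with its gradient absorbed by $q_A^{\mathrm{in}}$, matching of the surviving $\tfrac1\eps(\theta_0')^2$-layer with \eqref{eq:Limit3}, and the weighted bound via Lemma~\ref{lem:rescale}. Step~1 is essentially the paper's treatment of $r_{2,A}$. In Step~2 you keep the smoothed $\we_A^{\mathrm{out}}$, whereas the paper passes to $\widetilde\we_A^{\mathrm{out}}$ by Lemma~\ref{lem:wout} and uses \eqref{eq:jump-outer}; that is a workable variant. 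The problem is Step~3, where the misplaced orders hide the central cancellation of the proof.

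\emph{The capillary term has an $\eps^{-2}$ part you dropped.} The identity $\theta_0'''=f''(\theta_0)\theta_0'$ removes only the $\eps^{-2}$ part of the first bracket. By \eqref{eq:lambdaA},
$\eps\Delta u_A^{\mathrm{in}}-\tfrac1\eps f''(c_A^{\mathrm{in}})u_A^{\mathrm{in}}=\tfrac h\eps\theta_0''\Delta d_\Gamma+O(1)$.
So the leading term is $\tfrac h\eps\theta_0''\Delta d_\Gamma$, and $\theta_0'\Delta^\Gamma h$ belongs to the $O(1)$ part, not the leading order. Likewise, by \eqref{eq:nablauA}, $\eps\nabla u_A^{\mathrm{in}}=\tfrac h\eps\theta_0''\nabla d_\Gamma+O(1)$, and $\eps\Delta c_A^{\mathrm{in}}-\tfrac1\eps f'(c_A^{\mathrm{in}})=\theta_0'\Delta d_\Gamma+O(\eps)$. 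Hence the capillary term contains the $O(\eps^{-2})$ contribution $\tfrac{h}{\eps^2}\prho((\theta_0')^2)\,\Delta d_\Gamma\nabla d_\Gamma$, which your account omits.

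\emph{The viscous $\eps^{-2}$ terms do not cancel on their own.} On the viscous side, \eqref{eq:A.42} removes the $\eps^{-3}$ term, not the $\eps^{-2}$ terms, and \eqref{eq:A.44} removes only one of the $\eps^{-2}$ terms. The remaining formally $\eps^{-2}$ contributions come from $\partial_t\we_A^{\mathrm{in}}$, the convective terms, $\Div(2\nu(c_A)D\we_A^{\mathrm{in}}+2\nu'(c_A)u_AD\ve_A)$ and $\tfrac1\eps\prho\hp_0 h$. They sum to the nonzero expression $L_0$. The $\hv_1$-equation \cite[(A.43), (A.62)]{AbelsFei} gives $L_0=-\tfrac h{\eps^2}\prho((\theta_0')^2\nabla d_\Gamma\Delta d_\Gamma)$, which cancels the capillary $\eps^{-2}$ term exactly. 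This cross-cancellation between the viscous/pressure residual and the capillary term is the core of the proof, and it is missing from your plan.

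\emph{No remainder lemma can replace this identity.} For an $\eps^{-2}$ layer term, even one that is a $\rho$-derivative of an exponentially decaying function, Lemma~\ref{lem:f'-phi-x-dependent} gives only $\eps^{-2}\cdot\eps^{3/2}=\eps^{-1/2}$. In the weighted norm, $\oeps/\eps\le C(1+|\rho|)$ gives $\|\oeps\eps^{-2}a(\rho)h\|_{L^2}\sim\eps^{-1/2}\|h\|_{L^2(\Sigma)}$. So at orders $\eps^{-3}$ and $\eps^{-2}$ your test ``vanish or become mean-value-free'' must read ``vanish identically''. Your weighted argument, which assumes every residual has the form $\oeps\eps^{-1}a(\rho)\cdot$data, presupposes exactly this identity.

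Once the $L_0$ cancellation is in place, the rest of your plan goes through. The $\eps^{-1}$ terms involving $\hat a$ from \eqref{eq:c2}, $\nabla^\Gamma h_1$ and $\Delta^\Gamma h_1$ must be rewritten as $\rho$-derivatives and are then handled by Lemma~\ref{lem:f'-phi-x-dependent}. The surviving $\tfrac1\eps(\theta_0')^2(\Delta d_\Gamma\nabla^\Gamma h+\nabla d_\Gamma\Delta^\Gamma h)$ is matched with \eqref{eq:Limit3} via $\int(\theta_0')^2\,\mathrm{d}\rho=\sigma$ and Lemma~\ref{lem:2delta-Gamma_t}, as you describe.
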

\begin{proof}
	For $\ve\coloneqq\ve^+\chi_{\Omega^+} + \ve^-\chi_{\Omega^-}$ it holds by construction
	\begin{align}\label{eq:expvA}
		\ve_A-\ve&=\zg (\hv_0(\rho,x,t)-\ve^+\chi_{\Omega^+} - \ve^-\chi_{\Omega^-})+O(\eps) =O(\eps^{\frac{1}{2}})
	\end{align}
	in $L^\infty(0,T;L^2(\Omega))$ by the matching conditions.
	
	First, it holds 
	\begin{align*}
		\partial_t\we_A&=\partial_t\we^+ \chi_{\Omega^+} +\partial_t\we^- \chi_{\Omega^-}
		- \zg \partial_t \bw_A^{\mathrm{in}} + R_1+ O(\sqrt{\eps}\|\ol \we\|_{L^2(0,T;H^1)}),
		\\ \nabla\we_A&=\nabla \we^+ \chi_{\Omega^+} +\nabla\we^- \chi_{\Omega^-} 
		- \zg \nabla \bw_A^{\mathrm{in}} + R_2+O(\sqrt{\eps}\|\ol \we\|_{L^2(0,T;H^1)})
	\end{align*}
        in $L^2(Q_T)$ due to Lemma~\ref{lem:wout} and $\llbracket \we^\pm\rrbracket=0$. Here $R_1,R_2$ are lower order terms which contain a factor $\zg'$, vanish in $\Gamma(\delta)$ as $\zg'$ supports in $\Gamma(2\delta) \setminus \Gamma(\delta)$ and decays exponentially to zero as $\eps \to 0$. Moreover, 
	\begin{align*}
	    \ve_A\cdot \nabla\we_A =\ve\cdot \nabla\we_A + (\ve_A-\ve)\cdot\nabla\widetilde{\we}_A^{\mathrm{out}}+ (\ve_A-\ve)\cdot\nabla(\we_A-\widetilde{\we}_A^{\mathrm{out}}),
	\end{align*}
	where $\ve_A-\ve=O(\sqrt{\eps})$ in $L^\infty(0,T;L^2)$ by \eqref{eq:expvA}, $\|\we_A-\widetilde{\we}_A^{\mathrm{out}}\|_{L^\infty(0,T;L^2)}\leq C\sqrt{\eps} \|\ol{\we}\|_{L^2(0,T;H^1)}$, and $\|\widetilde{\we}_A^{\mathrm{out}}\|_{L^2(0,T;L^\infty)}\leq C \|\ol{\we}\|_{L^2(0,T;H^1)}$ due to Proposition~\ref{prop:wpm-h} and Lemma \ref{lem:wout}. Therefore integration by parts yields
\begin{align*}
          &\|(\ve_A-\ve)\cdot\nabla(\we_A-\widetilde{\we}_A^{\mathrm{out}})\|_{L^2(0,T;H^{-1})}+ \|(\ve_A-\ve)\cdot\nabla \widetilde{\we}_A^{\mathrm{out}}\|_{L^2(0,T;H^{-1})} \\
    &\quad      \leq C\left(\|\ve_A-\ve\|_{L^2(0,T;W^{1,\infty})} \|\we_A-\widetilde{\we}_A^{\mathrm{out}}\|_{L^\infty(0,T;L^2)}  + \|\ve_A-\ve\|_{L^\infty(0,T;L^2)} \|\widetilde{\we}_A^{\mathrm{out}}\|_{L^2(0,T;L^\infty)}  \right)\\
  &\quad      \leq C\sqrt{\eps}\|\ol{\we}\|_{L^2(0,T;H^1)}.
	\end{align*}
	Additionally, we write
	\begin{align*}
	    \we_A\cdot\nabla\ve_A = \we_A\cdot\nabla\ve + \we_A\cdot\nabla(\ve_A-\ve),
	\end{align*}
	where 
	\begin{align*}
	    \int_\Omega(\we_A\cdot\nabla(\ve_A-\ve))\cdot\bphi\dx = -\int_\Omega (\ve_A-\ve) \we_A\cdot\nabla\bphi\dx
	+ \int_\Omega \Div \bw_A^{\mathrm{in}} (\bv_A - \bv) \cdot \bphi \dx
	\end{align*}
	for all $\bphi\in V_\sigma$ due to $\llbracket\we^\pm\rrbracket=0$ and $\Div\we^\pm=0$.
	Using \eqref{eq:expvA} and $\|\we_A\|_{L^\infty(0,T;L^2)}\leq C \sqrt{\eps} \|\ol{\we}\|_{L^2(0,T;H^1)}$ by Lemma \ref{lem:wout}, we obtain $\|\we_A\cdot\nabla(\ve_A-\ve)\|_{L^2(0,T;V_\sigma')}\leq C \sqrt{\eps}\|\ol{\we}\|_{L^2(0,T;H^1)}$. The terms of $\bv \cdot \nabla \bw_A + \bw_A \cdot \nabla \bv$ will be used to cancel the ``outer'' part of $\mathbf{r}_{1,A}$ via \eqref{eq:Limit1}.
	It is noted that by \eqref{eq:v-1-eps} we have employed the expansion
		\begin{align*}
			\Div \bw_A^{\mathrm{in}} (\bv_A - \bv)
			& = \frac{1}{\varepsilon} h \ptial{\rho}^2 \hv_0 \cdot \nabla d_\Gamma \bv_{1,\eps}
			+ \prho g \bv_{1,\eps} + O(\varepsilon) \\
			& = \frac{1}{\varepsilon} h \Big(\bu_0 \cdot \nabla d_\Gamma \prho^2(d_\Gamma \eta(\rho))\Big) \cdot \nabla d_\Gamma \bv_{1,\eps}
			+ \prho g \bv_{1,\eps} + O(\varepsilon) \\
			& = h (\rho + h_\varepsilon)\prho \Big(\bu_0 \cdot \nabla d_\Gamma \prho( \eta(\rho)) \Big) \cdot \nabla d_\Gamma \bv_{1,\eps}
			+ \prho g \bv_{1,\eps} + O(\varepsilon)
		\end{align*}
                in $L^2(\Gamma(2\delta)\cap Q_T)$
		for some given function $ g $, by taking $ \rho $-derivative of \eqref{eq:A.44} and using $ d_\Gamma = \varepsilon(\rho + h_\varepsilon) $. Hence one gets $ \Div \bw_A^{\mathrm{in}} (\bv_A - \bv) = O(\sqrt{\varepsilon})$ in $ L^2(Q_T) $.

	Furthermore, we use that 
	\begin{align*}
		 &- \Div(2\nu(c_A) D\we_A)
		+ \nabla q_A  = \Div(2\nu(c_A) D(\zg \bw_A^{\mathrm{in}}))\\ 
		&\qquad - \nabla (\zg q_A^{\mathrm{in}})  - \Div\left(2\nu^\pm D\widetilde{\bw}_A^{\mathrm{out}}\right) + \nabla \widetilde{q}_A^{\mathrm{out}}  - \Div\left(2(\nu(c_A)-\nu^\pm) D\widetilde\bw_A^{\mathrm{out}}\right)+O(\sqrt{\eps}\|\ol\we\|_{L^2(0,T;H^1)}).
	\end{align*}
        in $L^2(0,T;H^{-1}(\Omega)^d)$ because of \eqref{eq:woutEstim}. Here
	\begin{align*}
		& - \int_\Omega \Big(\Div(2\nu^\pm D\widetilde\bw_A^{\mathrm{out}})-\nabla \widetilde{q}_A^{\mathrm{out}}\Big) \cdot \bphi \dx \\
		& \quad = - \int_{\Gamma_t} \jump{2\nu^\pm D\bw^\pm - q^\pm \tn{I}} \bn_{\Gamma_t} \cdot \bphi \dsigma
		+ \int_{\Omega} \sum_\pm(-\nu^\pm\Delta\bw^\pm+\nabla q^\pm)\chi_{\Omega^\pm} \cdot \bphi \dx
	\end{align*}
	for all $\bphi\in V_\sigma$, i.e.,
	\begin{align}
		-\Div(2\nu^\pm D\widetilde\bw_A^{\mathrm{out}})+\nabla \widetilde q_A^{\mathrm{out}} & = -\jump{2\nu^\pm D\bw^\pm - q^\pm \tn{I}} \bn_{\Gamma_t} \delta_\Gamma+\sum_\pm(-\nu^\pm\Delta\bw^\pm+\nabla q^\pm)\chi_{\Omega^\pm}
		\label{eq:jump-outer}
	\end{align}
	in $L^2(0,T;V_\sigma')$, where
	\begin{equation*}
		\left\langle\delta_\Gamma, \psi \right\rangle\coloneqq \int_0^T\int_{\Gamma_t} \psi (x,t)\,\d\sigma(x)\,dt\quad \text{for }\psi \in L^1(\Gamma).
	\end{equation*}
	Here we have for $\bphi\in V_\sigma$
	\[
	\left\langle -\Div(2(\nu(c_A)-\nu^\pm) D\widetilde{\we}^{\mathrm{out}}), \bphi \right\rangle_{V_\sigma', V_\sigma} = \int_\Omega 2(\nu^\pm-\nu(c_A))D\widetilde{\we}^{\mathrm{out}} : \nabla\bphi\dx,
	\]
	where $\|(\nu^\pm-\nu(c_A))D\widetilde{\we}^{\mathrm{out}}\|_{L^2(0,T;L^2)}\leq C\sqrt{\eps}\|\ol{\we}\|_{L^2(0,T;H^1)}$ and therefore it holds
	\begin{equation*}
		\|\Div(2(\nu(c_A)-\nu^\pm) D\widetilde{\we}^{\mathrm{out}})\|_{L^2(0,T;V_\sigma')}\leq C\sqrt{\eps}\|\ol{\we}\|_{L^2(0,T;H^1(\Omega))}. 
	\end{equation*}

	In the following, we employ the definition to expand the terms with respect to $(\bw_A^{\mathrm{in}},q_A^{\mathrm{in}})$ in different orders of $\varepsilon$.
	First, in $\Gamma(2\delta)$, one calculates 
	\begin{align*}
		\pt \bw_A^{\mathrm{in}} & = \tfrac{h}{\varepsilon^2} \ptial{\rho}^2 \hv_0 \pt d_\Gamma 
		- \tfrac{1}{\varepsilon} h \Big(\ptial{\rho}^2 \hv_0 \pt^\Gamma h_1
		- \ptial{\rho} \pt \hv_0
		- \ptial{\rho}^2 \hv_1 \pt d_\Gamma\Big)
		+ \tfrac{1}{\varepsilon} \pt^\Gamma h \ptial{\rho} \hv_0 \\
		& \quad - h \Big(\prho^2 \hv_0 \pt^\Gamma h_2
		+ \prho^2 \hv_1 \pt^\Gamma h_1
		- \ptial{\rho} \pt \hv_1+\ptial{\rho}^2 \hv_2 \pt d_\Gamma\Big)
		+ \pt^\Gamma h \ptial{\rho} \hv_1
                  + O(\varepsilon \normm{h}_{X_{T,h}}), \\
          & = \tfrac{h}{\varepsilon^2} \ptial{\rho}^2 \hv_0 \pt d_\Gamma 
		- \tfrac{1}{\varepsilon} h \Big(\ptial{\rho}^2 \hv_0 \pt^\Gamma h_1
		- \ptial{\rho} \pt \hv_0
		- \ptial{\rho}^2 \hv_1 \pt d_\Gamma\Big)
		+ \tfrac{1}{\varepsilon} \pt^\Gamma h \ptial{\rho} \hv_0+ O(\sqrt\varepsilon \normm{h}_{X_{T,h}}) \\
		\nabla \bw_A^{\mathrm{in}} & = \tfrac{h}{\varepsilon^2} \ptial{\rho}^2 \hv_0 \otimes \nabla d_\Gamma 
		- \tfrac{h}{\eps} \Big(\ptial{\rho}^2 \hv_0 \otimes \nabla^\Gamma h_1
		- \ptial{\rho} \nablax \hv_0
                                             - \ptial{\rho}^2 \hv_1 \otimes \nabla d_\Gamma\Big)\\
          &\quad
		+ \tfrac{1}{\eps} \ptial{\rho} \hv_0 \otimes \nabla^\Gamma h + O(\sqrt\varepsilon \normm{h}_{X_{T,h}})\\
		\Div \bw_A^{\mathrm{in}} & = \tfrac{h}{\varepsilon^2} \ptial{\rho}^2 \hv_0 \cdot \nabla d_\Gamma
		- \tfrac{h}{\eps} \Big(\ptial{\rho}^2 \hv_0 \cdot \nabla^\Gamma h_1
		- \ptial{\rho} \Divx \hv_0
		- \ptial{\rho}^2 \hv_1 \cdot \nabla d_\Gamma\Big)\\
		&\quad + \tfrac{1}{\eps} \ptial{\rho} \hv_0 \cdot \nabla^\Gamma h + O(\sqrt\varepsilon \normm{h}_{X_{T,h}}), \\
		\nabla q_A^{\mathrm{in}} 
		& = \tfrac{h}{\varepsilon^2} \ptial{\rho}^2 \hp_0 \nabla d_\Gamma + \nabla (-\eps \nabla u^{\mathrm{in}}_A\cdot \nabla c^{\mathrm{in}}_A-\tfrac1\eps f'(c_A^{\mathrm{in}}) u_A^{\mathrm{in}})
	\end{align*}
        in $L^2(Q_T)$.
	By direct computations and $2D \hat{\mathbf{z}} = \prho \hat{\mathbf{z}} \otimes_s \left(\frac{\nabla d_\Gamma}\eps -\nabla ^\Gamma h_\eps\right) + 2D_x \hat{\mathbf{z}}$ with $ \hat{\mathbf{z}}(\rho(x,t),x,t) = \mathbf{z}(x,t) $ for some vector $ \mathbf{z}\colon \R\times \Gamma(2 \delta) \to \bbr^d $, it follows 
	\begin{align*}
		& 2\nu(c_A) D\bw_A^{\mathrm{in}}
		+ 2\nu'(c_A) u_A^{\mathrm{in}} D\bv_A = 2 \nu(\theta_0 + \varepsilon^2 \hc_2) D \left(\tfrac{h}{\varepsilon}(\prho \hv_0 + \varepsilon \prho \hv_1)\right) \\
		&\qquad 
		+ 2 \nu'(\theta_0 + \eps^2 \hc_2) \tfrac{h}{\varepsilon}(\prho \theta_0 + \varepsilon \prho^2 \hc_2) D(\hv_0 + \varepsilon \hv_1)+ O(\sqrt\varepsilon \normm{h}_{X_{T,h}}) \\
		&\quad = \tfrac{h}{\varepsilon} \prho \Big(
		\nu(\theta_0 + \eps^2 \hc_2) \prho (\hv_0 + \varepsilon \hv_1)
		\Big) \otimes_s \left(\tfrac{\nabla d_\Gamma}\eps -\nabla ^\Gamma h_\eps\right)\\
		& \qquad 
		+ \tfrac{h}{\varepsilon} \prho \Big(
		2 \nu(\theta_0 + \eps^2 \hc_2) D_x (\hv_0 + \varepsilon \hv_1)
		\Big) 
		+ \tfrac{\nabla^\Gamma h}{\varepsilon} \otimes_s \big(\nu(\theta_0 + \eps^2 \hc_2) (\prho \hv_0 + \varepsilon \prho \hv_1)\big) 
		+ O(\sqrt\varepsilon \normm{h}_{X_{T,h}})
	\end{align*}
        in $L^2(Q_T\cap \Gamma(2\delta))$.
	Then we have
	\begin{align*}
		& \Div(2\nu(c_A) D\bw_A^{\mathrm{in}})
		+ \Div(2\nu'(c_A) u_A^{\mathrm{in}} D\bv_A)= \ptial{\rho}\big(2\nu(c_A) D\bw_A^{\mathrm{in}}
		+ 2\nu'(c_A) u_A D\bv_A\big) \left(\tfrac{\nabla d_\Gamma}\eps -\nabla ^\Gamma h_\eps\right) \\
		&\qquad   
		+  \Divx \big(2\nu(c_A) D\bw_A^{\mathrm{in}}
		+ 2\nu'(c_A) u_A D\bv_A\big) + O(\sqrt\varepsilon \normm{h}_{X_{T,h}})\\
		&\quad = \tfrac{h}{\varepsilon} \prho^2 \Big(
		\nu(\theta_0(\rho)) \prho (\hv_0 + \varepsilon \hv_1)
		\Big) \Big(\tfrac{1}{\varepsilon^2} + \absm{\nabla^\Gamma h_1}^2\Big) \\
		& \qquad 
		+ \tfrac{h}{\varepsilon} \prho^2 \Big(
		\nu(\theta_0(\rho)) \prho (\hv_0 + \varepsilon \hv_1)
		\Big) \cdot \left(\tfrac{\nabla d_\Gamma}\eps -\nabla ^\Gamma h_1\right) \left(\tfrac{\nabla d_\Gamma}\eps -\nabla ^\Gamma h_1\right) \\
		& \qquad 
		+ \Divx \left(\tfrac{h}{\varepsilon} \prho \Big(
		\nu(\theta_0(\rho)) \prho (\hv_0 + \varepsilon \hv_1)
		\Big) \otimes_s \left(\tfrac{\nabla d_\Gamma}\eps -\nabla ^\Gamma h_1\right)\right) \\
		& \qquad 
		+ \tfrac{h}{\varepsilon} \prho^2 \Big(
		2 \nu(\theta_0(\rho)) D_x (\hv_0 + \varepsilon \hv_1) 
		\Big) \left(\tfrac{\nabla d_\Gamma}\eps -\nabla ^\Gamma h_1\right) 
        + \Divx \left(
		\tfrac{h}{\varepsilon} \prho \Big(
		2 \nu(\theta_0(\rho)) D_x (\hv_0 + \varepsilon \hv_1)
		\Big) \right) \\
		& \qquad 
		+ \tfrac{\nabla^\Gamma h}{\varepsilon} \prho \big(\nu(\theta_0(\rho)) (\prho \hv_0 + \varepsilon \prho \hv_1)\big) \cdot \left(\tfrac{\nabla d_\Gamma}\eps -\nabla ^\Gamma h_1\right) \\
		& \qquad 
		+ \prho \big(\nu(\theta_0(\rho)) (\prho \hv_0 + \varepsilon \prho \hv_1)\big) \tfrac{\nabla^\Gamma h}{\varepsilon} \cdot \left(\tfrac{\nabla d_\Gamma}\eps -\nabla ^\Gamma h_1\right)
		+ \tfrac{\Delta^\Gamma h}{\varepsilon} \nu(\theta_0(\rho)) (\prho \hv_0 + \varepsilon \prho \hv_1) \\
		& \qquad 
		+ \nu(\theta_0(\rho)) (\prho \nablax \hv_0 + \varepsilon \prho \nablax \hv_1) \tfrac{\nabla^\Gamma h}{\varepsilon} 
		+ \tfrac{\nabla^\Gamma \nabla^\Gamma h}{\varepsilon} \nu(\theta_0(\rho)) (\prho \hv_0 + \varepsilon \prho \hv_1)
		+ O(\sqrt\eps \normm{h}_{X_{T,h}})
	\end{align*}
        in $L^2(0,T;H^{-1}(\Gamma_t(2\delta))$.
	Let
        $$
        \hat{\pi} \coloneqq -\eps \nabla u^{\mathrm{in}}_A\cdot \nabla c^{\mathrm{in}}_A-\tfrac1\eps f'(c_A^{\mathrm{in}}) u_A^{\mathrm{in}}.
        $$
        Rearranging each term with $ d_\Gamma = \varepsilon (\rho + h_\varepsilon) $ and adding the expansion of $\nabla q_A^{\mathrm{in}}$ yield
	\begin{align*}
		& \Div(2\nu(c_A) D\bw_A^{\mathrm{in}})
		+ \Div(2\nu'(c_A) u_A^{\mathrm{in}} D\bv_A) - \nabla (q_A^{\mathrm{in}} - \hat{\pi}) \\
		&\quad = \frac{h}{\varepsilon^3} \ptial{\rho} \prho \big(\nu(\theta_0(\rho)) (\ptial{\rho} \hv_0 - \bu_0 d_\Gamma \eta'(\rho))\big)
		+ \frac{1}{\varepsilon^2} h \ptial{\rho}^2 \big(\nu(\theta_0(\rho)) (\bu_0 \rho \eta'(\rho))\big) 
		\\
		& \qquad 
		+ \frac{1}{\varepsilon^2} h \ptial{\rho}^2 \big(\nu(\theta_0(\rho)) (\ptial{\rho} \hv_1 - (\bu_1 d_\Gamma - \bu_0 h_1) \eta'(\rho))
		- \hp_0 \nabla d_\Gamma
		+ 2 \nu(\theta_0(\rho)) D_x \hv_0 \nabla d_\Gamma
		\big) \\
		& \qquad 
		+ \frac{h}{\varepsilon^2} \prho \Div_x \Big(
		\nu(\theta_0(\rho)) \prho \hv_0 \otimes_s \nabla d_\Gamma
		\Big)
        - \frac{h}{\varepsilon^2} \prho \Big(
		\prho \big(\nu(\theta_0(\rho)) \Div_x \hv_0 \big) \nabla d_\Gamma
		\Big)
		\\
		& \qquad + \frac{h}{\varepsilon^2} \bigg[\prho^2 \Big(\nu(\theta_0(\rho)) (\prho \hv_0 - \bu_0 d_\Gamma \eta'(\rho)) \cdot \nabla d_\Gamma \nabla d_\Gamma
		\Big) 
		\bigg] 
		+ O(\sqrt{\eps}\|\ol{\we}\|_{L^2(0,T;H^1)})
	\end{align*}
	in $L^2(0,T;H^{-1}(\Gamma_t(2\delta))^d)$, where the $O(\eps^{-3})$-term vanishes due to \eqref{eq:A.42}, the sixth term is identically zero by \eqref{eq:A.44}, and the other $O(\eps^{-1})$-terms -- together with corresponding terms coming from $\pt \bw_A^{\mathrm{in}}
		+ \bv_A^{\mathrm{in}} \cdot \nabla \bw_A^{\mathrm{in}}
		+ \bw_A^{\mathrm{in}} \cdot \nabla \bv_A^{\mathrm{in}}$ -- contain only $\rho$-derivatives which have exponential decay as $\varepsilon \to 0$ and a primitive, which has exponential decay by the matching condition and \eqref{eq:Limit1}, and hence are of order $O(\sqrt{\eps}\|\ol{\we}\|_{L^2(0,T;H^1)})$ in $L^2(0,T;H^{-1}(\Gamma_t(2\delta))^d)$ by Lemma \ref{lem:f'-phi-x-dependent}. Here we also employed, by \eqref{eq:A.44} and \cite[(A.47)]{AbelsFei} with $k=1$, that
    \begin{gather*}
        \tfrac{h}{\eps^3} \prho^2 \Big( \nu(\theta_0(\rho)) \prho \hv_0 \cdot \nabla d_\Gamma \nabla d_\Gamma \Big)
        = \tfrac{h}{\eps^2} \prho^2 \Big( \nu(\theta_0(\rho)) (\rho + h_\eps) \eta'(\rho) \bu_0 \cdot \nabla d_\Gamma \nabla d_\Gamma \Big), \\
        \tfrac{h}{\eps^2} \prho \Big( \nu(\theta_0(\rho))  \big(\prho \hv_1 \cdot \nabla d_\Gamma - \prho \hv_0 \cdot \nabla^\Gamma h_\eps) \Big)
        = - \tfrac{h}{\eps^2} \prho \big(\nu(\theta_0(\rho)) \Div_x \hv_0) \big).
    \end{gather*}
    Therefore we find
	\begin{align*}
		& \pt \bw_A^{\mathrm{in}}
		+ \bv_A^{\mathrm{in}} \cdot \nabla \bw_A^{\mathrm{in}}
		+ \bw_A^{\mathrm{in}} \cdot \nabla \bv_A^{\mathrm{in}} \\
		& \qquad - \Div(2\nu(c_A) D\bw_A^{\mathrm{in}})
		- \Div(2\nu'(c_A) u_A^{\mathrm{in}} D\bv_A) + \nabla (q_A^{\mathrm{in}} - \hat{\pi}) \\
		& \quad = - \tfrac{h}{\varepsilon^2} \ptial{\rho} 
		\bigg[\ptial{\rho} \big(\nu(\theta_0(\rho)) (\ptial{\rho} \hv_1 - (\bu_1 d_\Gamma - \bu_0 h_1) \eta'(\rho))\big)
		- \prho \hp_0 \nabla d_\Gamma + \ptial{\rho} \big(\nu(\theta_0(\rho)) (\bu_0 \rho \eta'(\rho))\big)
		\\
		& \qquad\qquad\quad 
		-  \prho \hv_0 (\pt d_\Gamma - \hv_0 \cdot \nabla d_\Gamma)
		+ \prho(2 \nu(\theta_0(\rho)) D_x \hv_0 \nabla d_\Gamma) 
		+ \Div_x \Big(
		\nu(\theta_0(\rho)) \prho \hv_0 \otimes_s \nabla d_\Gamma \Big) \\
        & \qquad\qquad\quad
        - \prho \big(\nu(\theta_0(\rho)) \Div_x \hv_0 \big) \nabla d_\Gamma
		\bigg]
		+ O(\sqrt{\eps}\|\ol{\we}\|_{L^2(0,T;H^1)}) \\
		& \quad \eqqcolon L_0 + O(\sqrt{\eps}\|\ol{\we}\|_{L^2(0,T;H^1)})
	\end{align*}
        in $L^2(0,T;H^{-1}(\Gamma_t(2\delta)^d)$.
	The additional term $ -L_0 $ will be used to deal with the leading order from capillary force.

	It remains to estimate the contribution from $\eps\Div\left(\nabla u_A^{\mathrm{in}}\otimes\nabla c_A^{\mathrm{in}}+\nabla c_A^{\mathrm{in}}\otimes\nabla u_A^{\mathrm{in}}\right)+ \nabla \hat{\pi} $. To this end, note that it is enough to just consider the inner expansion terms in $\Gamma(2\delta)$
	because the functions involved (and their derivatives) decay exponentially (up to $\theta_0$ that converges to $\pm 1$) with $\rho_\eps$ outside $\Gamma(\delta)$ and $u_A=0, c_A=\pm 1$ outside $\Gamma(2\delta)$. 
	We will use that
	\begin{align*}
		&\eps\Div\left(\nabla u_A^{\mathrm{in}}\otimes\nabla c_A^{\mathrm{in}}+\nabla c_A^{\mathrm{in}}\otimes\nabla u_A^{\mathrm{in}}\right)+ \nabla \hat{\pi}\\
		&\quad =\left(\eps\Delta u_A^{\mathrm{in}}-\tfrac{1}{\eps}f''(c_A^{\mathrm{in}})u_A^{\mathrm{in}}\right)\nabla c_A^{\mathrm{in}} 
		+\left(\eps\Delta c_A^{\mathrm{in}}-\tfrac{1}{\eps}f'(c_A^{\mathrm{in}})\right)\nabla u_A^{\mathrm{in}}. 
	\end{align*}
	Now we compute the terms of different orders in
		\begin{align}
	\left(\eps\Delta u_A^{\mathrm{in}}-\tfrac{1}{\eps}f''(c_A^{\mathrm{in}})u_A^{\mathrm{in}}\right)\nabla c_A^{\mathrm{in}} 
	+\left(\Delta c_A^{\mathrm{in}}-\tfrac{1}{\eps^2}f'(c_A^{\mathrm{in}})\right)\eps\nabla u_A^{\mathrm{in}}\label{modify-000}
		\end{align}
	and estimate the remainders in $L^2(0,T;H^{-1}(\Gamma_t(\delta))$. To this end we use
	\begin{alignat*}{2}
		\tfrac{1}{\eps^2}f'(c_A^{\mathrm{in}}) &= \tfrac1{\eps^2} f'(\theta_0)  + f''(\theta_0) \hc_2 +  O(\eps^2) &\quad &\text{in } L^\infty(\Gamma(\delta))
	\end{alignat*}
	and
	\begin{alignat*}{2}
		\tfrac{1}{\eps}f''(c_A^{\mathrm{in}})u_A^{\mathrm{in}} &= \tfrac{h}{\eps^2} f''(\theta_0) \theta_0' + h f''(\theta_0) \partial_\rho \hc_2 + h f'''(\theta_0)\theta_0' \hc_2  + O(\eps\|\ol{\we}\|_{L^2(0,T;H^1)}) &\quad & 
	\end{alignat*}
	in $L^2(0,T;L^\infty(\Gamma_t(\delta))$. In particular, we obtain
	\begin{align}\nonumber
		&\Delta c_A^{\mathrm{in}}-\tfrac{1}{\eps^2}f'(c_A^{\mathrm{in}}) = \tfrac{1}{\eps^2}(\theta_0'' -f'(\theta_0)) +\tfrac{1}{\eps}\theta_0' {\Delta d_\Gamma}\\\nonumber
		&\qquad +\prho^2 \hc_2-f''(\theta_0)\hc_2+\theta_0''(\rho)|\nabla^\Gamma h_1|^2-\theta_0'\Delta^\Gamma h_1+ O(\eps)\\\nonumber
		&\quad=\tfrac{1}{\eps}\theta_0' {\Delta d_\Gamma}
        + \theta_0'(\rho)\rho g_0
        - \theta_0'\Delta^\Gamma h_1+ O(\eps)\quad \text{in }L^\infty(\Gamma(\delta)),
	\end{align}
	where $\hc_2$ is as in \eqref{eq:c2},
	because of \eqref{eq:OptProfile} and \eqref{eq:DeltacA}. Moreover, it holds
	\begin{align}\nonumber
		&\eps\Delta u_A^{\mathrm{in}}-\tfrac{1}{\eps}f''(c_A^{\mathrm{in}})u_A^{\mathrm{in}}= \tfrac{1}{\eps^2}h(\theta_0'''-f''(\theta_0)\theta_0') +\tfrac{h}{\eps}\theta_0'' {\Delta d_\Gamma} + h\partial_\rho (\partial_\rho^2 \hc_2-f''(\theta_0)\hc_2 \\\nonumber
		&\qquad   +\theta_0'' |\nabla^\Gamma h_1|^2-\theta_0'\Delta^\Gamma h_1)-2\theta''_0\nabla^\Gamma h\cdot \nabla^\Gamma h_1+\theta_0' \Delta^\Gamma h +  O_{1,\eps}\\\label{eq:lambdaA}
		&\quad =\tfrac{h}{\eps}\theta_0'' {\Delta d_\Gamma} + h\partial_\rho (\rho \theta_0' g_0)
         - h\theta_0''\Delta^\Gamma h_1 -2\theta''_0\nabla^\Gamma h\cdot \nabla^\Gamma h_1+\theta_0' \Delta^\Gamma h +  O_{2,\eps},
	\end{align}
        where $O_{j,\eps}= O(\eps\|h\|_{X_{T,h}})$ in $L^2(0,T;L^\infty(\Gamma_t(2\delta))$ and $O_{j,\eps}= O(\eps^{\frac32}\|h\|_{X_{T,h}})$ in $L^\infty(0,T;L^2(\Gamma_t(2\delta)))$ for $j=1,2$, due to \eqref{eq:DeltauA}, \eqref{eq:c2} and
	\begin{equation*}
		-\theta_0'''(\rho)+f''(\theta_0(\rho))\theta_0'(\rho)=0 \qquad \text{for all }\rho\in\R,
	\end{equation*}
	which follows from \eqref{eq:OptProfile} by differentiation. 
	
	Using these identities together with \eqref{eq:nablacA} and \eqref{eq:nablauA} we obtain that the term of order $O(\eps^{-2})$ in \eqref{modify-000}  is
	\begin{align*}
		\tfrac{2 h}{\eps^2} \theta_0'(\rho) \theta_0''(\rho) \nabla d_\Gamma \Delta d_\Gamma = \tfrac{h}{\eps^2} \prho (\theta_0'(\rho))^2 \nabla d_\Gamma \Delta d_\Gamma.
	\end{align*}
	By using the equation for $ \hv_1 $ given by \cite[(A.43) together with (A.62)]{AbelsFei}, we have
	\begin{align*}
		L_0 = - \tfrac{h}{\eps^2} \prho \big((\theta_0'(\rho))^2 \nabla d_\Gamma \Delta d_\Gamma\big),
	\end{align*}
	which cancels the $ O(\eps^{-2}) $-term.
	The terms of order  $O(\tfrac{1}{\eps})$   in \eqref{modify-000} are:
	\begin{align*}
		&
		- \tfrac{2h}{\eps} \theta_0''(\rho)\theta_0'(\rho) \Delta d_\Gamma  \nabla^\Gamma h_1
		+\tfrac{(\theta_0'(\rho))^2}{\eps} \Delta d_\Gamma \nabla^\Gamma h
		+\tfrac{h}\eps \hat{a} \theta_0''(\rho) \nabla d_\Gamma   \\
		&\qquad + \tfrac{h}{\eps} \partial_\rho \hat{a} \,\theta_0'(\rho) \nabla d_\Gamma  - 2 (\Delta^\Gamma h_1+\nabla^\Gamma h \cdot \nabla^\Gamma h_1) \tfrac{\theta_0''(\rho) \theta_0'(\rho)}{\eps} \nabla d_\Gamma+ \tfrac{(\theta_0'(\rho))^2}{\eps} \nabla d_\Gamma \Delta^\Gamma h \\ 
		& \quad = \tfrac{(\theta_0'(\rho))^2}{\eps} \Delta d_\Gamma \nabla^\Gamma h
		+ \tfrac{h}{\eps}  \prho \left(\theta_0'\hat{a}\right) \nabla d_\Gamma - \tfrac{h}{\eps} \prho (\theta_0'(\rho))^2 \Delta d_\Gamma \nabla^\Gamma h_1\\
		&\qquad - \tfrac{1}{\eps} \prho (\theta_0'(\rho))^2 \nabla d_\Gamma (\Delta^\Gamma h_1+\nabla^\Gamma h \cdot \nabla^\Gamma h_1)  + \tfrac{(\theta_0'(\rho))^2}{\eps} \nabla d_\Gamma \Delta^\Gamma h
		+ O(\sqrt{\eps}\|\ol{\we}\|_{L^2(0,T;H^1)}) \\
		&\quad = \tfrac{(\theta_0'(\rho))^2}{\eps} \Delta d_\Gamma \nabla^\Gamma h
		+ \tfrac{(\theta_0'(\rho))^2}{\eps} \nabla d_\Gamma \Delta^\Gamma h 
		+ O(\sqrt{\eps}\|\ol{\we}\|_{L^2(0,T;H^1)})
	\end{align*}
	in $ L^2(0,T;H^{-1}(\Gamma_t(\delta)))$ because of Lemma~\ref{lem:f'-phi-x-dependent}. 
	Therefore, we find 
	\begin{align*}
		& \eps\Div\left(\nabla u_A\otimes\nabla c_A+\nabla c_A\otimes\nabla u_A\right) - \nabla \hat{\pi} + L_0 \\
		& \qquad = \tfrac{(\theta_0'(\rho))^2}{\eps} \Delta d_\Gamma \nabla^\Gamma h
		+ \tfrac{(\theta_0'(\rho))^2}{\eps} \nabla d_\Gamma \Delta^\Gamma h 
		+ O(\sqrt{\eps}\|\ol{\we}\|_{L^2(0,T;H^1)})
	\end{align*}
	in $ L^2(0,T;H^{-1}(\Gamma_t(\delta)))$ and in $L^2(0,T;L^2(\Gamma_t(\delta); \oeps^2 \dx))$.
	
	Altogether, using \eqref{eq:Limit1}  
	\begin{align*}
		\partial_t\we_A & +\ve_A\cdot\nabla\we_A+\we_A\cdot\nabla\ve_A +\nabla q_A \\
		& 
		-\Div\left(2\nu(c_A)D\we_A\right)
		-\Div\left(2\nu'(c_A)u_AD\ve_A\right) 
		+\eps\Div\left(\nabla u_A\otimes\nabla c_A+\nabla c_A\otimes\nabla u_A\right)
	\end{align*}
	is controlled by $C\sqrt{\eps}\|\ol{\we}\|_{L^2(0,T;H^1)}$ in $L^2(0,T;V_\sigma')$ because of \eqref{eq:Limit1}.
	Note that the remaining two terms of $\tfrac{(\theta_0'(\rho))^2}{\eps} \Delta d_\Gamma
	\nabla^\Gamma h$ and $\tfrac{(\theta_0'(\rho))^2}{\eps} \nabla d_\Gamma
	\Delta^\Gamma h$ can be shown to cancel with the jump term $\llbracket 2\nu^\pm D\we^\pm -q^\pm \tn{I} \rrbracket \no_\Gamma\delta_\Gamma$ in \eqref{eq:jump-outer} from above up to the desired error by \eqref{eq:Limit3}, compare with \cite[proof of Theorem 4.1]{AbelsFeiMoser} by Lemma \ref{lem:2delta-Gamma_t}. This concludes the proof of the first part of Theorem \ref{th_leading_velocity}.
	
	Let us now show the second part in $L^2(\Omega;\oeps^2 \dx)$-spaces.   First of all, the previous estimates already yield
	\begin{align*}
		& \eps\Div\left(\nabla u_A\otimes\nabla c_A+\nabla c_A\otimes\nabla u_A\right)+\nabla (\zg \hat{\pi}) = O(\sqrt{\eps}\|\ol{\we}\|_{L^2(0,T;H^1)})
	\end{align*}
	in $L^2(Q_T; \oeps^2 \, \d(x,t))$ since
          \begin{equation*}
            \|\oeps \tfrac{(\theta_0'(\rho))^2}{\eps}( \Delta d_\Gamma\nabla^\Gamma h+\nabla d_\Gamma
	\Delta^\Gamma h)\|\leq C\sqrt{\eps}\|\ol{\we}\|_{L^2(0,T;H^1)}.
      \end{equation*}
      Moreover, we use that
      \begin{align}\nonumber
        & \Div (2\nu(c_A) D \we_A^{\mathrm{out}}) = 2\nu(c_A) \Div (D\we_A^{\mathrm{out}}) + 2 \nu'(c_A) \nabla c_A\cdot D\we_A^{\mathrm{out}}\\\nonumber
        &\quad = (1-\zg)(\nu^+\Delta \we^+ \chi_{\Omega+} + \nu^-\Delta \we^- \chi_{\Omega-})+ \zg \nu(c_A)(\Delta \we^+ \eta(\rho)+\Delta \we^- (1-\eta(\rho)))
        \\\nonumber
        &\qquad + \zg 2\nu(c_A)\nabla (\eta(\rho))\cdot (D\we^+-D\we^-)+ \zg \Delta (\eta(\rho))(\we^+-\we^-) + O(\sqrt{\eps}\|\ol\we\|_{L^2(0,T;H^1)})\\
        &\quad = \nu^+\Delta \we^+ \chi_{\Omega+} + \nu^-\Delta \we^- \chi_{\Omega-}+ O(\sqrt{\eps}\|\ol\we\|_{L^2(0,T;H^1)}) \label{eq:div-DwAout}
      \end{align}
      in $L^2(Q_T;\oeps^2\sd x)$ by Lemma~\ref{lem:wout}, the matching conditions, $\eta(\rho)=1$ for $\rho\geq 1$, and $\eta(\rho)=0$ for $\rho\leq -1$. Here we used
      $\omega_\eps \nabla c_A=\zg \tfrac{\oeps}{\eps} \theta_0'(\rho)\nabla (d_\Gamma-\eps h_\eps)+O(\eps) $ in $L^\infty(Q_T)$ and that $D\we_A^{\mathrm{out}}$ in $(r,s)$-coordinates is in $L^2((0,T)\times\Sigma;L^\infty(-2\delta,2\delta))$. Furthermore, we used that $\we_A^{\mathrm{out}}$ in $(r,s)$-coordinates is in $L^2((0,T)\times\Sigma;C^1([-2\delta,2\delta]))$ and $\we^+|_{\Gamma}=\we^-|_{\Gamma}$ to show
      \begin{equation*}
        \|\oeps\Delta (\eta(\rho)) (\we^+-\we^-)\|_{L^2(Q_T\cap \Gamma(2\delta))} \leq C\sqrt{\eps}\norm{\ol{\bw}}_{L^2(0,T;H^1)}
      \end{equation*}
      similarly as in the proof of \eqref{eq:woutEstim}. Here, because of \eqref{eq:Limit1},  $\nu^+\Delta \we^+ \chi_{\Omega+}+\nu^-\Delta \we^- \chi_{\Omega-}$ will cancel with the corresponding terms in $\partial_t \we_A^{\mathrm{out}}+\ve\cdot \nabla \we_A^{\mathrm{out}}+\we_A^{\mathrm{out}}\cdot \nabla \ve$ calculated before.
    
	We need to control
	\begin{align*}
		(\ve_A-\ve)\cdot\nabla\we_A
		+ \we_A\cdot\nabla(\ve_A-\ve)
		-\Div(2(\nu(c_A) - \nu^\pm)D\we_A)
		-\Div(2\nu'(c_A)u_AD\ve_A) ,
	\end{align*}
	in $L^2(Q_{T};\oeps^2 \, \d(x,t))$. First, by construction \eqref{eq:expvA} we recall
    \begin{align*} 
        \|\mathbf v_A-\mathbf v\|_{L^\infty(Q_T)} + \|\omega_\varepsilon \nabla(\mathbf v_A-\mathbf v)\|_ {L^\infty(Q_T)} &\leq C\varepsilon.
    \end{align*}
    In addition by \eqref{eq:w_A} and \eqref{eq:h-w-estimate}, we have
    \begin{align*} 
		\|\mathbf w_A^{\mathrm{in}}\|_{L^2(Q_T)} + \|\omega_\varepsilon\nabla\mathbf w_A^{\mathrm{in}}\|_{L^2(Q_T)} 
        &\leq C\sqrt{\varepsilon} \|h\|_{L^2(0,T;H^1(\Sigma))} \leq C\sqrt{\varepsilon} \norm{\ol{\bw}}_{L^2(0,T;H^1)},
	\end{align*} 
    and hence
    \begin{align*}
        \|\mathbf w_A\|_{L^2(Q_T)} + \|\omega_\varepsilon\nabla\mathbf w_A\|_{L^2(Q_T)} 
        & \leq C\sqrt{\varepsilon} \norm{\ol{\bw}}_{L^2(0,T;H^1)}
    \end{align*}
    In this case, we find
	\begin{align*}
		\norm{\oeps (\ve_A-\ve)\cdot\nabla\we_A}_{L^2(Q_T)}
		& \leq \norm{\ve_A-\ve}_{L^\infty(Q_T)}
		\norm{\oeps \nabla\we_A}_{L^2(Q_T)} \leq C \sqrt{\eps} \norm{\ol{\bw}}_{L^2(0,T;H^1)},
	\end{align*}
	and
	\begin{align*}
		\norm{\oeps \we_A\cdot\nabla(\ve_A-\ve)}_{L^2(Q_T)}
		& \leq 
		\norm{\oeps \nabla(\ve_A-\ve)}_{L^\infty(Q_T)}
		\norm{\we_A}_{L^2(Q_T)} 
        \leq C \sqrt{\eps} \norm{\ol{\bw}}_{L^2(0,T;H^1)}.
	\end{align*}
	Moreover, direct computations yield
	\begin{align*}
		\Div(2\nu'(c_A) u_A D\bv_A)
		& = 2\nu'(c_A) u_A \Div D\bv_A
		+ 2\nu'(c_A) D \bv_A \nabla u_A
		+ 2\nu''(c_A) D \bv_A \nabla c_A u_A
	\end{align*}
	in $Q_T$. Hence
	\begin{align*}
		& \norm{\oeps \Div(2\nu'(c_A) u_A D\bv_A)}_{L^2(Q_T)} \\
		& \quad 
		\leq \norm{\oeps u_A}_{L^\infty(0,T;L^2(\Omega))}
		\norm{\Div D\bv_A}_{L^2(0,T;L^\infty(\Omega))} \norm{\nu'(c_A)}_{L^\infty(Q_T)} \\
		& \qquad + 
		\norm{\oeps \nabla u_A}_{L^2(0,T;L^3(\Omega))}
		\norm{2 D\bv_A}_{L^\infty(0,T;L^6)} \norm{\nu'(c_A)}_{L^\infty(Q_T)} \\
		& \qquad + 
		\norm{u_A}_{L^2(0,T;L^3(\Omega))}
		\norm{\oeps \nabla c_A}_{L^\infty(0,T;L^\infty(\Omega))}
		\norm{2 D\bv_A}_{L^\infty(0,T;L^6)} \norm{\nu''(c_A)}_{L^\infty(Q_T)} \\
		& \quad \leq C \sqrt{\eps} \norm{\ol{\bw}}_{L^2(0,T;H^1)}.
	\end{align*} 
	Putting everything together completes the proof of $\mathbf{r}_{1,A}$.

        To estimate $r_{2,A}$ we use 
          \begin{equation*}
            r_{2,A} = - \Div \bw_A= -\Div(\bw_A^{\mathrm{out}})
            +\zg\Div\bw_A^{\mathrm{in}}
            +\nabla\zg\cdot\bw_A^{\mathrm{in}}.        
          \end{equation*}
          Here
          \begin{equation*}
            \Div\bw_A^{\mathrm{in}} = \tfrac{h}{\eps^2} \left(\partial_\rho^2\ve_A^{\mathrm{in}}\cdot \nabla (d_\Gamma-\eps h_\eps)+ \partial_\rho \Div_x \ve_A^{\mathrm{in}}\right)+ \tfrac{\nabla^\Gamma h}{\eps}\cdot \partial_\rho \ve_A^{\mathrm{in}} = \tfrac{h}{\eps^2}\partial_\rho \hat{g}(\rho,x,t) + \tfrac{\nabla^\Gamma h}{\eps}\cdot \partial_\rho \ve_A^{\mathrm{in}}, 
          \end{equation*}
          where $\hat{g}$ is as in Remark~\ref{rem:PropertiesAproxSol}, $\partial_\rho \ve_A^{\mathrm{in}} = d_\Gamma \eta'(\rho)\ue_0+ O(\eps)$, $d_{\Gamma} = \eps (\rho + h_\eps)$, and $\nabla d_\Gamma\cdot \ue_0|_{\Gamma_t}= \partial_{\no_{\Gamma}} (\we^+-\we^-)|_{\Gamma}\cdot \no_{\Gamma_t}=0.$
          From this one derives together with Lemma~\ref{lem:wout} 
          \begin{align*}
            \normm{\Div \bw_A}_{L^\infty(0,T;L^2)}
        \leq C \sqrt{\eps} \norm{\ol{\bw}}_{L^2(0,T;H^1)}.
    \end{align*}
        Then similarly, with weight $\oeps$, one derives 
        \begin{align*}
            \normm{\oeps \nabla \Div \bw_A}_{L^2(Q_T)}
        \leq C \sqrt{\eps} \norm{\ol{\bw}}_{L^2(0,T;H^1)}.
        \end{align*}
It remains to estimate $\partial_t \Div \we_A$. Because of \eqref{eq:DivwoutEstim}, we only need to estimate
        \begin{align*}
          \partial_t \Div (\zg \we_A^{\mathrm{in}})= \Div \left(\zg \tfrac{h}{\eps^2} \partial_\rho^2 \ve_A^{\mathrm{in}}\partial_t (d_\Gamma -\eps h_\eps)\right)+ \Div (\tfrac{d_\Gamma}{\eps}\eta'(\rho)\ue \partial_t(\zg h)),
        \end{align*}
        where
        \begin{equation*}
          \|\Div (\tfrac{d_\Gamma}{\eps}\eta'(\rho)\ue \partial_t(\zg h))\|_{L^2(0,T;H^{-1}_{(0)})}\leq C\|\tfrac{d_\Gamma}{\eps}\eta'(\rho)\ue \partial_t(\zg h))\|_{L^2(Q_T)}\leq C\sqrt\eps \|\ol{\we}\|_{L^2(0,T;H^1)}.
        \end{equation*}
        Moreover,
        \begin{equation*}
          \Div \left(\zg \tfrac{h}{\eps^2} \partial_\rho^2 \ve_A^{\mathrm{in}}\partial_t (d_\Gamma -\eps h_\eps)\right) =  \zg \tfrac{h}{\eps^2} \partial_\rho^2 \hat{g}(\rho,x,t)\partial_t (d_\Gamma -\eps h_\eps)+ \tfrac{d_\Gamma}{\eps^2} \eta''(\rho)\ue\cdot \nabla (\zg h (\partial_t (d_\Gamma -\eps h_\eps)).
        \end{equation*}
        By the decay properties of $\hat g $, cf.\ Remark~\ref{rem:PropertiesAproxSol}, the first term can easily be estimate. For the second term we use Lemma~\ref{lem:f'-phi-x-dependent} to obtain
        \begin{equation*}
          \left|\int_\Omega \tfrac{d_\Gamma}{\eps^2} \eta''(\rho)\ue\cdot \nabla (\zg h (\partial_t (d_\Gamma -\eps h_\eps))\varphi\sd x\right|\leq C\sqrt\eps \|\varphi\|_{H^1(\Omega)}\|h\|_{H^1(\Sigma)}\quad \text{for all }\varphi\in H^1_{(0)}(\Omega).
        \end{equation*}
        Altogether this proves
        \begin{align*}
            \norm{r_{2,A}}_{L^2(Q_{T})}
	       + \norm{r_{2,A}}_{H^{1}(0,T; H^{-1}_{(0)}(\Omega))}
	       + \norm{\oeps \nabla r_{2,A}}_{L^2(Q_{T})} \leq C\sqrt{\eps}\|\ol{\we}\|_{L^2(0,T;H^1)}.
        \end{align*}
    This concludes the proof.
\end{proof}

\begin{theorem}\label{thm:LeadingPart}
	Let $r_{3,A}$ be defined as in \eqref{eq:r3}. Then there are $\eps_1\in (0,1]$ and $C>0$ such that
	\begin{align*}
		\|r_{3,A}\|_{L^2(0,T;(\Veps)')}
		& \leq C(M)\sqrt{\eps} \|\ol{\we}\|_{L^2(0,T; H^1(\Omega))},\\
		\sum_{j+k\leq 1}\norm{\tfrac{\oeps^{1+j+k}}{\eps^j}\nabla^kr_{3,A}}_{L^2(Q_T)} & \leq C\sqrt\eps\|\ol{\we}\|_{L^2(0,T; H^1(\Omega))}.
	\end{align*}
        for all $\eps\in (0,\eps_1]$, $T\in (0,T_0]$.
\end{theorem}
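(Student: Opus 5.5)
We expand $r_{3,A}$ in the stretched variable inside $\Gamma(2\delta)$ and show that all singular orders cancel, by the choice of $u_A^{\mathrm{in}}$ and the limit equation \eqref{eq:linTwoPhase5}. Outside $\Gamma(2\delta)$ we have $u_A=0$, and $\we_A\cdot\nabla c_A$ and $\zg\ol\we|_\Gamma\cdot\nabla c_A$ are exponentially small by the matching conditions, so only the inner region matters.

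\emph{Step 1: expansion.} Since $u_A=-\zg u_A^{\mathrm{in}}$ and $\zg'$ is supported where $|\rho|\gtrsim\delta/\eps$, the cut-off terms are exponentially small. We insert \eqref{eq:partialtuA}, \eqref{eq:DeltauA} and the expansion of $\tfrac1\eps f''(c_A^{\mathrm{in}})u_A^{\mathrm{in}}$ from the proof of Theorem~\ref{th_leading_velocity} (divided by $\eps$), and use $\ve_A=\hv_0+\eps\ve_{1,\eps}$ and \eqref{eq:nablacA}.

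The order $\eps^{-3}$ contributes $h(\theta_0'''-f''(\theta_0)\theta_0')=0$.

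At order $\eps^{-2}$ we collect
\[
\tfrac{h}{\eps^2}\theta_0''\bigl(\partial_t d_\Gamma+\hv_0\cdot\nabla d_\Gamma-\Delta d_\Gamma\bigr).
\]
Using Remark~\ref{rem:PropertiesAproxSol}(2) and $d_\Gamma=\eps(\rho+h_\eps)$, this becomes $\tfrac{h}{\eps}\theta_0''(\rho+h_\eps)(-g_0)+\dots$, so it drops to order $\eps^{-1}$.

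The $\eps^{-1}$-terms are then the following:
\begin{itemize}
\item $-\tfrac1\eps\theta_0'\bigl(\partial_t^\Gamma h+\ve\cdot\nabla_\Gamma h-\Delta_\Gamma h+g_0h\bigr)$;
\item the coupling $\we_A\cdot\nabla c_A+\zg\ol\we|_\Gamma\cdot\nabla c_A$, which gives $-\tfrac1\eps\theta_0'\,\no\cdot(\we^\pm-\ol\we)|_{\Gamma_t}$ plus a term $\tfrac h\eps\theta_0'\,\no\cdot\partial_\rho\hv_1$ from $\we_A^{\mathrm{in}}$;
\item $\rho$-derivative terms $\tfrac h\eps\partial_\rho(\cdots)$ involving $h_1$, $\hc_2$, $g_0$, where \eqref{eq:c2} is used to rewrite $\partial_\rho^2\hc_2-f''(\theta_0)\hc_2$.
\end{itemize}
By \eqref{eq:linTwoPhase5} the first two groups cancel exactly, up to $\theta_0'$ times differences such as $\we^\pm(x)-\we^\pm|_{\Gamma_t}$. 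These differences are $O(|r|)$ and are handled like $J_1$ in \eqref{est:diffw1couple}.

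\emph{Step 2: remainders.} What remains has one of three forms:
\begin{itemize}
\item $\tfrac1\eps a(\rho,x,t)\,w(s)$ with $a$ a $\rho$-derivative of an exponentially decaying function, or more generally with $\int a\,\theta_0'\,d\rho=0$;
\item $O(1)$ exponentially decaying profiles times $h$ or $\nabla^\Gamma h$;
\item terms already of size $O(\eps\|h\|_{X_{T,h}})$.
\end{itemize}
For the dual bound we use that $\|\cdot\|_{(\Veps)'}$ is controlled by Lemma~\ref{lem:EstimMeanValueFree} for mean-value-free profiles, giving $\eps^{3/2}\cdot\eps^{-1}$, and by \eqref{eq:RemEstim1} and Lemma~\ref{lem:f'-phi-x-dependent} for $\rho$-derivatives. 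Combined with \eqref{eq:h-w-estimate}, this yields $C\sqrt\eps\|\ol\we\|_{L^2(0,T;H^1)}$.

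For the weighted $L^2$ bounds, the factor $\oeps^{1+j+k}\eps^{-j}$ converts each $\partial_r$ or $\eps^{-1}$ acting on a profile into a bounded factor $\toeps/\eps$ times the decay. Hence \eqref{eq:RemEstim2} gives $\eps^{1/2}\cdot\eps^{-1}\cdot\eps=\sqrt\eps$ times $\|h\|$ in $H^1$-type norms on $\Sigma$. Tangential derivatives cost $\|h\|_{L^2H^{5/2}}$ and $\partial_t^\Gamma h\in L^2H^{1/2}$, which Proposition~\ref{prop:wpm-h} provides.

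\emph{Main obstacle.} The hardest step is the exact cancellation at orders $\eps^{-2}$ and $\eps^{-1}$ and the bookkeeping of the $\partial_\rho\hv_1$ and $g_0$ terms, including how $\hat a$ enters through $\partial_\rho$ of \eqref{eq:c2}. For the weighted gradient norm, $\oeps^2\nabla r_{3,A}$, I would also need $\we^\pm\in L^2H^2(\Omega\setminus\Gamma_t)$ so that the differences $\we^\pm-\we^\pm|_\Gamma$ stay controllable after one more derivative.
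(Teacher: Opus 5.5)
Your overall route matches the paper's. You expand in the stretched variable, cancel the $\eps^{-3}$ term via the profile equation, and lower the $\eps^{-2}$ term using Remark~\ref{rem:PropertiesAproxSol}(2) and $d_\Gamma=\eps(\rho+h_\eps)$. You then remove the $\theta_0'$-bracket with \eqref{eq:linTwoPhase5} and use $\mathcal{R}$-class bounds with Lemma~\ref{lem:rescale} for the weighted norms. Your $J_1$-type treatment of $\tfrac1\eps\theta_0'(\rho)(\we^\pm-\we^\pm|_{\Gamma_t})\cdot\no$ is also a sound way to handle those differences in $(\Veps)'$, since it only needs $\|\nabla_\btau\varphi\|_{L^2}$, which \eqref{eq:VepsChar} controls.

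\textbf{The gap} is the criterion you apply to the remaining $O(\eps^{-1})$ terms in the dual estimate. You treat a $\rho$-derivative as a special case of $\int_\R a\,\theta_0'\,d\rho=0$ and propose to bound such terms with \eqref{eq:RemEstim1} and Lemma~\ref{lem:f'-phi-x-dependent}. Both tools bound the pairing by $\|\varphi\|_{H^1}$, respectively $\|\partial_{\no}\varphi\|_{L^2}$, and neither is controlled by $\|\varphi\|_{\Veps}$.

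To see this, take the approximate kernel direction $\varphi=\eps^{-1/2}Z(s)\theta_0'(\rho)$, suitably cut off. Then $\|\varphi\|_{\Veps}\le C\|Z\|_{H^1(\Sigma)}$, while $\|\partial_{\no}\varphi\|_{L^2}\sim\eps^{-1}\|Z\|_{L^2(\Sigma)}$. Testing $\tfrac1\eps\partial_\rho F(\rho)w(s)$ against this $\varphi$ gives, up to the Jacobian and lower-order terms, $\eps^{-1/2}\bigl(\int_\R\partial_\rho F\,\theta_0'\,d\rho\bigr)\int_\Sigma wZ\,ds$. So such a term has size $\eps^{-1/2}$ in $(\Veps)'$ unless $\int_\R F\theta_0''\,d\rho=0$. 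At order $\eps^{-1}$, orthogonality to $\theta_0'$ is therefore the only mechanism available.

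Several of your terms lack it. The term $\tfrac h\eps\theta_0''(\rho+h_\eps)g_0$ you obtain from the $\eps^{-2}$-order has nonzero $\theta_0'$-moment, because $\int_\R\rho\,\theta_0''\theta_0'\,d\rho=-\tfrac\sigma2$. The term $\tfrac h\eps\theta_0''\,\hv_1\cdot\no$ produced by $\eps\ve_{1,\eps}\cdot\nabla u_A^{\mathrm{in}}$ (missing from your list) is not orthogonal either in general, since $\hv_1$ depends on $\rho$.

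\textbf{What is needed}, and what the paper does, is the following. Compute the $\theta_0'$-moments of all $\rho$-dependent $O(\eps^{-1})$ contributions: those from the $\eps^{-2}$-order, from \eqref{eq:c2}, from $\eps\ve_{1,\eps}\cdot\nabla u_A^{\mathrm{in}}$ and from $\we_A^{\mathrm{in}}\cdot\nabla c_A$. Then verify that, together with the $\theta_0'$-bracket, they are absorbed by \eqref{eq:linTwoPhase5}. This is why the zero-order terms $g_0h$ and $(\no\cdot\partial_\rho\hv_1)h$ appear in the limit equation; in your first bullet you posit $g_0h$ rather than derive it. Only after this is the residual of the form $\tfrac1\eps a(\rho,x,t)$ with $\int_\R a\,\theta_0'\,d\rho=0$.

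The paper then freezes $x\mapsto P_{\Gamma_t}x$. The part $f_1=\tfrac1\eps a(\rho,P_{\Gamma_t}x,t)$ is estimated by Lemma~\ref{lem:EstimMeanValueFree}, which requires the profile to depend on $x$ only through $s$. The part $f_2=\tfrac1\eps\bigl(a(\rho,x,t)-a(\rho,P_{\Gamma_t}x,t)\bigr)$ is $|\rho+h_\eps|$ times a decaying profile, so \eqref{eq:RemEstim2} bounds it in $L^2$. The weighted $L^2$ bounds need no orthogonality, since $\tfrac{\oeps^2}{\eps^2}a$ is a bounded decaying profile.

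Separately, for $\oeps^2\nabla r_{3,A}$ your claim that tangential derivatives cost only $\|h\|_{L^2H^{5/2}}$ and $\|\partial_t h\|_{L^2H^{1/2}}$ is too quick. The tangential gradient of $\tfrac1\eps\theta_0'\Delta^\Gamma h$ and $\tfrac1\eps\theta_0'\partial_t^\Gamma h$ involves $\nabla_\Sigma^3h$ and $\nabla_\Sigma\partial_th$, which $X_{T,h}$ does not control in $L^2$. The paper is equally brief here, so this step needs an additional argument.
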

\begin{proof}
	The proof is very similar to the proof of \cite[Theorem~3.3]{AbelsFei}. Therefore we shall be brief and refer to this paper for further details.
	
	First of all we have for all $\psi\in H^1(\Gamma_t(2\delta))$
	\begin{equation*}
		\int_{\Gamma_t(2\delta)} r_{3,A} \psi \dx =
		\int_{\Gamma_t(\delta)} r_{3,A} \psi \dx +
		\int_{\Gamma_t(2\delta)\setminus \Gamma_t(\delta)} r_{3,A} \psi \dx,  
	\end{equation*}
	where it is enough to consider the integral in $\Gamma_t(\delta)$ since in $\Gamma_t(2\delta) \setminus \Gamma_t(\delta)$ all terms in $r_{3,A}$ decay exponentially. In $\Gamma_t(\delta)$ we have $u_A=-u_A^{\mathrm{in}}$ and $c_A=c_A^{\mathrm{in}}$.
	
	By definitions of $\bv_A = \bv_0 + \varepsilon \bv_{1,\eps} $ and $ \bw_A $, we obtain using \eqref{eq:nablacA} and \eqref{eq:nablauA}
	\begin{align*}
		\bv_A \cdot \nabla u_A^{\mathrm{in}}
		& = \tfrac{h}{\varepsilon^2} \theta_0''(\rho) \bv_0 \cdot \nabla d_{\Gamma}
		+ \tfrac{h}{\varepsilon} \theta_0''(\rho) \bv_{1,\eps} \cdot \nabla d_\Gamma \\
		& \quad 
		- \frac{h}{\varepsilon} \theta_0''(\rho) \bv_0 \cdot \nabla^\Gamma h_1
		+ \frac{1}{\varepsilon} \theta_0'(\rho) \bv_0 \cdot \nabla^\Gamma h
		+ O(\sqrt \varepsilon\|\ol{\we}\|_{L^2(0,T;H^1)}),\\
		\bw_A \cdot \nabla c_A
		& =  \bw_A \cdot \nabla d_{\Gamma} \tfrac1{\varepsilon}\theta_0'(\rho) 
		+ O(\sqrt \varepsilon\|\ol{\we}\|_{L^2(0,T;H^1)})
	\end{align*}
	in $L^2(0,T;L^2(\Gamma_t(\delta)))$.
	Now using \eqref{eq:partialtuA} and \eqref{eq:lambdaA}  we conclude 
	\begin{align*}
		- r_{3,A}
		& = - \tfrac{h}{\varepsilon^2} \theta_0''(\rho) \Big(\pt d_\Gamma + \bv_0 \cdot \nabla d_\Gamma - \Delta d_\Gamma\Big) \\
		& \quad + \tfrac{h}{\varepsilon} \Big(
        \partial_\rho (\theta_0'(\rho)\rho g_0)
		+ \theta_0''(\rho) \big( \pt^\Gamma h_1 + \bv_0 \cdot \nabla^\Gamma h_1-\Delta^\Gamma h_1
        - \bv_{1,\eps} \cdot \nabla d_\Gamma\big)
		\Big) \\
		& \quad
		- \tfrac{2\theta_0''(\rho)}{\eps} \nabla^\Gamma h \cdot \nabla^\Gamma h_1
        + \tfrac{\theta_0'(\rho)}{\varepsilon} \Big(
		- \pt^\Gamma h - \bv_0 \cdot \nabla^\Gamma h + \bw_A \cdot \nabla d_{\Gamma}
		+ \zg\ol{\bw}|_{\Gamma_t} \cdot \nabla d_{\Gamma}
		+ \Delta^\Gamma h
		\Big)\\
		&\quad
		+ O(\sqrt \varepsilon\|\ol{\we}\|_{L^2(0,T;H^1)})\\
		&= 
		\theta_0''(\rho)\tfrac{h}\eps \big( \pt^\Gamma h_1 + \bv_0 \cdot \nabla^\Gamma h_1+g_0 h_1-\Delta^\Gamma h_1-\bv_{1,\eps} \cdot \nabla d_\Gamma\big)
		\\
		& \quad - \tfrac{2\theta_0''(\rho)}{\eps} \nabla^\Gamma h \cdot \nabla^\Gamma h_1
        -\tfrac{\theta_0'(\rho)}{\varepsilon} \Big(
		\pt^\Gamma h + \bv_0 \cdot \nabla^\Gamma h - \bw_A \cdot \nabla d_{\Gamma}
		- \zg\ol{\bw}|_{\Gamma_t} \cdot \nabla d_{\Gamma}
		+g_0h - \Delta^\Gamma h
                  \Big)\\
          &\quad+ O(\sqrt \varepsilon\|\ol{\we}\|_{L^2(0,T;H^1)})
	\end{align*}
	in $L^2(0,T;L^2(\Gamma_t(\delta)))$ since $\pt d_\Gamma + \bv_0 \cdot \nabla d_\Gamma - \Delta d_\Gamma=-\eps \rho g_0 - \eps h_1 g_0 + O(\eps^2)$. Here the bracket with $\theta_0'$ can be controlled due to Lemma \ref{lem:2delta-Gamma_t} and \eqref{eq:linTwoPhase5}, where $\bw_A^{\mathrm{in}}$ can be controlled by construction. Altogether we obtain
	\begin{equation*}
		r_{3,A}(x,t) = \tfrac1\eps a(\rho,x,t) + O(\sqrt \varepsilon\|\ol{\we}\|_{L^2(0,T;H^1)}) \qquad \text{in }L^2(0,T;H^{-1}(\Gamma_t(\delta))),
	\end{equation*}
	where
	\[
	\int_{\mathbb{R}} a(\rho, x, t) \theta_0^{\prime}(\rho) \,d \rho=0
	\quad \text { for all } x\in \Gamma_t, t\in [0,T].
	\]
	Therefore we can decompose $\tfrac1\varepsilon a(\rho,x,t)=f_1(x,t)+f_2(x,t)$ with
	\[
	f_1(x, t)=\tfrac1\varepsilon a(\rho, P_{\Gamma_t}x, t),\quad
	f_2(x, t)=\tfrac1\varepsilon(a(\rho, x, t)-a(\rho, P_{\Gamma_t}x, t)).
	\]
	Because of Lemma~\ref{lem:EstimMeanValueFree} and Lemma~\ref{lem:rescale}, we get
	\[
	\left\|f_1\right\|_{L^2\left(0,T; V_{\varepsilon}^{\prime}\right)}
	\leq C \sqrt\varepsilon\|\ol\we\|_{L^2(0,T;H^1)},\qquad
	\left\|f_1\right\|_{L^2\left(0,T; L^2\left(\Gamma_t(\delta)\right)\right)}
	\leq C \varepsilon^{-\frac12}\|\ol\we\|_{L^2(0,T;H^1)},
	\]
	and, using Lemma~\ref{lem:rescale} again,
	\[
	\left\|f_2\right\|_{L^2\left(0,T; \Veps\right)}
	\leq C\left\|f_2\right\|_{L^2\left(0,T;
		L^2\left(\Gamma_t(2 \delta)\right)\right)}
	\leq C' \sqrt\varepsilon \|\ol\we\|_{L^2(0,T;H^1)}.
	\]
	Finally, using $\oeps^j \theta_0^{(k)}, \oeps^j \partial_\rho^k \nabla_x^l   \hc_2\in \mathcal{R}_{j,\alpha}$ for every $k\in \N,j,l \in\N_0$ and the identities above one can show in a straight forward manner
	\begin{align*}
		\norm{\tfrac{\oeps^{1+j+k}}{\eps^j} \nabla^k r_{3,A}}_{L^2(0,T;L^2(\Gamma_t(\delta)))}
		\leq C \sqrt\eps \left(\norm{h}_{L^\infty(0,T;H^{1})} + \norm{h}_{L^2(0,T;H^{\frac{5}{2}})}\right)
		\leq C \sqrt\eps\norm{\ol \we}_{L^2(0,T; H^1)}.
	\end{align*}
	for all $j+k\leq1$ with the aid of Lemma~\ref{lem:rescale}.
	This finishes the proof.
\end{proof}

\noindent
\begin{proof*}{of Theorem~\ref{thm:FullLinearizedSystem}}
	First of all, using that $(\ol{\we}, \ol{u})$ solves the modified linearized system \eqref{eq:linNSAC'} with $\ol{\mathbf{r}}_1= \mathbf{r}_1+\mathbf{r}_{1,A}$, $\ol{r}_2= r_2+r_{2,A}$, and $\ol{r}_3=r_3+r_{3,A}$ the Theorems~\ref{thm:main-0}, \ref{th_leading_velocity}, and \ref{thm:LeadingPart} yield
	\begin{align*}
		&\|\ol{u}\|_{L^\infty(0,T;L^2)}+\|\ol u\|_{L^2(0,T;\Veps)}+ \|\nabla_\tau \ol u\|_{L^2(0,T;L^2(\Gamma_t(2\delta)))}+\|(\oeps \nabla \ol u, \tfrac{\oeps}\eps \ol u)\|_{L^2(Q_T)}\\
		&\quad +\|\ol{\we} \|_{L^\infty(0,T;L^2)} +\|\nabla \ol{\we}\|_{L^2(Q_{T})}\\
		&\leq C\left(\|\ol{u}_0\|_{L^2(\Omega)}+\|\ol\we_0\|_{H^1(\Omega)}
        + \|\ol{\mathbf{r}}_1\|_{L^2(0,T;V_\sigma')}+ \|(\ol r_2,\oeps \nabla \ol r_2)\|_{L^2(Q_T)}\right.\\
		&\qquad \left.  + \| \ol{r}_2\|_{H^1(0,T; H^{-1}_{(0)}(\Omega))}+\|\ol{r}_3\|_{L^2(0,T;(\Veps)')}+\|\oeps^2\ol{r}_3\|_{L^2(Q_T)}\right)\\
		&\leq C\left(\|u_0\|_{L^2(\Omega)}+\|\we_0\|_{H^1(\Omega)}+\|\mathbf{r}_1\|_{L^2(0,T;V_\sigma')}+ \|(\ol r_2,\oeps \nabla \ol r_2)\|_{L^2(Q_T)} \right.\\
		&\qquad \left.  + \| r_2\|_{H^1(0,T; H^{-1}_{(0)}(\Omega))}+\|r_3\|_{L^2(0,T;(\Veps)')}+\|\oeps^2r_3\|_{L^2(Q_T)}+\sqrt{\eps}\|\ol \we\|_{L^2(0,T;H^1)}\right).
	\end{align*}
	Hence choosing $\eps_1\in (0,1]$ sufficiently small, we obtain
	\begin{align*}
		&\|\ol{u}\|_{L^\infty(0,T;L^2)}+\|\ol u\|_{L^2(0,T;\Veps)}+ \|\nabla_\tau \ol u\|_{L^2(0,T;L^2(\Gamma_t(2\delta)))}+\|(\oeps \nabla \ol u, \tfrac{\oeps}\eps \ol u)\|_{L^2(Q_T)}\\
		&\quad +\|\ol{\we} \|_{L^\infty(0,T;L^2)} +\|\nabla \ol{\we}\|_{L^2(Q_{T})}\\
		&\leq C\left(\|u_0\|_{L^2(\Omega)}+\|\we_0\|_{H^1(\Omega)}+\|\mathbf{r}_1\|_{L^2(0,T;V_\sigma')}+ \|(\ol r_2,\oeps \nabla \ol r_2)\|_{L^2(Q_T)}\right.\\
		&\qquad \left. + \| r_2\|_{H^1(0,T; H^{-1}_{(0)}(\Omega))} +\|r_3\|_{L^2(0,T;(\Veps)')}+\|\oeps^2r_3\|_{L^2(Q_T)}\right).
	\end{align*}
	Now combining this estimate with Theorems~\ref{thm:HighOrderEstimate} and \ref{thm:HigherOrderW} (and \eqref{eq:h-w-estimate}), Proposition~\ref{prop:wpm-h}, and Lemma \ref{lem:wout}  yields \eqref{eq:MainUbarEstim}. Finally, \eqref{eq:uAEstim2} is a consequence of Proposition \ref{prop:uAEstim} together with \eqref{eq:h-w-estimate2}.
\end{proof*}

\section{Convergence Result}\label{sec:Convergence}
This section is devoted to the proof of our first main result, i.e., Theorem~\ref{thm:main}. To this end, we use the results for the linearized system and establish suitable estimates for the terms due to linearization of the nonlinear terms and include remainders for the approximation solutions. With all these ingredients with leading error estimates in Theorems \ref{th_leading_velocity} and \ref{thm:LeadingPart}, we use a continuation argument and that the nonlinear terms are dominated by the linearized system if $\eps\in (0,\eps_0]$ and $\eps_0>0$ is sufficiently small to show the main result.

Let $\oeps$ be as in \eqref{eq:weight} and $0 < T \leq  T_0$. We define $X_{T} \coloneqq X_{T}^1 \times X_{T}^2$ with
\begin{align*}
	X_{T}^1 & \coloneqq L^2(0,T;H^2(\Omega \setminus \Gamma_t)) \cap H^1(0,T;L^2(\Omega)) \cap L^\infty(0,T;H^1(\Omega)), \\
	X_{T}^2 & \coloneqq L^2(0,T;H^3(\Omega)) \cap H^1(0,T;H^1(\Omega)) \cap L^\infty(0,T;H^2(\Omega)),
\end{align*}
which are endowed with the $\eps$-dependent norms
\begin{align*}
	\norm{\bw}_{X_{T,\eps}^1}
	& \coloneqq \norm{(\bw,\oeps \nabla \bw)}_{L^\infty(0,T;L^2)}
	+ \norm{\bw}_{L^2(0,T;H^1)} 
	+ \norm{(\oeps \pt \bw,\oeps \nabla^2 \bw)}_{L^2(Q_{T})}
\end{align*}
and
\begin{align*}
	\norm{u}_{X_{T,\eps}^2}
  & \coloneqq \left\|u\right\|_{L^\infty(0,T;\wW^2)}+
		\left\|(u, \nabla_\btau u)\right\|_{L^2 (0,T;\wW^2)}+ \|\oeps \partial_t u\|_{L^2 (0,T;\wW^1)} \\
		& \qquad + \left\|\oeps\mathcal{L}_\eps u\right\|_{L^2 (0,T;\wW^1(\Gamma_t(2\delta))}+ \|(\oeps^2 \nabla^2 u, \oeps^3 \nabla^3 u )\|_{L^2 (Q_{T})}.
\end{align*} 
Moreover, let $Y_{T} \coloneqq Y_{T}^1 \times Y_{T}^2 \times Y_{T}^3$ such that
\begin{align*}
	Y_{T}^1 & \coloneqq L^2(Q_{T}), \quad	Y_{T}^2  \coloneqq L^2(0,T;H_{(0)}^1(\Omega)) \cap H^1(0,T; H^{-1}_{(0)}(\Omega)), \quad
	Y_{T}^3 \coloneqq L^2(0,T;H^1_{0}(\Omega)),
\end{align*}
endowed with norms
\begin{align*}
	& \norm{\mathbf{r}_1}_{Y_{T,\eps}^1} \coloneqq \norm{\mathbf{r}_1}_{L^2(0,T;V_\sigma')}
	+ \norm{\oeps \mathbf{r}_1}_{L^2(Q_{T})}, \\
	& \norm{r_2}_{Y_{T,\eps}^2} \coloneqq \norm{r_2}_{L^2(Q_{T})}
	+ \norm{r_2}_{H^{1}(0,T; H^{-1}_{(0)}(\Omega))}
	+ \norm{\oeps \nabla r_2}_{L^2(Q_{T})}, \\
	& \norm{r_3}_{Y_{T,\eps}^3} \coloneqq \norm{r_3}_{L^2(0,T;(V_t^\eps)')}
	+ \norm{\oeps r_3 }_{L^2(0,T;\wW^1)}.
\end{align*}

Now let Assumption~\ref{assump:main} be satisfied, let $(\ve_A,p_A, c_A)$ be as in Theorem~\ref{thm:ApproximateSolution} and set $\ve_{A,0}:= \ve_A|_{t=0}$, $c_{A,0}:= c_A|_{t=0}$. Moreover, let $(\ve_\eps, p_\eps, c_\eps)$ be the solution of \eqref{eq:NSAC} with initial values $(\ve_{0,\eps}, c_{0,\eps})$ satisfying \eqref{initial assumption} for some $R_0>0$.  
Then $u\coloneqq c_\eps-c_A$, $\we\coloneqq \ve_\eps-\ve_A$, $q\coloneqq p_\eps-p_A$ solve the linearized system \eqref{eq:linNSAC} with
\begin{equation*}
  (\br_1,r_2,r_3)= (\bR_1,0,R_3)- (\bR_\eps,G_\eps,S_\eps),
\end{equation*}
where $(\bR_\eps,G_\eps,S_\eps)$ are the remainder terms from the approximate solution in Theorem~\ref{thm:ApproximateSolution} and
\begin{align}
	\mathbf{R}_1
	& \coloneqq 
	-\bw \cdot \nabla \bw 
	- \eps \Div (\nabla u \otimes \nabla u)- \Div \left(2 \big(\nu(c_A) - \nu(c_\eps)\big) D \bw\right)\nonumber  \\\label{eq:R1}
	& \quad\  - \Div \left(2 \left(\nu(c_A)+\nu'(c_A)u - \nu(c_\eps)\right) D \bv_A\right), \\\label{eq:R3}
	R_3
	 &\coloneqq -\bw \cdot \nabla u
	- \tfrac{1}{\eps^2} \left[f'(c_\eps) - f'(c_A) - f''(c_A) u\right]
\end{align}
are error terms due to linearization. We note that we have the compatibility condition $r_2|_{t=0}= -G_\eps|_{t=0}= \Div \we_0$ by construction and $r_3|_{\partial\Omega}=R_3|_{\partial\Omega}= S_\eps|_{\partial\Omega}=0$ due to $c_\eps|_{\partial\Omega}=c_A|_{\partial\Omega}=-1$, $f'(-1)=0$, $u|_{\partial\Omega}=0$ and Theorem~\ref{thm:ApproximateSolution}.
Furthermore, let $u=u_A+\ol u$ be the decomposition of $u$ as in Theorem~\ref{thm:FullLinearizedSystem}.
Then $u_A$ is uniquely determined by $\we$, cf.\ Remark~\ref{rem:uA}. By \eqref{eq:MainUbarEstim} we have for any $T\in (0,T_0]$ and $\eps\in (0,\eps_1]$
\begin{equation}\label{eq:MainUbarEstim-recall}
  \|(\ol \we, \ol u)\|_{X_{T,\eps}}\leq C(T_0) \left(\|(\br_1,r_2,r_3)\|_{Y_{T,\eps}}+\norm{(\bw_0, u_0)}_{X_\gamma}
  \right),
\end{equation}
where
\begin{align*}
	\norm{(\bw_0,u_0)}_{X_{\gamma,\eps}}
	= \left\| u_0\right\|_{\wW^2} +\|\nabla_\btau u_0\|_{L^2(\Gamma_0(2\delta))} 
    + \|\we_0\|_{H^1(\Omega)}
\end{align*}
and $\eps_1\in (0,1]$ is chosen so small that the estimates of Sections~\ref{sec:higher-order} and \ref{sec:FullSystem} hold true.
We note that using the Gagliardo--Nirenberg inequality $\norm{u}_{L^\infty(\Omega)} \leq C_{\rm GN} \norm{u}_{H^1(\Omega)}^{\onehalf} \norm{u}_{H^2(\Omega)}^{\onehalf}$ and $\norm{u}_{H^1(\Omega)}\leq \eps^{-1}\norm{u}_{H_\eps^1(\Omega)}$ and $\norm{u}_{H^2(\Omega)}\leq \eps^{-2}\norm{u}_{H_\eps^2(\Omega)}$. Thus we obtain
\begin{equation}
  \label{eq:L-infty-Estim}
  \|\ol u\|_{L^\infty(Q_{T})}\leq C_{\rm GN} \eps^{-\frac32} \|\ol u\|_{X_{T_\eps,\eps}^2}.
\end{equation}

Now we choose $R>0$ such that
\begin{equation*}
  R>2C(T_0)R_0,
\end{equation*}
where $C(T_0)$ is as in \eqref{eq:MainUbarEstim-recall}. Then, by the assumptions on the initial values \eqref{initial assumption}, for every $\eps\in (0,\eps_1]$ there is some $T_\eps \in (0,T_0]$ such that
\begin{equation}
  \label{eq:AprioriBound}
  \|(\ol \we, \ol u)\|_{X_{T_\eps,\eps}}\leq R\eps^{N+\frac12}.
\end{equation}
We choose $T_\eps\in (0,T_\eps]$ maximal such that system \eqref{eq:NSAC} has a smooth solution $\ve_\eps\colon \ol{\Omega}\times [0,T_\eps)\to \R^d$, $(p_\eps,c_\eps) \colon \ol{\Omega}\times [0,T_\eps)\to \R^2$ satisfying \eqref{eq:AprioriBound}. In order to prove Theorem~\ref{thm:main} we will show that $T_\eps = T_0$ for all $\eps\in (0,\eps_0]$ provided $\eps_0\in (0,1]$ is sufficiently small. 
To this end, we consider the closed ball
\begin{align*}
	B_{R,T}^{\eps,N}
	\coloneqq {
		\left\{
		(\bw,u) \in X_{T}:
		\norm{(\bw,u)}_{X_{T,\eps}} \leq R \eps^{N+\onehalf}
		\right\}
	}
\end{align*}
for some fixed integer $N\geq 3$ as in Theorem~\ref{thm:main}.

The following estimate of the nonlinear terms $(\mathbf{R}_1,R_3)$ is essential for the proof of our main result:
\begin{thm}\label{thm:perturbation}
	Let $0 < T \leq T_0$, $\eps \in (0,1]$, and let $\mathbf{R}_1, R_3$ be defined as in \eqref{eq:R1}-\eqref{eq:R3}.
	Then there is some $\eps_1\in (0,1]$ such that
	\begin{align}
		\norm{\mathbf{R}_1}_{Y_{T,\eps}^1}+\norm{R_3}_{Y_{T,\eps}^3} \leq C \eps^{2N-2}
	\end{align}
        for all $\eps \in (0,\eps_1]$ provided $(\ol{\bw}, \ol{u}) \in B_{R,T}^{\eps,N}$, 
	where $C > 0$ is a constant independent of $\eps \in (0,\eps_1]$ and $ T \in (0,T_0] $.
\end{thm}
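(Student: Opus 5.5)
The plan is to estimate every term of $\mathbf{R}_1$ and $R_3$ by first splitting $u=u_A+\ol u$ and $\we=\we_A+\ol\we$. For the leading parts, Theorem~\ref{thm:FullLinearizedSystem} (estimate \eqref{eq:uAEstim2}) together with Lemma~\ref{lem:wout} and Proposition~\ref{prop:wpm-h} control $u_A,\we_A$ by $\eps^{-1/2}\|\ol\we\|_{L^2(0,T;H^1)}$. For the remainders, the $X_{T,\eps}$-norm is used directly. On the ball $B^{\eps,N}_{R,T}$ this gives the following scalings:
\begin{itemize}
\item $\|\ol\we\|_{X^1_{T,\eps}},\|\ol u\|_{X^2_{T,\eps}}\le R\eps^{N+\frac12}$;
\item $\|u_A\|_{L^\infty}\le C\eps^{N-\frac12}$, and $\|\we_A\|$ is of order $\eps^{N+1}$;
\item $\|\ol u\|_{L^\infty(Q_T)}\le C\eps^{N-1}$ by \eqref{eq:L-infty-Estim}.
\end{itemize}
Hence $\|u\|_{L^\infty}\le C\eps^{N-1}$. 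Each quadratic term should then cost at most two factors of roughly $\eps^{N-1}$ against at most $\eps^{-1}$ weights, giving $\eps^{2N-2}$. Since $N\ge3$, the smallness $\|u\|_{L^\infty}\le 1$ is available for the Taylor remainders.

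\emph{Velocity terms.} For $\we\cdot\nabla\we$ I would bound the $V_\sigma'$-norm by $\|\we\|_{L^4}^2$. For the $L^2(\oeps^2)$-part I would use $\|\we\|_{L^\infty(0,T;L^2)}^{1/2}$ and $H^1$/weighted $H^2$ interpolation in $d=3$, with the worst factor $\oeps^{-1/2}\le\eps^{-1/2}$. The viscosity terms $(\nu(c_A)-\nu(c_\eps))D\we$ and $(\nu(c_A)+\nu'(c_A)u-\nu(c_\eps))D\ve_A=O(u^2)D\ve_A$ are handled pointwise by $|u|$, resp.\ $|u|^2$, in $L^\infty$. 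In the $\oeps$-weighted $L^2$ estimate their divergences produce $\nabla u$, which carries $\oeps\nabla u$ of size $\eps^{N-\frac12}$ in $L^\infty_tL^q$ from \eqref{eq:uAEstim2}, and $\nabla c_A\,\oeps\lesssim1$.

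The capillary term $\eps\Div(\nabla u\otimes\nabla u)$ I would rewrite as $\eps(\Delta u\nabla u+\nabla^2u\nabla u)$. For the weighted part I would estimate $\|\oeps\eps\nabla^2u\,\nabla u\|_{L^2}\le\eps\|\oeps^2\nabla^2u\|_{L^2_tL^p}\|\oeps^{-1}\nabla u\|_{L^\infty_tL^{p'}}$. The anisotropic interpolation Lemma~\ref{lem:Lp} is used for the $\ol u$-parts, with $\nabla_\btau$ and normal derivatives weighted differently. For the $u_A$-parts I would use the explicit $L^p$-bounds in \eqref{eq:uAEstim2}. The $V_\sigma'$-part follows from $\eps\|\nabla u\|_{L^4}^2$.

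\emph{The $R_3$ terms.} The term $\we\cdot\nabla u$ has zero trace and is handled like the above in the norms $(V_t^\eps)'$ (bounded by $L^2$) and $\oeps\wW^1$. The latter requires $\oeps\nabla(\we\cdot\nabla u)$ and $\tfrac{\oeps}{\eps}\we\cdot\nabla u$, i.e.\ up to $\oeps\nabla^2u$ times $\we\in L^\infty_tL^6$ and $\nabla\we$ times $\oeps\nabla u\in L^\infty$-type bounds from $\wW^2$. The Taylor term is $\eps^{-2}O(u^2)$, with gradient $\eps^{-2}O(u\nabla u+u^2\nabla c_A)$. Its weighted norms $\|\oeps\eps^{-2}u^2\|$, $\|\tfrac{\oeps^2}{\eps}\eps^{-2}u^2\|$ and $\|\oeps^2\eps^{-2}u\nabla u\|$ are bounded by $\|u\|_{L^\infty}\cdot\eps^{-1}\|\tfrac{\oeps^2}{\eps^2}u,\tfrac{\oeps^2}{\eps}\nabla u\|_{L^2}$ and similar expressions.

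\emph{Main obstacle.} I expect the hardest step to be exactly these $\eps^{-2}$-weighted Taylor and capillary terms for the $u_A$-part. There $u_A\sim\eps^{-3/2}\|h\|$ is concentrated in the layer, and crude $L^\infty$ bounds lose powers of $\eps$. I would first try to write $u_A^2$ and $u_A\nabla u_A$ in the $(\rho,s)$-variables as $\eps^{-2}\mathcal R$-type profiles times $h^2$. Lemma~\ref{lem:rescale} would then gain $\eps^{1/2}$, and I would bound $h^2$ via $H^{3/2}(\Sigma)\hookrightarrow L^\infty$ and $\|h\|_{X_{T,h}}\lesssim\|\ol\we\|_{L^2H^1}\le R\eps^{N+\frac12}$. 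If this loses a power of $\eps$, the margin allowed by the target exponent $2N-2$ (instead of $2N+1$) should absorb it.
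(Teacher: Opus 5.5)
Your treatment of $\mathbf R_1$ and of the weighted part $\|\oeps R_3\|_{L^2(0,T;\wW^1)}$ follows essentially the paper's route, and crude bounds do suffice there, since in the layer the weight $\oeps^{2}\sim\eps^{2}$ pays for the singular factors. The gap is exactly at the step you single out as the main obstacle: the $L^2(0,T;(\Veps)')$-norm of the leading Taylor term $\tfrac1{2\eps^2}f'''(\theta_0)u_A^2=\tfrac1{2\eps^4}a_0(\rho)h^2+\dots$, where $a_0:=f'''(\theta_0)(\theta_0')^2$.

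Lemma~\ref{lem:rescale}, \eqref{eq:RemEstim2} with $k=-4$, only gives the bound $C\eps^{-7/2}\|h\|_{X_{T,h}}^2\le C\eps^{2N-5/2}$. This misses the claimed $\eps^{2N-2}$ by a factor $\eps^{-1/2}$. The margin between $\|h\|^2\sim\eps^{2N+1}$ and the target is $\eps^{-3}$, while this route loses $\eps^{-7/2}$, so nothing absorbs the loss. Nor is this an artifact of a crude dual estimate. Test functions $\psi=\eps^{-1/2}\theta_0'(\rho)Z(s)$, cut off near $\Gamma_t$, satisfy $\|\psi\|_{\Veps}\le C\|Z\|_{H^1(\Sigma)}$. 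Pairing with them gives, up to lower order, $\tfrac12\eps^{-7/2}\int_\R a_0\theta_0'\drho\int_\Sigma h^2 Z\,J(0,\cdot)\,\d s$. So the dual norm really is of size $\eps^{-7/2}\|h^2\|_{L^2(\Sigma)}$ unless that $\rho$-integral vanishes. The loss also matters downstream. For $N=3$ the resulting bound is $C\eps^{7/2}=C\eps^{N+\frac12}$ with $C$ depending on $R$, which is no longer small relative to $R_0\eps^{N+\frac12}$. Hence the continuation argument in the proof of Theorem~\ref{thm:main} would not close.

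The missing idea is a parity cancellation. Since $f$ is even, $f'''$ is odd; $\theta_0$ is odd and $\theta_0'$ is even. So $a_0$ is odd and $\int_\R a_0\theta_0'\drho=0$. Lemma~\ref{lem:EstimMeanValueFree} then supplies the factor $\eps^{3/2}$ instead of $\eps^{1/2}$, giving $\|\tfrac1{2\eps^4}a_0(\rho)h^2\|_{L^2(0,T;(\Veps)')}\le C\eps^{-5/2}\|h\|_{X_{T,h}}^2\le C\eps^{2N-3/2}$.

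With this in hand, the rest of $\eps^{-2}[f'(c_\eps)-f'(c_A)-f''(c_A)u]$ can be bounded in $L^2$ as you propose. This includes:
the correction $\tfrac1{2\eps}f'''(\theta_0)\partial_\rho\hc_2\,h(\tfrac h\eps\theta_0'-u_A)$;
the mixed term $\eps^{-2}u_A\ol u$, which via $\|u_A\|_{L^\infty(0,T;L^4)}\|\ol u\|_{L^4(Q_T)}$ is of order exactly $\eps^{2N-2}$;
$\eps^{-2}\ol u^2$;
and the $O(\eps^2u^2)$ and $O(u^3)$ remainders.

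A minor inaccuracy: $\we_A$ is only $O(\eps^{N+\frac12})$, not $O(\eps^{N+1})$. Only $\we_A^{\mathrm{in}}$ gains $\sqrt\eps$ (Lemma~\ref{lem:wout}), while $\widetilde{\we}_A^{\mathrm{out}}$ is merely bounded by $\|\ol\we\|_{L^2(0,T;H^1)}$ (Proposition~\ref{prop:wpm-h}). This does not affect your other estimates, since $\ol\we$ is of that order anyway.
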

\begin{proof}
	\emph{Estimates of $\mathbf{R}_1$, $L^2(0,T;V_\sigma')$-norm:} As before we choose $\eps_1\in (0,1]$ so small that the estimates of Sections~\ref{sec:higher-order} and \ref{sec:FullSystem} hold true.
	First, by \eqref{eq:h-w-estimate}, we know 
	\begin{align}
		& \|\pt \we^\pm\|_{L^2(0,T; L^2(\Omega))}
		+ \|\we^\pm\|_{L^2(0,T; H^2(\Omega\setminus \Gamma_t))}
		+ \|\we^\pm\|_{L^\infty(0,T; H^1(\Omega\setminus \Gamma_t))} \nonumber \\
		& \quad \leq C(T_0)\|\ol{\we}\|_{L^2(0,T;H^1)}
		\leq C \eps^{N+\onehalf}\nonumber
	\end{align}
	for $T \leq T_0$ which combined with the decomposition of $ \bw = \bw_A + \ol{\bw} $ and Lemma \ref{lem:wout} yields
	\begin{align}
	\norm{(\bw,\oeps \nabla \bw)}_{L^\infty(0,T;L^2)}
        + \norm{\oeps \pt \bw}_{L^2(Q_{T})} \leq C \eps^{N+\frac12}, \quad 
        \norm{\oeps^2 \nabla^2 \bw}_{L^2(Q_{T})} \leq C \eps^{N+1}.
		\label{eq:w-eps}
	\end{align}
	By $ L^{6/5}(\Omega)\hookrightarrow H^{-1}(\Omega)$ and $\omega_\eps \geq \eps$, we directly have
	\begin{align}
		\nonumber
		& \norm{\bw \cdot \nabla \bw}_{L^2(0,T,V_\sigma')}  \leq C \norm{\bw}_{L^\infty(0,T,L^3(\Omega))} \norm{\nabla \bw}_{L^2(0,T,L^2(\Omega))} \\
		\label{eq:wnablaw-V'}
		& \quad \leq C \norm{\bw}_{L^\infty(0,T,L^2(\Omega))}^\onehalf
		\norm{\bw}_{L^\infty(0,T,L^6(\Omega))}^\onehalf
		\norm{\nabla \bw}_{L^2(0,T,L^2(\Omega))}
		\leq C \eps^{2N-\frac12}.
	\end{align}
	We note that by Lemma \ref{lem:Lp} with $ p = 4 $ and for $j=1,2$
	\begin{align}
		\nonumber
		& \int_0^{T} \norm{\oeps^j \nabla \ol{u}}_{L^4(\Gamma_t(2\delta))}^4 \dt \\
		\nonumber
		&\quad \leq C \eps^{4j-5} \int_0^{T} \Big(\norm{\oeps^2 \nabla \ol{u}}_{L^2(\Gamma_t(2\delta))} + \norm{\oeps^2  \pr\nabla \ol{u}}_{L^2(\Gamma_t(2\delta))}\Big) \\
		\nonumber
		& \qquad \qquad \times \Big(\norm{\oeps \nabla \ol{u}}_{L^2(\Gamma_t(2\delta))}
		+ \norm{\oeps \nabla_\btau \nabla \ol{u}}_{L^2(\Gamma_t(2\delta))}
		\Big)^2
		\norm{\oeps \nabla \ol{u}}_{L^2(\Gamma_t(2\delta))} \dt \\
		&\quad \leq C \eps^{4j-5} \norm{\ol{u}}_{X_{T,\eps}^2}^4 \leq C \eps^{4(N+j)-3},
		\label{eq:omega-du-L4}
	\end{align}
        since $\nabla \oeps$ is uniformly bounded. This implies 
	\begin{align*}
		\norm{\eps \Div (\nabla \ol{u} \otimes \nabla \ol{u})}_{L^2(0,T;V_\sigma')} 
		\leq \eps^{-1}\norm{\eps \nabla \ol{u}}_{L^4(Q_{T} \cap \Gamma(2\delta))}^2
		+ \eps^{-1}\norm{\eps \nabla \ol{u}}_{L^4(Q_{T} \setminus \Gamma(2\delta))}^2
		\leq C \eps^{2N - \onehalf}.
	\end{align*}
	Moreover, by \eqref{eq:uAEstim2} we have
    \begin{equation}
        \label{eq:u_A-eps}
	\begin{aligned}
		& \eps^{\frac14} \|u_A\|_{L^\infty(Q_{T})}
        + \eps^{\frac14-\frac1p-j}\|(\oeps^ju_A, \oeps^j\nabla_\btau u_A, \omega_\eps^{1+j} \nabla u_A)\|_{L^\infty(0,T;L^p)} \\
        & \qquad + \|\omega_\eps^2 \nabla^2 u_A\|_{L^2(0,T;L^4)}
        + \eps^{-\frac14} \|\omega_\eps^2 \nabla^2 u_A\|_{L^4(0,T;L^2)}
		\leq C \eps^{N-\frac14}
	\end{aligned}
      \end{equation}
      for all $1\leq p \leq 4$ and $j=0,1$.
	Then it holds
	\begin{align*}
		& \norm{\eps \Div (\nabla u \otimes \nabla u)}_{L^2(0,T;V_\sigma')} \leq C \eps^{2N-\frac{3}{2}}.
	\end{align*}

	Furthermore, we derive
	\begin{align*}
		& \norm{\Div \Big(2 \big(\nu(c_A) + \nu'(c_A)u - \nu(c_\eps)\big) D \bv_A\Big)}_{L^2(0,T;V_\sigma')}
		\leq C \norm{u}_{L^4(Q_{T})}^2 
		\norm{D \bv_A}_{L^\infty(Q_{T})} \leq C \eps^{2N-\onehalf}, \\
		& \norm{\Div \Big(2 \big(\nu(c_A) - \nu(c_\eps)\big) D \bw\Big)}_{L^2(0,T;V_\sigma')}
		\leq 2 \norm{\nu(c_A) - \nu(c_\eps)}_{L^\infty(Q_{T})} 
		\norm{D \bw}_{L^2(Q_{T})} \leq C \eps^{2N-1}.
	\end{align*}
        \emph{Estimates of $\mathbf{R}_1$ in weighted $L^2(Q_T)$-norm:}
	The convective term is controlled as follows:
	\begin{align*}
		\norm{\oeps \bw \cdot \nabla \bw}_{L^2(Q_{T})}
		& \leq  \norm{\oeps\bw}_{L^\infty(0,T;L^6)} \norm{\nabla \bw}_{L^2(0,T;L^3)}
		\leq C \eps^{2N-\frac12}.
	\end{align*}
	By \eqref{eq:u_A-eps}, we find
	\begin{align*}
		\norm{\oeps \eps \Div (\nabla u \otimes \nabla u)}_{L^2(Q_{T})} 
		\leq \frac{C}\eps \norm{\oeps^2 \nabla^2 u}_{L^2(0,T;L^4)} \norm{\oeps \nabla u}_{L^\infty(0,T;L^4)}
		\leq C \eps^{2N-\frac{3}{2}}
	\end{align*}
        because of \eqref{eq:uAEstim2} and
        \begin{align}\nonumber
          \|\oeps \nabla \ol u\|_{L^\infty(0,T;L^4)}&\leq C \|\oeps \nabla \ol u\|_{L^\infty(0,T;L^2)}^{\frac14}\|\oeps \nabla \ol u\|_{L^\infty(0,T;H^1)}^{\frac34}\leq C\eps^{N-\frac14},\\\label{eq:L2L4D2u}
          \|\oeps^2 \nabla^2 \ol u\|_{L^2(0,T;L^4)}&\leq C \|\oeps^2 \nabla^2 \ol u\|_{L^2(0,T;L^2)}^{\frac14}\|\oeps^2 \nabla^2 \ol u\|_{L^2(0,T;H^1)}^{\frac34}\leq C\eps^{N-\frac14}.
        \end{align}
	We also remark that 
	\begin{align*}
		& \norm{\oeps \Div \Big(2 \big(\nu(c_A) - \nu(c_\eps)\big) D \bw\Big)}_{L^2(Q_{T})} \leq \norm{\nu(c_A) - \nu(c_\eps)}_{L^\infty(Q_{T})} 
		\norm{\oeps \Div D \bw}_{L^2(Q_{T})} \\
		& \qquad + \norm{\oeps \nu'(c_A) \nabla c_A - \oeps \nu'(c_\eps) \nabla c_\eps}_{L^\infty(0,T; L^4))} \norm{D \bw}_{L^2(0,T;L^4)} \leq C \eps^{2N-\frac{9}{8}}
	\end{align*}
        because of \eqref{eq:w-eps}, $\norm{D \bw}_{L^2(0,T;L^4)} \leq C(T_0) \norm{D \bw}_{L^\infty(0,T;L^2)}^{\frac{1}{4}} \norm{\nabla D \bw}_{L^2(0,T;L^2)}^{\frac{3}{4}}$ and
        \begin{align*}
          &\norm{\oeps \nu'(c_A) \nabla c_A - \oeps \nu'(c_\eps) \nabla c_\eps}_{L^\infty(0,T; L^4)}\\
          &\quad\leq \norm{\nu'(c_A)  - \nu'(c_\eps)}_{L^\infty(0,T;L^4)} \norm{\oeps \nabla c_A }_{L^\infty(Q_T)} + C \norm{\oeps \nabla (c_A -  c_\eps)}_{L^\infty(0,T; L^4)}\leq C\eps^{N-\frac14}.  
        \end{align*}
    For the remaining viscous term, by Taylor's theorem, one obtains
    \begin{align*}
        & \norm{\oeps \Div \Big(2 \big(\nu(c_A) + \nu'(c_A) u - \nu(c_\eps)\big) D \bv_A\Big)}_{L^2(Q_{T})} \\
		& \quad \leq C \norm{u \nabla u}_{L^2(Q_{T})}
		\norm{D \bv_A}_{L^\infty(Q_{T})}
        + C \norm{u^2}_{L^2(Q_{T})} \norm{\oeps \Div D \bv_A}_{L^\infty(Q_{T})} 
        \leq C \eps^{2N-\frac32}
    \end{align*}
    since $\|\eps D^2 \ve_A\|_{L^\infty(Q_T)}$ is uniformly bounded.
	Altogether we  arrive at
	\begin{align*}
		\norm{\bR_1}_{Y_{T}^1} \leq C \eps^{2N-\frac{3}{2}}.
	\end{align*}
	
	\noindent\emph{Estimates of $R_3$ in  $L^2(0,T;(\Veps)')$-norm:}
	We first recall
	\begin{align*}
		R_3
		& =-\bw \cdot \nabla u
		- \tfrac{1}{\eps^2} \left[f'(c_\eps) - f'(c_A) - f''(c_A) (\ol u + u_A)\right].
	\end{align*}
	Then similar to \eqref{eq:wnablaw-V'} with \eqref{eq:u_A-eps}, we find
	\begin{align*}
		\norm{\bw \cdot \nabla u_A}_{L^2(0,T,(\Veps)')}
		& \leq C \norm{\bw \cdot \nabla u_A}_{L^2(0,T,L^{\frac{6}{5}}(\Omega))}  \leq C \norm{\bw}_{L^2(0,T,L^3(\Omega))} \norm{\nabla u_A}_{L^\infty(0,T,L^2(\Omega))} \\
		& \leq C \norm{\bw}_{L^2(0,T,L^2(\Omega))}^\onehalf
		\norm{\bw}_{L^2(0,T,L^6(\Omega))}^\onehalf
		\norm{\nabla u_A}_{L^\infty(0,T,L^2(\Omega))}
		\leq C \eps^{2N-1},
	\end{align*}
	which further implies
	\begin{align*}
		\norm{\bw \cdot \nabla u}_{L^2(0,T;(\Veps)')}
		\leq C \eps^{2N-1}.
	\end{align*}

    The estimates for the error in $\tfrac1\eps f'(c_\eps)$ is more involved.
    By Taylor's theorem and $c_A = \theta_0 + \eps^2 \hc_2+O(\eps^3)$ in $\Gamma(2\delta)$, it holds
	\begin{align*}
		\cN &\coloneqq 
		f'(c_\eps) - f'(c_A) - f''(c_A) u
		 = \tfrac12 f'''(\theta_0) u^2 + O(\eps^2 u^2)+ O(u^3) \\
		& = \tfrac12 f'''(\theta_0) u_A^2 +O(u_A \ol u) +O(\ol u^2) + O(\eps^2 u^2)+ O(u^3) \qquad \text{in }\Gamma(\delta),
	\end{align*}
    where we have use $f'''(c_A) = f'''(\theta_0) + O(\eps^2 \hc_2)+O(\eps^3)$ and $u = u_A + \ol u$. In $\Omega \setminus \Gamma_t(\delta)$, $c_A = \pm 1+O(\eps^3)$ in $\Omega^\pm$ and $\tfrac{\cN}{\eps^2}$ can be estimated in a straight forward manner. Note that the estimates of $\ol u$ is better than $u$, we only need to control $u$-terms instead of $\ol u$-terms here.
        Since
        \begin{align*}
          &\tfrac1{\eps^2}\left(\|u_A u\|_{L^2(Q_T)}+ \eps^2\|u^2\|_{L^2(Q_T)}+\|u^3\|_{L^2(Q_T)} \right)\\
          &\quad \leq \tfrac{C}{\eps^2}\left(\|u_A\|_{L^\infty(0,T;L^4)}\|u\|_{L^4(Q_T)}+ \|u\|_{L^\infty(Q_T)}\|u\|_{L^4(Q_T)}^2\right)\leq \tfrac{C}{\eps^2}\left(\eps^{2N}+ \eps^{2N+\frac32}\right)\leq C \eps^{2N-2},
        \end{align*}
        it remains to estimate the leading contribution
	$\frac{1}{2\varepsilon^2}f'''(\theta_0)u_A^2$ in
	$L^2(0,T;(V_t^\varepsilon(2\delta))')$. In $\Gamma(\delta)$,  by the definition of $u_A$, we have
	\begin{align*}
		\frac{1}{2\varepsilon^2}f'''(\theta_0)u_A^2
		&=
		\frac{1}{2\varepsilon^4}
		f'''(\theta_0(\rho))(\theta_0'(\rho))^2h^2
		+\frac{1}{2\varepsilon} f'''(\theta_0(\rho))\partial_\rho\hc_2 h\left(\tfrac{h}\eps \theta_0'-u_A \right). 
	\end{align*}
	Set $a_0(\rho):=f'''(\theta_0(\rho))(\theta_0'(\rho))^2$ for all $\rho\in\R$.
	Since $f$ is even, $f'''$ is odd. Hence $a_0$ is odd  since $\theta_0$ is odd and $\theta_0'$ is even. Therefore
$\int_{\mathbb R}a_0(\rho)\theta_0'(\rho)\sd\rho=0$.
	Thus Lemma \ref{lem:EstimMeanValueFree} gives
	\begin{align*}
		\left\|
		\tfrac{1}{2\varepsilon^4}a_0(\rho)h^2
		\right\|_{L^2(0,T;(V_t^\varepsilon(2\delta))')}
		&\leq
		C\varepsilon^{-4}\varepsilon^{3/2}
		\|h\|_{X_{T,h}}^2
        \leq C \eps^{2N-\frac{3}{2}}.
	\end{align*}
	For the remaining terms, since $f'''(\theta_0)\partial_\rho\hc_2$ is uniformly bounded, we obtain
	\begin{align*}
		\left\|
		\frac{1}{2\varepsilon} f'''(\theta_0)\partial_\rho\hc_2\,h(\tfrac{h}\eps \theta_0'-u_A)
		\right\|_{L^2(Q_T)}
		& \leq
		C\varepsilon^{-1}
		\|h\|_{X_{T,h}}
		\|\tfrac{h}\eps \theta_0'-u_A\|_{L^\infty(0,T;L^2)} 
        \leq
		C\varepsilon^{2N-\frac12},
	\end{align*}
        Altogether we obtain
$
      \norm{R_3}_{L^2(0,T;(\Veps)')} \leq C \eps^{2N-2}.
$

\smallskip
		\noindent\emph{Estimates of $\oeps R_3$ in  $L^2(0,T;\wW^1)$-norm:}
First of all we note that it is sufficient to estimate $\|\oeps^2 \nabla R_3\|_{L^2(Q_T)}$ and $\|\tfrac{\oeps^2}{\eps} R_3\|_{L^2(Q_T)}$ since $\|\oeps R_3\|_{L^2(Q_T)}\leq\|\tfrac{\oeps^2}{\eps} R_3\|_{L^2(Q_T)}$ due to $\oeps\geq \eps$.  
	Invoking \eqref{eq:u_A-eps}, one derives
	\begin{align*}
		\norm{\tfrac{\oeps^2}\eps \bw \cdot \nabla u}_{L^2(Q_{T})}
		\leq  C \norm{\bw}_{L^2(0,T:L^4)} \norm{\tfrac{\oeps^2}\eps \nabla u}_{L^\infty(0,T;L^4)}
		\leq \eps^{2N-\frac{1}{2}},
	\end{align*}
        where we note that we have for $\tfrac{\oeps^2}\eps \nabla u$ the same estimates as for $\oeps \nabla u$ and $\normm{\bw}_{L^2(0,T;L^4)} \leq C(T_0) \normm{\bw}_{L^\infty(0,T;L^2)}^{\frac{1}{4}} \normm{\nabla \bw}_{L^2(0,T;L^2)}^{\frac{3}{4}}$.
	Moreover, using 
    \eqref{eq:w-eps},
    \eqref{eq:u_A-eps}, and \eqref{eq:L2L4D2u}, 
	we obtain
	\begin{align*}
		&\norm{\oeps^2 \nabla (\bw \cdot \nabla u)}_{L^2(Q_{T})}\\
		&\quad \leq C \left(\norm{\oeps \nabla \bw}_{L^2(0,T;L^6)} \norm{\oeps \nabla {u}}_{L^\infty(0,T;L^3)}  + \norm{\bw}_{L^\infty(0,T;L^4)} 
		\norm{\oeps^2 \nabla^2 u}_{L^2(0,T;L^4)} \right) \\
		&\quad \leq C \eps^{2N -\frac16} + C \eps^{2N-\frac12}\leq C'\eps^{2N-\frac12}.
	\end{align*}
	Invoking Lemma \ref{lem:Lp} with $ p = 4 $, we find
	\begin{align*}
&		\int_0^{T} \norm{\oeps^j\ol{u}}_{L^4(\Gamma_t(2\delta))}^4 \dt\\ 
		&\quad \leq C \eps^{-1} \int_0^{T} \norm{(\oeps^{1+j} \ol{u},\oeps^{1+j} \ptial{r} \ol{u})}_{L^2(\Gamma_t(2\delta))}\norm{\oeps^j\ol{u},\oeps^j\nabla_\btau \ol{u}}_{L^2(\Gamma_t(2\delta))}^2 \norm{\oeps^j\ol{u}}_{L^2(\Gamma_t(2\delta))} \dt  \\
		& \quad
		\leq \eps^{-1+4j} \norm{\ol{u}}_{X_{T}^2}^4,
	\end{align*}
    which yields
    \begin{align}
        \norm{\oeps^j\ol{u}}_{L^4(Q_{T})}
        \leq C \eps^{N+\frac14+j}, \quad \norm{\oeps^ju}_{L^4(Q_{T})}
        \leq C \eps^{N-\frac14+j} \qquad \text{for }j=0,1.
		\label{eq:u-L4}
    \end{align}
    Since $\|u\|_{L^\infty(Q_T)}\leq C$, we can use Taylor's theorem to estimate $|\mathcal{N}|\leq C |u|^2$ and obtain
    \begin{align*}
      \left\|\tfrac{\oeps^2}{\eps^3} \mathcal{N}\right\|_{L^2(Q_T)}\leq C\eps^{-3} \|\oeps^2 u\|_{L^4(Q_T)}^2\leq C\eps^{2N-\frac32}. 
    \end{align*}
	Direct computation with Taylor's theorem yields
	\begin{align*}
		& \abs{\nabla \left[f'(c_\eps) - f'(c_A) - f''(c_A) u\right]} \\
		&\quad = \abs{f''(c_\eps) \nabla c_\eps 
			- f''(c_A) \nabla c_A
			- f'''(c_A) u \nabla c_A
			- f''(c_A) \nabla u} \\
		& \quad\leq \abs{[f''(c_\eps) - f''(c_A)] \nabla u}
		+ \abs{\left[f''(c_\eps) - f''(c_A) - f'''(c_A) u\right] \nabla c_A} \\
		& \quad \leq C\abs{ u \nabla u}
		+ C \abs{u^2 \nabla c_A}.
	\end{align*}
	Then by \eqref{eq:omega-du-L4}, \eqref{eq:u_A-eps}, and \eqref{eq:u-L4}, we have
	\begin{align*}
		\norm{\tfrac{\oeps^2}{\eps^2}\nabla \cN}_{L^2(Q_{T})}
		& \leq  C \norm{\tfrac{\oeps}\eps u}_{L^4(Q_{T})} \norm{\tfrac{\oeps}{\eps} \nabla u}_{L^4(Q_{T})} + C \norm{\tfrac{\oeps}{\eps} u}_{L^4(Q_{T})}^2 \norm{\nabla c_A}_{L^\infty(Q_{T})} \\
		& \leq C \eps^{N - \frac{1}{4}} \eps^{N - \frac{5}{4}}
		+ C \eps^{2N - \frac{5}{4}} \eps^{N - \frac{3}{4}} 
		\leq C' \eps^{2N-\frac{3}{2}}. 
	\end{align*}
	In summary, we have derived
$		\norm{R_3}_{Y_{T,\eps}^3} \leq C \eps^{2N-2}$.
	This finishes the proof.
\end{proof}

\noindent
\begin{proof*}{of Theorem \ref{thm:main}}
   With all the estimates above at hand, we are ready to derive the convergence result. As before let $T_\eps\in (0,T_0]$, $\eps\in (0,1]$, be maximal such that \eqref{eq:NSAC1} has a smooth solution $\ve_\eps\colon \ol{\Omega}\times [0,T_\eps)\to \R^d$, $(p_\eps,c_\eps) \colon \ol{\Omega}\times [0,T_\eps)\to \R^2$ satisfying \eqref{eq:AprioriBound}, as before and let $\eps_1\in (0,1]$ be such that the statements of Theorem~\ref{thm:ApproximateSolution} and Theorem~\ref{thm:perturbation} hold true. Then 
   we have
  \begin{equation*}
    \norm{(\br_1,r_2,r_3)}_{Y_{T_\eps,,\eps}}\leq C\left( \eps^{2N-2} + C \eps^{N+1}\right),
  \end{equation*}
  where $C$ is independent of $T_\eps\in (0,T_0]$ and $\eps\in (0,\eps_1]$.
  Since $N\geq 3$, we can choose $\eps_0\in (0,\eps_1]$ so small that
  \begin{equation*}
    \norm{(\br_1,r_2,r_3)}_{Y_{T_\eps,\eps}}\leq R_0\eps^{N+\frac12} \qquad \text{for all } \eps \in (0,\eps_0].
  \end{equation*}
Now \eqref{eq:MainUbarEstim-recall} and \eqref{initial assumption} yield
\begin{equation*}
  \norm{(\ol \bw,\ol{u})}_{X_{T_\eps,\eps}} \leq 2C(T_0) R_0 \eps^{N+\frac12} < R \eps^{N+\frac12}\qquad \text{for all }\eps \in (0,\eps_0].
\end{equation*}
Since $T_\eps \in (0,T_0]$ is chosen maximal such that \eqref{eq:AprioriBound} holds, we conclude $T_\eps= T_0$ for all $\eps \in (0,\eps_0]$ since otherwise, by standard results on local existence of smooth solutions, we could extend the smooth solution $\ve_\eps\colon \ol{\Omega}\times [0,T_\eps)\to \R^d$, $(p_\eps,c_\eps) \colon \ol{\Omega}\times [0,T_\eps)\to \R^2$ on a time interval $[0,T'_\eps)$, still satisfying \eqref{eq:AprioriBound}, for some $T'_\eps\in (T_\eps,T_0]$. This would contradict the maximality of $T_\eps$.
In particular, \eqref{eq:convVelocityb} holds true for every $R_1\geq R$. 
To show  \eqref{eq:convc} we use that $u=u_A+\ol u$ and \eqref{eq:uAEstim2} to obtain
\begin{align*}
	& \|(u, \nabla_\btau u, \omega_\eps \nabla u)\|_{L^\infty(0,T_0;L^2)}
    + \|(\omega_\eps\partial_t u,\oeps^2 \nabla^2 u)\|_{L^2(Q_{T_0})}+ \|\omega_\eps \nabla_{\btau}\nabla u\|_{L^2(\Gamma(2\delta))} \\
    &\quad\leq (1 + C(T_0) \eps^{-\frac12})R\eps^{N+\frac12} \leq R_1 \eps^{N}\qquad \text{for all }\eps \in (0,\eps_0]
\end{align*}
if $R_1 \geq (1+C(T_0))R$. 
Similarly, we derive from $\bw = \bw_A + \ol \bw$ and Lemma \ref{lem:wout} that
\begin{align*}
	& \|(\bw, \oeps \nabla \bw)\|_{L^\infty(0,T_0;L^2)}
    + \|\oeps \partial_t \bw\|_{L^2(Q_{T_0})} 
    \leq (1 + C(T_0) \eps^{\frac12})R\eps^{N+\frac12} \leq R_1 \eps^{N+\frac12}, \\
    & \|\oeps^2 \nabla^2 \bw\|_{L^2(Q_{T_0})} 
    \leq (\eps + C(T_0) \eps^{\frac12})R\eps^{N+\frac12} \leq R_1 \eps^{N+1},
\end{align*}
for all $\eps \in (0,\eps_0]$, which proves \eqref{eq:convVelocityb}.
Finally, it follows from \eqref{eq:uAEstim2} and
\eqref{eq:L-infty-Estim} that 
\begin{align*}
    \norm{u}_{L^\infty(Q_{T_0})} \leq  \norm{u_A}_{L^\infty(Q_{T_0})} +\norm{\ol u}_{L^\infty(Q_{T_0})}
    \leq (C(T_0)\sqrt{\eps} +C_{\rm GN} ) R \eps^{N-1}\leq (C(T_0) +C_{\rm GN} ) R \eps^{N-1},
\end{align*}
where $C_{\rm GN}$ is the constant given in \eqref{eq:L-infty-Estim}. This shows \eqref{eq:convc-Linfty} for a sufficiently large $R_1>0$.

The claimed convergence for $\ve_A$ is referred to Remark \ref{rem:vAC1bounded} and the convergence of $c_A$ follows from the construction.
This completes the proof of Theorem \ref{thm:main}. 
\end{proof*}

%%%%%%%%%%%%%%%%%%%%%%%%%%%%%%%%%%%%%%%	
\section*{Statements and Declarations}
\subsection*{Conflict of interest} 
The authors have no competing interests to declare that are relevant to the content of this article.

\subsection*{Data availability statement}
Data sharing is not applicable to this article as no datasets were generated during the current study.

\subsection*{Funding} 
M. Fei is partially supported by the NSF of China under Grant No.~12271004 and 12471222, and the NSF of Anhui Province of China under Grant No.2308085J10. Y. Liu is partially supported by the NSF of Jiangsu Province of China under Grant No.~BK20240572, the NSF of Jiangsu Higher Education Institutions of China under Grant No.~24KJB110020, and the China Postdoctoral Science Foundation under Grant No.~2025M773078. 

\subsection*{Acknowledgments}
This work was partially done when Y.L. was visiting Fakult\"at f\"ur Mathematik, Universit\"at Regensburg, and School of Mathematics and Statistics, Anhui Normal University, whose hospitality is gratefully acknowledged. The first three authors also thank Tianyuan Mathematics Research Center for the research visit when the work was partially conducted.

%\bibliographystyle{abbrv}
%\bibliography{Bibliography}

\def\ocirc#1{\ifmmode\setbox0=\hbox{$#1$}\dimen0=\ht0 \advance\dimen0
  by1pt\rlap{\hbox to\wd0{\hss\raise\dimen0
  \hbox{\hskip.2em$\scriptscriptstyle\circ$}\hss}}#1\else {\accent"17 #1}\fi}

\bigskip

\noindent
{\it
	(H. Abels) Fakult\"at f\"ur Mathematik,
	Universit\"at Regensburg,
	93040 Regensburg,
	Germany}\\
{\it E-mail address: {\sf \href{mailto:helmut.abels@mathematik.uni-regensburg.de}{helmut.abels@mathematik.uni-regensburg.de}} }\\[1ex]
{\it
	(M. Fei) School of  Mathematics and Statistics, Anhui Normal University, Wuhu 241002, China}\\
{\it E-mail address: {\sf \href{mailto:mwfei@ahnu.edu.cn}{mwfei@ahnu.edu.cn}} }\\[1ex]
{\it
	(Y. Liu) School of Mathematical Sciences, and Ministry of Education Key Laboratory of NSLSCS, and Key Laboratory of Jiangsu Provincial Universities of FDMTA, Nanjing Normal University, Nanjing 210023, China}\\
{\it E-mail address: {\sf \href{mailto:ydliu@njnu.edu.cn}{ydliu@njnu.edu.cn}} }\\[1ex]
{\it
	(M. Moser) Fakult\"at f\"ur Mathematik,
	Universit\"at Regensburg,
	93040 Regensburg,
	Germany}\\
{\it E-mail address: {\sf \href{mailto:maximilian1.moser@mathematik.uni-regensburg.de}{maximilian1.moser@mathematik.uni-regensburg.de}} }\\[1ex]

\end{document}